\newcommand{\blind}{1}
\newtheorem{thm}{Theorem}[section]
\newtheorem{lem}{Lemma}[section]
\newtheorem{cor}{Corollary}[section]
\newtheorem{prop}{Proposition}[section]
\newtheorem{de}{Definition}[section]
\newtheorem{cond}{Condition}[section]
\newtheorem*{thm*}{Theorem}
\theoremstyle{remark}
\newtheorem{rmk}[thm]{Remark}
\newcommand{\bed}{\begin{definition}}
\newcommand{\eed}{\end{definition}}
\newcommand{\bitem}{\begin{itemize}}
\newcommand{\eitem}{\end{itemize}}
\newcommand{\beqn}{\begin{equation}}
\newcommand{\eeqn}{\end{equation}}
\newcommand{\balign}{\begin{align}}
\newcommand{\ealign}{\end{align}}
\newcommand{\diag}{\mathrm{diag}}
\newcommand{\beq}{\begin{equation}}
\newcommand{\eeq}{\end{equation}}
\begin{document}

\def\spacingset#1{\renewcommand{\baselinestretch}%
{#1}\small\normalsize} \spacingset{1}


\if1\blind
{
  \title{\bf Optimal Network Membership Estimation under Severe Degree Heterogeneity}
  \author{Zheng Tracy Ke\\
    Department of Statistics, Harvard University
    \vspace{.2cm}\\
    and  \vspace{.2cm}\\
    Jingming Wang\\
    Department of Statistics, Harvard University}
  \maketitle
} \fi

\if0\blind
{
  \bigskip
  \bigskip
  \bigskip
  \begin{center}
    {\LARGE\bf Optimal Network Membership Estimation Under Severe Degree Heterogeneity}
\end{center}
  \medskip
} \fi

\bigskip 

\begin{abstract}
Real networks often have severe degree heterogeneity, with the maximum, average, and minimum node degrees differing significantly. This paper examines the impact of degree heterogeneity on statistical limits of network data analysis. Introducing the heterogeneity distribution (HD) under a degree-corrected mixed-membership network model, we show that the optimal rate of mixed membership estimation is an explicit functional of the HD. This result confirms that severe degree heterogeneity may decelerate the error rate, even when the overall sparsity remains unchanged.

To obtain a rate-optimal method, we modify an existing spectral algorithm, Mixed-SCORE, by adding a pre-PCA normalization step. This step normalizes the adjacency matrix by a diagonal matrix consisting of the $b$th power of node degrees, for some $b\in \mathbb{R}$. We discover that $b = 1/2$ is universally favorable. The resulting spectral algorithm is rate-optimal for networks with arbitrary degree heterogeneity. A technical component in our proofs is entry-wise eigenvector analysis of the normalized graph Laplacian.

\end{abstract}

\noindent%
{\it Keywords:} DCMM, entry-wise eigenvector analysis, graph Laplacian, least favorable configuration, leave-one-out, Mixed-SCORE, random matrix theory, SCORE.

\vfill

\newpage
\spacingset{1.7}

\tableofcontents

\section{Introduction} \label{sec:Intro}
Networks are widely observed in social sciences, machine learning, economics, physics, and bioinformatics. One of the core problems in network data analysis is community detection, which aims to cluster nodes into socially meaningful groups. Mixed membership estimation \citep{airoldi2009mixed} is a ``soft clustering" problem that allows a node to have memberships in multiple communities. Given an undirected network with $n$ nodes, let $A\in\mathbb{R}^{n\times n}$ be the adjacency matrix, where $A(i,j)\in \{0,1\}$ indicates if there is an edge between two nodes $i$ and $j$.  
Suppose there are $K$ communities. Each node has a Probability Mass Function (PMF)  
$\pi_{i} \in\mathbb{R}^K$   
such that
\[
\mbox{$\pi_i(k)$ is the fractional weight of node $i$ on community $k$}, \qquad 1\leq k\leq K.  
\]
The goal is estimating $\pi_1,\pi_2,\ldots,\pi_n$ from $A$. 
One example of networks with mixed memberships is the political blog network \citep{adamic2005political}. Each node is a political blog active during the 2004 U.S. Presidential Election, and each edge is a link between two blogs in a snapshot on one day before the election. There are two communities, conservative and liberal. However, for most bloggers, they are neither extremely conservative nor extremely liberal; 
their positions in-between can be captured by mixed membership \citep{Mixed-SCORE}.  
Mixed membership estimation has also been used to learn research interests of statisticians from co-citation networks \citep{ji2021co} and understand developmental brain disorders from gene co-expression networks \citep{liu2018global}.

Several methods have been proposed for estimation and inference of network mixed memberships. 
To name a few, the Bayesian approach in {\it Airoldi et al.} \cite{airoldi2009mixed} puts a Dirichlet prior on $\pi_i$'s and uses variational inference to get the posteriors, the spectral approaches by {\it Zhang et al.} \cite{JiZhuMM} and  {\it Jin et al.} \cite{Mixed-SCORE} estimate $\pi_i$'s from the leading eigenvectors of $A$, and {\it Fan et al.} \cite{fan2019simple} and {\it Bhattacharya et al.} \cite{bhattacharya2023inferences} studied the hypothesis testing and uncertainty quantification for individual membership vectors. 

Despite these encouraging progresses, there is an open problem: how to deal with severe {\it degree heterogeneity} in real-world networks. 
An observed static network is often a snapshot of a dynamic process involving node birth and edge formation. 
The preferential attachment (``rich get richer") phenomenon \citep{barabasi1999emergence} is quite common in the underlying dynamic process;
as a consequence, the degrees of a small set of nodes can be a magnitude larger than the degrees of other nodes. Meanwhile, there are a large number of nodes whose degrees are quite small, most of which are the ``young" nodes in the underlying dynamic process.  \cite{jin2021improvements} reported the maximum, minimum, and average degrees of some frequently-studied network datasets and found that the three quantities, $d_{\max}$, $d_{\min}$, and $d_{\mathrm{mean}}$, could be significantly different.

The severe degree heterogeneity in real-world networks inspires a question: How does degree heterogeneity affect the accuracy of mixed membership estimation?
Suppose we have two networks whose average node degrees are comparable with each other. 
In one network, the node degrees are at the same order. In the other network, node degrees are significantly different from each other. 
Do we achieve the same accuracy of mixed membership estimation on these two networks? 
We provide the first result that connects degree heterogeneity and the optimal rate of mixed membership estimation. 
Under a degree-corrected mixed membership (DCMM) model \citep{Mixed-SCORE}, we propose to summarize degree heterogeneity by a cumulative distribution function $F_n(\cdot)$, defined using normalized degree parameters in DCMM, and 
we show that the optimal rate of mixed membership estimation is determined by a baseline rate 
(which is related to the average degree) 
and $F_n(\cdot)$ (which captures degree heterogeneity). 

Knowing the optimal rate, the next question is if there exists an algorithm that achieves the optimal rate under arbitrary degree heterogeneity. 
Existing methods include the variational inference approach \citep{airoldi2009mixed}, the spectral approach \citep{mao2020estimating}, and nonnegative matrix factorization 
\citep{yang2013overlapping}. There are also works studying mixed membership estimation for dynamic networks \citep{fu2009dynamic}, 
networks with different node types \citep{huang2020mixed}, and tensor models \citep{agterberg2022estimating}. 
However, most of these works assume no degree heterogeneity, hence not applicable in our setting.  
OCCAM \citep{JiZhuMM} is a method that allows for degree heterogeneity, but it restricts the fraction of mixed nodes to be relatively small.  Mixed-SCORE \citep{Mixed-SCORE} is the only existing method that allows for severe degree heterogeneity and an arbitrary fraction of mixed nodes. We then focus on Mixed-SCORE and
compare its rate with the aforementioned lower bound. We discover that Mixed-SCORE may be non-optimal when  degree heterogeneity is severe. Inspired by some non-trivial theoretical insights, we improve Mixed-SCORE by adding a (pre-PCA) Laplacian normalization and a (post-PCA) node trimming. The resulting method is called Mixed-SCORE-Laplacian (MSL). We show that MSL is rate-optimal under arbitrary degree heterogeneity.

Another theoretical result we provide is the node-wise error of MSL. For each node $i$, we derive a large-deviation bound for $\|\hat{\pi}_i-\pi_i\|_1$. 
The bound is a monotone decreasing function of $\theta_i$. It shows that the estimation error is not evenly distributed among nodes: higher-degree nodes receive more accurate estimation of $\pi_i$. Such node-wise error bounds will be useful for downstream tasks such as using $\hat{\pi}_i$ to rank node preferences or build prediction models. 

The backbone of our analysis is entry-wise eigenvector analysis for normalized adjacency matrices. We consider a degree-normalization of $A$ to $L=H^{-b}AH^{-b}$ for any $b\geq 0$, where $H$ is a diagonal matrix containing regularized node degrees.  
We study each entry of the leading eigenvectors of $L$ and show how its large-deviation is related to $b$. 
Such analysis reveals the effects of different degree-normalizations and inspires us to use the Laplacian normalization. 
Fixing the Laplacian normalization, we derive entry-wise large-deviation bounds for leading eigenvectors, and use this result to derive the node-wise error and rate-optimality of MSL.

In summary, we offer the first rate-optimality study of mixed membership estimation for arbitrary degree heterogeneity. Our contributions include (i) a degree-heterogeneity-aware lower bound, (ii) a rate-optimal spectral algorithm, (iii) node-wise estimation error bounds, and (iv) entry-wise eigenvector analysis for the normalized graph Laplacian. To demonstrate the practical implications of our work, we apply our method to a political blog network \citep{adamic2005political} and a coauthorship network \citep{JiJin}.

%


\subsubsection*{Related literature}

Mixed membership estimation may be viewed as ``soft" community detection, but it is fundamentally different from community detection. 
Community detection is a clustering problem, and its optimality can be achieved by a post-spectral-clustering `majority vote' \citep{gao2015achieving}. However, optimality of mixed membership estimation has much higher requirement on the entrywise signal-to-noise ratio in eigenvectors, hence a more challenging problem. 

A lower bound for mixed membership estimation under moderate degree heterogeneity was given in a short note \citep{jin2017sharp}. However, this lower bound is not sharp for severe degree heterogeneity. Using more sophisticated proofs, we provide sharp lower bounds under arbitrary degree heterogeneity. 
We also have many other contributions - a new algorithm, node-wise error bounds, and entry-wise eigenvector analysis, but none of which were seen in \cite{jin2017sharp}.

One of our ideas in methodology development is using the normalized graph Laplacian. This matrix has been used in community detection \cite{qin2013regularized,jin2021improvements}. There are interesting theoretical studies explaining why it improves community detection perfromance, such as large-deviation bounds for matrix spectral norms \citep{chaudhuri2012spectral} and analysis of global clustering quality metrics \citep{sarkar2015role}. However, these results are insufficient to conclude that Laplacian is the correct normalization for optimal mixed membership estimation under arbitrary degree heterogeneity. In Section~\ref{sec:MSCORE-L}, we show that an optimal method has to attain the correct node-wise errors simultaneously for all nodes. Hence, any conclusion of optimality requires both {\it node-wise error analysis} and {\it degree-heterogeneity-aware lower bounds}. Neither results existed in the literature.

A technical component of our work is the entry-wise eigenvector analysis for normalized graph Laplacian.  
In the literature, entry-wise eigenvector analysis was conducted for eigenvectors of the adjacency matrix under a stochastic block model (SBM), such as in {\it Abbe et al.} \cite{abbe2020entrywise}. 
However, a direct extension of the analysis in \cite{abbe2020entrywise} does not work for our purpose, because: (i) the Laplacian matrix no longer has independent entries,  (ii) our model allows for severe degree heterogeneity, and (iii) we need tighter bounds than those in \cite{abbe2020entrywise}. 
We overcome these hurdles by new proof ideas, which will be explained in Section~\ref{sec:Entrywise-proof}. 

Our theory is for the asymptotic regime where the average node degree grows faster than $\log(n)$, and the rate-optimality of our proposed method is subject to a logarithmic factor. It is an interesting question to remove such a  logarithmic gap, but the study is very challenging (see Section~\ref{subsec:MSL-rate} for more discussions). We leave it to future work.


The remainder of this paper is organized as follows. Section~\ref{sec:LB} states the model, asymptotic settings, and lower bounds. Section~\ref{sec:MSCORE-L} introduces the new algorithm. Section~\ref{sec:UB} contains the main results, including entry-wise eigenvector analysis, node-wise errors, rate-optimality, and extensions to general loss functions. Section~\ref{sec:Entrywise-proof} explains the techniques used in entry-wise eigenvector analysis. Section~\ref{sec:Simu} and Section~\ref{sec:real_data} present the simulations and real-data results, respectively. Discussions are in Section~\ref{sec:Discuss}, and all proofs are in the supplementary material.



\spacingset{1}
\section{The model, asymptotic settings, and lower bounds} \label{sec:LB}
\spacingset{1.7}


We model the network with a degree-corrected mixed membership (DCMM) model \citep{Mixed-SCORE}. DCMM generalizes the stochastic block model (SBM) by incorporating both degree heterogeneity and mixed membership, and its special cases include the well-known mixed membership stochastic block model (MMSBM) \citep{airoldi2009mixed} and degree-corrected block model (DCBM) \citep{DCBM}. 
We present an information-theoretic lower bound under DCMM. 
This lower bound depends on the distribution of node degree parameters, so it reveals the fundamental impact of degree heterogeneity on mixed membership estimation. 


\subsection{The DCMM model and the asymptotic settings} \label{subsec:DCMM}

Let $K$ be the number of communities and let $\Pi=[\pi_1,\pi_2,\ldots,\pi_n]'\in\mathbb{R}^{n\times K}$ be the membership matrix. DCMM uses a symmetric non-negative matrix $P\in\mathbb{R}^{K\times K}$ to model the community structure and a positive vector $\theta=(\theta_1,\theta_2,\ldots,\theta_n)'$ to model the degree heterogeneity among nodes. 
The upper triangular entries of $A$ are independent Bernoulli variables satisfying 
\beq \label{mod-DCMM} 
\mathbb{P}\big(A(i,j) = 1\big) =  \theta_i \theta_j \cdot \pi_i' P\pi_j, \qquad 1\leq i<j\leq n. 
\eeq
To guarantee parameter identifiability, we follow  \cite{Mixed-SCORE} to require that $P$ is non-singular and has unit diagonals. 
Write $\Theta=\diag(\theta_1,\theta_2,\ldots,\theta_n)$.\spacingset{1}\footnote{Given a vector $v\in\mathbb{R}^n$, we use $\diag(v)$ or $\diag(v_1,v_2,\ldots,v_n)$ to denote the $n\times n$ diagonal matrix whose $i$th diagonal entry is equal to $v_i$, for $1\leq i\leq n$.}\spacingset{1.7} It is seen that 
\beq \label{mod-DCMM2}
A = \Omega - \diag(\Omega)+W, \qquad\mbox{where}\quad \Omega:=\Theta\Pi P\Pi'\Theta\quad \mbox{and}\quad W:=A-\mathbb{E}[A]. 
\eeq
We call node $i$ a {\it pure node} if $\pi_i$ is degenerate (i.e., it has only one nonzero coordinate that is equal to $1$, with the other coordinates being zero) and a {\it mixed node} otherwise. The DCMM model is identifiable provided that each community has at least one pure node \citep{Mixed-SCORE}. 

Given $A$ and $K$, we are interested in estimating $\Pi$. 
We use $n$ as the driving asymptotic parameter and study the optimal error rate as $n\to\infty$. A challenge is that DCMM has many nuisance parameters, all of which affect the error rate. We hope to define a few summarizing quantities so that the optimal rate only depends on these quantities.

\medskip
\noindent
{\bf An illustrating example} {\it (Random-parameter DCMM)}. Let $g(\cdot)$ be a distribution on the standard simplex of $\mathbb{R}^K$ such that $\mathbb{E}_{g}[\pi]=\frac{1}{K}{\bf 1}_K$ and $\lambda_{\min}(\mathbb{E}_{g}[\pi\pi'])\geq \frac{c}{K}$, for a constant $c>0$. Let $F(\cdot)$ be a distribution with support in $(0,\infty)$ and mean equal to $1$. 
For $a_n, p_n\in (0,1)$, 
\[
P=(1-p_n)I_K + p_n {\bf 1}_K{\bf 1}_K', \qquad \pi_i\overset{iid}{\sim}g(\cdot), \qquad \theta_i\overset{iid}{\sim} a_n\cdot F(\cdot). 
\]
Here, $(1-p_n)$ captures the `dissimilarity' between communities. The average node degree is at the order $na^2_n$ (if $K$ is finite), so $a_n$ captures the sparsity of the network. The distribution $F(\cdot)$ controls degree heterogeneity (e.g., when $F$ is a point mass, there is no degree heterogeneity). We expect that the optimal rate  depends on $(1-p_n)$, $a_n$ and $F(\cdot)$. 

\medskip

Using insights from this example, we now define the parameter class for a general DCMM, where we introduce a few quantities that serve as the counterpart of $(1-p_n, a_n, F(\cdot))$. 
Write $\bar{\theta}=n^{-1}\sum_{i=1}^n\theta_i$ and $\eta_i=\theta_i/\bar{\theta}$, $1\leq i\leq n$.   
\begin{de} \label{Def-CDF}  
The heterogeneity distribution (HD) is the empirical distribution associated with $\eta_1,\eta_2,\ldots,\eta_n$, whose CDF is $F_n(t)=n^{-1}\sum_{i=1}^n1\{\theta_i\leq t\cdot \bar{\theta}\}$. 
\end{de}

\noindent
Since the diagonals of $P$ are fixed at $1$ for identifiability, the average node degree is $\asymp n\bar{\theta}^2$, implying that $\bar{\theta}$ captures network sparsity. 
In addition, $F_n(\cdot)$ captures degree heterogeneity. Here, $\bar{\theta}$ and $F_n(\cdot)$ serve as the counterpart of $a_n$ and $F(\cdot)$ for a general DCMM model.  

Let $d_i=\sum_{j}A(i,j)$ be the degree of node $i$ and $\bar d = n^{-1} \sum_{i} d_i$ be the average node degree. Write $D_\theta=\diag(\mathbb{E}d_1, \mathbb{E}d_2, \ldots, \mathbb{E}d_n) + (\mathbb{E}\bar{d})I_n$.
We define a ``community-overlap" matrix 
\beq \label{def:G}
G= K \cdot \Pi'\Theta D_{\theta}^{-1}\Theta \Pi\;\; \in\;\; \mathbb{R}^{K\times K}.
\eeq
Note that $G(k,\ell)=\frac{K}{n}\sum_{i=1}^n \frac{n\theta_i^2}{\mathbb{E}(d_i+\bar{d})}\pi_i(k)\pi_j(\ell)$, where for most nodes $i$, $\frac{n\theta_i^2}{\mathbb{E}(d_i+\bar{d})}\asymp 1$. Therefore, when $k=\ell$, a large value of $G(k,k)$ implies that a significant fraction of nodes have nonzero membership on community $k$, so $\frac{n}{K}G(k,k)$ is the ``effective size" of this community; when $k\neq \ell$, 
a large value of $G(k,\ell)$ indicates a big ``overlap" between community $k$ and community $\ell$. 
Our study suggests that the counterpart of $(1-p_n)$ for a general DCMM should be defined through eigenvalues of $PG$. In detail, let $\lambda_k(PG)$ denote the $k$th largest (in magnitude) right eigenvalue of $PG$, for $1\leq k\leq K$. 
Define 
\beq \label{delta_n}
\beta_n:=|\lambda_K(PG)|. 
\eeq
We use $\beta_n$ as the counterpart of $(1-p_n)$ for a general DCMM. In the aforementioned random-parameter DCMM example, $G\sim K\cdot \mathbb{E}_g[\pi\pi']$, and $\beta_n\asymp 1-p_n$
(this example also suggests that $G$ is a global quantity that is insensitive to degree heterogeneity).

Later in this section, we will present an information-theoretical lower bound for mixed membership estimation, which depends on $n$, $K$,  $\beta_n$ (capturing community dis-similarity), $\bar{\theta}$ (capturing network sparsity), and $F_n(\cdot)$ (capture degree heterogeneity).

\subsection{Regularity conditions} \label{subsec:RegConds}

Our results require some regularity conditions. Let $G$ and $\lambda_k(PG)$ be the same as in Section~\ref{subsec:DCMM}. For each $1\leq k\leq K$, denote by $\eta_k\in\mathbb{R}^K$ the $k$th right eigenvector of $PG$ (associated with the eigenvalue $\lambda_k(PG)$). Fix positive constants $c_1$-$c_4$. 

\begin{cond} \label{reg-conds}
The DCMM parameters $(\theta,\Pi,P)$ satisfy the following requirements:
\begin{itemize} \itemsep -5pt
\item[(a)] $\Vert G\Vert\leq c_1$, $\Vert G^{-1}\Vert\leq c_1$, and $\min_{1\leq k\leq K}\bigl\{\sum_{i=1}^n\theta_i\pi_i(k)\bigr\}\geq c_1K^{-1}\|\theta\|_1$. 
\item[(b)] $\max_{k\neq 1}\{\lambda_k(PG)\}\leq (1-c_2)\cdot \lambda_1(PG)$, and $\min_{1\leq k\leq K} \{\sum_{1\leq \ell\leq K}P(k, \ell) \} \geq c_2 K $. 
\item[(c)] $\eta_1$ is a positive vector, satisfying that $\min_{1\leq k\leq K}\{\eta_1(k)\}\geq c_3\max_{1\leq k\leq K}\{\eta_1(k)\}$. 
\item[(d)] Each community has at least one pure node whose $\theta_i$ is lower bounded by $c_4\bar{\theta}$. 
\end{itemize} 
\end{cond}

Condition (a) requires that the matrix $G$ defined in \eqref{def:G} is well-conditioned and the degree parameters are balanced across communities. These are commonly assumed in the literature \citep{Mixed-SCORE}. In Condition (b), the first is an eigen-gap condition. 
Since $PG$ is a nonnegative matrix, by Perron's theorem \cite{HornJohnson}, $\max_{k\neq 1}\{\lambda_k(PG)\}$ is strictly smaller than $\lambda_1(PG)$. Here, we further assume that $\max_{k\neq 1}\{\lambda_k(PG)\}$ is smaller than $(1-c_2)\cdot \lambda_1(PG)$, which is mild. 
The second requirement in (b) restricts that the sum of each row of $P$ is at the order of $K$. This is for convenience of presentation and can be easily relaxed (see Section~\ref{supp:relaxP} of the supplement).
For Condition (c), note that $\eta_1$ is a positive vector when $PG$ is irreducible (by Perron's theorem \cite{HornJohnson}). Hence, this condition is only slightly stronger than requiring the irreducibility of $PG$. \spacingset{1}\footnote{When $PG$ is reducible, with probability $1-o(1)$, the network splits into multiple disconnected components. In that case, we will conduct mixed membership estimation separately on each component, and each sub-problem still has an irreducible $PG$.}\spacingset{1.7}
Condition (d) is related to the identifiability of DCMM, which requires each community to have at least one pure node. 
Here we put a slightly stronger requirement: there is at least one ``moderate-degree" pure node per community.

We also define a class of $\theta$. 
We hope this class to be broad enough to cover various kinds of degree heterogeneity. We note that the EHD (see Definition~\ref{Def-CDF}) can have many different situations: First, $F_n(\cdot)$ may be discrete (i.e., $\theta_i$'s take only finitely many values), or converge to a continuous CDF as $n\to\infty$. Second, $F_n(\cdot)$ may have an unbounded support, to allow for $\theta_{\max}\gg\bar{\theta}$. Finally, the density of $F_n(\cdot)$ may have different shapes in the neighborhood of zero, to control the fraction of extremely-low-degree nodes. We introduce the following class of $\theta$. Despite that its definition is technical, this class indeed covers all the above cases. 

\begin{de} \label{def:thetaClass}
Given constants $\varrho\in (0,1)$ and $a_0\in (0,1)$ and any sequence $x_n\to 0$, let ${\cal G}_n(\varrho,a_0, x_n)$ be the collection of $\theta\in\mathbb{R}^n$ such that there exists $c_n>0$ satisfying $\varrho c_n\geq x_n^2$, $F_n(c_n)\leq 1- a_0$, and $\int_{\varrho c_n}^{c_n} \frac{1}{\sqrt{t\wedge 1}}d F_n(t) \geq a_0 \int_{x_n^2}^{\infty}\frac{1}{\sqrt{t\wedge 1}}d F_n(t)$.
\end{de}

In Section~\ref{subsec:theta-class} of the supplementary material, we justify the broadness of ${\cal G}(\varrho, a_0, x_n)$. We show that the requirements in Definition~\ref{def:thetaClass} are satisfied with high-probability if $\theta_i\overset{iid}{\sim}a_n\cdot F$, where 
$a_n>0$ is a scalar and $F(\cdot)$ is a fixed, finite-mean distribution which has its support in $(0,\infty)$ and belongs to one of the following cases: (i) $F(\cdot)$ is a discrete distribution; (ii) $F(\cdot)$ is a continuous distribution with support in $[c,\infty)$, for some $c>0$; (iii) $F(\cdot)$ is a continuous distribution supported in $(0,\infty)$, and its density $f(t)$ satisfies that $\lim_{t\to 0+}\{t^{\omega}f(t)\}=C
$, for some constants $\omega \neq 1/2$ and $C>0$. 

\subsection{A $\theta$-dependent lower bound} \label{subsec:LB}

For any estimator $\hat{\Pi}=[\hat{\pi}_1,\hat{\pi}_2,\ldots,\hat{\pi}_n]'$, we measure its performance by the $\ell^1$-loss: 
\beq \label{Loss}
{\cal L}(\hat{\Pi},\Pi) = \min_{T}\Bigl\{ \frac{1}{n}\sum_{i=1}^n \|T\hat{\pi}_i-\pi_i\|_1\Bigr\}, 
\eeq
where the minimum is taken over column permutation of $\hat{\Pi}$. Given any $\theta\in\mathbb{R}_+^n$, $K\geq 2$, and $\beta_n\in (0,1)$, let ${\cal Q}_n(\theta)={\cal Q}_n(\theta; K, \beta_n)$ be the collection of $(\Pi, P)$ such that $|\lambda_K(PG)|\geq \beta_n$ and that $(\theta,\Pi, P)$ satisfies Condition~\ref{reg-conds}. The meaning of $\beta_n$ here is slightly different from the one in \eqref{delta_n} ---we use $ \beta_n$  as the respective lower bound of $|\lambda_K(PG)|$, by the convention of minimax analysis. 
Define the minimax error for a particular $\theta$ as
\beq \label{minmaxLoss}
{\cal L}^*_n( \theta):= \inf_{\hat{\Pi}}\sup_{(\Pi, P)\in {\cal Q}_{n}(\theta)}\mathbb{E}{\cal L}(\hat{\Pi},\Pi). 
\eeq
Define the `baseline' rate as 
\beq \label{BaseRate}
err_n = K^{3/2}\beta_n^{-1}(n\bar{\theta}^2)^{-1/2}
\eeq
and the `degree-heterogeneity-aware' rate as 
\beq \label{OptRate}
err_n(\theta):= \int \min \Bigl\{\frac{err_n}{\sqrt{t\wedge 1}}, 1\Bigr\}d F_n(t) = \frac{1}{n}\sum_{i=1}^n \min\left\{ \frac{K\sqrt{K}}{\beta_n \sqrt{n\bar{\theta} (\bar{\theta}\wedge \theta_i)}},\;\; 1\right\}. 
\eeq

\begin{thm}[A $\theta$-dependent lower bound]\label{thm:LB}
Fix constants $(c_1, c_2, c_3, c_4, \varrho, a_0)$ and positive sequences $(K, \beta_n)$ such that  $K^{3/2}\beta_n^{-1}(n\bar{\theta}^2)^{-1/2}\to 0$.   
There exists a constant $C_1>0$ such that simultaneously for all $\theta\in {\cal G}_n(\varrho,a_0, err_n)$,   ${\cal L}_n^*(\theta) \geq C_1 err_n(\theta)$.   
\end{thm}

Delving into $err_n(\theta)$, the individual contribution of node $i$ in the lower bound is $\tau_n(\theta_i)\equiv \min\bigl\{ \frac{K\sqrt{K}}{\beta_n \sqrt{n\bar{\theta} (\bar{\theta}\wedge \theta_i)}},\, 1\bigr\}$. In Figure~\ref{fig:entrywise-illustration}, we plot the curve of $\tau_n(\theta_i)$ versus $\theta_i$.  The `minimum with 1' reflects the naive bound $\|\hat{\pi}_i-\pi_i\|_1\leq 2$. When $\frac{K\sqrt{K}}{\beta_n \sqrt{n\bar{\theta} (\bar{\theta}\wedge \theta_i)}}\leq 1$, $\tau_n(\theta_i)$ depends on the minimum of $\theta_i$ and $\bar{\theta}$. 
This is because the error at $\pi_i$ comes from (i) noise in the $i$th row of $A$ (controlled by $\theta_i$) and (ii) the errors of estimating global parameters such as $P$ (controlled by $\bar{\theta}$).   When $\theta_i$ is small, (i) dominates (ii); but when $\theta_i$ is large, (ii) dominates (i).

\spacingset{1}
\begin{figure}[tb!]
\centering
\includegraphics[width=.5\textwidth]{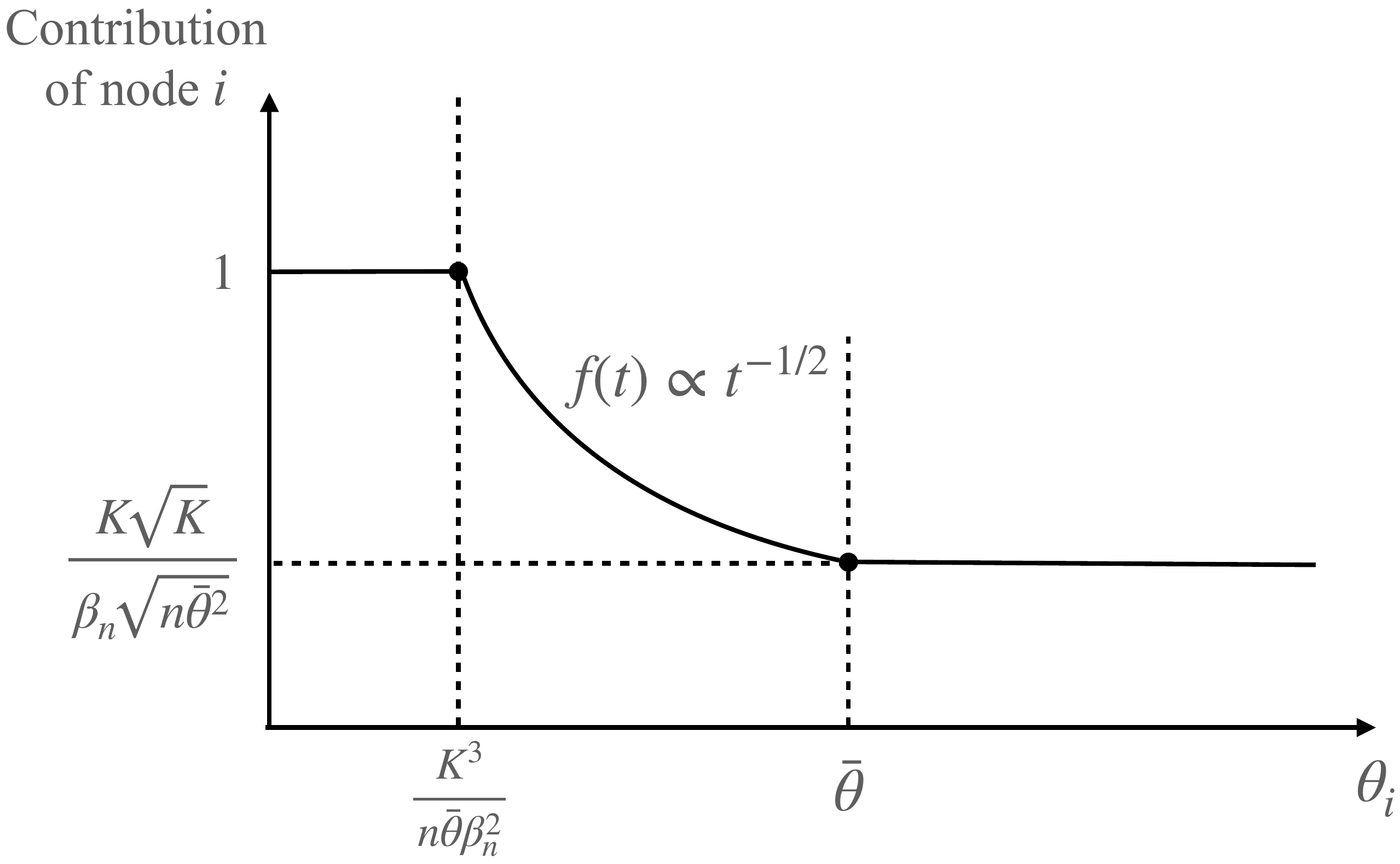}
\caption{Visualization of the contributions of individual nodes in the minimax lower bound. The x-axis represents $\theta_i$, and the y-axis corresponds to the summand in \eqref{OptRate}.} \label{fig:entrywise-illustration}
\end{figure}
\spacingset{1.7}

We revisit the example of random-parameter DCMM in Section~\ref{subsec:DCMM}, where $P$ has equal off-diagonal entries, $\pi_i\overset{iid}{\sim}g(\cdot)$, and $\theta_i\overset{iid}{\sim}a_n\cdot F(\cdot)$. Then, $
err_n = \frac{K\sqrt{K}}{(1-p_n)\sqrt{na_n^2}}$, and $err_n(\theta) = \int \min \bigl\{\frac{err_n}{\sqrt{t\wedge 1}}, 1\bigr\}d F(t)$. 
The effect of degree heterogeneity is seen from comparing $err_n(\theta)$ with the baseline rate. 
We consider a few examples of $F(\cdot)$: 
\begin{prop} \label{prop:rate}
Let $\epsilon\in (0,1)$, $\alpha>0$ and $c_{\min}>0$ be constants, and let $L\geq 1$ be a constant integer. Let $\delta_x$ denote the point mass at $x$. Suppose $x_1<x_2<\ldots <x_L$, $\sum_{\ell=1}^L\epsilon_\ell x_\ell=1$, and as $n\to\infty$, $x_1\gg err_n^2$. 
\[
\int \min \Bigl\{\frac{err_n}{\sqrt{t\wedge 1}}, 1\Bigr\}d F(t)  \asymp
\begin{cases}
err_n, & \mbox{when }F=\mathrm{Uniform}([1-\epsilon, 1+\epsilon]),\cr
err_n, & \mbox{when }F=\frac{\alpha-1}{\alpha c_{\min}}\mathrm{Pareto}(c_{\min}, \alpha), \mbox{ with } \alpha>1,\cr
err_n^{(1\wedge 2\alpha)}, & \mbox{when }F=\frac{\beta}{\alpha}\mathrm{Gamma}(\alpha,\beta), \mbox{ with }\alpha\neq 1/2,\cr
err_n\cdot \max_{1\leq \ell\leq L}\bigl\{\frac{\epsilon_\ell}{\sqrt{x_\ell\wedge 1}}\bigr\}, &\mbox{when }F=\sum_{\ell=1}^{L} \epsilon_\ell \delta_{x_\ell}. 
\end{cases}
\]
\end{prop}

Among the examples in Proposition~\ref{prop:rate}, 
the uniform distribution corresponds to moderate degree heterogeneity, where $\theta_{\max}\asymp \bar{\theta}\asymp \theta_{\min}$.  The Pareto distribution is an example of severe degree heterogeneity, where $\theta_{\max}\gg\bar{\theta}\asymp \theta_{\min}$. 
The Gamma distribution indicates even more severe heterogeneity: $\theta_{\max}\gg\bar{\theta}\gg\theta_{\min}$; and the optimal rate is slower than the baseline rate when $\alpha<1/2$. 
The last example is a discrete distribution, for which the optimal rate is slower than the baseline rate if there are a considerable fraction of extremely small $\theta_i$'s.


\section{The Mixed-SCORE-Laplacian (MSL) algorithm} \label{sec:MSCORE-L}

We shall introduce an algorithm to achieve the lower bound in Theorem~\ref{thm:LB}. Most methods for mixed membership estimation \citep{airoldi2009mixed} assume that $\theta_i$'s are equal (i.e., no degree heterogeneity). Mixed-SCORE \citep{Mixed-SCORE} is one of the few methods that deal with degree heterogeneity, but its error rate only matches with the lower bound in Theorem~\ref{thm:LB} for some special $\theta$. We modify Mixed-SCORE to achieve the lower bound for any $\theta$ in the regular class in Definition~\ref{def:thetaClass}. 

\subsection{A general node embedding, and review of Mixed-SCORE}
 
We define a general node embedding approach, which extends the SCORE embedding \citep{SCORE}.  
Given constants $\tau>0$ and $b\geq 0$, let 
\beq \label{Def:Laplacian}
L= H^{-b} AH^{-b}, \qquad \mbox{where}\quad H = \diag(d_1,d_2,\ldots,d_n)+ \tau\bar{d}\cdot I_n. 
\eeq
Here $L$ is a normalized version of $A$, where the entries of $A$ are re-weighted by node degrees. $\tau$ is a ridge-regularization parameter to avoid extreme entries in $H^{-b}$; we usually set $\tau=1$. The most important parameter is $b$, which controls the level of re-weighting. When $b=0$, $L$ is the adjacency matrix; when $b=1/2$, $L$ is the normalized graph Laplacian. 
Let $\hat{\lambda}_1, \cdots, \hat{\lambda}_K$ be the $K$ largest eigenvalues (in magnitude) of $L$, and let  $\hat{\xi}_1, \cdots, \hat{\xi}_K\in\mathbb{R}^{n}$ be the corresponding eigenvectors.
Define an $n\times (K-1)$ matrix $\hat{R}$ by 
\beq \label{SCORE}
\hat{R}(i,k) = \hat{\xi}_{k+1}(i)/\hat{\xi}_1(i), \qquad 1\leq i\leq n,\, 1\leq k\leq K-1. 
\eeq
When the network is connected, by Perron's theorem, $\hat{\xi}_1$ is always a strictly positive vector, so $\hat{R}$ is well-defined. Let $\hat{r}_1',\hat{r}_2',\ldots,\hat{r}_n'$ denote the rows of $\hat{R}$. We use $\hat{r}_i$ as a $(K-1)$-dimensional embedding of node $i$. 
We call them the (general) SCORE embeddings of nodes. The original SCORE embedding \citep{SCORE} is a special case with $b=0$ in \eqref{Def:Laplacian}, but we allow a general $b$ here. 

\spacingset{1}
\begin{figure}[tb!]
\centering
\includegraphics[width=.75\textwidth]{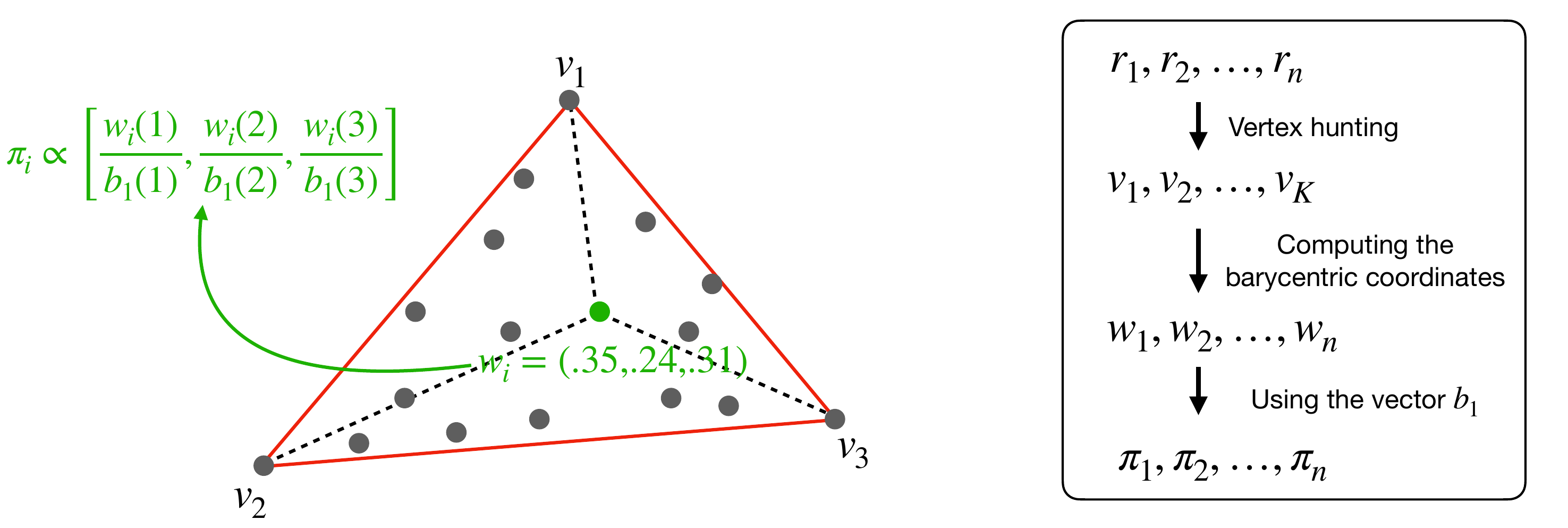} 
\caption{The main insight of mixed membership estimation. In the left panel, the black dots are $r_1,r_2,\ldots,r_n$. The red triangle is the Ideal Simplex, with $K$ vertices $v_1,v_2,\ldots,v_K$. Each $r_i$ has a barycentric coordinate $w_i$ in the simplex, and $w_i$ is related to the target membership vector $\pi_i$ through a vector $b_1$. In the right panel, we present the steps of estimating $\pi_i$'s from the node embeddings.} \label{fig:MSCORE-idea}
\end{figure}
\spacingset{1.7}

The main idea of Mixed-SCORE \citep{Mixed-SCORE} is to estimate $\pi_i$ from the simplex geometry associated with node embeddings. 
For illustration, we consider an oracle case where $A=\Omega$. The eigen-pairs $(\hat{\lambda}_k, \hat{\xi}_k)$ and the embedded vectors $\hat{r}_i$ now become non-stochastic, and we re-write them as $(\lambda_k, \xi_k)$ and $r_i$. Figure~\ref{fig:MSCORE-idea} displays the point cloud $\{r_i\}_{i=1}^n$. We observe that this point cloud is contained in a simplex (called the Ideal Simplex) with vertices $v_1, v_2, \ldots, v_K$. For any pure node $i$, the corresponding $r_i$ falls on one vertex. We can recover the Ideal Simplex as long as there is a pure node for each community. Such a step is called Vertex Hunting \citep{Mixed-SCORE}. 
By definition, each point in the simplex can be uniquely written as a convex combination of its $K$ vertices, and the vector containing the combination coefficients is called the {\it barycentric coordinate} of this point in the simplex. We use $w_i\in\mathbb{R}^K$ to denote the barycentric coordinate of $r_i$ in the Ideal Simplex.  After vertex hunting, we can obtain $w_i$ immediately from $r_i$ and the recovered vertices. \cite{Mixed-SCORE} observed that $\pi_i$ is linked to $w_i$ via $\pi_i\propto [\diag(b_1)]^{-1} w_i$, where $b_1\in\mathbb{R}^K$ is defined by $b_1(k)=[\lambda_1+v_k'\diag(\lambda_2,\ldots,\lambda_K)v_k]^{-1/2}$. We thus estimate $\pi_i$'s as follows: First, we compute $b_1$ from the $K$ vertices and the eigenvalues of $L$. Next, for each $1\leq i\leq n$, let $\pi_i^*= [\diag(b_1)]^{-1} w_i$ and $\hat{\pi}_i = \pi_i^*/\|\pi_i^*\|_1$. The whole estimation procedure is summarized in Figure~\ref{fig:MSCORE-idea}. 
In Section~\ref{subsec:simplex} of the supplement, we show that when $A=\Omega$,  $\hat{\Pi}=\Pi$ (up to a column permutation of $\hat{\Pi}$). The proof is similar to that in \cite{Mixed-SCORE}, except that they fixed $b=0$ in \eqref{Def:Laplacian} while we allow for an arbitrary $b$.

In the real case, $A$ is a noisy version of $\Omega$.  
The above procedure has a natural extension to the real case. When $b=0$, it gives rise to the conventional Mixed-SCORE \citep{Mixed-SCORE}. However, the rate-optimality of Mixed-SCORE was unknown (due to the lack of a $\theta$-dependent lower bound in the literature). 
In Sections~\ref{subsec:Methods}-\ref{subsec:algorithm}, we first use the lower bound in Theorem~\ref{thm:LB} to conclude with non-optimality of Mixed-SCORE under severe degree heterogeneity, and then we modify the conventional Mixed-SCORE to obtain a rate-optimal algorithm.

\subsection{A theory-guided approach towards rate-optimality} \label{subsec:Methods}

The lower bound in Theorem~\ref{thm:LB} 
decomposes into node-wise contributions:
\beq \label{NodewiseRate}
err_n(\theta)=\frac{1}{n}\sum_{i=1}^n \tau_n(\theta_i), \qquad \tau_n(\theta_i):= \min\left\{1,\quad \frac{K\sqrt{K}}{\beta_n \sqrt{n\bar{\theta} (\bar{\theta}\wedge \theta_i)}}\right\}. 
\eeq
Therefore, if an estimate $\hat{\Pi}$ satisfies $\|\hat{\pi}_i-\pi_i\|_1\leq C\tau_n(\theta_i)$ simultaneously for all nodes $i$, then ${\cal L}(\hat{\Pi},\Pi)\leq Cerr_n(\theta)$. In other words, ``rate-optimality for arbitrary $\theta$" reduces to the task of achieving the correct node-wise errors. 

We can use \eqref{NodewiseRate} to check if an existing method is optimal. 
Let $\hat{\pi}_i^{\text{MSCORE}}$ be the estimated $\pi_i$ from the conventional Mixed-SCORE.  \cite{Mixed-SCORE} showed that with high probability ($\theta_{\min}$ and $\theta_{\max}$ denote the respective minimum and maximum of $\theta_1,\theta_2,\ldots,\theta_n$), 
\beq \label{Nodewise-MSCORE}
\Vert \hat{\pi}_i^{\text{MSCORE}}- \pi_i\Vert_1 \leq C\min\left\{1, \quad \frac{K\sqrt{K}}{\beta_n\sqrt{n\bar{\theta}\theta_{\min}}} \sqrt{\frac{\theta_{\max}^3\bar{\theta}^2\log(n)}{\theta_{\min}(n^{-1}\|\theta\|^2)^2}}\right\}. 
\eeq
When $\theta_i$'s are at the same order (i.e., moderate degree heterogeneity),  the right hand sides of \eqref{NodewiseRate} and \eqref{Nodewise-MSCORE} are the same up to a logarithmic factor. However, the two rates do not match for a general $\theta$ vector, implying non-optimality under severe degree heterogeneity.  

How to improve Mixed-SCORE to achieve the node-wise error rate in \eqref{NodewiseRate}?
Recall that the conventional Mixed-SCORE fixed $b=0$ in \eqref{Def:Laplacian}. Our proposal is using a theory-guided choice of $b$. 
This problem is connected to many attempts in network data analysis that try to find a good normalization of the adjacency matrix. 
To name a few, \cite{chaudhuri2012spectral} proposed to apply spectral clustering algorithms to $H^{-1}A$ (where $A$ and $H$ are the same as in \eqref{Def:Laplacian}) and they justified their approach by providing a sharp concentration inequality for the spectral norm of $H^{-1}A$ under a DCBM model. \cite{sarkar2015role} studied the leading eigenvectors of a graph Laplacian (a special case of \eqref{Def:Laplacian} with $b=1/2$ and $\tau=0$) under the SBM model, and they showed that the Laplacian normalization can improve an eigenvector-based clustering-quality metric by a constant factor. 
\cite{qin2013regularized} studied a spectral clustering algorithm under the DCBM model, which used the matrix in \eqref{Def:Laplacian} with $b=1/2$ and a constant $\tau$, and they provided a large-deviation bound for $\|L-\mathbb{E}L\|$. 
However, these results are insufficient to provide insights for choosing $b$ in our problem, because our target rate-optimality is hinged on the {\it node-wise errors}. The previous works focused on analyzing either the spectral norm of $L$ \citep{chaudhuri2012spectral,qin2013regularized} or some quantities that aggregate all errors in a cluster \citep{sarkar2015role}. 
Instead, we consider the Mixed-SCORE algorithm with a general value of $b$ and study the impact of $b$ on {\it node-wise errors}. 

Our approach has two main components: (i) Perturbation analysis of the Mixed-SCORE algorithm, to link the node-wise error with the noise level in $\hat{r}_i$. This part is similar to the analysis in \cite{Mixed-SCORE} and is kept brief. (ii) Sharp eigenvector analysis, to show the influence of $b$ on the node-wise noise level in $\hat{r}_i$. 
In this part, a key step is deriving entry-wise large-deviation bounds for leading eigenvectors of $L$. Our analysis starts from the leave-one-out trick \citep{abbe2020entrywise} and applies new strategies to handle the degree normalization in $L$. The analysis for $b=1/2$ is presented in Section~\ref{sec:Entrywise-proof} in full detail. The analysis for a general $b$ is discussed in Section~\ref{sec:choose-b} of the supplementary material. 
In this section, due to space constraint, we only describe the high-level ideas and then present the final conclusions. 

Introduce $H_0 = \diag(\mathbb{E} d_i + \tau \mathbb{E} \bar{d})$ and $L_0=H_0^{-b}\Omega H_0^{-b}$ as the respective population counterpart of $H$ and $L$. Let $\lambda_k$ be the $k$th largest eigenvalue (in magnitude) of $L_0$, and let 
$\xi_k\in\mathbb{R}^n$ be the corresponding eigenvector. Write $\Xi=[\xi_1,\xi_2,\ldots,\xi_K]$ and $R=[\diag(\xi_1)]^{-1}[\xi_2,\ldots,\xi_K]$. Denote by $r_i'$ and $\hat{r}_i'$ the $i$th row of $R$ and $\hat{R}$, respectively. If we supply $R$ to Mixed-SCORE (see Figure~\eqref{fig:MSCORE-idea}), we obtain $\hat{\Pi}=\Pi$. Alternatively, if we supply $\hat{R}$, we obtain a noisy estimate of $\Pi$. Mimicking the perturbation analysis in \cite{Mixed-SCORE}, we can show that under mild conditions, with overwhelming probability, there exists an $K\times K$ orthogonal matrix $O$ such that 
\beq \label{SNR_i}
\|\hat{\pi}_i - \pi_i\|_1 \leq  C \|\hat{r}_i - r_i \|  \leq C \frac{\|e_i'(\hat{\Xi}O-\Xi)\|}{\xi_1(i)}, \qquad \mbox{simultaneously for all } 1\leq i\leq n. 
\eeq
As $b$ changes, both $\xi_1(i)$ and $\|e_i'(\hat{\Xi}O-\Xi)\|$ will change. The dependence of $\xi_1(i)$ on $b$ follows from elementary linear algebra. The dependence of $\|e_i'(\hat{\Xi}O-\Xi)\|$ on $b$ is difficult to study. It requires knowing row-wise large-deviation bounds  for $\hat{\Xi}$ (see Section~\ref{subsec:entrywise} and Section~\ref{sec:Entrywise-proof}). 
A common strategy \citep{abbe2020entrywise} is approximating each $\hat{\xi}_k$ by its first-order proxy defined as follows: Note that $L\hat{\xi}_k=\hat{\lambda}_k\hat{\xi}_k$ implies $\hat{\xi}_k(i) = \hat{\lambda}_k^{-1}e_i'L\hat{\xi}_k$.  When the signal-to-noise ratio is sufficiently large, $\|\hat{\xi}_k-\xi_k\|$ and $\|H-H_0\|$ are appropriately small. It follows that $
\hat{\xi}_k(i) \approx  \hat{\lambda}_k^{-1} e_i'L\xi_k=   \hat{\lambda}_k^{-1} e_i'H^{-b}AH^{-b}\xi_k \approx  \lambda_k^{-1} e_i'H_0^{-b}AH_0^{-b}\xi_k$. 
We define $\hat{\xi}_k^*={\color{red}\lambda_k^{-1}} H_0^{-b}AH_0^{-b}\xi_k$ as a proxy to $\hat{\xi}_k$ and write $\hat{\Xi}^*=[\hat{\xi}_1^*, \hat{\xi}_2^*, \ldots, \hat{\xi}_K^*]$. Now, $\hat{\Xi}^*$ is a linear mapping of $A$, easier to study. We relegate detailed discussions to Section~\ref{sec:choose-b} of the supplement but only present the final results: 
\beq \label{SNR-main}
\|\hat{\pi}_i-\pi_i\|_1\lesssim  \frac{C\sqrt{\log(n)}}{\sqrt{n\theta_i\bar{\theta}}} \cdot \delta(b,F_n), \qquad\mbox{where } \delta(b, F_n)= \frac{\sqrt{\int t^{3-4b}dF_n(t)}}{\int t^{2-2b}dF_n(t)}. 
\eeq

Given \eqref{SNR-main}, we have two important observations: First, the influence of $b$ on all $n$ nodes is the same, through a function $\delta(b, F_n)$ that is independent of $i$.  Therefore, we can select the best $b$ by simply minimizing $\delta(b, F_n)$. 
Second, there exists a universal choice of $b$ that minimizes $\delta(b, F_n)$ simultaneously for all $F_n(\cdot)$. To see this, we note that $\int tF_n(t)=1$, by Definition~\ref{Def-CDF}. The Cauchy-Schwarz inequality implies $\int t^{2-2b}dF_n(t)\leq \sqrt{\int t^{3-4b}dF_n(t)}\sqrt{\int tdF_n(t)}=\sqrt{\int t^{3-4b}dF_n(t)}$, so $\delta(b, F_n)$ can never be smaller than $1$. Meanwhile, if we set $b=1/2$, $\delta(b, F_n)$ becomes exactly $1$, regardless of $F_n(\cdot)$. 
Therefore, $b=1/2$ is the universally best choice.

From now on, we fix $b=1/2$. Then, $L$ is the normalized graph Laplacian, and $\delta(b, F_n)=1$. 
However, the right hand side of \eqref{SNR-main} is still slightly different from $\tau_n(\theta_i)$. In Figure~\ref{fig:entrywise-illustration} we have seen that $\tau_n(\theta_i)$ has three regions; in fact, \eqref{SNR-main} matches $\tau_n(\theta_i)$ only in the middle region (which is also the most subtle region). To match $\tau_n(\theta_i)$ in all three regions, we need some minor refinements of the estimation procedure, as described in Section~\ref{subsec:algorithm} below.

\subsection{The proposed algorithm} \label{subsec:algorithm}

Let $L$ be the normalized Laplacian matrix, corresponding to $b=1/2$ in \eqref{Def:Laplacian}. Let $\hat{r}_1,\hat{r}_2, \ldots,\hat{r}_n$ be the node embeddings as in \eqref{SCORE}. By \eqref{SNR-main}, the noise level in $\hat{r}_i$ is closely related to $\theta_i$. When $\theta_i$ is too small, we should delete such $\hat{r}_i$, as it brings in more noise than signals; additionally, we can get information of the order of $\theta_i$ from observed node degrees. To this end, given any constants $c>0$ and $\gamma\in (0,1)$, 
define:
\beq \label{keepSet-VS}
\hat{S}_n(c)= \bigl\{i: \; d_i \geq c\cdot K\hat{\lambda}_K^{-2}\log(n) \bigr\}, \qquad 
\hat{S}_n^*(c, \gamma) = \hat{S}_n(c)\cap \bigl\{i:\; d_i\geq \gamma\cdot\bar{d}\bigr\}. 
\eeq
Now, all nodes partition into three non-overlapping subsets: $M_1=\hat S_n^*(c, \gamma)$, which consists of high-degree and moderate-degree nodes,  $M_2=\hat{S}_n(c)\setminus \hat S_n^*(c, \gamma)$, which consists of low-degree nodes, and $M_3=\{1,2,\ldots,n\}\setminus \hat{S}_n(c)$, consisting of extremely low-degree nodes. 
For nodes in $M_3$, their $\hat{r}_i$'s are too noisy to contain useful information of $\pi_i$. We delete all such nodes from the embedded point cloud. 
For nodes in $M_2$, we exclude them from the vertex hunting step. From Figure~\ref{fig:MSCORE-idea}, we can see that not all $\hat{r}_i$'s are needed for estimating the simplex. We apply a vertex hunting algorithm (e.g., \citep{araujo2001successive}) on those $\hat{r}_i$'s of nodes in $M_1$. After vertex hunting, we estimate $\pi_i$ from $\hat{r}_i$, for all nodes in $M_1\cup M_2$.  See Algorithm~\ref{alg:MSL}. The pseudo-code, together with details of the vertex hunting algorithm, is in Section~\ref{sec:MSL} of the supplementary material. 

\spacingset{1}
\begin{table}[htb!]
\centering
\caption{A summary of node trimming. All nodes are used to obtain $\hat{R}$. All nodes receive an estimate $\hat{\pi}_i$, but for extremely low-degree nodes,  we assign a trivial $\hat{\pi}_i$ rather than estimating it. The three cases here correspond to the three regions in Figure~\ref{fig:entrywise-illustration}.} \label{tb:trimming}
\scalebox{.9}{
\begin{tabular}{cl|ccc}
\hline
Subset& Degree range& Node embedding & Vertex hunting & Non-trivial $\hat{\pi}_i$ \\[0.2cm]
\hline
$M_1$ & $\big[\gamma\bar{d},\, \, n \big]$ & yes & yes & yes \\[0.16cm]
$M_2$ & $\big[c K\hat{\lambda}_K^{-2}\log(n), \, \, \gamma\bar{d} \,  \big]$ & yes & no & yes  \\[0.16cm]
$M_3$ & $\big[0, \,\, c K\hat{\lambda}_K^{-2}\log(n) \big]$ & yes & no & no\\
\hline
\end{tabular}
}
\end{table}
\spacingset{1.7}


\spacingset{1}
\section{Entywise eigenvector analysis, node-wise errors, and rate-optimality}  \label{sec:UB}
\spacingset{1.7}

We present the theoretical properties of Mixed-SCORE-Laplacian (MSL). The backbone of our analysis is an entry-wise large-deviation bound for leading eigenvectors of the normalized graph Laplacian, which is presented in Section~\ref{subsec:entrywise}. The node-wise errors and rate-optimality are in Section~\ref{subsec:MSL-rate}. We also extend all results to a more general loss function in Section~\ref{subsec:otherLoss}.

\spacingset{1}
\begin{algorithm}[tb!]
\caption{Mixed-SCORE-Laplaccian (a high-level description).} \label{alg:MSL}
\medskip
\noindent
{\bf Input:} $K$, $A$, and tuning parameters $\tau=1$, $c=0.5$, and $\gamma=0.05$ (default values). 

\begin{enumerate}
\item Node Embedding: Fix $b=1/2$ in \eqref{Def:Laplacian} and obtain $\hat{R}=[\hat{r}_1, \hat{r}_2, \ldots,\hat{r}_n]'$ as in \eqref{SCORE}. 
\item Trimming: Let $\hat{S}_n(c)$ be as in \eqref{keepSet-VS}. For any $i\notin \hat{S}_n(c)$, set $\hat{\pi}_i=K^{-1}{\bf 1}_K$ and delete such nodes from all the remaining steps.  
\item (De-noised) Vertex Hunting: Let $\hat{S}_n^*(c, \gamma)$ be as in \eqref{keepSet-VS}. Run the successive projection algorithm \citep{araujo2001successive} on $\{\hat{r}_i: i\in \hat{S}_n^*(c, \gamma)\}$ to obtain $\hat{v}_1,\hat{v}_2,\ldots,\hat{v}_K$. 

\item Membership Estimation: Obtain $\hat{b}_1\in\mathbb{R}^K$ by $\hat{b}_1(k)=[\hat{\lambda}_1+\hat{v}_k'\diag(\hat{\lambda}_2,\ldots,\hat{\lambda}_K)\hat{v}_k]^{-1/2}$.  For each $i\in \hat{S}_n(c)$, solve its barycentric coordinate $\hat{w}_i\in\mathbb{R}^K$. 
Let $\hat{\pi}_i^*\in\mathbb{R}^K$ be such that $\hat{\pi}_i^*(k)=\max\{\hat{w}_i(k)/\hat{b}_1(k),\, 0\}$. Obtain $\hat{\pi}_i$ by normalizing $\hat{\pi}_i^*$ to have a unit $\ell^1$-norm. 
\end{enumerate}
{\bf Output:} $\hat{\Pi}$.
\end{algorithm}
\spacingset{1.7}

\subsection{Entry-wise eigenvector analysis for graph Laplacian} \label{subsec:entrywise}

Fix $b=1/2$ and $\tau=1$ in the definition of $L$ in \eqref{Def:Laplacian}.  For $1\leq k\leq K$, let $\hat{\lambda}_k$ be the $k$th largest eigenvalue (in magnitude) of $L$,  and let $\hat{\xi}_k$ be the associated eigenvector. Define
\beq \label{Def:Laplacian0}
L_0 = H_0^{-1/2}\Omega H_0^{-1/2}, \qquad \mbox{where}\quad H_0 = \mathbb{E}[H]. 
\eeq
For $1\leq k\leq K$, let $\lambda_k$ be the $k$th largest eigenvalue (in magnitude) of $L_0$, and let ${\xi}_k$ be the corresponding eigenvector.
Write $\Xi_1=[\xi_2,\ldots,\xi_K]$, $\Xi=[\xi_1, \Xi_1]$, $\hat{\Xi}_1=[\hat{\xi}_2,\ldots,\hat{\xi}_K]$, and $\hat{\Xi}=[\hat{\xi}_1, \hat{\Xi}_1]$. 
By Condition~\ref{reg-conds}(b), there is a gap between $\lambda_1$ and the other eigenvalues; hence,   
by sin-theta theorem \citep{sin-theta}, we can obtain upper bounds for $
\|\hat{\xi}_1-\xi_1\|$ and $\min_{O_1}\|\hat{\Xi}_1O_1-\Xi_1\|_F$, 
where the minimum is taken over all orthogonal matrices $O_1\in\mathbb{R}^{(K-1)\times (K-1)}$. However, these bounds are insufficient for controlling  node-wise errors of MSL. We now bound each entry of $(\hat{\xi}_1-\xi_1)$ and each row of $(\hat{\Xi}_1O_1-\Xi_1)$.
Let $e_i\in\mathbb{R}^n$ denote the $i$th standard basis of $\mathbb{R}^n$. 

\begin{thm}[Row-wise large-deviation bounds for $\hat{\Xi}$]\label{thm:eigenvector}
Consider the DCMM model \eqref{mod-DCMM}-\eqref{mod-DCMM2}, where Condition~\ref{reg-conds}(a)-(c) are satisfied. 
Suppose $K^3 \log(n)/(n\bar{\theta}^2\beta_n^2)\to 0$ as $n\to\infty$. 
With probability $1-o(n^{-3})$,  there exists $\omega\in \{1, -1\}$ and  an orthogonal matrix $O_1\in \mathbb{R}^{(K-1)\times(K-1)}$ such that simultaneously for all $1\leq i\leq n$, 
\begin{align}
|\omega\hat{\xi}_1(i)-\xi_1(i)| & \leq C \sqrt{\frac{K\theta_i\log(n)}{n^2\bar{\theta}^3 }}\Biggl(1+ \sqrt{\frac{\log(n)}{n\bar{\theta}\theta_i}}\, \Biggr),\label{result-1-mainpaper}\\
\Vert e_i' (\hat{\Xi}_1 O_1 - \Xi_1)\Vert&\leq  C \sqrt{\frac{K^3 \theta_i\log(n)}{n^2\bar{\theta}^3 \beta_n^2}}\Biggl(1+ \sqrt{\frac{\log(n)}{n\bar{\theta}\theta_i}} \, \Biggr).  \label{result-2-mainpaper}
\end{align}
\end{thm}

Theorem~\ref{thm:eigenvector} is one of our major technical contributions. 
We sketch the proof ideas in Section~\ref{sec:Entrywise-proof} and relegate the full proof to the supplementary material. In this theorem, we make an assumption: $K^3 \log(n)/(n\bar{\theta}^2\beta_n^2)\to 0$. In a simple case where $K$ is finite and $\beta_n^{-1}=O(1)$, this assumption reduces to $n\bar{\theta}^2\gg\log(n)$. It means the average degree only needs to grow faster than $\log(n)$. Hence, Theorem~\ref{thm:eigenvector} can handle sparse networks up to logarithmic degree regime. Similarly, all other theorems in this section cover sparse networks. 

\subsection{Node-wise errors and rate-optimality} \label{subsec:MSL-rate}

We apply Theorem~\ref{thm:eigenvector} to establish the theoretical properties of MSL. First, we give a large-deviation bound for each row of $\hat{R}$. 
Define $R\in\mathbb{R}^{n\times (K-1)}$ by 
\beq \label{oracle1}
R(i,k) = \xi_{k+1}(i)/\xi_1(i), \qquad1\leq i\leq n,1\leq k\leq K-1. 
\eeq
Let $r'_1,r'_2,\ldots,r_n'$ be the rows of $R$. It is seen that $\hat{r}_i'=[\hat{\xi}_1(i)]^{-1}\cdot e_i'\hat{\Xi}_1$ and $r_i'=[\xi_1(i)]^{-1}\cdot e_i' \Xi_1$. Therefore, we can control the difference between $\hat{r}_i$ and $r_i$ by applying Theorem~\ref{thm:eigenvector}: 

\begin{lem} \label{lem:hatR}
Consider the DCMM model in \eqref{mod-DCMM}-\eqref{mod-DCMM2}, where Condition~\ref{reg-conds}(a)-(c) are satisfied. Fix $b=1/2$ and $\tau=1$ in \eqref{Def:Laplacian}.  
Suppose $K^3 \log(n)/(n\bar{\theta}^2\beta_n^2)\to 0$ as $n\to\infty$. For any constant $c_0>0$, define $
S_n(c_0):=\big\{1\leq i\leq n:\; n\bar{\theta} \theta_i \beta_n^2\geq c_0 K^3 \log (n) \big\}$. 
With probability $1-o(n^{-3})$,  there exists an orthogonal matrix $O_1\in \mathbb{R}^{K-1, K-1}$ such that, simultaneously for $ i \in S_n(c_0)$,  
\beq \label{bound-hatR}
\Vert O_1' \hat{r}_i - r_i \Vert \leq C \sqrt{\frac{K^3\log (n)}{n\bar{\theta} (\bar{\theta}\wedge \theta_i)\beta_n^2}}.   
\eeq
The statement holds for any $c_0>0$, except that the constant $C$ will depend on $c_0$. 
\end{lem}

Lemma~\ref{lem:hatR} only considers nodes in $S_n(c_0)$. For a node $i\notin S_n(c_0)$, its degree is so small that $\hat{r}_i$ is too noisy to contain useful information of $\pi_i$. In MSL, the set $S_n(c_0)$ is estimated by $\hat{S}_n(c)$, and those $\hat{r}_i$'s with $i\notin \hat{S}_n(c)$  are discarded.

Next, we present the node-wise error of MLS. 
\begin{thm}[Node-wise errors and rate-optimality] \label{thm:uppbd_pi}
Consider the DCMM model in \eqref{mod-DCMM}-\eqref{mod-DCMM2}, where Condition~\ref{reg-conds}(a)-(c) hold, and additionally, Condition~\ref{reg-conds}(d) is satisfied. Fix $b=1/2$ and $\tau=1$ in \eqref{Def:Laplacian}.   
Suppose $K^3 \log(n)/(n\bar{\theta}^2\beta_n^2)\to 0$ as $n\to\infty$. Let $\hat{\Pi}$ be the output of Algorithm~\ref{alg:MSL}, in which $(c,\gamma)$ are positive constants such that $\gamma<c_4$ ($c_4$ is the same as in Condition~\ref{reg-conds}(d)). With probability $1-o(n^{-3})$, there exists a permutation $T$ on $\{1,2,\ldots,K\}$, such that simultaneously for all $1\leq i\leq n$, 
\beq \label{Nodewise-Err-mainpaper}
\Vert T\hat{\pi}_i- \pi_i\Vert_1 \leq  C \min\left\{ \sqrt{\frac{K^3\log (n)}{n\bar{\theta} (\bar{\theta}\wedge \theta_i)\beta_n^2}},\;\; 1\right\}, 
\eeq
In addition, let 
${\cal L}(\hat{\Pi},\Pi)$ be the $\ell^1$-loss in \eqref{Loss}, and $err_n(\theta)$ be as in \eqref{OptRate}. Then, 
\[
\mathbb{E}{\cal L}(\hat{\Pi},\Pi) \leq C err_n(\theta)\sqrt{\log(n)}.
\] 
\end{thm}


Up to a logarithmic factor, the node-wise error in \eqref{Nodewise-Err-mainpaper} matches with the right hand side of \eqref{NodewiseRate} for every $\theta_i$, and the optimality of MSL follows immediately. 

An interesting question is if we can further remove the $\sqrt{\log(n)}$-factor in the upper bound. This factor arises from the entry-wise eigenvector analysis in Section~\ref{subsec:entrywise}. We conjecture that it is not easy to remove, due to the nature of the leave-one-out trick in eigenvector analysis (see Section~\ref{sec:Entrywise-proof}). Another question is if the requirement $K^3 \log(n)/(n\bar{\theta}^2\beta_n^2)\to 0$ can be further relaxed to allow for bounded degrees. We conjecture that it is impossible. In the community detection literature, there are many results for the bounded-degree regime. However, mixed membership estimation is a more challenging task than community detection. To our best knowledge, there has been no result for the bounded-degree regime.    

Theorem~\ref{thm:uppbd_pi} not only shows the optimality of MSL under a global loss but also provides insights on local errors. 
For example, if we only want to estimate one particular $\pi_i$, 
should we pre-process the network by removing low-degree nodes or grouping nodes of similar degrees?
This is answered by \eqref{Nodewise-Err-mainpaper}. Suppose $i$ is a moderate-degree node, with $\theta_i=O(\bar{\theta})$ and $n\bar{\theta}\theta_i\beta_n^2\gg K^3\log(n)$. Then, \eqref{Nodewise-Err-mainpaper} becomes $\Vert T\hat{\pi}_i- \pi_i\Vert_1\leq C(n\bar{\theta}\theta_i\beta_n^2)^{-1/2}\sqrt{K^3\log(n)}$. The total effect of other nodes at $\hat{\pi}_i$ is carried by $n\bar{\theta}=\theta_i+\sum_{j:j\neq i}\theta_j$. As a result, to achieve the best accuracy of $\hat{\pi}_i$, we should not remove any node or group nodes of similar degrees. \spacingset{1}\footnote{This argument only says that we should use {\it all} nodes to compute $\hat{R}$, but it does not imply we should use all rows of $\hat{R}$ in the subsequent steps. There is no conflict with the node trimming strategy in Section~\ref{subsec:algorithm}.} \spacingset{1.7}

\subsection{Extension to other loss functions} \label{subsec:otherLoss}
We extend the results in Section~\ref{sec:LB} and Section~\ref{subsec:MSL-rate} to a general loss function: 
\beq \label{generalLoss}
 {\cal L}(\hat{\Pi},\Pi; p, q) =  \min_{T}\biggl\{\Bigl(\frac{1}{n}\sum_{i=1}^n (\theta_i/\bar{\theta})^{p} \|T\hat{\pi}_i-\pi_i\|_q^q\Bigr)^{1/q}\biggr\}, \quad\mbox{for $p\geq 0$ and $q\geq 1$}. 
\eeq
The $\ell^1$-loss in \eqref{Loss} is a special case with $p=0$ and $q=1$. When $p>0$, the estimation errors are re-weighted by degree parameters. In many real applications, the $\pi_i$ of high-degree nodes are more interesting, so this general loss metric is relevant. We are particularly interested in the case of $p=1/2$ and $q=1$. We write ${\cal L}^w(\hat{\Pi},\Pi):= {\cal L}(\hat{\Pi},\Pi; 1/2, 1)$ and call it the {\it weighted $\ell^1$-loss}. 
Same as before, let $err_n$ denote the baseline rate in \eqref{BaseRate}

\begin{cor} \label{cor:rates-generalLoss}
Suppose conditions of Theorem \ref{thm:uppbd_pi} hold. Let $\hat{\Pi}$ be the estimator from Mixed-SCORE-Laplacian and ${\cal L}(\hat{\Pi},\Pi; p,q)$ be the loss metric in \eqref{generalLoss}, for $p\geq 0$ and $q\geq 1$. Then,  
\beq \label{L1-generalLoss-rate}
 {\cal L}(\hat{\Pi},\Pi; p, q)\leq C\sqrt{\log(n)}  \Big(\int t^p\min\Bigl\{ \frac{err^q_n}{(t\wedge 1)^{q/2}},\, 1\Bigr\}  dF_n(t) \Big)^{1/q}. 
\eeq
Furthermore, in the special case of $p=1/2$ and $q=1$, 
$\mathbb{E}{\cal L}^w(\hat{\Pi},\Pi) \leq C\sqrt{\log(n)}\, err_n$. 
\end{cor}

For the loss metric ${\cal L}^w(\hat{\Pi},\Pi)$, we provide a matching lower bound as follows. It suggests that 
Mixed-SCORE-Laplacian is still rate-optimal (for arbitrary $\theta$) under this loss metric.

\begin{thm}\label{thm:LB2}
Fix constants $c_1$-$c_4$. 
Given $(n,K)$ and $\theta\in \mathbb{R}^n$ such that $F_n(err_n^2)\leq \check{c}$, for a constant $\check{c}\in (0,1)$. Let ${\cal Q}_n(\theta)$ be the collection of $(\Pi, P)$ satisfying Condition~\ref{reg-conds}. There is  a constant  $C>0$ such that, for all sufficiently large $n$, 
$\inf_{\hat{\Pi}}\sup_{(\Pi, P)\in {\cal Q}_n(\theta)}\mathbb{E}{\cal L}^w(\hat{\Pi},\Pi) \geq C err_n$.  
\end{thm}

\section{Proof of Theorem~\ref{thm:eigenvector}: Challenges, and our strategy} \label{sec:Entrywise-proof}


Theorem~\ref{thm:eigenvector} is connected to the literature of ``entry-wise eigenvector analysis" for random graphs \citep{abbe2020entrywise,chen2021spectral,erdHos2013spectral,fan2019simple,Mixed-SCORE,mao2020estimating,tang2018limit} but has significant differences. Most existing works (a) study eigenvectors of the adjacency matrix but not the normalized Laplacian, (b) consider a network model that has no degree heterogeneity or only mild degree heterogeneity ($\theta_{\min}\asymp \theta_{\max}$), and (c) derive the ``2-to-infinity" bounds, i.e., upper bounds for $\max_{i}|\omega\hat{\xi}_1(i)-\xi_1(i)|$ or $\max_{i}\|e_i'(\hat{\Xi}_1O_1-\Xi_1)\|$, instead of obtaining different bounds for different $i$. As a result, Theorem~\ref{thm:eigenvector} cannot be deduced from any of the existing results. 
In what follows, Section~\ref{subsec:leave-one-out} reviews the leave-one-out strategy for entry-wise eigenvector analysis, Section~\ref{subsec:why-difficult} explains the technical challenges, and Section~\ref{subsec:Entrywise-proof-sketch} contains a proof sketch of Theorem~\ref{thm:eigenvector}.

\subsection{A review of the leave-one-out approach} \label{subsec:leave-one-out}

{\it Abbe et al.} \cite{abbe2020entrywise} introduced the leave-one-out trick to study the eigenvectors of the adjacency matrix $A$ when the network follows an SBM model (a special case of DCMM  when $\theta_i$'s are equal to each other and when $\pi_i$'s are all degenerate). 
Let $\hat{\lambda}_k^*$ be the $k$th largest eigenvalue (in magnitude) of $A$ and let $\hat{\xi}_k^*\in\mathbb{R}^n$ be the corresponding eigenvector. Let $\lambda_k^*$ be the $k$th largest eigenvalue (in magnitude) of $\Omega$ and let $\xi^*_k\in\mathbb{R}^n$ be the associated eigenvector. Write $\hat{\Xi}^*=[\hat{\xi}_1^*,\ldots,\hat{\xi}_K^*]$ and $\Xi^*=[\xi_1^*,\ldots,\xi_K^*]$. Let $\|\cdot\|_{2\to\infty}$ denote the maximum row-wise $\ell^2$-norm of a matrix. 
\cite{abbe2020entrywise} derived a large-deviation bound for $\min_{O}\|\hat{\Xi}^*O-\Xi^*\|_{2\to\infty}$, where the minimum is over $K\times K$ orthogonal matrices.  
To explain their proof strategies, we consider $K=1$. Their goal reduced to obtaining a bound for $\|\hat{\xi}^*_1-\xi^*_1\|_\infty$, up to a sign flip of $\hat{\xi}_1$. By definition, $A\hat{\xi}_1^* = \hat{\lambda}^*_1\hat{\xi}_1^*$. 
It is seen that 
\beq \label{leave-one-out-1}
\hat{\xi}_1^*(i) = (1/\hat{\lambda}_1^{*}) \, e_i'A\hat{\xi}_1^* = (1/\hat{\lambda}_1^*) \sum_{j: j\neq i}\hat{\xi}_1^*(j) A(i,j).
\eeq
If $\hat{\xi}_1^*$ is independent of the $i$th row of $A$, then the right hand side is nothing but a weighted sum of independent Bernoulli variables and can be easily analyzed. 
However, $\hat{\xi}_1^*$ is dependent of the $i$th row of $A$. The leave-one-out trick creates a proxy of $\hat{\xi}_1^*$ that is independent of the $i$th row of $A$. Fix $i$ and let $\tilde{\xi}_1^{*}$ be the leading eigenvector of the matrix $\tilde{A}$, obtained by setting the $i$th row and the $i$th column of $A$ to zero. If $\tilde{\xi}_1^*$ is sufficiently close to $\hat{\xi}_1^*$, then 
\beq \label{leave-one-out-2}
\hat{\xi}_1^*(i)\approx (1/\hat{\lambda}_1^*) \sum_{j: j\neq i}\tilde{\xi}_1^*(j) A(i,j). 
\eeq
Since $\tilde{\xi}^*_1$ is independent of the $i$th row of $A$, the right hand side of \eqref{leave-one-out-2} is now easy to analyze. 
Although this leave-one-out trick is easy to describe, it does require tedious analysis to control the ``approximation" in \eqref{leave-one-out-2}. 
\cite{abbe2020entrywise} proposed to control the difference between the left and right hand sides of \eqref{leave-one-out-2} using $\|\hat{\xi}_1^*-\tilde{\xi}_1^*\|$ and then apply the sin-theta theorem to bound $\|\hat{\xi}_1^*-\tilde{\xi}_1^*\|$.

\subsection{Challenges in proving Theorem~\ref{thm:eigenvector} and how to overcome them} \label{subsec:why-difficult}

Suppose we want to extend the analysis to study eigenvectors of the normalized Laplacian matrix $L$, assuming that the network follows a DCMM model. 
Recall that $
L = H^{-\frac12}AH^{-\frac12}$ and $L_0 = H_0^{-\frac 12}\Omega H_0^{-\frac 12}$, 
where $H$ is a diagonal matrix constructed from degrees. The first challenge is that entries in the upper triangle of $L$ are no longer independent of each other. If we mimick \cite{abbe2020entrywise} to define a proxy of $\hat{\xi}_1$ as the leading eigenvector of the matrix obtained by setting the $i$th row \& column of $L$ to zero, this vector is still dependent of the $i$th row of $A$. 

Our solution is to create a proxy to $L$ such that the effect of the $i$th row of $A$ is removed. 
Recall that $W=A-\mathbb{E}[A]$. 
Let $W^{(i)}$ be the matrix obtained by setting the $i$-th row/column of $W$ to zero. Introduce $A^{(i)}=\Omega-{\rm diag}(\Omega) + W^{(i)}$ and $
\tilde{H}^{(i)}=\diag\bigl( A^{(i)}{\bf 1}_n\bigr) + n^{-1}({\bf 1}_n' A^{(i)}{\bf 1}_n)I_n$. 
It can be shown that $(A^{(i)}, \tilde{H}^{(i)})$ are independent of the $i$th row of $A$, and they are appropriately close to $(A, H)$. Define
\beq \label{Intermediate-Matrices}
\overline{L}^{(i)}: =(\tilde{H}^{(i)})^{-\frac 12} A^{(i)} (\tilde{H}^{(i)})^{-\frac 12}. 
\eeq
Let $(\overline{\lambda}_1^{(i)}, \overline{\xi}^{(i)}_1)$ be the first eigen-pair of $\overline{L}^{(i)}$. Then, $\overline{\xi}_1^{(i)}$ is the proxy of $\hat{\xi}_1$ we seek for.

Given $\overline{\xi}_1^{(i)}$, is it easy to study $\hat{\xi}_1(i)-\xi_1(i)$ by mimicking the analysis in \cite{abbe2020entrywise}? Unfortunately, the answer is no. Since $\hat{\xi}_1 = (1/\hat{\lambda}_1)H^{-1/2}AH^{-1/2}\hat{\xi}_1$, we can re-write $\hat{\xi}_1(i)$ as
\beq \label{leave-one-out-new}
\hat{\xi}_1(i) = \frac{1}{\hat{\lambda}_1\sqrt{H(i,i)}}\Biggl[\sum_{j:j\neq i}\frac{\overline{\xi}^{(i)}_1(j)}{\sqrt{H(j,j)}}A(i,j)+\overline{e}_i \Biggr], 
\eeq
where $\overline{e}_i$ is the approximation error to $(1/\hat{\lambda}_1)e_i'H^{-1/2}AH^{-1/2}\hat{\xi}_1$ by replacing $\hat{\xi}_1$ by $\overline{\xi}^{(i)}_1$ in this expression. To analyze the right hand side of \eqref{leave-one-out-new}, we face two more challenges:
\begin{itemize}
\item On the right hand side of \eqref{leave-one-out-new}, $H(j,j)$ is still dependent of the $i$th row of $A$. We may replace it with $\tilde{H}^{(i)}(j,j)$. However, this replacement affects every term in the sum, and its effect is not easy to control. Even if we can control it, we are still unable to show that the resulting quantity is close enough to $\xi_1(i)$, because the effect of $|\tilde{H}^{(i)}(j,j)-H_0(j,j)|$ is still non-negligble. 
\item The control of $\overline{e}_i$ is much more challenging than similar steps in \cite{abbe2020entrywise}, for three reasons: First, since \cite{abbe2020entrywise} studies the eigenvectors of $A$ instead of those of $L$, the proxy eigenvector is defined in a more straightforward way; consequently, the proxy error has a simpler form. Second,  
in the setting of \cite{abbe2020entrywise}, the network model  has no degree heterogeneity, and the target bound for each entry of $\hat{\xi}_1^*$ is at the same order; then the desirable bound for $\overline{e}_i$ is also the same for all $i$.
However, in our setting, the desirable bound for $\overline{e}_i$ may be significantly different for different $i$, so we must have {\it better} control of  each $\overline{e}_i$. 
Third, the idea in \cite{abbe2020entrywise} for controlling $\overline{e}_i$  is based on studying the $\ell^2$-error between the empirical eigenvector and its proxy. However, for our problem, it is impossible to control $\overline{e}_i$ from studying $\|\hat{\xi}_1-\overline{\xi}^{(i)}_1\|$, because the entries of $\hat{\xi}_1-\overline{\xi}^{(i)}_1$ can be at different orders due to degree heterogeneity, and analysis based on $\|\hat{\xi}_1-\overline{\xi}^{(i)}_1\|$ is not sharp. 
\end{itemize}

To overcome the first challenge, we introduce another proxy eigenvector. Let $\tilde{H}^{(i)}$ be the same as in \eqref{Intermediate-Matrices}. Let $(\tilde{\lambda}_1^{(i)}, \tilde{\xi}^{(i)}_1)$ be the first eigen-pair of the following matrix:
\beq \label{Intermediate-Matrices2}
\tilde{L}^{(i)}:= (\tilde{H}^{(i)})^{-\frac 12} \Omega (\tilde{H}^{(i)}) ^{-\frac 12}. 
\eeq 
Our idea is to use $\tilde{\xi}_1^{(i)}$ as a proxy to $\xi_1$, and study $|\hat{\xi}_1(i) - \tilde{\xi}^{(i)}_1(i)|$ rather than $|\hat{\xi}_1(i)-\xi_1(i)|$. 
In the first bullet point above, we have mentioned that the effect of $|\tilde{H}^{(i)}(j,j)-H_0(j,j)|$ is non-negligible when we try to bound $|\hat{\xi}_1(i)-\xi_1(i)|$ directly. 
Comparing $\tilde{L}^{(i)}$ and $L_0$, the difference is that $H_0$ is replaced by $\tilde{H}^{(i)}$. Hence, when we bound 
$|\hat{\xi}_1(i) - \tilde{\xi}^{(i)}_1(i)|$, there is no longer any term caused by $|\tilde{H}^{(i)}(j,j)-H_0(j,j)|$; and the issue is partially resolved. 
It still remains to bound $|\tilde{\xi}^{(i)}_1(i)-\xi_1(i)|$. This seems to involve $|\tilde{H}^{(i)}(j,j)-H_0(j,j)|$ again. Fortunately, since $L_0$ and $\tilde{L}^{(i)}$ are both low-rank matrices, we can study the difference between $\tilde{\xi}_1^{(i)}$ and $\xi_1$ in a different way to avoid using $|\tilde{H}^{(i)}(j,j)-H_0(j,j)|$ explicitly (see Lemma~\ref{lem:upbd_key1} below).


To overcome the second challenge, we notice that under severe degree heterogeneity, the noise level is different at different entries of $\hat{\xi}_1-\overline{\xi}_1^{(i)}$, and the same happens for $\overline{\xi}_1^{(i)}-\tilde{\xi}_1^{(i)}$. The key question is how to ``track" such heterogeneous noise levels in every step of our analysis of $\overline{e}_i$ to ensure that the resulting bounds are good for all $1\leq i\leq n$. 
We map out the analysis by two key technical lemmas, Lemmas~\ref{lem:upbd_key11}-\ref{lem:techB1}.  
Recall that we focus on studying $|\hat{\xi}_1(i)-\tilde{\xi}^{(i)}_1(i)|$ (see the previous paragraph). 
These two lemmas together establish an inequality:
\begin{align} 
 |\hat{\xi}_1(i)-\tilde{\xi}^{(i)}_1(i)| &\leq c_{1n}(\theta_i)+ c_{2n}(\theta_i) \cdot \|(\tilde{H}^{(i)})^{-\frac12}(\hat{\xi}_1-\tilde{\xi}^{(i)}_1)\|_\infty \label{Illustrate-Proof-1} \\ 
 &\leq c_{1n}(\theta_i)+ c_{2n}(\theta_i) \cdot \|(\tilde{H}^{(i)})^{-\frac12} H_0^{\frac12}\|\cdot \|H_0^{-\frac12}(\hat{\xi}_1-\tilde{\xi}^{(i)}_1)\|_\infty, 
 \label{Illustrate-Proof-2}
\end{align}
where $c_{1n}(\theta_i)$ and $c_{2n}(\theta_i)$ are some explicit sequences.
We note that \eqref{Illustrate-Proof-2} follows immediately from \eqref{Illustrate-Proof-1}. 
Given \eqref{Illustrate-Proof-2}, we first multiply $H_0^{-\frac 12}(i,i)$ on both hand sides to obtain a bound for $\|H_0^{-\frac12}(\hat{\xi}_1-\tilde{\xi}^{(i)}_1)\|_\infty$ and then plug this bound into \eqref{Illustrate-Proof-2} again to get the bound for $|\hat{\xi}_1(i)-\tilde{\xi}^{(i)}_1(i)|$. 
The non-trivial part is deriving \eqref{Illustrate-Proof-1} --- the backbone of our leave-one-out analysis.  
The proof of \eqref{Illustrate-Proof-1} ``tracks" the heterogeneous noise levels by repeatedly using the idea of ``re-weighting": For example, Lemma~\ref{lem:upbd_key11} introduces a re-weighted noise matrix $\Delta^{(i)}:= (\tilde{H}^{(i)})^{-\frac{1}{2}} W {H}^{-\frac{1}{2}}$, 
and Lemma~\ref{lem:techB1} controls every term by using $(\tilde{H}^{(i)})^{-\frac12}(\hat{\xi}_1-\tilde{\xi}^{(i)}_1)$ rather than the unweighted vector $\hat{\xi}_1-\tilde{\xi}^{(i)}_1$. In fact, in the proof of these lemmas, we have properly ``re-weighted" almost every intermediate vector and matrix by either $H^{-\frac12}$, or $H^{-\frac12}_0$, or $(\tilde{H}^{(i)})^{-\frac12}$.

\subsection{Proof sketch of Theorem~\ref{thm:eigenvector}} \label{subsec:Entrywise-proof-sketch}
We prove the claim about $\hat{\xi}_1$ in Theorem~\ref{thm:eigenvector}. The proof of the claim for $\hat{\Xi}_1$ follows the same vein but is slightly more complicated, which is relegated to the supplementary material.   

Let $\overline{\xi}_1^{(i)}$ and $\tilde{\xi}_1^{(i)}$ be as defined in \eqref{Intermediate-Matrices} and \eqref{Intermediate-Matrices2}, respectively. As mentioned, $\overline{\xi}_1^{(i)}$ is a proxy to $\hat{\xi}_1$, and $\tilde{\xi}_1^{(i)}$ is a proxy to $\xi_1$. 
The next lemma controls the difference between $\xi_1$ and $\tilde{\xi}_1^{(i)}$. 
\begin{lem}\label{lem:upbd_key1}
Suppose the conditions of Theorem \ref{thm:eigenvector} hold. We pick the sign of $\xi_1$ such that $\xi_1(1)\geq 0$. For each $1\leq i\leq n$, we pick the sign of $\tilde{\xi}^{(i)}_1$ such that $\tilde{\xi}_1^{(i)}(1)\geq 0$. With probability $1-o(n^{-3})$, simultaneously for all $1\leq i, j\leq n$,  
\begin{align}\label{eq:t1}
|\tilde{\xi}_1^{(i)}(j)-\xi_1(j)| \leq  CK^{\frac 12} \kappa_j\cdot \Big( \sqrt{\frac{\theta_j}{\bar{\theta}}} \wedge 1\, \Big), \qquad\mbox{where}\quad \kappa_j:=\sqrt{\frac{\log (n)}{n\bar{\theta}^2 }} \sqrt{\frac{\theta_j}{n\bar{\theta}}}. 
\end{align}
\end{lem}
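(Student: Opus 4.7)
My plan is to reduce the entrywise comparison of $\tilde\xi_1^{(i)}$ and $\xi_1$ to a $K$-dimensional coefficient comparison. The crucial structural fact is that $L_0$ and $\tilde L^{(i)}$ share the common rank-$K$ factor $\Omega=\Theta\Pi P\Pi'\Theta$ and differ only through diagonal left/right conjugation. Thus any eigenvector with nonzero eigenvalue lies in the column span of $H_0^{-1/2}\Theta\Pi$ (respectively $(\tilde H^{(i)})^{-1/2}\Theta\Pi$), so I can write $\xi_1=H_0^{-1/2}\Theta\Pi\beta$ and $\tilde\xi_1^{(i)}=(\tilde H^{(i)})^{-1/2}\Theta\Pi\tilde\beta^{(i)}$ for uniquely determined $\beta,\tilde\beta^{(i)}\in\mathbb{R}^K$. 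Plugging into the eigenvector equation, $\beta$ solves the $K$-dimensional problem $PM\beta=\lambda_1\beta$ with $M=\Pi'\Theta H_0^{-1}\Theta\Pi$ under the normalisation $\beta'M\beta=1$; $\tilde\beta^{(i)}$ solves the analogous equation with $M$ replaced by $\tilde M^{(i)}=\Pi'\Theta(\tilde H^{(i)})^{-1}\Theta\Pi$.

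\textbf{Main steps.} Under this parametrisation, the entrywise error splits as
\[
\tilde\xi_1^{(i)}(j)-\xi_1(j)=\theta_j H_0(j,j)^{-1/2}\pi_j'\bigl(\tilde\beta^{(i)}-\beta\bigr)+\theta_j\bigl[\tilde H^{(i)}(j,j)^{-1/2}-H_0(j,j)^{-1/2}\bigr]\pi_j'\tilde\beta^{(i)}.
\]
The key scaling fact, which generates the $(\sqrt{\theta_j/\bar\theta}\wedge 1)$ factor in the target bound, is that $H_0(j,j)\asymp n\bar\theta(\theta_j+\bar\theta)$, so $\theta_j H_0(j,j)^{-1/2}\asymp\sqrt{\theta_j/(n\bar\theta)}\,(\sqrt{\theta_j/\bar\theta}\wedge 1)$. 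Next I would establish, by Bernstein's inequality and a union bound, that with probability $1-o(n^{-3})$ the event $|\tilde H^{(i)}(j,j)-H_0(j,j)|\lesssim\sqrt{\theta_j n\bar\theta\log n}+\log n$ holds simultaneously for all $i,j$; this is legitimate because the definition \eqref{def:tildeH} strips out all randomness involving row/column $i$, leaving $\tilde H^{(i)}(j,j)-H_0(j,j)$ a sum of mutually independent centred Bernoullis plus a small deterministic bias of order $\theta_i\theta_j$. This concentration yields $\|\tilde M^{(i)}-M\|\lesssim\sqrt{\log n/(n\bar\theta^2)}$ by a direct entry-wise calculation, and then a $K\times K$ perturbation argument on the eigenproblem $PM\beta=\lambda_1\beta$ yields $\|\tilde\beta^{(i)}-\beta\|\lesssim K^{1/2}\sqrt{\log n/(n\bar\theta^2)}$. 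Plugging this back and bounding $|\pi_j'u|\le\sqrt K\,\|u\|$, the coefficient-change term matches the target rate, and the scaling-change term turns out to be strictly lower order (one checks this case-by-case on whether $\theta_j\gtrless\bar\theta$).

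\textbf{Hardest step.} The most delicate step is the $K\times K$ perturbation bound for $\tilde\beta^{(i)}-\beta$. Although $P$ and $M$ are individually symmetric, $PM$ is generally non-normal, so symmetric Davis--Kahan does not apply directly. I plan to symmetrise via the change of variables $\gamma=M^{1/2}\beta$, which converts $PM\beta=\lambda_1\beta$ into $(M^{1/2}PM^{1/2})\gamma=\lambda_1\gamma$ with the clean normalisation $\|\gamma\|=1$, and then apply symmetric Davis--Kahan to the perturbation $M^{1/2}PM^{1/2}\to (\tilde M^{(i)})^{1/2}P(\tilde M^{(i)})^{1/2}$. This requires a sharp lower bound on $\lambda_{\min}(M)$ (of order $1/K$, inherited from \eqref{cond1-Pi} via $M\asymp G/K$), together with careful tracking of the eigengap of $M^{1/2}PM^{1/2}$ through the hypotheses \eqref{cond3-PG}--\eqref{cond4-svPG}. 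A secondary subtlety is sign alignment: I will use the convention $\xi_1(1),\tilde\xi_1^{(i)}(1)\ge 0$ stated in the lemma, which is consistent because Perron--Frobenius forces both vectors to be strictly positive, so no sign flip is needed and the comparison is unambiguous.
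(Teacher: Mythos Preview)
Your approach is correct and closely parallel to the paper's, differing mainly in parametrisation. The paper stays in $n$ dimensions: it writes $\tilde Y^{(i)}:=H_0^{1/2}(\tilde H^{(i)})^{-1/2}$, expands $\tilde\xi_1^{(i)}(j)$ in the basis $\xi_1,\ldots,\xi_K$ via the identity $\tilde Y\bigl(\sum_k\lambda_k\xi_k\xi_k'\bigr)\tilde Y\,\tilde\xi_1^{(i)}=\tilde\lambda_1^{(i)}\tilde\xi_1^{(i)}$, bounds the cross–coefficients $\xi_k'\tilde Y\tilde\xi_1^{(i)}$ by the $\sin\Theta$ theorem with gap $\asymp K^{-1}\lambda_1(PG)$ and perturbation $\|\tilde L^{(i)}-L_0\|\lesssim K^{-1}\lambda_1(PG)\|\tilde Y-I_n\|$, and arrives at the clean inequality $|\tilde\xi_1^{(i)}(j)-\xi_1(j)|\le C\|\tilde Y-I_n\|\,\|\Xi(j)\|$. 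Your $K$-dimensional reduction $\xi_1=H_0^{-1/2}\Theta\Pi\beta$, $\tilde\xi_1^{(i)}=\tilde Y H_0^{-1/2}\Theta\Pi\tilde\beta^{(i)}$ is exactly the same content written in the coefficient space; both routes rest on the single concentration input $\|\tilde Y-I_n\|\lesssim\sqrt{\log n/(n\bar\theta^2)}$. The paper's version is slightly slicker because the $K^{1/2}$ factor appears automatically through $\|\Xi(j)\|$ and no non-symmetric eigenproblem has to be handled.

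Two small points to tighten in your write-up. First, your bound $|\pi_j'u|\le\sqrt K\,\|u\|$ is wasteful: since $\|\pi_j\|_2\le\|\pi_j\|_1=1$ you have $|\pi_j'u|\le\|u\|$, and with $\|\tilde\beta^{(i)}-\beta\|\lesssim K^{1/2}\sqrt{\log n/(n\bar\theta^2)}$ this is exactly what recovers the target $K^{1/2}\kappa_j$; keeping the extra $\sqrt K$ would leave you off by that factor. Second, in the Davis--Kahan step for $S=M^{1/2}PM^{1/2}$ versus $\tilde S$, the eigengap is only $\asymp\lambda_1(PM)$, so you must show $\|\tilde S-S\|\lesssim\lambda_1(PM)\,\epsilon$ rather than merely $\lesssim\epsilon$. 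This follows once you write $PM^{1/2}=M^{-1/2}S$ to get $\|PM^{1/2}\|\lesssim K^{1/2}\|S\|=K^{1/2}\lambda_1(PM)$, and combine with $\|\tilde M^{1/2}-M^{1/2}\|\lesssim K^{-1/2}\epsilon$; bounding $\|P\|$ and $\|M^{1/2}\|$ separately would lose a factor here.
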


Next, we study the difference between $\tilde{\xi}^{(i)}_1(i)$ and $\hat{\xi}_1(i)$. 
We use $\overline{\xi}_1^{(i)}$ as a bridge quantity. The following lemma is proved in the supplementary material:

\begin{lem}\label{lem:upbd_key11}
Suppose the conditions of Theorem \ref{thm:eigenvector} hold. For $1\leq i\leq n$, let $\kappa_i$ be as in \eqref{eq:t1}, and define $\Delta= \Delta^{(i)}:= (\tilde{H}^{(i)})^{-\frac{1}{2}} W {H}^{-\frac{1}{2}} $. We pick the signs of $\xi_1$ and $\tilde{\xi}_1^{(i)}$ in the same way as in Lemma~\ref{lem:upbd_key1}, and we pick the sign of $\overline{\xi}^{(i)}_1$ such that ${\rm sgn} ( (\tilde{\xi}^{(i)}_1)' \overline{\xi}^{(i)}_1) = 1$.  With probability $1-o(n^{-3})$, there exists some $w\in \{1, -1\}$ such that, simultaneously for all $1\leq i\leq n$, 
\begin{align}
&|w\hat{\xi}_1(i)- \tilde{\xi}^{(i)}_1(i)|\leq  C\sqrt K \kappa_i  + C |e_i'\Delta\hat{\xi}_1|,\label{Laplacian-entry-1}\\
&|e_i'\Delta\hat{\xi}_1| 
\leq  |e_i'\Delta \tilde{\xi}^{(i)}_1| + |e_i'\Delta(\overline{\xi}^{(i)}_1- \tilde{\xi}^{(i)}_1)| +\frac{C}{\sqrt{n\bar{\theta}^2}} \| w\hat{\xi}_1-\overline{\xi}^{(i)}_1\|.  \label{Laplacian-entry-2}
\end{align}
\end{lem}

Write $\overline{\xi}_1^{(i)}=\overline{\xi}_1$,  $\tilde{\xi}_1^{(i)}=\tilde{\xi}_1$  and $\tilde{H}^{(i)}=\tilde{H}$ for short. The role of Lemma~\ref{lem:upbd_key11} is to reduce the analysis of $|\hat{\xi}_1(i)-\tilde{\xi}_1(i)|$ to the analyses of $|e_i'\Delta\tilde{\xi}_1|$, $|e_i'\Delta(\overline{\xi}_1- \tilde{\xi}_1)|$ and $\|\hat{\xi}_1-\overline{\xi}_1\|$. Among the three quantities, the first two are relatively easy to bound, because $e_i'\Delta \approx e_i'H_0^{-1/2}WH_0^{-1/2}$, which is independent of $(\overline{\xi}_1,\tilde{\xi}_1)$; 
To control the third term $\|\hat{\xi}_1-\overline{\xi}_1\|$, we need the following lemma, whose proof can be found in the supplementary material:  

\begin{lem}\label{lem:techB1}
Under the assumptions in Lemma \ref{lem:upbd_key11}, with probability $1-o(n^{-3})$, for the same $w\in \{1, -1\}$ in Lemma \ref{lem:upbd_key11}, simultaneously for all $1\leq i\leq n$,
\begin{align}  
|e_i'\Delta\tilde{\xi}_1^{(i)}|&  \leq C\widetilde{\kappa}_i, \qquad \mbox{where}\quad \widetilde{\kappa}_i:=\frac{ 1}{n\bar{\theta} }\sqrt{\frac{\log (n)}{n\bar{\theta}^2}} \sqrt{n\bar{\theta}\theta_i \vee \log (n)}, \label{Laplacian-entry-3-1}\\
|e_i\Delta(\overline{\xi}^{(i)}_1-\tilde{\xi}^{(i)}_1)| & \leq C\widetilde{\kappa}_i \Bigl(1+n\bar{\theta}\bigl\| (\tilde{H}^{(i)})^{-\frac12}( w\hat{\xi}_1-\tilde{\xi}^{(i)}_1)\big\|_\infty\Bigr) +  \frac{C\log(n)}{n\bar{\theta}^2} \| w\hat{\xi}_1-\overline{\xi}^{(i)}_1\|,\label{Laplacian-entry-3-2}\\
\| w\hat{\xi}_1 - \overline{\xi}_1^{(i)}\| \leq  C&\,\widetilde{\kappa}_i \Bigl(1+n\bar{\theta}\|(\tilde{H}^{(i)})^{-\frac12}(w\hat{\xi}_1-\tilde{\xi}^{(i)}_1)\|_\infty\Bigr) + \frac{C  }{\sqrt{n\bar{\theta}^2}}|w\hat{\xi}_1(i)-\tilde{\xi}^{(i)}_1(i)|. \label{Laplacian-entry-3-3}
\end{align}
\end{lem}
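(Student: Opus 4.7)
The whole argument hinges on the fact that $\tilde{H}^{(i)}$, $\tilde{\xi}_1^{(i)}$ and $\overline{\xi}_1^{(i)}$ are measurable with respect to $\sigma(W_{st}: s,t\neq i)$, so $(W_{ij})_{j=1}^n$ is independent of these objects. Since both $H$ and $\tilde{H}^{(i)}$ are \emph{diagonal}, $e_i'(\tilde{H}^{(i)})^{-1/2}=(\tilde{H}^{(i)})_{ii}^{-1/2}e_i'$, which lets us expose the independent sums explicitly. Before doing anything else I would collect the ``off-the-shelf'' estimates: Bernstein concentration of $d_j$ around $\mathbb{E}d_j\asymp n\bar{\theta}\theta_j$ to get $H_{jj}\asymp \tilde{H}^{(i)}_{jj}\asymp n\bar{\theta}(\theta_j+\bar{\theta})$ uniformly, the resulting $\|H^{-1/2}-(\tilde{H}^{(i)})^{-1/2}\|_{\infty}$ bound, the pointwise size $|\tilde{\xi}_1^{(i)}(j)|\asymp\sqrt{\theta_j/(n\bar{\theta})}$ (Lemma~\ref{lem:upbd_key1} plus the $\Theta^{1/2}\Pi$-expansion of $\xi_1$), and the standard regularized-Laplacian noise bound $\|\overline{L}^{(i)}-\tilde{L}^{(i)}\|\lesssim (n\bar{\theta}^2)^{-1/2}$ from \cite{jin2021improvements}.

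\textbf{Proving \eqref{Laplacian-entry-3-1}.} Expand $e_i'\Delta \tilde{\xi}_1^{(i)}=(\tilde{H}^{(i)})_{ii}^{-1/2}\sum_j W_{ij}\,H_{jj}^{-1/2}\tilde{\xi}_1^{(i)}(j)$ and write $H_{jj}^{-1/2}=(\tilde{H}^{(i)})_{jj}^{-1/2}+[H_{jj}^{-1/2}-(\tilde{H}^{(i)})_{jj}^{-1/2}]$. The leading term is a sum of independent centered bounded variables conditional on $\sigma(W_{st}\colon s,t\neq i)$, with conditional variance $\sum_j\Omega_{ij}(\tilde{H}^{(i)})_{jj}^{-1}[\tilde{\xi}_1^{(i)}(j)]^2\lesssim \theta_i/(n\bar{\theta})$ and uniform summand size $\lesssim (n\bar{\theta}^2)^{-1/2}$; Bernstein gives precisely $\widetilde{\kappa}_i$ after multiplying by $(\tilde{H}^{(i)})_{ii}^{-1/2}$. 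The correction term is absorbed using $|H_{jj}^{-1/2}-(\tilde{H}^{(i)})_{jj}^{-1/2}|\lesssim A_{ij}\cdot[n\bar{\theta}(\theta_j+\bar{\theta})]^{-3/2}$ and a second (simpler) Bernstein on the resulting sum.

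\textbf{Proving \eqref{Laplacian-entry-3-2}.} Applying the same diagonal split, the leading piece is $\sum_j W_{ij}(\tilde{H}^{(i)})_{jj}^{-1/2}[\overline{\xi}_1^{(i)}(j)-\tilde{\xi}_1^{(i)}(j)]$. Because $\overline{\xi}_1^{(i)}-\tilde{\xi}_1^{(i)}$ is independent of the $i$th row of $W$, Bernstein applies, and its $\ell^2$-norm is controlled by Davis--Kahan applied to the perturbation $\overline{L}^{(i)}-\tilde{L}^{(i)}=(\tilde{H}^{(i)})^{-1/2}[W^{(i)}-\diag(\Omega)](\tilde{H}^{(i)})^{-1/2}$, yielding $\widetilde{\kappa}_i$ uniformly in $i$. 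The $\|w\hat{\xi}_1-\overline{\xi}_1^{(i)}\|$ term on the right-hand side appears in the correction piece, after substituting $\overline{\xi}_1^{(i)}-\tilde{\xi}_1^{(i)}=(\overline{\xi}_1^{(i)}-w\hat{\xi}_1)+(w\hat{\xi}_1-\tilde{\xi}_1^{(i)})$ and then using the $\ell^\infty$-norm of $(\tilde{H}^{(i)})^{-1/2}(w\hat{\xi}_1-\tilde{\xi}_1^{(i)})$ to bound the $\sum_j [H_{jj}^{-1/2}-(\tilde{H}^{(i)})_{jj}^{-1/2}]W_{ij}(w\hat{\xi}_1(j)-\tilde{\xi}_1^{(i)}(j))$ contribution.

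\textbf{Proving \eqref{Laplacian-entry-3-3} and closing the loop.} Davis--Kahan with spectral gap $\asymp\alpha_n$ gives $\|w\hat{\xi}_1-\overline{\xi}_1^{(i)}\|\lesssim \alpha_n^{-1}\|(L-\overline{L}^{(i)})\overline{\xi}_1^{(i)}\|$. I would decompose $L-\overline{L}^{(i)}$ into (a) the rank-$\leq 2$ piece $H^{-1/2}(A-A^{(i)})H^{-1/2}$ supported on the $i$th row/column, and (b) the two ``diagonal conjugation'' pieces $[H^{-1/2}-(\tilde{H}^{(i)})^{-1/2}]A^{(i)}H^{-1/2}$ and $(\tilde{H}^{(i)})^{-1/2}A^{(i)}[H^{-1/2}-(\tilde{H}^{(i)})^{-1/2}]$. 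For (a) one reuses the Bernstein analysis from \eqref{Laplacian-entry-3-1} on $e_i'W H^{-1/2}\overline{\xi}_1^{(i)}$, with the swap $\overline{\xi}_1^{(i)}\leadsto\tilde{\xi}_1^{(i)}$ producing the $|w\hat{\xi}_1(i)-\tilde{\xi}_1^{(i)}(i)|$ term (since the $i$th entry of this swap is exactly that). For (b) the degree-difference bound on $H^{-1/2}-(\tilde{H}^{(i)})^{-1/2}$ combined with $|A^{(i)}\overline{\xi}_1^{(i)}|$-style estimates produces the $\|(\tilde{H}^{(i)})^{-1/2}(w\hat{\xi}_1-\tilde{\xi}_1^{(i)})\|_\infty$ coefficient. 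The main obstacle is that \eqref{Laplacian-entry-3-1}--\eqref{Laplacian-entry-3-3}, together with Lemma~\ref{lem:upbd_key11}, form a coupled system: a bound on one quantity is needed to bound the next. I would close the loop by observing that each self-referential coefficient carries a factor $K/\sqrt{n\bar{\theta}^2\alpha_n^2}=o(1)$ under the standing SNR assumption $K^3\log(n)/(n\bar{\theta}^2\beta_n^2)\to 0$; a single union bound over $i$ and a short contraction/bootstrap argument then absorbs the self-referential terms into the left-hand sides and delivers all three inequalities simultaneously with probability $1-o(n^{-3})$.
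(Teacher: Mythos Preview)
Your overall architecture matches the paper's proof almost exactly: the same $X=(\tilde H^{(i)})^{1/2}H^{-1/2}$ split, Bernstein conditional on $\sigma(W_{st}:s,t\neq i)$ for the leading piece, the $|X(j,j)-1|\lesssim (A_{ij}+\theta_i\bar\theta+\log n/n)/\tilde H^{(i)}_{jj}$ bound for the correction, and a Davis--Kahan-type inequality for \eqref{Laplacian-entry-3-3}. One organizational difference: the paper applies the sin-theta bound in \eqref{Laplacian-entry-3-3} to $\hat\xi_1$ rather than to $\overline{\xi}_1^{(i)}$, which immediately produces $|\hat\xi_1(i)|$ (hence $|w\hat\xi_1(i)-\tilde\xi_1^{(i)}(i)|$ after one swap) and lets the eigen-equation $L\hat\xi_1=\hat\lambda_1\hat\xi_1$ collapse one of the conjugation pieces; your choice works too but needs an extra absorption of $|\overline\xi_1^{(i)}(i)-w\hat\xi_1(i)|\le\|\overline\xi_1^{(i)}-w\hat\xi_1\|$ into the left side.

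There is, however, one genuine gap in your treatment of \eqref{Laplacian-entry-3-2}. You assert that the leading piece $\sum_j W_{ij}(\tilde H^{(i)})_{jj}^{-1/2}[\overline\xi_1^{(i)}(j)-\tilde\xi_1^{(i)}(j)]$ ``yields $\widetilde\kappa_i$'' via Bernstein and Davis--Kahan. Davis--Kahan only controls the $\ell^2$-norm of $\overline\xi_1^{(i)}-\tilde\xi_1^{(i)}$, which handles the variance part of Bernstein; it does \emph{not} control the uniform bound $M=\|(\tilde H^{(i)})^{-1/2}(\overline\xi_1^{(i)}-\tilde\xi_1^{(i)})\|_\infty$ needed for the $M\log n$ term. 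In the paper this $M\log n$ contribution is precisely where the self-referential coefficient $n\bar\theta\,\|(\tilde H^{(i)})^{-1/2}(w\hat\xi_1-\tilde\xi_1^{(i)})\|_\infty$ and the $\frac{\log n}{n\bar\theta^2}\|w\hat\xi_1-\overline\xi_1^{(i)}\|$ term first appear: one inserts $\overline\xi_1^{(i)}-\tilde\xi_1^{(i)}=(w\hat\xi_1-\tilde\xi_1^{(i)})+(\overline\xi_1^{(i)}-w\hat\xi_1)$ into the $\ell^\infty$ norm and bounds the second piece by the $\ell^2$ norm times $(n\bar\theta^2)^{-1/2}$. You attribute these circular terms solely to the $(X-I)$ correction; that correction does contribute, but the dominant source is the Bernstein max-term in the leading piece, and without it your claimed $\widetilde\kappa_i$ bound for the leading piece is not justified.
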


We now use Lemmas~\ref{lem:upbd_key1}-\ref{lem:techB1} to prove the first claim, \eqref{result-1-mainpaper}, in Theorem~\ref{thm:eigenvector}. In Lemma~\ref{lem:upbd_key11}, although the vector $\tilde{\xi}_1^{(i)}$ depends on $i$, the scalar $w\in \{\pm 1\}$ is shared by $1\leq i\leq n$; similarly, the $w$ in Lemma~\ref{lem:techB1} is also shared by all $1\leq i\leq n$. Therefore, we can assume $w=1$ in all claims, without loss of generality. 
When there is no confusion, we write $\overline{\xi}_1^{(i)}=\overline{\xi}_1$,  $\tilde{\xi}_1^{(i)}=\tilde{\xi}_1$  and $\tilde{H}^{(i)}=\tilde{H}$ for short. 
We plug \eqref{Laplacian-entry-3-1}-\eqref{Laplacian-entry-3-3} into \eqref{Laplacian-entry-1}-\eqref{Laplacian-entry-2} and note that $\kappa_i\leq \widetilde{\kappa}_i$. It gives
\begin{align*}
|\hat{\xi}_1(i)-\tilde{\xi}_1(i)|\leq C  \sqrt K\,  \widetilde{\kappa}_i + C\widetilde{\kappa}_i n\bar{\theta}\|\tilde{H}^{-\frac12}(\hat{\xi}_1-\tilde{\xi}_1)\|_\infty + \frac{C  \sqrt{\log (n)}}{n\bar{\theta}^2}|\hat{\xi}_1(i)-\tilde{\xi}_1(i)|. 
\end{align*}
Since $ \sqrt{\log (n)}\ll n\bar{\theta}^2$, we can rearrange the above inequality to get
\beq \label{eq:ht11}
 |\hat{\xi}_1(i)-\tilde{\xi}_1(i)|  \leq C\sqrt K\, \widetilde{\kappa}_i+ C\widetilde{\kappa}_i n\bar{\theta}\|\tilde{H}^{-\frac 12}(\hat{\xi}_1-\tilde{\xi}_1)\|_\infty. 
\eeq
We further apply Lemma~\ref{lem:upbd_key1} and note that $\kappa_j\leq \widetilde{\kappa}_j$ for all $j$ and that with high probability, $\|\tilde{H}^{-\frac12}H_0^{\frac12}\|\leq C$ (see the supplementary material). It yields that $|\hat{\xi}_1(i)-\xi_1(i)| \leq |\hat{\xi}_1(i)-\tilde{\xi}_1(i)|+ |\tilde{\xi}_1(i)-\xi_1(i)|\leq C\sqrt K\, \widetilde{\kappa}_i+ C \widetilde{\kappa}_i n\bar{\theta}\|H_0^{-\frac 12}(\hat{\xi}_1-\tilde{\xi}_1)\|_\infty$, which leads to  
\begin{align*} 
|\hat{\xi}_1(i)-\xi_1(i)|  &\leq C\sqrt K\,  \widetilde{\kappa}_i+ C \widetilde{\kappa}_i n\bar{\theta}\bigl(\|H_0^{-\frac 12}(\hat{\xi}_1-\xi_1)\|_\infty + \|H_0^{-\frac 12}(\tilde{\xi}_1-\xi_1)\|_\infty\bigr). 
\end{align*}
To bound the second term in the bracket, we apply Lemma~\ref{lem:upbd_key1} again and note that $H_0(j,j)\asymp n\bar{\theta} (\bar{\theta}\vee \theta_j)$ (see Section~\ref{subsec:property_L_0} of the supplementary material) 
and $\kappa_j[H_0(j,j)]^{-\frac 12}\leq \widetilde{\kappa}_j[H_0(j,j)]^{-\frac 12}\leq  \sqrt{\frac{\log (n)}{n\bar{\theta}^2}}\frac{1}{n\bar{\theta}}$. It gives $
\|H_0^{-\frac 12}(\tilde{\xi}_1-\xi_1)\|_\infty \leq C\sqrt{\frac{K\log (n)}{n\bar{\theta}^2}}\frac{1}{n\bar{\theta}}$. 
Plugging it into the above inequality, we obtain that
\beq \label{temp1}
 |\hat{\xi}_1(i)-{\xi}_1(i)| \leq C\sqrt K\, \widetilde{\kappa}_i+ C \widetilde{\kappa}_i n\bar{\theta}\|{H}_0^{-\frac 12}(\hat{\xi}_1-{\xi}_1)\|_\infty. 
\eeq
We multiply $H_0^{-\frac 12}(i,i)$ on both hand sides and use $\widetilde{\kappa}_j[H_0(j,j)]^{-\frac 12}\leq \sqrt{\frac{\log (n)}{n\bar{\theta}^2}}\frac{1}{n\bar{\theta}}$ again. It leads to that 
$\big|e_i' H_0^{-\frac 12}(\hat{\xi}_1- \xi_1)\big| \leq  CK^{\frac 12} \sqrt{\frac{\log (n)}{n\bar{\theta}^2 }} \frac{1}{n\bar{\theta}} + C\sqrt{\frac{\log (n)}{n\bar{\theta}^2 }} \, \|H_0^{-\frac 12}(\hat{\xi}_1- \xi_1)\| _\infty$. 
This inequality holds for every $1\leq i\leq n$. 
Since $\hat{\xi}_1-\xi_1$ does not depend on $i$, if we take a maximum over $i$, then both the left and right hand sides contain a term related to $\|H_0^{-\frac 12}(\hat{\xi}_1- \xi_1)\|_\infty$. Noticing that $\sqrt{\frac{\log (n)}{n\bar{\theta}^2 }}= o(1)$, we re-arrange the terms to get
$\|H_0^{-\frac 12}(\hat{\xi}_1- \xi_1)\|_\infty \leq CK^{\frac 12} \sqrt{\frac{\log (n)}{n\bar{\theta}^2 }} \frac{1}{n\bar{\theta}} \leq CK^{\frac 12} \frac{1}{n\bar{\theta}}$.
Plugging 
it into \eqref{temp1} gives
$|\hat{\xi}_1(i)-{\xi}_1(i)| \leq CK^{\frac 12}  \widetilde{\kappa}_i$.
The claim \eqref{result-1-mainpaper} follows immediately by plugging in the definition of $\widetilde{\kappa}_i$ in Lemma~\ref{lem:techB1}. \qed

\section{Simulations} \label{sec:Simu}
We conduct two experiments: Experiment 1 compares Mixed-SCORE-Laplacian (MSL) with Mixed-SCORE \citep{Mixed-SCORE}. Recall that MSL is a modification of the conventional Mixed-SCORE. We hope to demonstrate that our proposed modification improves the numerical performance under severe degree heterogeneity. Experiment 2 investigates the node-wise errors of MSL. We hope to show that the node-wise error indeed varies with $\theta_i$ as specified in Theorem~\ref{thm:uppbd_pi}. 
In addition, we also study the effect of $K$ in Section~\ref{supp:Simu-add} of the supplementary material. 
Both MSL and Mixed-SCORE require plugging in a vertex hunting (VH) algorithm as the input. We use successive projection \citep{araujo2001successive} as the default VH algorithm,  except in Experiment~2. Since this experiment studies the very delicate node-wise errors, we follow \cite{Mixed-SCORE} to use an improved VH algorithm; see Section~\ref{supp:VH} of the supplementary material for details.

\spacingset{1}
\begin{figure}[tb!]
\centering
\includegraphics[height=.24\textwidth, width=.242\textwidth]{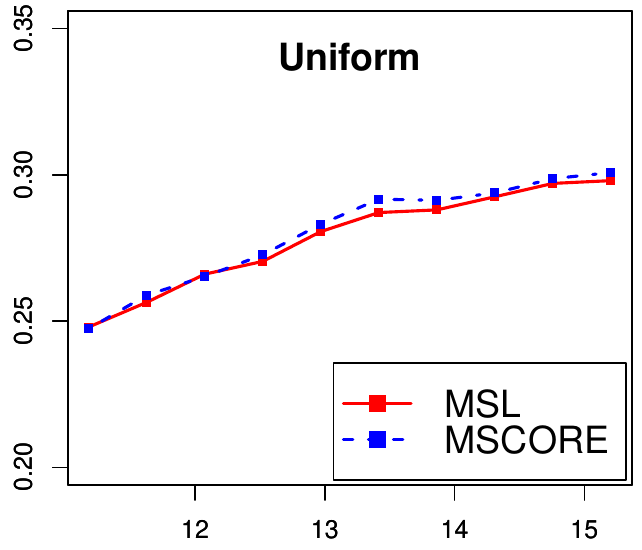} 
\includegraphics[height=.24\textwidth, width=.242\textwidth]{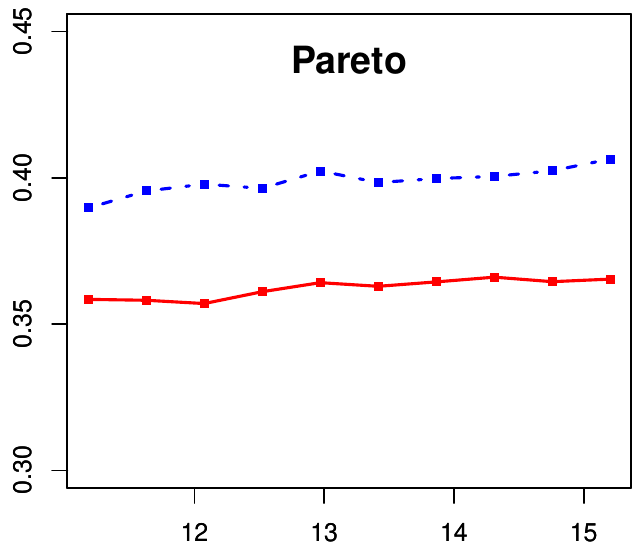}
\includegraphics[height=.24\textwidth, width=.242\textwidth]{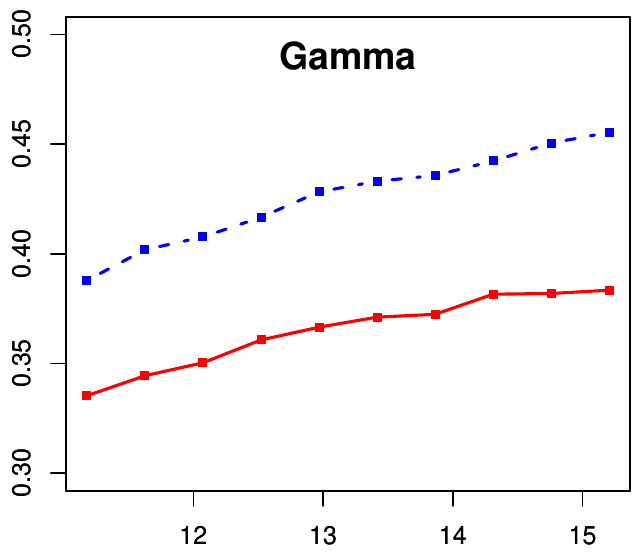}
\includegraphics[height=.24\textwidth, width=.242\textwidth]{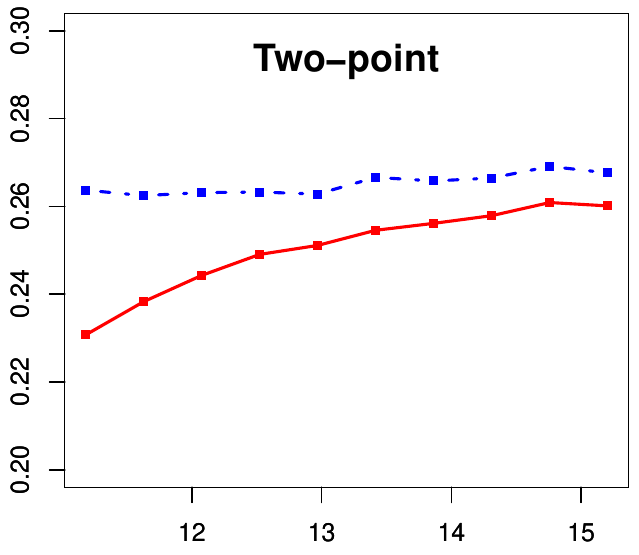}
\caption{MSL v.s. MSCORE ($n=2000$, $K=2$, $x$-axis is $\sqrt{n\bar{\theta}^2}$, and $y$-axis is the $\ell^1$-loss in \eqref{Loss}, averaged over 100 repetitions). 
We consider four different cases of degree heterogeneity.}
\label{fig:compare}
\end{figure}
\spacingset{1.7}

\smallskip\noindent
{\bf Experiment 1: Comparison with the conventional Mixed-SCORE.}
Fix $(n, K)=(2000, 2)$. Let $P=\beta_nI_2 + (1-\beta_n){\bf 1}_2{\bf 1}_2'$. To generate $\theta$, given a distribution $F(\cdot)$ and $b_n>0$, we draw $\theta^0_1, \theta^0_2,\ldots,\theta^0_n \overset{iid}{\sim} F(\cdot)$ and then let $\theta_i= b_n \cdot n\theta^0_i/\Vert\theta^0\Vert_1$ for $1\leq i\leq n$.   
To generate $\Pi$,  we first set $\pi_i=(1,0)'$ and $\pi_i=(0,1)'$ each for $15\%$ of nodes, and then let $\pi_i=(t_i, 1-t_i)'$ for the remaining $70\%$ of nodes, with $t_i\overset{iid}{\sim}\mathrm{Uniform}([0, 1])$. 
The distribution $F(\cdot)$ controls degree heterogeneity. 
In light of Proposition~\ref{prop:rate}, we consider four choices of $F(\cdot)$: in Experiment 1.1,  
$F=\mathrm{Uniform}([0.3, 5])$; in Experiment 1.2, $F=\mathrm{Pareto}(10,0.3)$, with a truncation at $5000$; in Experiment 1.3, $F=\mathrm{Gamma}(1/3, 1)$; in Experiment 1.4, $F=0.05\delta_9 + 0.95\delta_{0.1}$, where $\delta_x$ is a point mass at $x$. The parameter $b_n$ ($=\bar{\theta}$) controls network sparsity, and $\beta_n$ controls `community dis-similarity'. Write $\mathrm{SNR}:=\sqrt{n\bar{\theta}^2}(1-P(1,2))= b_n\beta_n\sqrt{n}$. 
We let $nb_n$ range from 500 to 680 with a grid of 20, and change $\beta_n$ accordingly by fixing $\mathrm{SNR}=10$. 
Figure~\ref{fig:compare} reports the $\ell^1$-loss (averaged over 100 repetitions) for four cases of degree heterogeneity and various levels of network sparsity. In Section~\ref{supp:Simu-add} of the supplementary material, we also report the  
weighted $\ell^1$-loss. Except in Experiment 1.1, degree heterogeneity is severe in the other three cases, and we always observe a significant improvement of MSL over MSCORE.


\spacingset{1}
\begin{figure} [tb!]
\centering
\includegraphics[height=.26\textwidth, width=0.35\textwidth]{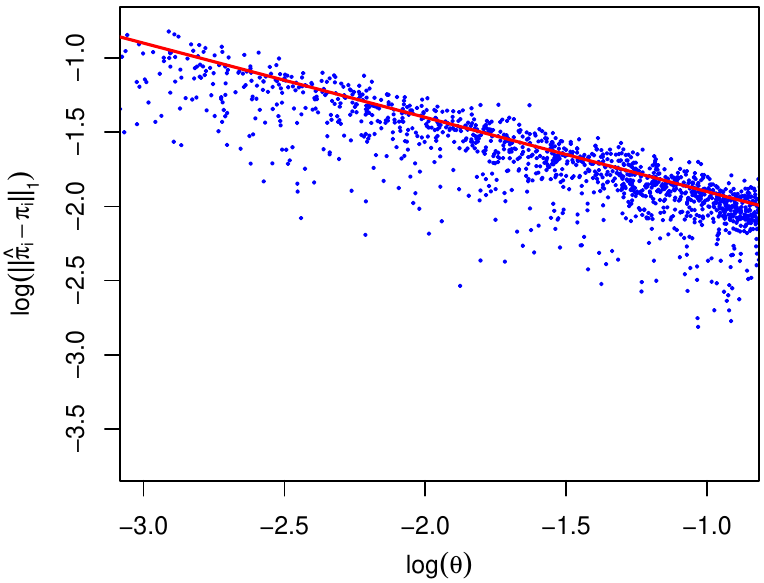}
\includegraphics[height=.26\textwidth, width=0.35\textwidth]{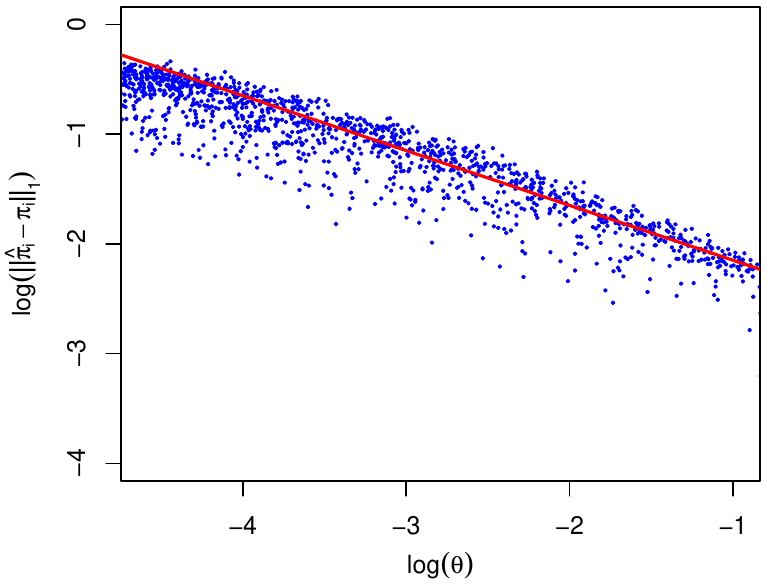}
\caption{Node-wise errors of MSL. $\theta_i$'s are generated from $\mathcal{U}([0.3, 5])$ and Pareto$(10,0.3)$, respectively  for the left and the right. In each plot, the slope of the red line is $-1/2$.
} 
\label{fig:theta}
\end{figure}
\spacingset{1.7}

\smallskip\noindent
{\bf Experiment 2: Node-wise errors of Mixed-SCORE-Laplacian.}
Fix $(n,K)=(2000,2)$. We first generate $(\theta, \Pi)$ as in Experiment 1. Fixing $(\theta,\Pi)$, we then generate 100 networks, and compute the average of $\|\hat{\pi}_i-\pi_i\|_1$ on these 100 repetitions, for each $1\leq i\leq n$. 
In Experiment 2.1, $F$ is $\mathrm{Uniform}([0.3, 5])$; in Experiment 2.2, $F$ is $\mathrm{Pareto}(10,0.3)$. Fix $\sqrt{n}\,  b_n = 15$ and $\mathrm{SNR}=b_n\beta_n\sqrt n=10$. 
Theorem~\ref{thm:uppbd_pi} says that the error at $\hat{\pi}_i$ is (approximately) proportional to $(\theta_i\wedge \bar{\theta})^{-1/2}$. In Figure~\ref{fig:theta}, we plot $\log(\Vert \hat{\pi}_i-\pi_i\Vert_1)$ versus $\log(\theta_i)$, for $\theta_i\leq \bar{\theta}$. 
Under both moderate degree heterogeneity (left panel) and severe degree heterogeneity (right panel), the plot fits a straight line with slope $-1/2$ well. This verifies the claim of Theorem~\ref{thm:uppbd_pi}. 

\section{Real data} \label{sec:real_data}

We use real-world examples to denomstrate the practical implications of our study. 
The first data set, the political blog network \citep{adamic2005political}, contains 
``true" community labels. 
We use this data set to compare the node embeddings with and without pre-PCA normalization.
 The second data set is a co-authorship network of statisticians \citep{JiJin}.  We apply both the conventional Mixed-SCORE and our proposed MSL with $K=2$ and compare their performances.

\smallskip \noindent
{\bf The political blog network \citep{adamic2005political}.}
Each node is a blog during 2004 U.S. presidential election, and there is an edge between two nodes if there is a link (one-way or bilateral) between two blogs.
A manual label of ``conservative" or ``liberal" was given to each blog by  \cite{adamic2005political}. 
We use the giant component of the network, which has $n=1222$ nodes.  In addition, we construct a new network by adding two outlier nodes; for each of them, we connect it to $80\%$ of the original nodes (randomly selected). The original network already shows severe degree heterogeneity ($d_{\min}=1$, $d_{\mathrm{mean}}=27.35$, and $d_{\max}=351$).  Degree heterogeneity in the new network is even more severe ($d_{\max}=978$). 
For each network, we compute the SCORE embeddings in \eqref{SCORE} with $K=2$, where $\hat{\xi}_1$ and $\hat{\xi}_2$ are either eigenvectors of $A$ (no pre-PCA normalization) or $L$ (with pre-PCA normalization). 
The histograms of $\hat{r}_i$'s in these four cases (i.e., two networks, with/without pre-PCA normalization) are displayed in Figure~\ref{plot:polblogs}.  
Since most bloggers are not extremely conservative/liberal, this network has mixed memberships. However, the manual labels by \cite{adamic2005political} are binary. 
To compare different embeddings, we calculate (a) Rayleight quotient $Rq : =\frac{ \pi^*(1- \pi^*)(\bar{r}_{(1)} - \bar{r}_{(2)})^2}{\pi^* v_{(1)} + (1- \pi^*)v_{(2)}}$, where $\pi^*$ is the fraction of nodes in the ``conservative" group, $\bar{r}_{(1)}$ and $v_{(1)}$ are the mean and variance of $\hat{r}_i$'s within this group, and $\bar{r}_{(2)}$ and $v_{(2)}$ are those within the ``liberal" group, and (b) the k-means clustering errors on $\hat{r}_i$'s (treating the manual labels as the ground truth) in each of the four cases. 
When calculating RQ and clustering error, we exclude the outliers and only use $\hat{r}_i$'s of the original nodes. 
The left two panels of Figure~\ref{plot:polblogs} are for the original network (no outlier). The  SCORE embeddings with pre-PCA normalization yield better RQs and smaller clustering errors. The advantage is more significant in the new network with outliers. 
These outliers have extremely high degrees and bring a lot of noise into the eigenvectors of $A$.
Consequently, the RQ drops significantly and the clustering error has a big increase.
In comparison, the pre-PCA normalization can properly ``down-weight" high-degree nodes in PCA and make the SCORE embeddings more robust to outliers.  

\spacingset{1}
\begin{figure}[tb!]
\center
\includegraphics[scale=0.42]{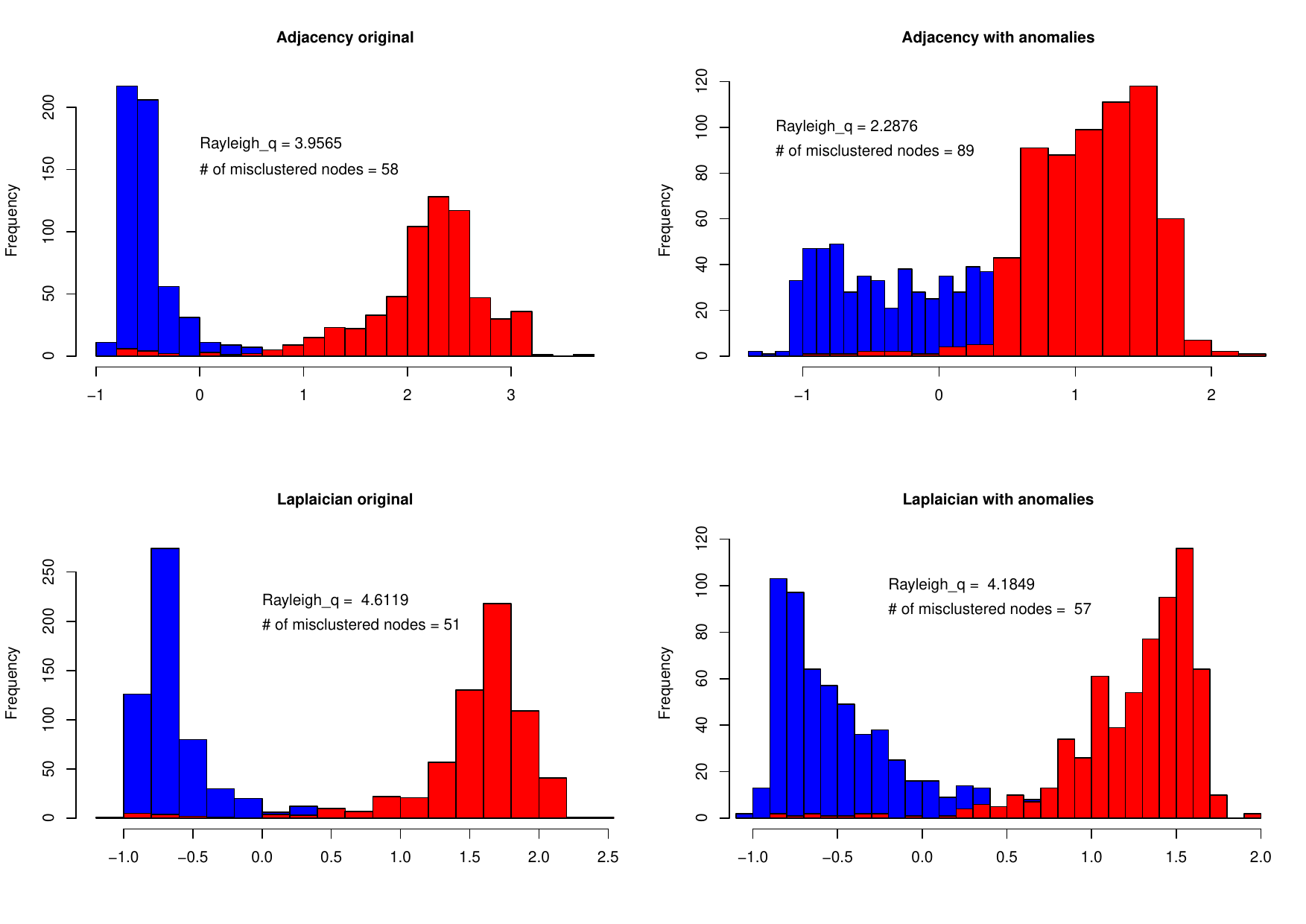} 
\includegraphics[scale=0.42]{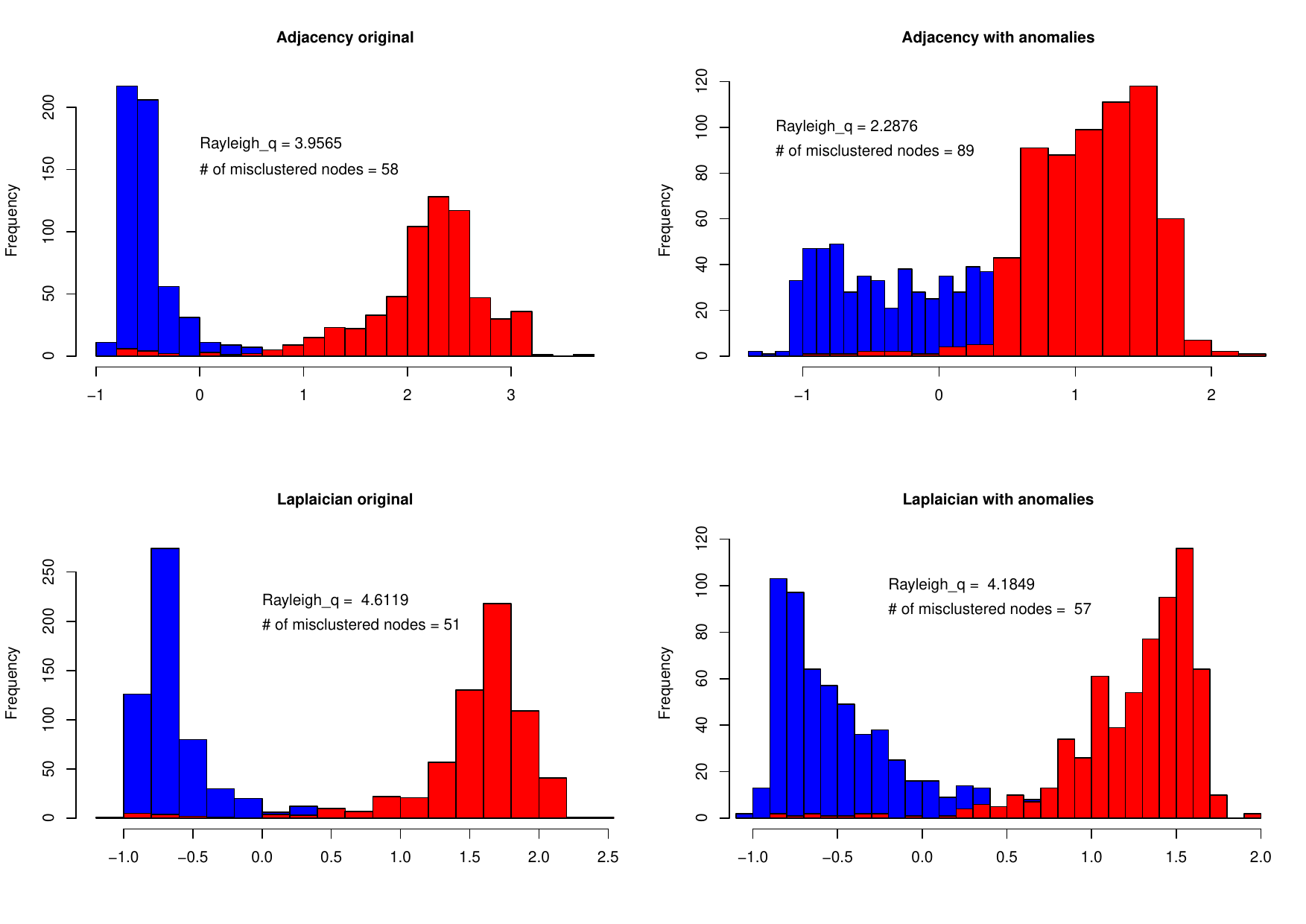}
\caption{Comparison of the SCORE embeddings without pre-PCA normalization (top) and with pre-PCA normalization (bottom). Since $K=2$, each embedding is a point in $\mathbb{R}$. The colors correspond to the true community labels. The left two panels are based on the original political blog network, and the right two panels are based on the network with outliers. 
}\label{plot:polblogs}
\end{figure}
\spacingset{1.7}

\smallskip\noindent
{\bf The co-authorship network of statisticians \citep{JiJin}.}
Several co-authorship networks were constructed by \cite{JiJin}. One of them is a 236-author network, in which two authors (nodes) are connected by an edge if they co-authored $\geq 2$ papers during 2003-2012 in four core statistical journals. \cite{JiJin} discorvered that there are two communities, ``Carroll-Hall" and ``North-Carolina", in this network. \cite{Mixed-SCORE} further studied this network and found strong evidence of mixed memberships: many members of the Fan-group (Jianqing Fan and collaborators) have nearly half-half memberships in two communities.  Figure~\ref{plot:coauthor} shows the estimated membership in the ``Carroll-Hall" for all 236 authors, where the x-axis is the Mixed-SCORE estimate, the y-axis corresponds to the MSL estimate, and the dashed line is $y=x$. 
We make several observations. First, the membership of Jianqing Fan is still near 50\%. Second, most pure nodes of the ``North-Carolina" community are still pure. Last, Raymond Carroll and Peter Hall are no longer 100\% pure in the ``Carroll-Hall" community. Meanwhile, authors such as Jing Qin are now pure nodes of this community. This does not conflict with the discoveries in \cite{JiJin, Mixed-SCORE}: In fact, they claimed that this is a group of authors interested in nonparametric and semi-parametric statistics and ``Carroll-Hall" was used as a short name. We note that both Raymond Carroll and Peter Hall have high degrees. MSL properly down-weights the contributions of high-degree nodes before conducting PCA, so we expect it to yield more accurate estimates. Indeed, the results are more consistent with our common insights: highly collaborative authors usually have diversified interests, hence ``mixed" memberships. 


%
%

\spacingset{1}
\begin{figure}[tb!]
\center
\includegraphics[height=.35\textwidth, trim=0 0 0 50, clip=true]{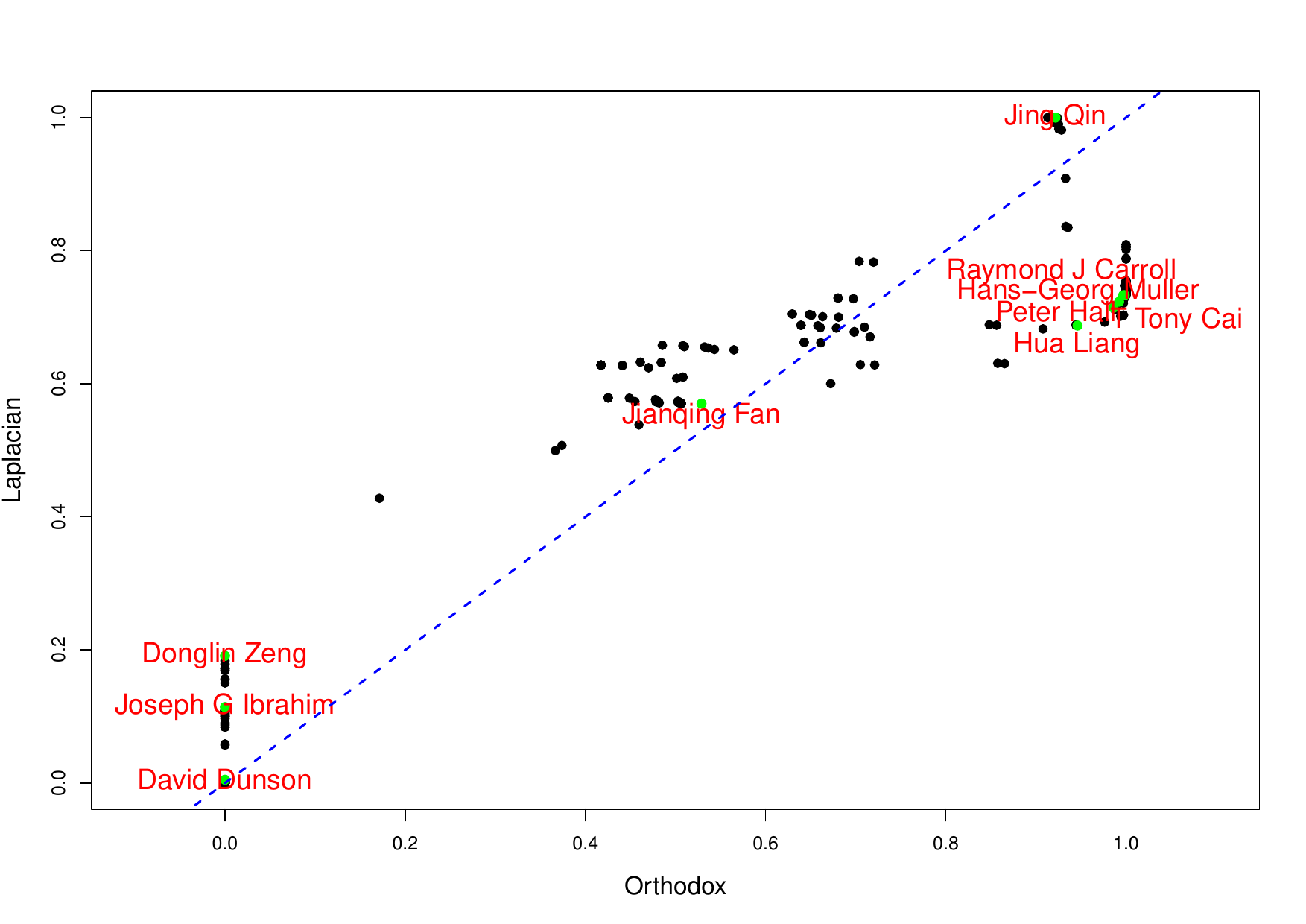} 
\caption{The estimated membership in the ``Caroll-Hall" community.  The $x$-axis and $y$-axis correspond to MSCORE and MSL, respectively. The $10$ highest-degree nodes are marked in green with the author names displayed by side.  }\label{plot:coauthor}
\end{figure}
\spacingset{1.7}

\section{Discussions} \label{sec:Discuss}
Severe degree heterogeneity has been widely observed in real networks \citep{barabasi1999emergence}, but its effect on network model estimation and inference is largely unclear. This paper provides the first result of optimal mixed membership estimation under (nearly) arbitrary degree heterogeneity. Our contributions include (i) a degree-heterogeneity-aware lower bound, (ii) an optimal spectral algorithm, (iii) node-wise error rates, and (iv) entry-wise eigenvector analysis.

In this paper, we assume $K$ is known. If $K$ is unknown, how to estimate $K$ from data is an interesting problem. 
Existing works of estimating $K$ (e.g., \cite{wang2017likelihood,jin2023optimal,dall2021unified,hwang2023estimation}) focus on the community detection setting and cannot be applied directly. Our theoretical results on eigenvalues of $L$ (see Section~\ref{supp:AuxiliaryLemma-L} of the supplementary material) suggest an estimator of $K$:
\[
\hat{K}=\#\bigl\{1\leq j\leq n:\; \hat{\lambda}_j^2\geq \bar{d}^{-1}\log^2(n)\bigr\},  \quad\mbox{where $\bar{d}$ is avarege node degree}. 
\]
We can show that when $K^3\log^2(n)/(n\bar{\theta}^2\beta_n^{2})\to\infty$, $\hat{K}=K$ with probability $1-o(1)$. Hence, all the results in this paper continue to hold when $K$ is estimated from data.

We focus on undirected networks, but our results are extendable to directed networks. We can extend the DCMM model to directed networks by introducing two degree parameters, $\theta_i^{in}$ and $\theta_i^{out}$, and two mixed membership vectors $\pi_i^{in}$ and $\pi_i^{out}$, for each node $i$. 
The degree heterogeneity in this model is captured by two CDFs, $F_n^{in}$ and $F_n^{out}$. We conjecture that the optimal rates in estimating $\Pi^{in}$ and $\Pi^{out}$ are different, and they are both functions of $n$, $K$, $\bar{\theta}^{in}$, $\bar{\theta}^{out}$, and  $F_n^{in}$ and $F_n^{out}$. We also conjecture that the optimal spectral method requires a normalization on the adjacency matrix as $L=(H^{out})^{-b_1}A(H^{in})^{-b_2}$, where $H^{in}$ and $H^{out}$ are diagonal matrices consisting of incoming and outgoing node degrees of nodes, and the choice of $(b_1, b_2)$ is inspired by node-wise error analysis. We leave this to future work. 

Uncertainty quantification of $\hat{\Pi}$ is another interesting problem. \cite{bhattacharya2023inferences} derived a novel finite-sample expansion for the $\hat{\pi}_i$ from conventional Mixed-SCORE, which yields valid confidence interval of $\pi_i$. They also proposed a ranking inference procedure for individual membership's profiles. 
Our results in this paper suggest that when there is severe degree heterogeneity, MSL may significantly improve the conventional Mixed-SCORE. 
It is thus interesting to develop uncertainty quantification for the $\pi_i$'s from the MSL algorithm. 
This requires second-order expansions for the leading eigenvectors of graph Laplacian. We also leave it to future work. 

The SCORE-family of algorithms \citep{SCORE,Mixed-SCORE} can be extended to weighted networks, because the rationale of such methods is hinged on the structure of $\Omega$. For example, let $\Omega=\Theta\Pi P\Pi'\Theta$ as before. If $A(i,j)\sim \mathrm{Poisson}(m\Omega(i,j))$ or $A(i,j)\sim N(\Omega(i,j), \sigma^2)$, we can still apply Mixed-SCORE (with or without normalizations) to get a consistent estimate of $\Pi$. 
The question is how to achieve the optimal error rate. The rate-optimality result in this paper is tied to the Bernoulli edge generation (e.g., the mean and variance of a Bernoulli variable are of the same order). For Poisson edges, we conjecture that similar optimality result can be achieved. For normal edges, we conjecture that no degree normalization is needed, when the variances of all edges are the same.

\spacingset{1.7} 



\appendix

\numberwithin{equation}{section}
\numberwithin{figure}{section}
\numberwithin{table}{section}

\section{Supplementary lemmas for the main text}

In the main paper, owing to the space constraints, we stated some arguments without giving detailed proofs. In this section, we revisit and prove  these arguments. 

\subsection{The simplex geometry and the oracle procedure} \label{subsec:simplex}

In Section~\ref{sec:MSCORE-L}, we considered the oracle case where $A=\Omega$ and claimed that there is a simplex geometry associated with the rows of $\hat{R}$. The next lemma makes this argument rigorous: 

\begin{lem}[The simplex geometry]\label{lem:simplex}
Consider a DCMM model, where each community $k$ has at least one pure node.  Let $H_0=\mathbb{E}[H]$ and $L_0=H_0^{-\frac 12}\Omega H_0^{-\frac 12}$. Let $\lambda_k$ be the $k$th largest eigenvalue (in magnitude) of $L_0$, and let $\xi_k$ be the corresponding eigenvector. If we pick the sign of $\xi_1$ such that $\sum_{i=1}^n\xi_1(i)>0$, then $\xi_1$ is a strictly positive vector. Furthermore, consider the matrix $R\in\mathbb{R}^{n\times (K-1)}$, where $R(i,k) = \xi_{k+1}(i)/\xi_1(i)$, $1\leq i\leq n,1\leq k\leq K-1$. 
Write $R=[r_1,r_2,\ldots,r_n]'$.
\begin{itemize}
\item There exists a simplex ${\cal S}\subset\mathbb{R}^{K-1}$ with $K$ vertices $v_1,v_2,\ldots,v_K$, such that $r_1,r_2,\ldots,r_n$ are contained in ${\cal S}$. If node $i$ is a pure node, then $r_i$ falls on one vertex of this simplex; if node $i$ is a mixed node, then $r_i$ is in the interior of the simplex (it can be on an edge or a face, but cannot be on any of the vertices). 
\item Each $r_i$ is a convex combination of the $K$ vertices, $r_i = \sum_{k=1}^K w_i(k)v_k$. The combination coefficient vector is $w_i=\|\pi_i\circ b_1\|_1^{-1}(\pi_i\circ b_1)$, where $\circ$ is the Hardarmart product and $b_1$ is a $K$-dimensional vector with $b_1(k)=1/\sqrt{\lambda_1+v_k'\diag(\lambda_2,\ldots,\lambda_K)v_k}$, $1\leq k\leq K$. 
\end{itemize}
\end{lem}

\begin{proof}[Proof of Lemma \ref{lem:simplex}] \label{sec:pf_simplex}

The proof largely follows the one in \cite{Mixed-SCORE}, except that they considered a special case of $H=I_n$ while we allow for a general diagonal matrix $H$ here.    

Recall that $L_0 = H_0^{-\frac12}\Theta \Pi (P\Pi'\Theta H_0^{-\frac12})$. Under the condition that each community has at least one pure node, 
$L_0$ has a rank $K$. It follows that $L_0$ has the same column space as $H_0^{-\frac12}\Theta\Pi$. Meanwhile, $\Xi=[\xi_1,\xi_2,\ldots,\xi_K]$ also has the same column space as $L_0$. Therefore, there exists a non-singular matrix $B\in\mathbb{R}^{K\times K}$ such that 
\[
\Xi = H_0^{-\frac12}\Theta\Pi B.  
\]
Write $B=[b_1,b_2,\ldots,b_K]$. Define $v_1,v_2,\ldots,v_K\in\mathbb{R}^{K-1}$ by $v_{k}(\ell)=b_{\ell+1}(k)/b_1(k)$, for $1\leq k\leq K$, $1\leq \ell\leq K-1$. Write $V=[v_1,v_2,\ldots,v_K]'\in\mathbb{R}^{K\times (K-1)}$. It follows that
\[
B = \diag(b_1)[{\bf 1}_K, V]. 
\] 
By definition of $R$, $[{\bf 1}_n, R] = [\diag(\xi_1)]^{-1}\Xi$. It follows that
\begin{align*}
[{\bf 1}_n, R] &= [\diag(\xi_1)]^{-1}\Xi =  [\diag(\xi_1)]^{-1} H_0^{-\frac12}\Theta\Pi B \cr
&=  [\diag(\xi_1)]^{-1} H_0^{-\frac12}\Theta\Pi\diag(b_1)[{\bf 1}_K, V]. 
\end{align*}
Define $W= [\diag(\xi_1)]^{-1} H_0^{-\frac12}\Theta\Pi\diag(b_1)$. The above equation implies that ${\bf 1}_n=W{\bf 1}_K$ and $R=WV$. Denote by $w_i'$ the $i$th row of $W$. 
It follows that $w_i'{\bf 1}_K=1$ and $r_i=\sum_{k=1}^K w_i(k)v_k$. Furthermore, under Condition~\ref{reg-conds}(c), we can show that both $\xi_1$ and $b_1$ are strictly positive vectors; the proof is similar to the proof of Lemma B.4 of \cite{Mixed-SCORE}, which we omit.  
It suggests that $W$ is also a nonnegative matrix. Combining the above, each $r_i$ is a convex combination of $v_1,v_2,\ldots,v_K$. This proves the simplex structure. 

We now derive the connection between $w_i$ and $\pi_i$. Write $\alpha_i=\xi_1^{-1}(i) H^{-\frac12}_0(i,i)\theta_i$.  Then, $w'_i=\alpha_i\cdot \pi_i'\diag(b_1)=\alpha_i\cdot(\pi_i\circ b_1)$. Since $\|w_i\|_1=1$, we immediately have $\alpha_i=1/\|\pi_i\circ b_1\|_1$. This proves that $w_i = \frac{1}{\|\pi_i\circ b_1\|_1}(\pi_i\circ b_1)$. To get the expression of $B_1$, we notice that 
\begin{align*}
\Lambda &= \Xi'L_0\Xi = (H_0^{-\frac12}\Theta\Pi B)' (H_0^{-\frac12}\Theta \Pi P\Pi'\Theta H_0^{-\frac12})(H_0^{-\frac12}\Theta\Pi B) \cr
&= B'(\Pi'\Theta D_{\theta}^{-1}\Theta\Pi )P (\Pi'\Theta D_{\theta}^{-1}\Theta\Pi )B' = K^{-2}\cdot B'GPGB,
\end{align*}
where $D_\theta$ and $G$ are as defined in Section~\ref{sec:LB} and we note that $D_{\theta}$ is actually $H_0$. Moreover, $G = K\cdot (H_0^{-\frac12}\Theta \Pi )'(H_0^{-\frac12}\Theta \Pi )=K\cdot (\Xi B^{-1})'(\Xi B^{-1})=K\cdot (BB')^{-1}$. It follows that
\[
B\Lambda B' = K^{-2}\cdot BB'GPGBB = P. 
\]
Write $\Lambda=\diag(\lambda_1,\Lambda_1)$, where $\Lambda_1=\diag(\lambda_2,\ldots,\lambda_K)$. Also, recall that $B=\diag(b_1)[{\bf 1}_K, V]$. We plug them into the above expression to get 
\[
P = \diag(b_1)[{\bf 1}_K, V]\begin{bmatrix}\lambda_1\\ &\Lambda_1\end{bmatrix}\begin{bmatrix}{\bf 1}_K'\\V'\end{bmatrix}\diag(b_1). 
\]
It follows that $P(k,k)=b_1(k)\cdot [\lambda_1+v_k'\Lambda_1v_k]\cdot b_1(k)$. The identifiability condition of DCMM model in Section \ref{subsec:DCMM} says that $P(k,k)=1$. Therefore, $b_1(k)=1/\sqrt{\lambda_1+v_k'\Lambda_1v_k}$. 
\end{proof}

\subsection{Broadness of the $\theta$-class ${\cal G}(\varrho, a_0)$} \label{subsec:theta-class}

In Section \ref{sec:LB}, we introduced a technical condition on $F_n(\cdot)$ (see Definition \ref{def:thetaClass}) and defined ${\cal G}(\varrho, a_0)$, a class of $\theta$. We claimed that this class is broad enough to include most interesting cases of degree heterogeneity. This is justified by the following lemma:  

\begin{lem} \label{prop:VerifyThetaClass}
The requirements in Definition~\ref{def:thetaClass} are satisfied if $\theta_i$'s are i.i.d. drawn from $\kappa_nF(\cdot)$, where $\kappa_n>0$ is a scalar and $F(\cdot)$ is a fixed, finite-mean distribution which has its support in $(0,\infty)$ and satisfies one of the following conditions: 
\begin{itemize}
\item $F(\cdot)$ is a discrete distribution; 
\item $F(\cdot)$ is a continuous distribution with support in $[c,\infty)$, for some $c>0$;
\item $F(\cdot)$ is a continuous distribution supported in $(0,\infty)$, and its density $f(t)$ satisfies that $\lim_{t\to\infty}t^{b}f(t)=C$, for some $b\neq 1/2$ and $C>0$. 
\end{itemize}
\end{lem}

\begin{proof}[Proof of Lemma~\ref{prop:VerifyThetaClass}] \label{subsec:Remark-F-condition}
Recall that we assume
 $\theta_i$'s i.i.d. generated from $\kappa_nF(\cdot)$, where $\kappa_n>0$, and $F(\cdot)$ is fixed distribution that is either continuous or discrete with finite mean $m$.

First, we consider the case that $F(\cdot)$ is a discrete distribution, i.e., $F= \sum_{\ell=1}^L \epsilon_\ell \delta_{x_\ell}$ where $L$ is a fixed constant and  $0< x_1<x_2<\ldots <x_L$, $\epsilon_\ell$'s  are all fixed, $\delta_x$ is a point mass at $x$, and $\sum_{\ell=1}^L\epsilon_\ell x_\ell=m$. 
In this case, we simply set $c_n= x_{L-1}/m$, $\rho = x_{1}/x_{L-1}$ and $a_0=\min_{\ell}{ \epsilon_\ell}$. One can easily check that with high probability, 
\begin{align*}
&F_n(c_n) = F(x_{L-1})= 1- \epsilon_L \leq 1 - a_0, \notag\\
& \sum_{\ell=1}^{L-1} \frac{\epsilon_\ell}{\sqrt{m^{-1}x_\ell\wedge 1}} \geq (1-\epsilon_L) \sum_{\ell=1}^{L} \frac{\epsilon_\ell}{\sqrt{m^{-1} x_\ell\wedge 1}} \geq a_0 \sum_{\ell=1}^{L} \frac{\epsilon_\ell}{\sqrt{m^{-1} x_\ell\wedge 1}} 
\end{align*}
which indeed verify the condition in  Definition \ref{def:thetaClass} for the chosen $(\rho, a_0)$. This proves the first bullet point of Lemma~\ref{prop:VerifyThetaClass}. 

Next, we consider the case that $F(\cdot)$ is a continuous  distribution with density $f(\cdot)$ and  ${\rm supp(f)}\subset [0, +\infty)$. Since $\int t d F_n(t)= 1$, it is not hard to see that $d F_n(t)=m f(mt) dt $. We can rewrite 
\begin{align} \label{2022042501}
\int_{err_n^2}^{\infty}\frac{1}{\sqrt{t\wedge 1}}d F_n(t)= \int_{err_n^2 m }^{\infty}\frac{f(t)}{\sqrt{ t/m\wedge 1}}d t
\end{align}
The singularity of the  integral on the RHS of (\ref{2022042501}) lies in the neighborhood of $0$, or $err_n^2m$.

If $f(t)t^{\frac 12- \epsilon_0} \to C$ as $t\to 0$ for some $\epsilon_0>0$, $C>0$, then the integral on the RHS of (\ref{2022042501})  converges and can be bounded by some constant   $C_1>0$. Since $F(\cdot)$ is a fixed continuous distribution with finite mean $m$, we can always find $\tilde{c}>  0$, $\tilde{a}\in (0,1)$ and $\rho\in (0,1)$ such that $F(\tilde{c}) - F(\rho \tilde{c})> C_2$ and $F(\tilde{c})\leq 1- \tilde{a}$ for some constant $0<C_2<C_1$. We then set $c_n = \tilde{c}/m$ and $a_0 = \min\{ \tilde{a}, C_2/C_1\}$.  As a result,  
\begin{align*}
&F_n(c_n) = F(\tilde{c}) \leq 1 - \tilde{a}\leq 1- a_0, \notag\\
& \int_{\rho c_n}^{c_n}\frac{1}{\sqrt{t\wedge 1}}d F_n(t) \geq \frac{F_n(\tilde{c}_n) - F_n(\rho \tilde{c}_n)}{\sqrt{c_n \wedge 1}} \geq C_2\geq a_0 \int_{err_n^2}^{\infty}\frac{1}{\sqrt{t\wedge 1}}d F_n(t) .
\end{align*}
Here $err_n$ can be replaced by any other sequence $x_n\to 0$. 
We  remark  that the case $F(\cdot)$ has a  support bounded  below from zero is also included in the current discussion. This proves the second bullet point in Lemma~\ref{prop:VerifyThetaClass} and part of the third bullet point.

If $f(t)t^{\frac 12 + \epsilon_0} \to C $ as $t\to 0$ for some $\epsilon_0>0$ and $C>0$, then the RHS of (\ref{2022042501}) is of the order $err_n^{-2\epsilon_0}$ and its mass is located in the neighborhood of $err_n^2\, m$. Therefore, we can simply set $c_n = C_3 \, err_n^2$ for some large $C_3>1$ such that $F(C_3err_n^2 \, m)\leq 1- \tilde{a}$ for some $\tilde{a}>0$ (this can be always achieved since $F(\cdot)$ is a fixed distribution with mean $m$). Let $\varrho= C_3^{-1}$. Then,  
\begin{align*}
&F_n(c_n) = F(C_3\, err_n^2\, m)\leq 1-\tilde{a}, \cr
& \int_{\varrho c_n}^{c_n}\frac{1}{\sqrt{t\wedge 1}}d F_n(t)  = \int_{err_n^2\, m}^{C_3\, err_n^2\, m}\frac{f(t)}{\sqrt{t/m}}d t > C_ 4 \int_{err_n^2 m }^{\infty}\frac{f(t)}{\sqrt{ t/m\wedge 1}}d t, 
\end{align*}
for some $C_4>0$. We thus take $a_0=\min\{ \tilde{a}, C_4\}$. The arguments also hold if we replace $err_n$ by any other sequence $x_n\to 0$. The condition in Definition \ref{def:thetaClass} is satisfied for the chosen $(\rho, a_0)$. This proves the remaining part of the third bullet point. 
\end{proof}

\section{The Mixed-SCORE-Laplacian (MSL) algorithm} \label{sec:MSL}

In Section~\ref{sec:MSCORE-L}, we explained the membership estimation steps in Figure~\ref{fig:MSCORE-idea} and gave a high-level description of the MSL algorithm in Algorithm~\ref{alg:MSL}. We now present Algorithm~\ref{alg:MSL-full}, a detailed version of Algorithm~\ref{alg:MSL}. In this algorithm, we assume there is a given vertex hunting (VH) algorithm. The choices of the VH algorithm are discussed in Section~\ref{supp:VH}. 

\spacingset{1}
\begin{algorithm}[tb!]
\caption{Mixed-SCORE-Laplacian.}\label{alg:MSL-full}

\medskip
\noindent
{\bf Input:} $K$, $A$, tuning parameters $(\tau, c, \gamma)=(1, 0.5, 0.05)$ (default), and a given VH algorithm. 
 
\begin{enumerate} \itemsep 4pt
\item Let $L$ be the normalized graph Laplacian in \eqref{Def:Laplacian}. Let $\hat{\lambda}_k$ be the $k$th largest eigenvalue (in magnitude) of $L$, and let $\hat{\xi}_k$ be the associated eigenvector, $1\leq k\leq K$. Define an $n\times (K-1)$ matrix $\hat{R}$ by 
\[
\hat{R}(i,k) = \hat{\xi}_{k+1}(i)/\hat{\xi}_1(i), \qquad 1\leq i\leq n,\, 1\leq k\leq K-1. 
\]
Denote by $\hat{r}'_1,\hat{r}'_2,\ldots,\hat{r}'_n$ the rows of $\hat{R}$.  
\item Let $\hat{\delta}_n =  K|\hat{\lambda}_K| $ and $
\hat{S}_n(c) = \{1\leq i\leq n:\, d_i\hat{\delta}_n^2 \geq c K^{3}\log(n)\}$. 
For any $i\notin \hat{S}_n(c)$, set $\hat{\pi}_i=K^{-1}{\bf 1}_K$.  
\item Let $\hat{S}_n^*(c, \gamma) = \hat{S}_n(c)\cap \{1\leq i\leq n:\, d_i\geq \gamma\bar{d}\}$. 
Run the given VH algorithm on the point cloud $\{\hat{r}_i\}_{i\in \hat{S}_n^*(c,\gamma)}$. Denote the output by $\hat{v}_1, \hat{v}_2, \ldots,\hat{v}_K$.


\item Let $\hat{\Lambda}_1=\diag(\hat{\lambda}_2,\ldots,\hat{\lambda}_K)$ and obtain $\hat{b}_1\in\mathbb{R}^K$ from 
\[
\hat{b}_1(k)=[\hat{\lambda}_1+\hat{v}_k'\hat{\Lambda}_1\hat{v}_k]^{-1/2}, \qquad 1\leq k\leq K, 
\]
For each $i\in \hat{S}_n(c)$, solve $\hat{w}_i\in\mathbb{R}^K$ from the linear equation set: 
\[
\sum_{k=1}^K \hat{w}_i(k)\hat{v}_k=\hat{r}_i, \qquad\mbox{and}\qquad \sum_{k=1}^K \hat{w}_i(k)=1.
\] 
Let $\hat{\pi}_i^*\in\mathbb{R}^K$ be such that $
\hat{\pi}_i^*(k)=\max\{\hat{w}_i(k)/\hat{b}_1(k),\, 0\}$, for $1\leq k\leq K$. 
Output $\hat{\pi}_i=\hat{\pi}_i^*/\|\hat{\pi}_i^*\|_1$, for each $i\in \hat{S}_n(c)$. 
\end{enumerate}
{\bf Output:} $\hat{\Pi}$.
\end{algorithm}
\spacingset{1.7}

\subsection{Choices of the plug-in VH algorithm} \label{supp:VH}

In our theoretical analysis and most simulations, we use successive projection (SP) \citep{araujo2001successive} as the plug-in VH algorithm. The details of SP are presented in Algorithm~\ref{alg:SP}. 
When plugging this algorithm into Algorithm~\ref{alg:MSL-full}, we need to pay attention to the dimension:
Algorithm~\ref{alg:SP} requires that the input point cloud is in a dimension $d\geq K$. However, each $\hat{r}_i$ is in dimension $K-1$. To resolve this issue, we follow \cite{Mixed-SCORE} to let 
\[
x_i = (1, \; \hat{r}_i')', \qquad 1\leq i\leq n. 
\] 
Now, each $x_i$ is in dimensional $K$. 
We input $x_i$'s to Algorithm~\ref{alg:SP}. The output $\hat{v}_1,\hat{v}_2,\ldots,\hat{v}_K$ will also be in dimension $K$. 
Since the first entry of each $x_i$ is $1$, the first entry of each $\hat{v}_k$ is also $1$. 
We then remove this first entry and output the $(K-1)$-dimensional sub-vectors of $\hat{v}_1,\hat{v}_2,\ldots,\hat{v}_K$. 
As argued in \cite{Mixed-SCORE}, this has no effect on the vertex estimation accuracy.

\spacingset{1}
\begin{algorithm}[tb!]
\caption{Successive projection (SP).}\label{alg:SP}
\medskip
\noindent
{\bf Input:} $K$ and $x_1, x_2, \ldots, x_m\in\mathbb{R}^d$ ($d\geq K$). 

\medskip

For each $1\leq k\leq K$, run the following steps: 
 
\begin{itemize} \itemsep 0pt
\item If $k\geq 2$, compute ${\cal P}=Y(Y'Y)^{-1}Y'$, where $Y=[x_{i_1}, x_{i_2},\ldots,x_{i_{k-1}}]\in\mathbb{R}^{d\times (k-1)}$.  \item If $k\geq 1$, find $i_1=\mathrm{argmax}_{i}\|x_i\|$; otherwise, find $i_k=\mathrm{argmax}_{i}\|(I_d-{\cal P})x_i\|$.   
\end{itemize}
{\bf Output:} $\hat{v}_k=x_{i_k}$, for $1\leq k\leq K$. 
\end{algorithm}
\spacingset{1.7}

\spacingset{1}
\begin{algorithm}[tb!]
\caption{Sketched vertex search (SVS).}\label{alg:SVS}
\medskip
\noindent
{\bf Input:} $K$, $x_1, x_2, \ldots, x_m\in\mathbb{R}^d$, and a tuning integer $L\geq K$. 

\medskip

\begin{itemize} \itemsep 0pt
\item Run k-means clustering on $x_1, x_2, \ldots, x_m$, assuming there are $L$ clusters. Denote by $\hat{y}_1, \hat{y}_2,\ldots,\hat{y}_L$ the estimated cluster centers. 
\item Input $\hat{y}_1, \hat{y}_2,\ldots,\hat{y}_L$ to Algorithm~\ref{alg:SP} to obtain $\hat{v}_1, \hat{v}_2,\ldots,\hat{v}_K$. 
\end{itemize}
{\bf Output:} $\hat{v}_1, \hat{v}_2,\ldots,\hat{v}_K$. 
\end{algorithm}
\spacingset{1.7}

SP performs well when the noise level in $x_1,x_2,\ldots,x_m$ is relatively low. When the noise level is relatively high, \cite{Mixed-SCORE} recommended to `de-noise' before running SP. They proposed the sketched vertex search (SVS) algorithm, which uses k-means to denoise. The details of SVS can be found in Algorithm~\ref{alg:SVS}. 
We also refer the readers to \cite[Section 3.4]{SCOREreview} and \cite{jinimproved} for more options of VH algorithms. In our simulation studies, we use SVS only in Experiment~2. This experiment studies the node-wise errors. Compared to the $\ell^1$-loss used in other experiments, node-wise errors are more sensitive to the noise level. 
This motivates us to replace SP by SVS. SVS has one tuning integer $L$, which is set as $L=5$ in Experiment~2.

{\bf Remark}: In Algorithm~\ref{alg:MSL-full}, we apply the plug-in VH algorithm on the trimmed point cloud: $
\{\hat{r}_i:  d_i\geq \gamma\bar{d}\}$. 
Our rationale is that the noise level on low-degree nodes is too high, so these points should be trimmed to improve performance. 
This trimming can also be viewed as a `denoising' step. Therefore, when we plug SVS into Algorithm~\ref{alg:MSL-full}, we actually conduct two rounds of denoising (trimming and k-means) before running SP. 

\section{Additional simulation results} \label{supp:Simu-add}
In this section, we present the additional simulation results, which are omitted in the main paper due to the space constraint. 

\subsection{The weighted $\ell^1$-loss}
We recall that most results in the main text are for the unweighted $\ell^1$-loss. In Section~\ref{subsec:otherLoss}, we extend the theoretical results to a general loss function parametrized by $p$ and $q$. A special case of $p=1/2$ and $q=1$ is called the weighted $\ell^1$-loss:  
\beq \label{unweightedLoss}
 {\cal L}^w(\hat{\Pi},\Pi) =  \min_{T}\Biggl\{\frac{1}{n}\sum_{i=1}^n \sqrt{\frac{\theta_i}{\bar{\theta}}}\, \|T\hat{\pi}_i-\pi_i\|_1\Biggr\}. 
\eeq
Compared to the unweighted $\ell^1$-loss, this performance metric  down-weights those errors in low-degree nodes. 
In Figure~\ref{fig:compare}, we report the unweighted $\ell^1$-loss of MSL and Mixed-SCORE in four different cases of degree heterogeneity and various levels of network sparsity. We now provide more results of these simulations by reporting the weighted $\ell^1$-loss in Figure~\ref{fig:compare-wLoss}. 

The conclusions for the weighted $\ell^1$-loss are similar to those for the unweighted $\ell^1$-loss: MSL greatly improves the conventional MSCORE in the last three cases, Pareto, Gamma, and Two-point mixture, which are the cases of severe degree heterogeneity. Furthermore, if we compare each panel in Figure~\ref{fig:compare-wLoss} with the corresponding panel in Figure~\ref{fig:compare}, we find that the value of the weighted loss is significantly smaller than the value of the unweighted loss in the case of Pareto and Gamma. This is because the node-wise error is a decreasing function of $\theta_i$; when low-degree nodes are down-weighted and high-degree nodes are up-weighted, the loss will decrease.

\spacingset{1}
\begin{figure}[tb!]
\centering
\includegraphics[height=.24\textwidth, width=.242\textwidth]{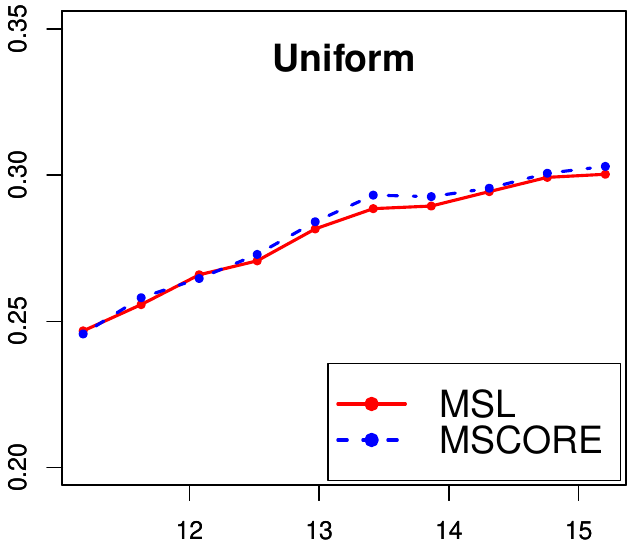} 
\includegraphics[height=.24\textwidth, width=.242\textwidth]{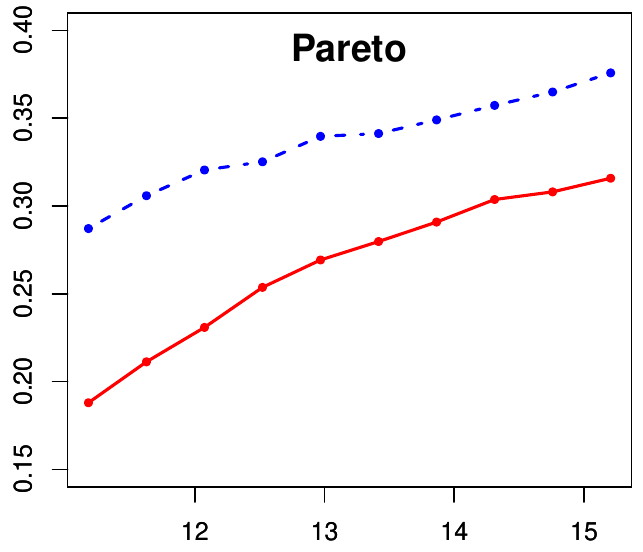}
\includegraphics[height=.24\textwidth, width=.242\textwidth]{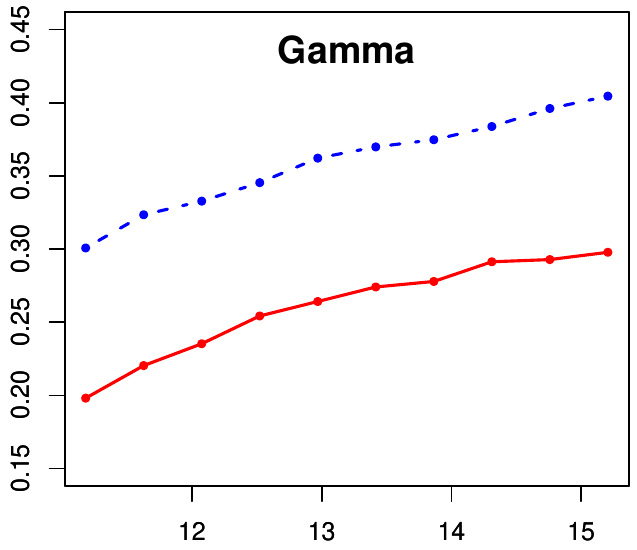}
\includegraphics[height=.24\textwidth, width=.242\textwidth]{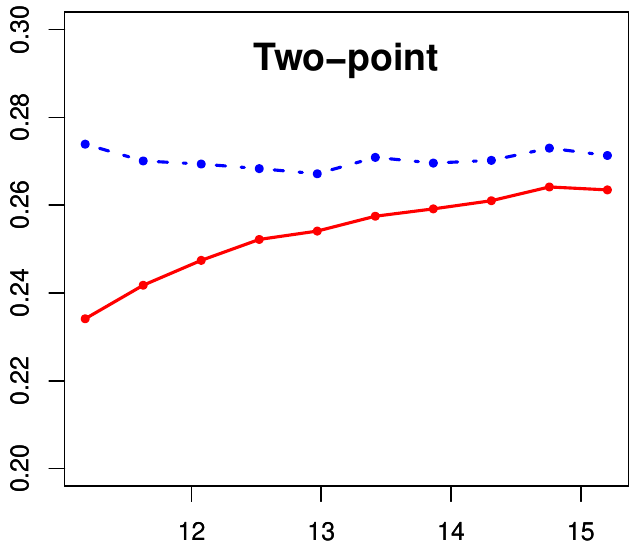}
\caption{MSL v.s. MSCORE ($n=2000$, $K=2$, $x$-axis is $\sqrt{n\bar{\theta}^2}$, and $y$-axis represents the unweighted $\ell^1$-loss in \eqref{unweightedLoss}). This figure complements Figure~\ref{fig:compare} in the main text by reporting the unweighted $\ell^1$-loss.}
\label{fig:compare-wLoss}
\end{figure}
\spacingset{1.7}

\subsection{Other values of $K$}
The simulation experiments in Section~\ref{sec:Simu} focus on $K=2$. We now consider other values of $K$ and investigate how the performance of MSL changes with $K$.
 
Fix $n=5000$ and let $K$ range in $\{3, 4, \ldots, 10, 11\}$. Given any $\beta_n\in (0,1)$ and $b_n>0$, we let $P=\beta_nI_K + (1-\beta_n){\bf 1}_K{\bf 1}_K'$ and generate $\theta$ as follows: Draw $\theta^0_1, \theta^0_2,\ldots,\theta^0_n \overset{iid}{\sim} \mathrm{Uniform}(0, 1)$ and let $\theta_i= b_n \cdot n\theta^0_i/\Vert\theta^0\Vert_1$ for $1\leq i\leq n$.  
We remark that this is a severe-degree-heterogeneity case: Since the support of $\mathrm{Uniform}(0,1)$ contains the neighborhood near zero, $\theta_{\max}/\theta_{\min}$ can be potentially large. 
We generate $\Pi$ in the same way as in Experiments 1-2: Set $\pi_i=(1,0)'$ and $\pi_i=(0,1)'$ each for $15\%$ of nodes, and let $\pi_i=(t_i, 1-t_i)'$ for the remaining $70\%$ of nodes, with $t_i\overset{iid}{\sim}\mathrm{Uniform}(0, 1)$. 
Write $\mathrm{SNR}:=\sqrt{n\bar{\theta}^2}(1-P(1,2))= b_n\beta_n\sqrt{n}$. We set $b_n (=\bar{\theta})$ such that $nb_n^2=800$
and select $\beta_n$ accordingly such that $\mathrm{SNR}=\sqrt{500}$. 
Figure~\ref{fig:largeK} reports the $\ell^1$-loss (averaged over 100 repetitions) of MSL for different $K$.

The results suggest that the $\ell^1$-loss of MSL increases with $K$. 
We recall that the minimax rate is proportional to $K\sqrt{K}$ ``asymptotically." However, for these ``finite" $K$ here, we observe that the error grows with $K$ approximately linearly.

\spacingset{1}
\begin{figure}[tb!]
\centering
\includegraphics[width=.5\textwidth]{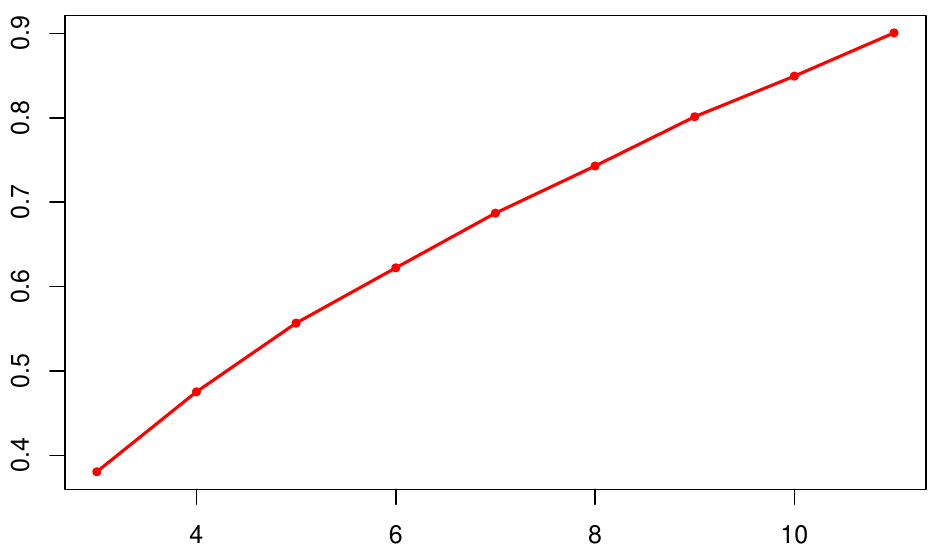} 
\caption{The $\ell^1$-loss of MSL under different values of $K$ ($n=5000$).}
\label{fig:largeK}
\end{figure}
\spacingset{1.7}


\section{Auxiliary lemmas on regularized graph  Laplacian} \label{supp:AuxiliaryLemma-L}
Given $\theta =(\theta_1, \theta_2, \ldots, \theta_n) $, recall that $\bar \theta = n^{-1} \sum_{i=1}^n \theta_i $. We introduce two disjoint index sets:
\beq\label{def:S1S2}
S_1 = \{1\leq i\leq n: \theta_i\geq \bar{\theta}\}, \qquad S_2 = \{1\leq i\leq n: \theta_i<\bar{\theta}\}.
\eeq

\subsection{Properties of $L_0$} \label{subsec:property_L_0}
Recall that $L_0= H_0^{-\frac 12} \Omega H_0^{-\frac 12}$. We state the following three lemmas  which give the spectrum properties of $L_0$ and also the estimates of the degree regularization matrix $H_0$. 
\begin{lem} \label{lem:order-eigenval}
Under the conditions of Theorem \ref{thm:eigenvector}, 
\begin{align} \label{order-lambda}
\lambda_1 >0, \quad \lambda_1 \asymp 1, \quad |\lambda_K|\asymp K^{-1}\lambda_K(PG), \quad \lambda_1-\max_{2\leq k\leq K}|\lambda_k|\geq c \lambda_1. 
\end{align}
\end{lem}

\begin{lem} \label{lem:order}
Under the conditions of Theorem \ref{thm:eigenvector}, 
\beq \label{order-xi}
\xi_1(i)\asymp \frac{1}{\sqrt{n}}\begin{cases}
\sqrt{\theta_i/\bar{\theta}}, & i\in S_1,\\
\theta_i/\bar{\theta}, & i\in S_2,
\end{cases}\qquad 
 \|\Xi(i)\|  \leq \frac{C\sqrt K}{\sqrt{n}} 
\begin{cases}
\sqrt{\theta_i/\bar{\theta}}, & i\in S_1,\\
\theta_i/\bar{\theta}, & i\in S_2,
\end{cases}
\eeq
and
\beq \label{order-H}
H_0(i,i) \asymp 
\begin{cases}
n\theta_i\bar{\theta}, & i\in S_1,\\
n\bar{\theta}^2, & i\in S_2.
\end{cases}
\eeq
\end{lem}

\begin{lem} \label{lem:noise}
Under the conditions of Theorem~\ref{thm:eigenvector}, with probability $1-o(n^{-3})$,
\beq \label{order-noise}
\|I_n-H_0^{-1}H\|\leq \frac{C\sqrt{\log(n)}}{\sqrt{n\bar{\theta}^2}}, \qquad \|H_0^{-1/2}(A-\Omega)H_0^{-1/2}\|\leq \frac{C}{\sqrt{n\bar{\theta}^2}}. 
\eeq
\end{lem}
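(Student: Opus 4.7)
The plan is to handle the two bounds separately, since the first is a statement about a diagonal matrix (pointwise scalar concentration) while the second is a bonafide matrix concentration claim.

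For the first bound, observe that both $H$ and $H_0$ are diagonal, so $\|I_n - H_0^{-1}H\| = \max_i |H(i,i) - H_0(i,i)|/H_0(i,i)$. The numerator decomposes into $(d_i - \mathbb{E}d_i) + \tau(\bar d - \mathbb{E}\bar d)$. Each $d_i = \sum_{j\ne i} A(i,j)$ is a sum of independent Bernoullis with mean at most $C n\theta_i\bar\theta$ and bounded support, and $\bar d$ is likewise a sum of independent Bernoullis (thanks to $\bar d = \tfrac{2}{n}\sum_{i<j}A_{ij}$) with mean $\asymp n\bar\theta^2$. Applying Bernstein's inequality with a deviation scale proportional to $\sqrt{\log n}$ (to secure failure probability $o(n^{-4})$) and then union-bounding over $1\le i\le n$ gives $|d_i - \mathbb{E}d_i|\le C\sqrt{n\theta_i\bar\theta\log n} + C\log n$ and $|\bar d - \mathbb{E}\bar d|\le C\bar\theta\sqrt{\log n}$ uniformly. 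Invoking Lemma~\ref{lem:order} for $H_0(i,i)\asymp n\bar\theta(\bar\theta\vee\theta_i)$ and treating $i\in S_1$ and $i\in S_2$ (as in \eqref{def:S1S2}) separately, the ratio in each case is bounded by $C\sqrt{\log n/(n\bar\theta^2)}$ plus a lower-order term absorbed under the assumption $n\bar\theta^2\gg\log n$.

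For the second bound, write $A-\Omega = W - \mathrm{diag}(\Omega)$, where $W := A - \mathbb{E}A$ has independent, centered, bounded upper-triangular entries. The key observation is the trivial but effective inequality $\|H_0^{-1/2}WH_0^{-1/2}\| \le \|H_0^{-1/2}\|^2\,\|W\|=\|W\|/\min_i H_0(i,i)$. Lemma~\ref{lem:order} gives $\min_i H_0(i,i) \asymp n\bar\theta^2$ (driven by $S_2$), while a standard concentration bound for the spectral norm of sparse, Wigner-type matrices (e.g.\ a Bandeira--van Handel / Lei--Rinaldo style result, using $\max_{ij}\Omega_{ij}\le c_0$ and $\max_i\sum_j\Omega_{ij}\lesssim n\bar\theta\theta_i$) yields $\|W\|\le C\sqrt{n\bar\theta^2}$ with probability $1-o(n^{-3})$ under $n\bar\theta^2\gg\log n$. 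Combining gives $\|H_0^{-1/2}WH_0^{-1/2}\| \le C/\sqrt{n\bar\theta^2}$. The remaining diagonal term is handled entry-wise: $[\mathrm{diag}(\Omega)](i,i)\le \theta_i^2\le C$, so $\|H_0^{-1/2}\mathrm{diag}(\Omega)H_0^{-1/2}\|\le C/(n\bar\theta^2)$, which is of lower order.

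The main subtlety is the sharp spectral-norm bound $\|W\|\lesssim\sqrt{n\bar\theta^2}$ without an extra $\sqrt{\log n}$ factor; a naive matrix Bernstein argument on the normalized matrix $H_0^{-1/2}WH_0^{-1/2}$ directly would introduce such a factor, which is why I instead extract the normalization and bound $\|W\|$ on its own using sparse random-graph concentration results that exploit the Bernoulli structure. A minor bookkeeping issue is that the calibration of the Bernstein deviation in the first bound must simultaneously cover $d_i$ for all $i$ and $\bar d$, but this is handled by a standard union bound, and the assumption $K^3\log n\ll n\bar\theta^2\beta_n^2$ inherited from Theorem~\ref{thm:eigenvector} comfortably controls the residual lower-order terms in both bounds.
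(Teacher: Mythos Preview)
Your treatment of the first bound is correct and matches the paper's argument essentially line for line.

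The second bound, however, has a genuine gap. You propose to extract the normalization via $\|H_0^{-1/2}WH_0^{-1/2}\|\le \|W\|/\min_i H_0(i,i)$ and then claim $\|W\|\le C\sqrt{n\bar\theta^2}$. That last claim is false under the degree heterogeneity allowed here. The Bandeira--van Handel or Lei--Rinaldo bound for $W$ gives $\|W\|\lesssim \max_i\sqrt{\sum_j \Omega_{ij}}\asymp \sqrt{n\bar\theta\,\theta_{\max}}$, not $\sqrt{n\bar\theta^2}$; you even wrote the correct row-sum $\max_i\sum_j\Omega_{ij}\lesssim n\bar\theta\theta_i$, whose maximum over $i$ is $n\bar\theta\theta_{\max}$. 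Combined with $\min_i H_0(i,i)\asymp n\bar\theta^2$, your route yields only
\[
\|H_0^{-1/2}WH_0^{-1/2}\|\lesssim \frac{\sqrt{n\bar\theta\,\theta_{\max}}}{n\bar\theta^2}=\sqrt{\frac{\theta_{\max}}{\bar\theta}}\cdot\frac{1}{\sqrt{n\bar\theta^2}},
\]
which is off by a factor $\sqrt{\theta_{\max}/\bar\theta}$ that can diverge under severe heterogeneity.

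The fix is exactly what the paper does: apply Bandeira--van Handel \emph{directly} to the normalized matrix $\widehat W:=H_0^{-1/2}WH_0^{-1/2}$. The point is that the normalization equalizes the row variances: $\max_i\sum_j\mathbb{E}\widehat W(i,j)^2\asymp \max_i \frac{\theta_i}{H_0(i,i)}\sum_j\frac{\theta_j}{H_0(j,j)}\lesssim \frac{1}{n\bar\theta}\cdot\frac{1}{\bar\theta}=\frac{1}{n\bar\theta^2}$, uniformly in $i$, and $\|\widehat W\|_\infty\le C/(n\bar\theta^2)$. This is precisely the mechanism by which the pre-PCA normalization tames high-degree nodes, and extracting $\|H_0^{-1/2}\|^2$ as a scalar throws it away.
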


The proof of Lemma \ref{lem:order-eigenval} is straightforward by noting that $D_{\theta}= H_0$ (see the definition of $D_\theta$ in  Section~\ref{sec:LB})  and therefore $L_0$ share the same eigenvalues as $K^{-1} PG$. Immediately, one can conclude (\ref{order-lambda}) from Condition~\ref{reg-conds}(b) and also the fact that $\lambda_1(PG) \asymp K$ which is derived from Condition~\ref{reg-conds}(a) and (b). 
In the sequel, we show the proof of the Lemmas \ref{lem:order} and \ref{lem:noise}. Before that, we introduce the Bernstein inequality which we will use frequently to bound sum of independent Bernoulli entries.
\begin{thm}[Bernstein inequality]\label{thm:Bern_ineq}
Let $X_1, \cdots, X_n$ be independent zero-mean random variables. Suppose that $|X_i|\leq M$ almost surely, for all $i$. Then for all $t>0$, 
\begin{align*}
\mathbb{P}\Big(\Big|\sum_{i=1}^n X_i\Big| \geq t \Big)\leq 2\exp\Big( -\frac{t^2/2}{\sigma^2 + Mt/3}\Big),
\end{align*}
with $\sigma^2:=\sum_{i=1}^n \mathbb{E}(X_i^2)$.
In particular, taking $t = C (\sigma \sqrt{\log (n)} + M \log (n) )$ for properly large $C$, then $$\Big|\sum_{i=1}^n X_i \Big|\leq C (\sigma \sqrt{\log (n)} + M \log (n) ) \quad \text{ with probability $1-o(n^{-5})$}.$$
\end{thm}

\begin{rmk}
The exponent in the high probability $1- o(n^{-5}) $ can be replaced by $-\tilde{C}$ for any large positive constant  integer $\tilde{C}$ by appropriately adjusting the constant $C$, which depends on $\tilde{C}$. Therefore, as long as we employ Bernstein inequality at most polynomial times in $n$ to obtain the ultimate upper bound, the result still holds with high probability by adjusting the constant factor in the bound. Specifically, we can achieve a high probability of $1- o(n^{-3})$ by choosing a  sufficiently large constant $C$ in the upper bound. Throughout the supplement, we sometimes omit specifying the high probability when applying Bernstein inequality for simplicity. It should be noted that in our analysis, Bernstein inequality is applied approximately $O(Kn)$ times. 
\end{rmk}

\begin{proof}[Proof of Lemma \ref{lem:order}]
First, we show (\ref{order-H}).  Uniformly for all $1\leq i \leq n$,
\begin{align}\label{210831031}
\mathbb{E} d_i   = \theta_i \sum_{j\neq i}\theta_j \pi_j' P \pi_i   =  \theta_i \sum_{j\neq i}\theta_j \sum_{k}\pi_j(k) e_k' P\pi_i\geq c_1 n\theta_i\bar{\theta}/K \cdot {\bf 1}_K' P \pi_i\geq c_1c_2 n\theta_i\bar{\theta}
\end{align}
by the last inequalities  in Condition~\ref{reg-conds}(a) and (b).  On the other hand, $ \pi_j' P \pi_i \leq \max_{t,s}P(t,s)$ for all $i, j$, then $\mathbb{E} d_i  = \theta_i \sum_{j\neq i}\theta_j \pi_j' P \pi_i \leq c n\theta_i\bar{\theta}$ for all $i$. As a result, $\mathbb{E} d_i\asymp n\theta_i\bar{\theta}$, $\mathbb{E} \bar{d}\asymp n\bar{\theta}^2$; and further
\begin{align*}
H_0(i,i)= \mathbb{E} d_i + \mathbb{E} \bar{d}  \asymp \begin{cases}
n\theta_i\bar{\theta}, & i\in S_1,\\
n\bar{\theta}^2, & i\in S_2.
\end{cases}
\end{align*}
This completes the proof of (\ref{order-H}).
Next, we turn to prove (\ref{order-xi}). By the definition $L_0= H_0^{-\frac 12} \Omega H_0^{-\frac 12}$, there exists a non-singular matrix $B\in \mathbb{R}^{K\times K}$ satisfying  
\begin{align*}
\Xi = H_0^{-\frac 12} \Theta \Pi  B, \qquad BB' = (\Pi' \Theta H_0^{-1} \Theta \Pi)^{-1}.
\end{align*}
Using Condition~\ref{reg-conds}(a) and $H_0= D_{\theta}$, one gets $ \Vert BB'\Vert \leq Kc$, $\lambda_{\min} (BB')\geq K c^{-1}$. Write $B= ({b}_1, \cdots, {b}_K)$. We have $K c^{-1} \leq \Vert b_i\Vert^2 \leq Kc$ for $1\leq i\leq K$. Taking the $i$-th row of $\Xi$, 
\begin{align*}
\Vert \Xi(i) \Vert= \frac{\theta_i}{\sqrt{H_0(i,i)}} \Vert \pi_i' B \Vert  \leq C\sqrt{\frac{\theta_i}{n\bar{\theta}}} \|\pi_i\| \Vert BB'\Vert^{\frac 12} \leq C\sqrt K \, \sqrt{\frac{\theta_i}{n\bar{\theta}}}.
\end{align*}
For the leading eigenvector $\xi_1$,  we have 
\begin{align*}
\xi_1(i) =  \frac{\theta_i}{\sqrt{H_0(i,i)}} \pi_i' b_1
\end{align*}
It follows from  $L_0 \Xi= \Xi \Lambda$ that $H_0^{-\frac 12} \Theta \Pi P \Pi' \Theta H_0^{-1} \Theta \Pi B= H_0^{-\frac 12}\Theta \Pi B \Lambda $, which implies that $P\Pi'\Theta H_0^{-1} \Theta \Pi B= B \Lambda$. As a consequence, $b_1$ is the first right  eigenvector of $P\Pi'\Theta H_0^{-1} \Theta \Pi$, and equivalently, the first right  eigenvector of $PG$. Using Condition~\ref{reg-conds}(c), we easily conclude that  $b_1(k)>0, b_1(k)\asymp 1$ for all $1\leq k\leq K$. Then, $\pi_i' b_1\asymp 1$ for all $1\leq i\leq n$, and the entrywise estimate of $\xi_1$ simply follows from (\ref{order-H}).
\end{proof}

\begin{proof}[Proof of Lemma \ref{lem:noise}]
Recall the definition of $H_0, H$. We write 
\begin{align*}
\frac{H(i,i)}{H_0(i,i)} -1  = \frac{d_i - \mathbb{E} d_i + \bar{d}- \mathbb{E} \bar{d}}{H_0(i,i)}, \qquad d_i - \mathbb{E} d_i  = \sum_{j\neq i} A_{ij} - \mathbb{E} A_{ij}.
\end{align*}
By (\ref{order-H}), we easily see that $H_0(i,i)\asymp n\bar{\theta}(\theta_i \vee \bar{\theta})$. What remains is to  estimate the numerator, or $d_i -\mathbb{E} d_i$ for all $1\leq i\leq n$. This actually can be achieved by employing Bernstein  inequality. Applying the Bernstein inequality (Theorem \ref{thm:Bern_ineq}) to $d_i -\mathbb{E} d_i$, we see that 
\begin{align*}
\mathbf{P}\Big( \big|\sum_{j\neq i} A_{ij} - \mathbb{E} A_{ij}\big| \geq t \Big) \leq 2\exp \bigg({- \frac{ \frac 12 t^2}{\sum_{j\neq i} {\rm var} A_{ij} + \frac 13 M t}}\bigg)
\end{align*}
where $M= \sup_{j} | A_{ij} - \mathbb{E} A_{ij}| \leq 2$.  Moreover, we have the crude bound 
\begin{align*}
\sum_{j\neq i} {\rm var} A_{ij} \leq c n \theta_i \bar{\theta}.
\end{align*}
Taking $t= C\sqrt{\log (n)} \sqrt{n\bar{\theta}\theta_i \vee \log (n) }$, it gives that $\big|\sum_{j\neq i} A_{ij} - \mathbb{E} A_{ij}\big|\leq C\sqrt{\log (n)( n\bar{\theta}\theta_i \vee \log (n) )}$ with probability $1- o(n^{-5})$.
Consider all $i$'s together,  one gets 
 \begin{align*}
\mathbf{P}\Big( \bigcup_{i=1}^n \Big\{ \big|\sum_{j\neq i} A_{ij} - \mathbb{E} A_{ij}\big| \geq C\sqrt{\log (n)}\, \big(\sqrt{ n \theta_i\bar{\theta} } \vee \sqrt{\log (n)}\big) \, \Big\}\Big) \leq c n^{-4}.
\end{align*}
This, combined with $H_0(i,i)\asymp n\bar{\theta}(\theta_i\vee \bar{\theta})$, implies that 
\begin{align*}
\Big| 1- H(i,i)/H_0(i,i)\Big| &\leq  \frac{|d_i - \mathbb{E} d_i|  }{H_0(i,i)} + \frac1n  \sum_{j=1}^n \frac{|d_j - \mathbb{E} d_j|  }{H_0(i,i)} \notag\\
&\leq C\sqrt{\frac{\log (n)}{n \bar{\theta}^2}} + C \sqrt{\frac{\log (n)}{n\bar{\theta}^2 }} \frac 1n \sum_{j=1}^n \max\Big\{ \sqrt{\frac{\theta_j}{\bar{\theta}}}, \sqrt{\frac{\log (n)}{n\bar{\theta}^2}} \, \Big\} \notag\\
&\leq C\sqrt{\frac{\log (n)}{n \bar{\theta}^2}} \Big(1+  \frac 1n \sum_{j=1}^n \sqrt{\frac{\theta_j}{\bar{\theta}}}+\sqrt{\frac{\log (n)}{n\bar{\theta}^2}} \,  \Big) 
\leq C \sqrt{\frac{\log (n)}{n\bar{\theta}^2}}
\end{align*}
with probability $1- o(n^{-3})$ uniformly for all $1\leq i \leq n$. Here in the last step, we used the Cauchy-Schwarz inequality $\sum_{j=1}^n \sqrt{\theta_j}\leq \sqrt n \sqrt{\sum_{j=1}^n \theta_j} = n \sqrt{\bar{\theta}}$. This finished the first estimate of (\ref{order-noise}). Now, we proceed to the second estimate. We crudely bound
$\|H_0^{-1/2}(A-\Omega)H_0^{-1/2}\|$ by 
$\Vert H_0^{-1/2}W H_0^{-1/2}\Vert + \Vert H_0^{-1/2}{\rm diag}(\Omega) H_0^{-1/2}\Vert $. First, it is easy to get the bound
\begin{align*}
\Vert H_0^{-1/2}{\rm diag}(\Omega) H_0^{-1/2}\Vert \leq C\max_{1\leq i \leq n }  \frac{\theta_i^2 \pi_i'P\pi_i}{H_0(i,i)}\leq \frac{C}{n\bar{\theta}} \leq \frac{C}{ \sqrt{n\bar{\theta}^2}}.
\end{align*}
Next, we apply the non-asymptotic bounds for random matrices in \cite{bandeira2016sharp} to bound the operator norm of $\widehat{W}:=  H_0^{-1/2}W H_0^{-1/2}$. Note that $\widehat{W}$ is a symmetric random matrix with independent upper triangular entries. Using Corollary 3.12 of \cite{bandeira2016sharp} with Remark 3.13, we bound 
\begin{align*}
\mathbb{P} (\Vert \widehat{W}\Vert \geq C \tilde{\sigma} + t )\leq n e^{-t^2/c\tilde{\sigma}_*^2}
\end{align*} 
for some constant $C, c>0$, with 
\begin{align*}
\tilde{\sigma}= \max_{i} \sqrt{\sum_{j} \mathbb{E} \widehat{W}(i,j)^2}\leq 1/\sqrt{n\bar{\theta}^2}, \quad \tilde{\sigma}_*= \max_{i,j} \Vert \widehat{W}(i,j)\Vert_\infty \leq C/n\bar{\theta}^2.
\end{align*}
Then, we take $t= c/\sqrt{n\bar{\theta}^2}$ for properly large $c>0$ and use the assumption $n\bar{\theta}^2\gg \log (n)$. It follows that $\Vert \widehat{W}\Vert \leq C/\sqrt{n\bar{\theta}^2} $ with probability $1- o(n^{-3})$.
We thus complete the proof of Lemma \ref{lem:noise}.
\end{proof}

\subsection{Properties of $\tilde{L}^{(i)}$} \label{subsec:property_tL}

In this section, for an arbitrary  fixed index $i$ and  the intermediate matrix $\tilde{L}^{(i)}$, we collect the spectrum properties of $\tilde{L}^{(i)}$ and estimate on  $\tilde{H}^{(i)}$ in the lemmas below. Let $E$ be the event that Lemma \ref{lem:noise} holds.

\begin{lem} \label{lem:est.etL}
Under the conditions in Theorem \ref{thm:eigenvector}. Over the event $E$, for any fixed $1\leq i\leq n$,   the eigenvalues $\tilde{\lambda}^{(i)}_1,\cdots, \tilde{\lambda}^{(i)}_K$ of $\tilde{L}^{(i)}$ satisfy
\begin{align}\label{prop:eig.tL}
\tilde{\lambda}^{(i)}_1 >0, \quad \tilde{\lambda}^{(i)}_1\asymp 1, \quad |\tilde{\lambda}^{(i)}_K|\asymp K^{-1}|\lambda_K(PG)|, \quad \tilde{\lambda}^{(i)}_1-\max_{2\leq k\leq K}|\tilde{\lambda}^{(i)}_k|\geq C^{-1}\tilde{\lambda}^{(i)}_1 ;
\end{align}
and for the associated eigenvectors, 
\begin{align}\label{prop:eigv.tL}
\tilde{\xi}^{(i)}_1(j)\asymp \frac{1}{\sqrt{n}}\begin{cases}
\sqrt{\theta_j/\bar{\theta}}, & j\in S_1,\\
\theta_j/\bar{\theta}, & j\in S_2,
\end{cases}\qquad 
 \|\tilde{\Xi}^{(i)}(j)\| \leq \frac{C\sqrt K}{\sqrt{n}} 
\begin{cases}
\sqrt{\theta_j/\bar{\theta}}, & j\in S_1,\\
\theta_j/\bar{\theta}, & j\in S_2.
\end{cases}
\end{align}
\end{lem} 

\begin{lem}\label{lem:noisetildeH}
 Under the conditions of  Theorem~\ref{thm:eigenvector}.  Over the event $E$, for any fixed $1\leq i \leq n$ and  $\tilde{H}^{(i)}$, 
 \begin{align} \label{est.tildeYY}
 \Vert I_n - H_0^{-1}\tilde{H}^{(i)} \Vert  \leq \frac{C\sqrt{\log(n)}}{\sqrt{n\bar{\theta}^2}},\quad  \Vert I_n - (\tilde{H}^{(i)})^{-1}{H} \Vert  \leq \frac{C\sqrt{\log(n)}}{\sqrt{n\bar{\theta}^2}}.
  \end{align}

 \end{lem}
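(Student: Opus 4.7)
Since $\tilde H^{(i)}$, $H_0$, and $H$ are all diagonal matrices with positive diagonals, both operator norms in \eqref{est.tildeYY} reduce to maxima over $j$ of ratios of diagonal entries. My plan is to reduce both bounds to Lemma~\ref{lem:noise} via a direct entry-by-entry comparison between $\tilde H^{(i)}$ and $H$, without rerunning a full Bernstein argument from scratch.

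First, I would write down explicit expressions for $H(j,j)-\tilde H^{(i)}(j,j)$ by plugging in the definitions. Using $A(j,j)=0$ and $\sum_{s,\ell}A(s,\ell)-\sum_{s\neq i,\ell\neq i}A(s,\ell)=2d_i$, a short computation yields, for $j\neq i$,
\[
H(j,j)-\tilde H^{(i)}(j,j)=A(j,i)+\frac{2(d_i-\mathbb{E}d_i)}{n}+\frac{\mathbb{E}d_i}{n},
\]
and for $j=i$,
\[
H(i,i)-\tilde H^{(i)}(i,i)=\Bigl(1+\tfrac{2}{n}\Bigr)(d_i-\mathbb{E}d_i)+\frac{\mathbb{E}d_i}{n}.
\]
Hence $\|H-\tilde H^{(i)}\|$ is governed by three simple terms: the Bernoulli $A(j,i)\leq 1$, the single-index deviation $d_i-\mathbb{E}d_i$, and the deterministic $\mathbb{E}d_i/n$.

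Next, I would bound each term on the event $E$. The Bernstein estimate established in the proof of Lemma~\ref{lem:noise} gives $|d_i-\mathbb{E}d_i|\leq C\sqrt{\log n\,(n\bar\theta\theta_i\vee\log n)}$, while \eqref{order-H} gives $\mathbb{E}d_i\asymp n\bar\theta\theta_i$ and $H_0(j,j)\asymp n\bar\theta(\bar\theta\vee\theta_j)$. Dividing the displayed equations by $H_0(j,j)$, each of the three contributions is $O(\sqrt{\log n}/\sqrt{n\bar\theta^2})$: the $A(j,i)$ term gives $(n\bar\theta^2)^{-1}$, the $(d_i-\mathbb{E}d_i)/n$ term gives a factor of $n^{-1}\sqrt{\log n}/\sqrt{n\bar\theta^2}$ times $\sqrt{n\bar\theta\theta_i\vee\log n}/(\bar\theta\vee\theta_j)$ which is easily $\lesssim 1$, and $\mathbb{E}d_i/(nH_0(j,j))\lesssim \theta_i/(n\bar\theta)$ which is bounded using the implicit constraint $\theta_i\bar\theta\lesssim 1$ (coming from $\Omega_{ij}\leq c_0$ together with the third inequality of \eqref{cond1-Pi}). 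The first bound of \eqref{est.tildeYY} then follows from the triangle inequality
\[
\|I_n-H_0^{-1}\tilde H^{(i)}\|\;\leq\;\|I_n-H_0^{-1}H\|+\max_j\frac{|H(j,j)-\tilde H^{(i)}(j,j)|}{H_0(j,j)},
\]
combined with the first estimate in Lemma~\ref{lem:noise}.

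Finally, for the second bound, the first bound already implies $\tilde H^{(i)}(j,j)\asymp H_0(j,j)$ uniformly in $j$, so
\[
\|I_n-(\tilde H^{(i)})^{-1}H\|=\max_j\frac{|H(j,j)-\tilde H^{(i)}(j,j)|}{\tilde H^{(i)}(j,j)}
\]
is controlled by the same term-by-term bound on the numerator as above. The main obstacle is purely bookkeeping: verifying that the three types of corrections are each $O(\sqrt{\log n/(n\bar\theta^2)})$ after division by $H_0(j,j)$, uniformly over $j\in\{1,\dots,n\}$; all concentration inputs are already in hand from Lemma~\ref{lem:noise}.
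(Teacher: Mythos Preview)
Your proposal is correct and follows essentially the same route as the paper: the paper also derives the entrywise differences $|H(j,j)-\tilde H^{(i)}(j,j)|$ (its \eqref{est:HHtd1}--\eqref{est:HHtd2}, which match your displayed formulas up to absorbing $\sqrt{n\bar\theta\theta_i\log n}/n$ into $\theta_i\bar\theta+\log(n)/n$ via AM--GM) and then combines them with Lemma~\ref{lem:noise}. Your justification of $\theta_i\bar\theta\lesssim 1$ via $\mathbb{E}d_i\geq c_1 n\theta_i\bar\theta$ (from \eqref{210831031}) together with the trivial bound $\mathbb{E}d_i\leq n$ is the right way to close the bookkeeping on the $\mathbb{E}d_i/n$ term.
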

 \noindent In addition to the above lemma, by Theorem \ref{thm:Bern_ineq} and after elementary computations, we also have that for each $1\leq j \leq n, j\neq i$, over the event $E$,
 \begin{align}\label{est:HHtd1}
\big|\tilde{H}^{(i)}(j,j) - H(j,j)\big| &= \Big|- W(j,i) - \frac 2n \Big(\sum_{s\neq i} W(i, s)\Big) \Big|  \notag\\
& \leq \Big|- A(j,i) + \Omega(j,i)- \frac 2n \Big(\sum_{s\neq i} W(i, s)\Big) \Big| \notag\\
&\leq A(j,i) + \theta_j\theta_i +  C\frac{\sqrt{ \theta_i \bar{\theta} \log n}}{\sqrt n}+  \frac{C\log (n)}{n};
\end{align}
 and 
\begin{align}\label{est:HHtd2}
\big|\tilde{H}^{(i)}(i,i) - H(i,i)\big|\leq (1+ 2/n) \Big|\sum_{s\neq i} W(i, s)\Big| \leq  C\sqrt{n\theta_i\bar{\theta} \log (n)}+ C \log (n).
\end{align}
Applying (\ref{est:HHtd1}) and (\ref{est:HHtd2}) with Lemma \ref{lem:noise}, it is easy to deduce the estimates in Lemma \ref{lem:noisetildeH}. To show the eigen-properties of $\tilde{L}^{(i)}$ in Lemma \ref{lem:est.etL}, one only need to rely on the estimate 
$$\Vert \tilde{L}^{(i)} - L_0 \Vert\asymp  \Vert I_n - H_0^{-1}\tilde{H}^{(i)} \Vert \lambda_1(L_0)\leq C \sqrt{\frac{\log (n)}{n\bar{\theta}^2}}\ll |\lambda_K|$$
under the assumption of Theorem~\ref{thm:eigenvector}, then (\ref{prop:eig.tL}) can be derived simply by further applying Lemma 
\ref{lem:order-eigenval}. Moreover, (\ref{prop:eigv.tL}) follows from  Lemmas \ref{lem:upbd_key1}, \ref{lem:upbd_key2K} and \ref{lem:order}. Thereby, we omit the proofs of  Lemmas \ref{lem:est.etL} and \ref{lem:noisetildeH}.  We comment here that the proof of Lemmas \ref{lem:upbd_key1}, \ref{lem:upbd_key2K} only depends on the lemmas in Section \ref{subsec:property_L_0}, i.e., the properties of $L_0$, not the properties of $\tilde{L}^{(i)}$. There is no circular logic for the  lemmas presenting in this subsection.

\subsection{Variants of Davis-Kahan sin$\theta$ Theorem}
In our analysis, we heavily rely on the use of Davis-Kahan sin$\theta$ theorem under different versions. For readers' convenience, we collect all the variants we employed in our theory below.

\begin{thm}[Davis-Kahan sin$\Theta$ Theorem and its variants]
Let $\Sigma, \hat{\Sigma}\in \mathbb{R}^{p, p}$ be symmetric, with eigenvalues $\lambda_1\geq \cdots \geq \lambda_p$ and  $\hat{\lambda}_1\geq \cdots \geq \hat{\lambda}_p$ respectively.   Fix $1\leq r\leq s\leq p$, let $U = (u_r, \cdots, u_s)$ and $U^\perp = (u_1, \cdots, u_{r-1}, u_{s+1}, \cdots, u_p)$ be the orthonormal eigenvectors such that $\Sigma u_j = \lambda_j u_j$, similarly we define $\hat{U}, \hat{U}^\perp $ for $\hat{\Sigma}$. Denote $\delta: = \inf \{ |\hat{\lambda} - \lambda|: \lambda\in [\lambda_r, \lambda_s], \hat{\lambda} \in (-\infty, \hat{\lambda}_{r-1}]\cup [\hat{\lambda}_{s+1}, \infty)  \}$ where we take the convention $\hat{\lambda}_ 0= - \infty$ and $\hat{\lambda}_{p+1} = \infty$. Then, 
\begin{align} \label{sine-theta:perp}
\Vert (\hat{U}^{\perp})' U\Vert = \Vert \sin\Theta (\hat{U}, U)\Vert \leq \frac{\Vert \hat{\Sigma} - \Sigma \Vert }{\delta}
\end{align}
for some constant $C>0$. 
Moreover, there exists an orthogonal matrix $O$ of dimension ${s-r+1}$ such that
\begin{align} 
&\Vert \hat{U}' U - O'\Vert \leq C  \bigg( \frac{\Vert \hat{\Sigma} - \Sigma \Vert }{\delta}
\bigg)^2, \label{sine-theta:perturb1}\\
& \Vert \hat{U} - UO\Vert \leq C \frac{\Vert \hat{\Sigma} - \Sigma \Vert }{\delta} \label{sine-theta:perturb2}
\end{align}
for some constant $C>0$. 
\end{thm}

Note that (\ref{sine-theta:perp}) is the version of sin$\Theta$ theorem proved by Davis and Kahan's original paper \cite{sin-theta}. The proof of  (\ref{sine-theta:perturb1}) can be referred to Lemma B.2 in the Supplementary of \cite{abbe2020entrywise}. More specifically, $O' = {\rm sgn}( \hat{U}' U) = \bar{U}\bar{V}'$ where the SVD of $ \hat{U}' U$ is given by $ \bar{U}\bar{\Lambda} \bar{V}'$.  By Chp I, Cor 5.4 of \cite{StewartSun1990}, the singular values in $\bar{\Lambda}$ are the cosines of canonical angles $0\leq \bar{\theta}_1 \leq \cdots \leq \bar{\theta}_{s-r+1}\leq \pi/2$ between $\hat{U}$ and $U$.  It follows that 
\begin{align*}
\Vert \hat{U}' U - O'\Vert = 1-  \cos \bar{\theta}_{s - r + 1} \leq 1- \cos^2 \bar{\theta}_{s - r + 1} = \sin ^2 \bar{\theta}_{s - r + 1}  =  \Vert \sin\Theta (\hat{U}, U)\Vert^2
\end{align*}
Thus, (\ref{sine-theta:perturb1}) follows directly from (\ref{sine-theta:perp}).  (\ref{sine-theta:perturb2}) is implied by the simple derivations
\begin{align*}
\Vert \hat{U} - UO\Vert^2 = \Vert 2I_{s-r+1} - \hat{U}' U O - O'U' \hat{U}\Vert \leq 2 \Vert \hat{U}' U - O'\Vert \, . 
\end{align*}
%

\section{Entrywise eigenvector analysis} \label{sec:EntrywiseAnalysis-in-supp}
Here we show the complete proof of Theorem  \ref{thm:eigenvector} in our manuscript. In Sections \ref{subsec:1steigen1}-\ref{subsec:1steigen2}, we state the proofs of key lemmas for proving (\ref{result-1}), while the claim of (\ref{result-1}) is already presented in the manuscript. Section \ref{sub:2-Keig} collects the proof of the second claim in Theorem~\ref{thm:eigenvector} (i.e.,  (\ref{result-2})) which provides the entry-wise estimates for the $2$- to $K$-th 
eigenvectors. Similarly to the proof of  the first claim in Theorem~\ref{thm:eigenvector} (i.e., (\ref{result-1}) ), we introduce three key lemmas, Lemmas \ref{lem:upbd_key2K}-\ref{lem:techB1_2K},  counterpart to Lemmas \ref{lem:upbd_key1}-\ref{lem:techB1}. The  proofs  of Lemmas \ref{lem:upbd_key2K}-\ref{lem:techB1_2K} are provided  correspondingly in Section \ref{subsec:pr_Lem_upbd_key2K}-\ref{subsec:pr_Lem_techB1_2K}.


\subsection{Proof of Lemma \ref{lem:upbd_key1}} \label{subsec:1steigen1}
In this subsection, we show the proof of Lemma \ref{lem:upbd_key1} using the eigen-properties of $L_0$ in Section \ref{subsec:property_L_0}.

Fix the index $i$, we study the perturbation from $L_0=H_0^{-1/2}\Omega H_0^{-1/2}$ to $\tilde{L}^{(i)}=(\tilde{H}^{(i)})^{-1/2}\Omega (\tilde{H}^{(i)})^{-1/2}$. 
By definition, 
\[
L_0=H_0^{-1/2}\Omega H_0^{-1/2}=\sum_{k=1}^K\lambda_k\xi_k\xi_k', \qquad (\tilde{H}^{(i)})^{-1/2}\Omega (\tilde{H}^{(i)})^{-1/2}\tilde{\xi}^{(i)}_1  = \tilde{\lambda}^{(i)}_1\tilde{\xi}^{(i)}_1. 
\]
Write $\tilde{Y}\equiv \tilde{Y}^{(i)}:={H_0^{1/2}(\tilde{H}^{(i)})^{-1/2}}$. Then, we have
\[
\tilde{Y}\bigl(\sum_{k=1}^K\lambda_k \xi_k\xi_k'\bigr)\tilde{Y}\tilde{\xi}^{(i)}_1= \tilde{\lambda}^{(i)}_1\tilde{\xi}^{(i)}_1. 
\]
It follows that, for each $1\leq j\leq n$, 
\beq \label{easyperturb-1}
\frac{1}{\tilde{Y}(j,j)}\tilde{\xi}^{(i)}_1(j)  = \frac{\lambda_1(\xi_1'\tilde{Y}\tilde{\xi}^{(i)}_1)}{\tilde{\lambda}^{(i)}_1}\xi_1(j)  + \sum_{k=2}^K  \frac{\lambda_k(\xi_k'\tilde{Y}\tilde{\xi}^{(i)}_1)}{\tilde{\lambda}^{(i)}_1}\xi_k(j). 
\eeq
As a result, 
\beq \label{easyperturb-2}
 |\tilde{\xi}^{(i)}_1(j)-\xi_1(j)|\leq \Bigl|\frac{1}{\tilde{Y}(j,j)}-1\Bigr||\tilde{\xi}^{(i)}_1(j)| + \Bigl| \frac{\lambda_1(\xi_1'\tilde{Y}\tilde{\xi}^{(i)}_1)}{\tilde{\lambda}^{(i)}_1}-1 \Bigr||\xi_1(j)| +  
 \sum_{k=2}^K  \Bigl|\frac{\lambda_k(\xi_k'\tilde{Y}\tilde{\xi}^{(i)}_1)}{\tilde{\lambda}^{(i)}_1}\Bigr||\xi_k(j)|.
\eeq
By Lemma~\ref{lem:order-eigenval}, $\|L_0\|\leq CK^{-1}\lambda_1(PG)\leq C$. And 
using the first estimate in (\ref{est.tildeYY}), it is easy to conclude that 
\begin{align} \label{est:tY-In}
\|\tilde{Y}-I_n\| = \big\Vert I_n - (H_0^{-1} \tilde{H}^{(i)})^{-\frac 12} \big\Vert  \leq \frac{C\sqrt{\log(n)}}{\sqrt{n\bar{\theta}^2}},
\end{align}
over the event $E$ where Lemma \ref{lem:noise} holds.
 As a result,  we have $\|\tilde{L}^{(i)}- L_0\|\leq\Vert (\tilde{Y} -I_n) L_0\Vert \leq CK^{-1} \lambda_1(PG)\|\tilde{Y} -I_n\|$ since $\tilde{L}^{(i)} = \tilde{Y}  L_0 \tilde{Y} $.  Using Weyl's inequality, we then see that 
 \begin{align*}
 \max_{1\leq k\leq K} {|\tilde{\lambda}^{(i)}_k-\lambda_k|} & \leq \|\tilde{L}^{(i)}- L_0\|\leq CK^{-1}\lambda_1(PG) \|\tilde{Y}-I_n\| \leq  C \|\tilde{Y}-I_n\|
 \end{align*}
 since $\lambda_1(PG) \leq CK$ under our model assumption.
Furthermore, by Lemma~\ref{lem:order-eigenval}, the eigen-gap between the largest eigenvalue and the other nonzero eigenvalues of $L_0$ is at the  order $K^{-1}\lambda_1(PG)$. Hence, the eigengap between $\lambda_1$ and $\tilde{\lambda}_2^{(i)}, \cdots, \tilde{\lambda}_K^{(i)}$ is still of the order $K^{-1}\lambda_1(PG)$.  It follows from the sin-theta theorem (\ref{sine-theta:perturb1}) that
\begin{align*}
|\xi_1'\tilde{Y}\tilde{\xi}^{(i)}_1- 1| &\leq |\xi_1' \tilde{\xi}^{(i)}_1- 1| + |\xi_1' (\tilde{Y}-I_n) \tilde{\xi}^{(i)}_1| \notag\\
  & \leq C (K \lambda_1^{-1}(PG)\Vert \tilde{L}^{(i)}- L_0\Vert )^2 + \Vert \tilde{Y} - I_n\Vert \leq C\|\tilde{Y}-I_n\|.
\end{align*}
Here ${\rm sgn}(\xi_1' \tilde{\xi}^{(i)}_1) = 1$ since we fix our choices of $\xi_1,  \tilde{\xi}^{(i)}_1$ with positive first components and they are both from the positive matrices. Then this will be claimed by Perron's theorem.

Using  Cauchy-Schwarz inequality, we bound $\sum_{k=2}^K |\xi_k'\tilde{Y}\tilde{\xi}^{(i)}_1| |\xi_k(j)| \leq \Vert  \Xi_1' \tilde{Y} \tilde{\xi}^{(i)}_1\Vert \Vert \Xi_1(j)\Vert $. And by sine-theta theorem (\ref{sine-theta:perp}),   
\begin{align*}
 \Vert  \Xi_1' \tilde{Y} \tilde{\xi}^{(i)}_1\Vert &\leq   \Vert (\tilde{\xi}^{(i)}_1)' \Xi_1\Vert  + \Vert \tilde{Y} -I_n \Vert  \notag\\
&\leq C\Big(K\lambda_1^{-1}(PG) \|\tilde{L}^{(i)}- L_0\| + \|\tilde{Y} -I_n\| \Big) \cr 
&\leq C  \|\tilde{Y} - I_n\|.
\end{align*}
Plugging the above  estimates, we have
\begin{align*}
|\tilde{\xi}^{(i)}_1(j)-\xi_1(j)| & \leq C\|\tilde{Y} - I_n\||\tilde{\xi}^{(i)}_1(j)| + C\|\tilde{Y} - I_n\|\|\Xi(j)\|\cr
&\leq C\|\tilde{Y} - I_n\| |\tilde{\xi}^{(i)}_1(j)-\xi_1(j)| +  C  \|\tilde{Y} - I_n\|\|\Xi(j)\|. 
\end{align*}
Since $\|\tilde{Y}-I_n\|= o(1)$ over the event $E$, rearranging the terms gives
\beq \label{easyperturb-3}
|\tilde{\xi}^{(i)}_1(j)-\xi_1(j)|\leq C  \|\tilde{Y}  -I_n\|\|\Xi(j)\|,  \qquad \mbox{for all }1\leq t\leq n. 
\eeq
We plug (\ref{est:tY-In}) into \eqref{easyperturb-3} and use the bound for $\|\Xi(j)\|$ in \eqref{order-xi}. It follows that over the event $E$, for all $1\leq j\leq n$, 
\beq  \label{easyperturb-5}
|\tilde{\xi}^{(i)}_1(j)- \xi_1(j)| \leq C \sqrt{K} \sqrt{\frac{\log(n)}{n\bar{\theta}^2}} \sqrt{\frac{\theta_j}{n\bar{\theta}}}
\bigg( \sqrt{\frac{\theta_j}{\bar{\theta}}} \wedge 1\, \bigg)
\eeq
Then, consider all $i$'s together, we conclude  (\ref{eq:t1}) with probability $1 - o(n^{-3})$ simultaneously for all $1\leq i, j\leq n$. 

\subsection{Proof of Lemma \ref{lem:upbd_key11}}
In this subsection, we state the proof of Lemma \ref{lem:upbd_key11} which heavily relies on the eigen-properties of $\tilde{L}^{(i)}$ in Section \ref{subsec:property_tL}.

Fix the index $i$, we first show (\ref{Laplacian-entry-2}) which is based on the decomposition
\begin{align*}
w\hat{\xi}_1 = \tilde{\xi}^{(i)}_1 + (\overline{\xi}^{(i)}_1-\tilde{\xi}^{(i)}_1) + (w\hat{\xi}_1- \overline{\xi}^{(i)}_1)
\end{align*}
where $w= {\rm sgn} ({\xi}_1' \hat{\xi}_1)$ will be claimed later. It is not hard to derive
\begin{align*}
|e_i' \Delta (w\hat{\xi}_1- \overline{\xi}^{(i)}_1)| \leq \Vert\Delta \Vert \,\| w\hat{\xi}_1-\overline{\xi}^{(i)}_1\| & \leq \Vert H_0^{-\frac 12} W H_0^{-\frac 12} \Vert \Vert H_0^{\frac 12} H^{-\frac 12} \Vert \Vert H_0^{\frac 12} (\tilde{H}^{(i)})^{-\frac 12} \Vert \,\| w\hat{\xi}_1-\overline{\xi}^{(i)}_1\| \notag\\
&\leq \frac{C}{\sqrt{n\bar{\theta}^2}} \,\| w\hat{\xi}_1-\overline{\xi}^{(i)}_1\|
\end{align*} 
over the event $E$,
in light of Lemmas \ref{lem:noise} and \ref{lem:noisetildeH}. We thus end up with (\ref{Laplacian-entry-2}) . We now turn to prove (\ref{Laplacian-entry-1}). 
We study the perturbation from $\tilde{L}^{(i)} = (\tilde{H}^{(i)})^{-1/2}\Omega(\tilde{H}^{(i)})^{-1/2}$ to $L= H^{-1/2}AH^{-1/2}$. Write $X\equiv X(i):= (\tilde{H}^{(i)})^{1/2} H^{-1/2}$. We can rewrite
\begin{align*}
L =  X(\tilde{H}^{(i)})^{-\frac 12}A (\tilde{H}^{(i)})^{-\frac 12} X= X\tilde{L}^{(i)}X - X(\tilde{H}^{(i)})^{-\frac 12}{\rm diag} (\Omega) (\tilde{H}^{(i)})^{-\frac 12}X + X\Delta 
\end{align*}
with $\Delta= (\tilde{H}^{(i)})^{-1/2} W H^{-1/2}$.
By definition, $\tilde{L}^{(i)}=( \tilde{H}^{(i)}) ^{-1/2}\Omega(\tilde{H}^{(i)})^{-1/2}=\sum_{k=1}^K\tilde{\lambda}^{(i)}_k\tilde{\xi}^{(i)}_k(\tilde{\xi}^{(i)}_k)'$ and $H^{-1/2}AH^{-1/2}\hat{\xi}_1=\hat{\lambda}_1\hat{\xi}_1$. It follows that
\[
\sum_{k=1}^K  \tilde{\lambda}^{(i)}_k(\hat{\xi}_1'X\tilde{\xi}_k^{(i)}) X\tilde{\xi}^{(i)}_k- X(\tilde{H}^{(i)})^{-\frac 12}{\rm diag} (\Omega) (\tilde{H}^{(i)})^{-\frac 12}X\hat{\xi}_1 + X\Delta  \hat{\xi}_1 = \hat{\lambda}_1\hat{\xi}_1. 
\]
As a result,
\begin{align} \label{Laplacian-step2-1}
\hat{\xi}_1(i) =& \frac{\tilde{\lambda}^{(i)}_1(\hat{\xi}_1'X\tilde{\xi}^{(i)}_1)}{\hat{\lambda}_1}X(i,i)\tilde{\xi}_1^{(i)}(i) +\sum_{k=2}^K \frac{\tilde{\lambda}^{(i)}_k(\hat{\xi}_1'X \tilde{\xi}^{(i)}_k)}{\hat{\lambda}_1}X(i,i)\tilde{\xi}^{(i)}_k(i)  \notag\\
&-  \frac{X^2(i,i)\Omega(i,i)}{\hat{\lambda}_1\tilde{H}^{(i)}(i,i)} \hat{\xi}_1(i) + \frac{X(i,i)}{\hat{\lambda}_1} e_i'\Delta \hat{\xi}_1. 
\end{align}
By Lemma \ref{lem:noisetildeH}, it is easy to deduce that 
\begin{align}\label{est:X-I}
\Vert X - I_n\Vert \leq C\frac{\sqrt{\log (n)}}{\sqrt{n\bar{\theta}^2}},
\end{align}
Since (\ref{prop:eig.tL}),  by Weyl's inequality, 
\begin{align} \label{2023030501}
\max_{1\leq k\leq K}\bigl\{{|\hat{\lambda}_k - \tilde{\lambda}^{(i)}_k|}\bigr\}\leq C\|L-\tilde{L}^{(i)}\|;
\end{align}
and over the event $ E$,
\begin{align}\label{l2norm_LtL}
\|L-\tilde{L}^{(i)}\| &\leq C\|X- I_n\| \Vert H^{-\frac 12} AH^{-\frac12}\Vert+\|(\tilde{H}^{(i)})^{-1}H_0\|\|{H}_0^{-1/2}(A-\Omega){H}_0^{-1/2}\| \notag\\
&\leq C\,\|X- I_n\| \Vert H_0^{-\frac 12} \Omega H_0^{-\frac12}\Vert+\|(\tilde{H}^{(i)})^{-1}H_0\|\|{H}_0^{-1/2}(A-\Omega){H}_0^{-1/2}\|, \notag\\
& \leq C \, \frac{K^{-1} \lambda_1(PG)\sqrt{\log (n)}+ 1}{\sqrt{n\bar{\theta}^2}} \leq C\sqrt{\frac{\log(n) }{n\bar{\theta}^2}}\ll  |\tilde{\lambda}^{(i)}_K|
\end{align}
since Lemmas \ref{lem:order-eigenval}, \ref{lem:noise} and (\ref{prop:eig.tL}) with the condition $K\beta_n^{-1}\sqrt{\log (n)}/\sqrt{n\bar{\theta}^2}\ll1$. Therefore, $\hat{\lambda}_1, \cdots, \hat{\lambda}_K$ share the same asymptotics as $\tilde{\lambda}^{(i)}_1, \cdots, \tilde{\lambda}^{(i)}_K$. The eigengap between $\hat{\lambda}_1$ and $\tilde{\lambda}^{(i)}_2, \cdots, \tilde{\lambda}^{(i)}_K$ is $K^{-1} \lambda_1(PG)$. Let $w(i) = {\rm sgn}(\hat{\xi}_1' \tilde{\xi}_1^{(i)})$. It follows that
\begin{align} \label{21090601}
|\hat{\xi}_1' X\tilde{\xi}_1^{(i)}- w(i) | &\leq \Vert X- I_n\Vert  + |\hat{\xi}_1' \tilde{\xi}_1^{(i)}- w(i) | \notag\\
&\leq \Vert X- I_n\Vert + C\Big( K \lambda_1^{-1}(PG)\|L-\tilde{L}^{(i)}\| \Big)^2 
\end{align}
where the last step is due to (\ref{sine-theta:perturb1}) in sin$\Theta$ Theorem . 
In particular,   further by (\ref{sine-theta:perturb2})
\begin{align}\label{210906011}
\Vert \hat{\xi}_1' X(\tilde{\xi}^{(i)}_2, \cdots, \tilde{\xi}^{(i)}_K)\Vert \leq  \Vert X- I_n\Vert + CK \lambda_1^{-1}(PG)\|L-\tilde{L}^{(i)}\| . 
\end{align}
We can actually claim that $w(i)\equiv w:= {\rm sgn} ({\xi}_1'\hat{\xi}_1)$ as follows. First notice $|\hat{\xi}_1' {\xi}_1  -\hat{\xi}_1' \tilde{\xi}^{(i)}_1|\leq \Vert \tilde{\xi}^{(i)}_1 - {\xi}_1 \Vert = o(1)$. Next,  $|\hat{\xi}_1' \tilde{\xi}^{(i)}_1|>c$ for some constant $c\in (0,1)$. It follows immediately that $w(i)={\rm sgn}(\hat{\xi}_1' \tilde{\xi}^{(i)}_1)  ={\rm sgn} (\hat{\xi}_1' {\xi}_1) = w$.
%
 In the sequel, we directly write $w$ instead of $w(i)$.  We plug in (\ref{2023030501}), (\ref{21090601}) and (\ref{210906011}) into (\ref{Laplacian-step2-1}).  By some elementary simplifications, it arrives at 
 \begin{align}\label{20230503}
 |w\hat{\xi}_1(i)-\tilde{\xi}_1^{(i)}(i)| 
 &\leq  \Big( \Vert X - I_n \Vert + K\lambda_1^{-1}(PG) \Vert L-\tilde{L}^{(i)} \Vert \Big) \Big( \big| \tilde{\xi}^{(i)}_1(i)\big| + \Vert \tilde{\Xi}_1^{(i)} (i)\Vert \Big) \notag\\
 & \quad +  \Big|\frac{X^2(i,i)\Omega(i,i)}{\hat{\lambda}_1\tilde{H}^{(i)}(i,i)} \hat{\xi}_1(i) \Big| + \Big|\frac{X(i,i)}{\hat{\lambda}_1} e_i'\Delta \hat{\xi}_1 \Big|\, . 
 \end{align}
We can further derive
\begin{align} \label{20230502}
 \bigg|\frac{X(i,i)^2\Omega(i,i)}{\hat{\lambda}_1\tilde{H}^{(i)}(i,i)} \hat{\xi}_1(i)\bigg| \leq \frac{CK\theta_i^2}{n\bar{\theta} (\bar{\theta} \vee \theta_i) \lambda_1(PG)} \leq C K \lambda_1^{-1} (PG){\kappa_i}\Big(  \sqrt{\frac{\theta_i}{\bar{\theta}}}\wedge 1 \Big), 
\end{align}
by the estimate $\tilde{H}^{(i)}(i,i) \asymp n\bar{\theta} (\bar{\theta} \vee \theta_i)$ following from (\ref{order-H}) and the first estimate in (\ref{est.tildeYY}), with $ \kappa_i = \frac{\sqrt{\log(n)}}{n\bar{\theta}}\cdot \frac{\sqrt{\theta_i}}{\sqrt{\bar{\theta}}}$.
Then, plugging (\ref{20230502}), (\ref{est:X-I}) and (\ref{l2norm_LtL}) into (\ref{20230503}) gives 
\begin{align*}
|w\hat{\xi}_1(i)-\tilde{\xi}_1^{(i)}(i)| 
 &\leq C \Big(\sqrt{\frac{\log n}{n\bar{\theta}^2}} + \frac{K}{\lambda_1(PG)\sqrt{n\bar{\theta}^2}}\Big)   \Big( \big| \tilde{\xi}^{(i)}_1(i)\big| + \Vert \tilde{\Xi}_1^{(i)} (i)\Vert \Big)  \notag\\
&\qquad +C K \lambda_1^{-1} (PG){\kappa_i}\Big(  \sqrt{\frac{\theta_i}{\bar{\theta}}}\wedge 1 \Big) + CK \lambda_1^{-1}(PG)|e_i'\Delta\hat{\xi}_1|\notag\\
& \leq C \kappa_i\Big(  \sqrt{\frac{\theta_i}{\bar{\theta}}}\wedge 1 \Big) K^{\frac 32} \lambda_1^{-1}(PG) + CK \lambda_1^{-1}(PG)|e_i'\Delta\hat{\xi}_1|
\end{align*}
where in the last step, we plugged 
 the bound of $ \big| \tilde{\xi}^{(i)}_1(i)\big|$ and $\Vert \tilde{\Xi}^{(i)}(i)\Vert$ in (\ref{prop:eigv.tL}). Since the assumption $\lambda_1(PG) \geq CK$, it gives that over the event $E$,
\begin{align*} 
|w\hat{\xi}_1(i)-\tilde{\xi}^{(i)}_1(i)|
&\leq  C\sqrt K \kappa_i  + C |e_i'\Delta\hat{\xi}_1|.  
\end{align*}
This concludes our proof by considering all $i$'s together.

\subsection{Proof of  Lemma \ref{lem:techB1} } \label{subsec:1steigen2}
In this section, we prove Lemma \ref{lem:techB1}. We separate the proofs into three parts corresponding to the three estimates (\ref{Laplacian-entry-3-1})-(\ref{Laplacian-entry-3-3}).
\subsubsection{Proof of (\ref{Laplacian-entry-3-1})}
\label{app:subsub_techproof1}
For any fixed $i$,
recall that  $X\equiv X(i):= (\tilde{H}^{(i)})^{1/2}H^{-1/2}$. We rewrite $\Delta\equiv \Delta(i)= (\tilde{H}^{(i)})^{-\frac 12} W (\tilde{H}^{(i)})^{-\frac 12}X$. It follows that 
\begin{align} \label{2112080000}
e_i' \Delta \tilde{\xi}^{(i)}_1 = \frac{W(i)  (\tilde{H}^{(i)})^{-\frac 12} X\tilde{\xi}^{(i)}_1}{\sqrt{\tilde{H}^{(i)}(i,i)}} =  \frac{W(i)  (\tilde{H}^{(i)})^{-\frac 12} \tilde{\xi}^{(i)}_1}{\sqrt{\tilde{H}^{(i)}(i,i)}}   +
 \frac{W(i)  (\tilde{H}^{(i)})^{-\frac 12} (X-I_n)\tilde{\xi}^{(i)}_1}{\sqrt{\tilde{H}^{(i)}(i,i)}} 
\end{align}

First, we study the term $|W(i)(\tilde{H}^{(i)})^{-1/2}\tilde{\xi}^{(i)}_1|$. Write
\[
W(i)(\tilde{H}^{(i)})^{-1/2}\tilde{\xi}^{(i)}_1 = \sum_{1\leq j\leq n: j\neq i}\frac{W(i,j)}{\sqrt{\tilde{H}^{(i)}(j,j)}}\tilde{\xi}^{(i)}_1(j). 
\]
In the sequel, we only consider the randomness of the $i$-th row of $W$. Note that the mean is $0$.  The variance is bounded by  (up to some constant $C$)
\[
\sum_{j\neq i }\frac{\theta_i\theta_j}{n\bar{\theta}(\theta_j\vee \bar{\theta})}\big( \tilde{\xi}^{(i)}_1(j)\big)^2\leq \sum_{j}\frac{\theta_i\theta_j}{n\theta_j\bar{\theta}}\big( \tilde{\xi}^{(i)}_1(j)\big)^2 \leq \frac{\theta_i}{n\bar{\theta}}. 
\]
Recall the definition of index sets $S_1, S_2 $ in (\ref{def:S1S2}) . Each term in the sum is bounded by 
\[
\frac{|\tilde{\xi}^{(i)}_1(j)|}{\sqrt{\tilde{H}^{(i)}(j,j)}}\leq \frac{C}{n\bar{\theta}}
\begin{cases}
1, & j\in S_1,\cr
\theta_j/\bar{\theta}, & j\in S_2,
\end{cases} 
\]
following from (\ref{prop:eigv.tL}) and the estimate $\tilde{H}^{(i)}(i,i)\asymp n\bar{\theta}(\bar{\theta}\vee \theta_i)$. 
Applying Theorem \ref{thm:Bern_ineq}, one see that over the event $E$,
\begin{align*}
\big| W(i)(\tilde{H}^{(i)})^{-1/2}\tilde{\xi}^{(i)}_1 \big| \leq C\sqrt{\frac{\theta_i\log (n)}{n\bar{\theta}}} + C\, \frac{\log (n)} {n\bar{\theta}}
\end{align*}
Hence, over the event $E$,
\begin{align}\label{2112080100}
 \frac{W(i)  (\tilde{H}^{(i)})^{-\frac 12} \tilde{\xi}^{(i)}_1}{\sqrt{\tilde{H}^{(i)}(i,i)}} \leq C 
 \frac{\sqrt{\log (n)}}{n\bar{\theta}} \Big( 1\wedge \frac{\sqrt{\theta_i}}{\sqrt{\bar{\theta}}}+ \frac{\sqrt{\log (n)}}{\sqrt{n\bar{\theta}^2}}\Big)\leq C\, \widetilde{\kappa}_i
\end{align}
by using the estimate $\tilde{H}^{(i)}(i,i)\asymp n\bar{\theta}(\bar{\theta}\vee \theta_i)$ and the definition of $\widetilde{\kappa}_i$ in (\ref{Laplacian-entry-3-1}).

%

Next, we study the term $|W(i)  (\tilde{H}^{(i)})^{-\frac 12} (X-I_n)\tilde{\xi}^{(i)}_1|$. Over the event $E$, by (\ref{est:HHtd1}), we have
\begin{align}\label{est.X-I.entry}
 |X(j,j)-1|\leq C\frac{|H(j,j)-\tilde{H}^{(i)}(j,j)|}{\tilde{H}^{(i)}(j,j)}\leq C\, 
 \frac{A(i,j) +\theta_i\theta_j+ \theta_i \bar{\theta} + \log (n)/n}{\tilde{H}^{(i)}(j,j)}, \qquad  j\neq i;
 \end{align}
 It follows that

 \begin{align*}
 |W(i)  (\tilde{H}^{(i)})^{-\frac 12} (X-I_n)\tilde{\xi}^{(i)}_1|&= \Bigl|     \sum_{1\leq j\leq n: j\neq i} W(i,j) \frac{[X(j,j)-1]\tilde{\xi}^{(i)}_1(j)}{\sqrt{\tilde{H}^{(i)}(j,j)}} \Bigr|\cr
& \leq  \sum_{1\leq j\leq n: j\neq i} |W(i,j)| \frac{|X(j,j)-1||\tilde{\xi}^{(i)}_1(j)|}{\sqrt{\tilde{H}^{(i)}(j,j)}}\cr
&\leq C \sum_{1\leq j\leq n: j\neq i} \big(A(i,j) + \theta_i\theta_j + \theta_i\bar{\theta} + \frac{\log (n)}{n}\big)\frac{|\tilde{\xi}^{(i)}_1(j)|}{[\tilde{H}^{(i)}(j,j)]^{3/2}},
 \end{align*}
where in the last line we have used the fact that $|W(i,j)|\leq 1$. We apply Bernstein's inequality. The mean is bounded by (up to some constant $C$)
\[
\sum_{j\neq i }\frac{\theta_i\theta_j+ \theta_i\bar{\theta}+ \log(n) /n}{(n\bar{\theta} (\theta_j \vee \bar{\theta}))^{3/2}}\cdot \frac{\sqrt{\theta_j}}{\sqrt{n\bar{\theta}}} \leq \sum_{j}\frac{\theta_i+ log(n)/(n\bar{\theta})}{(n\bar{\theta})^2}\leq \frac{\theta_i}{n\bar{\theta}^2}\Big(1+ \frac{\log (n)}{n\bar{\theta}\theta_i}\Big). 
\]
The variance is bounded by (up to some constant $C$)
\[
\sum_{j\neq i}\frac{\theta_i\theta_j}{(n\bar{\theta} (\theta_j \vee \bar{\theta}))^3}\big(\tilde{\xi}^{(i)}_1(j)\big)^2 \leq \frac{1}{(n\bar{\theta}^2)^2} \sum_{j}\frac{\theta_i\theta_j}{n\theta_j\bar{\theta}}\big(\tilde{\xi}^{(i)}_1(j)\big)^2\leq \frac{1}{(n\bar{\theta}^2)^2}\cdot \frac{\theta_i}{n\bar{\theta}}. 
\]
Each individual term is bounded by (up to some constant $C$)
\[
\frac{|\tilde{\xi}_1^{(i)}(j)|}{[\tilde{H}^{(i)}(j,j)]^{3/2}}\leq \frac{C}{n\bar{\theta}}
\left. \begin{cases}
1/(n\theta_j\bar{\theta}), & j\in S_1\cr
\theta_j/(n\bar{\theta}^2), & j\in S_2.
\end{cases}\right\}\leq \frac{C}{n\bar{\theta}}\cdot \frac{1}{n\bar{\theta}^2}. 
\]
We then have 
\begin{align*}
|W(i)  (\tilde{H}^{(i)})^{-\frac 12} (X-I_n)\tilde{\xi}^{(i)}_1| &\leq C 
\frac{\theta_i}{n\bar{\theta}^2} \Big(1+ \frac{\log (n)}{n\bar{\theta}\theta_i}\Big)
\end{align*}
As a result,  over the event $E$,
\begin{align}\label{2112080101}
 \frac{W(i)  (\tilde{H}^{(i)})^{-\frac 12} (X-I_n)\tilde{\xi}^{(i)}_1}{\sqrt{\tilde{H}^{(i)}(i,i)}} \leq 
 \frac{C}{n\bar{\theta}^2} \sqrt{\frac{\theta_i}{n\bar{\theta}}} \Big(1\wedge \frac{\sqrt{\theta_i}}{\sqrt{\bar{\theta}}} + \sqrt{\frac{\log (n)}{n\bar{\theta}\theta_i}} \sqrt{\frac{\log (n)}{n\bar{\theta}^2}} \,  \Big)
 \leq C \frac{\widetilde{\kappa_i}}{\sqrt{n\bar{\theta}^2 \log (n)}}
\end{align}
%
We plug \eqref{2112080100} and \eqref{2112080101} into \eqref{2112080000}, and consider all $i$'s over the event $E$, then we conclude the proof of (\ref{Laplacian-entry-3-1}).
 
 \subsubsection{Proof of (\ref{Laplacian-entry-3-2})}
\label{app:subsub_techproof2}

Similarly to (\ref{2112080000}), we have 
\begin{align}\label{21120901}
|e_i'\Delta(\overline{\xi}^{(i)}_1 -\tilde{ \xi}^{(i)}_1)|  &\leq  \frac{|W(i) (\tilde{H}^{(i)})^{-1/2}(\overline{\xi}^{(i)}_1 -\tilde{ \xi}^{(i)}_1)|}{\sqrt{\tilde{H}^{(i)}(i,i)}} + \frac{C|W(i)(\tilde{H}^{(i)})^{-1/2}( X-I_n)(\overline{\xi}^{(i)}_1 -\tilde{ \xi}^{(i)}_1)|}{\sqrt{\tilde{H}^{(i)}(i,i)}} 
\end{align}
We first study the term $|W(i)(\tilde{H}^{(i)})^{-1/2}(\overline{\xi}^{(i)}_1 -\tilde{ \xi}^{(i)}_1)|$. Write
\[
W(i)(\tilde{H}^{(i)})^{-1/2}(\overline{\xi}^{(i)}_1 -\tilde{ \xi}^{(i)}_1)= \sum_{1\leq j\leq n: j\neq i} \frac{W(i,j)[\overline{\xi}^{(i)}_1(j)-\tilde{\xi}^{(i)}_1(j)]}{\sqrt{\tilde{H}^{(i)}(j,j)}}.
\]
We shall apply Bernstein's inequality  since $(\tilde{H}^{(i)})^{-1/2}(\overline{\xi}^{(i)}_1 -\tilde{ \xi}^{(i)}_1)$ is independent of $W(i)$. The variance is bounded by (up to some constant)
\[
\sum_{j\neq i }\frac{\theta_i\theta_j}{n\bar{\theta}(\theta_j\vee \bar{\theta})}[\overline{\xi}^{(i)}_1(j)-\tilde{\xi}^{(i)}_1(j)]^2 \leq \sum_{j}\frac{\theta_i\theta_j}{n\theta_j\bar{\theta}}[\overline{\xi}^{(i)}_1(j)-\tilde{\xi}^{(i)}_1(j)]^2 \leq \frac{4\theta_i}{n\bar{\theta}}
\]
Each individual term is bounded by $\| (\tilde{H}^{(i)})^{-1/2}(\overline{\xi}^{(i)}_1-\tilde{\xi}^{(i)}_1)\|_\infty$. As a result,
\begin{align*}
& \quad |W(i)(\tilde{H}^{(i)})^{-1/2}(\overline{\xi}^{(i)}_1-\tilde{\xi}^{(i)}_1)| \notag\\
& \leq C\frac{\sqrt{\theta_i\log(n)}}{\sqrt{n\bar{\theta}}} + C\log(n)\|(\tilde{H}^{(i)})^{-1/2}(\overline{\xi}^{(i)}_1-\tilde{\xi}^{(i)}_1)\|_\infty\cr
&\leq C\frac{\sqrt{\theta_i\log(n)}}{\sqrt{n\bar{\theta}}} + C\log(n)\|(\tilde{H}^{(i)})^{-1/2}(w\hat{\xi}_1-\tilde{\xi}^{(i)}_1)\|_\infty
+ C\log(n)\|(\tilde{H}^{(i)})^{-1/2}(\overline{\xi}^{(i)}_1-w\hat{\xi}_1)\|\cr
&\leq C\frac{\sqrt{\theta_i\log(n)}}{\sqrt{n\bar{\theta}}} + C \log(n)\|(\tilde{H}^{(i)})^{-1/2}(w\hat{\xi}_1-\tilde{\xi}^{(i)}_1)\|_\infty
+ \frac{C\log(n)}{\sqrt{n\bar{\theta}^2}}\|\overline{\xi}^{(i)}_1- w\hat{\xi}_1\|. 
\end{align*} 
Further, 
\begin{align} \label{conditioningterm-4}
\frac{|W(i)(\tilde{H}^{(i)})^{-1/2}(\overline{\xi}^{(i)}_1-\tilde{\xi}^{(i)}_1)|}{\sqrt{\tilde{H}^{(i)}(i,i)}}  & \leq C\kappa_i +C\kappa_i \frac{\sqrt{n\bar{\theta}\log(n)}}{\sqrt{\theta_i}} \|(\tilde{H}^{(i)})^{-1/2}(w\hat{\xi}_1-\tilde{\xi}^{(i)}_1)\|_\infty
+ \frac{C\log(n)}{n\bar{\theta}^2}\|\overline{\xi}^{(i)}_1-w\hat{\xi}_1\| \notag\\
&\leq C\widetilde{\kappa}_i +C\widetilde{\kappa}_i n\bar{\theta} \, \|(\tilde{H}^{(i)})^{-1/2}(w\hat{\xi}_1-\tilde{\xi}^{(i)}_1)\|_\infty
+ \frac{C\log(n)}{n\bar{\theta}^2}\|\overline{\xi}^{(i)}_1-w\hat{\xi}_1\| 
\end{align}
by the definition of $\widetilde{\kappa}_i= \frac{ 1}{n\bar{\theta} }\sqrt{\frac{\log (n)}{n\bar{\theta}^2}} \sqrt{n\bar{\theta}\theta_i \vee \log (n)}$.

We then study the term $|W(i)(\tilde{H}^{(i)})^{-1/2}( X-I_n)(\overline{\xi}^{(i)}_1 -\tilde{ \xi}^{(i)}_1)|$. On the event $E$, recall (\ref{est.X-I.entry}). It follows that 
\begin{align*}
& \quad |W(i)(\tilde{H}^{(i)})^{-1/2}( X-I_n)(\overline{\xi}^{(i)}_1 -\tilde{ \xi}^{(i)}_1)| \notag\\
&= \Bigl|     \sum_{1\leq j\leq n: j\neq i} W(i,j) \frac{[X(j,j)-1][\overline{\xi}^{(i)}_1(j)-\tilde{\xi}^{(i)}_1(j)]}{\sqrt{\tilde{H}^{(i)}(j,j)}} \Bigr|\cr
& \leq  \sum_{1\leq j\leq n: j\neq i} |W(i,j)| \frac{|X(j,j)-1||\overline{\xi}^{(i)}_1(j)-\tilde{\xi}^{(i)}_1(j)|}{\sqrt{\tilde{H}^{(i)}(j,j)}}\cr
&\leq C\sum_{1\leq j\leq n: j\neq i} \frac{A(i,j)+\theta_i\theta_j + \theta_i\bar{\theta} + \log (n)/n }{\tilde{H}^{(i)}(j,j)}\, \frac{|\overline{\xi}^{(i)}_1(j)-\tilde{\xi}^{(i)}_1(j)|}{\sqrt{\tilde{H}^{(i)}(j,j)}}.
\end{align*}
We now decompose the RHS above by $\mathcal I_1 + \mathcal I_2$, where 
\begin{align*}
&\mathcal I_1 : =  \sum_{1\leq j\leq n: j\neq i} \frac{A(i,j)+\theta_i\theta_j+ \theta_i\bar{\theta} + \frac{\log (n)}{n} }{\tilde{H}^{(i)}(j,j)}\, \frac{|\overline{\xi}^{(i)}_1(j)-w\hat{\xi}_1(j)|}{\sqrt{\tilde{H}^{(i)}(j,j)}}  ,\notag\\
&\mathcal I_2:=  \sum_{1\leq j\leq n: j\neq i} \frac{A(i,j)+\theta_i\theta_j + \theta_i\bar{\theta} + \frac{\log (n)}{n} }{\tilde{H}^{(i)}(j,j)}\, \frac{|w\hat{\xi}_1(j)-\tilde{\xi}^{(i)}_1(j)|}{\sqrt{\tilde{H}^{(i)}(j,j)}}\,. 
\end{align*}
We bound the sub-terms separately as below. By Cauchy-Schwarz inequality, 
\begin{align}\label{211125sub1}
\mathcal I_1 \leq 
\Big(\sum_{1\leq j\leq n: j\neq i} \frac{(A(i,j) +\theta_i\theta_j + \theta_i\bar{\theta} + {\log (n)}/{n})^2}{\tilde{H}^{(i)}(j,j)^3}\Big)^{\frac 12} \Vert \overline{\xi}^{(i)}_1-w\hat{\xi}_1\Vert \,. 
\end{align} 
And we crudely bound
\begin{align}\label{211125sub2}
\mathcal I_2 \leq \sum_{1\leq j\leq n: j\neq i} \frac{A(i,j) +\theta_i\theta_j  + \theta_i\bar{\theta}+ {\log (n)}/{n}}{\tilde{H}^{(i)}(j,j)}\, \Vert (\tilde{H}^{(i)})^{-\frac 12} (\tilde{\xi}^{(i)}_1-w\hat{\xi}_1)\Vert_\infty.
\end{align}
Applying Bernstein inequality (i.e.,  Theorem \ref{thm:Bern_ineq}), similarly to the analysis of the term $ |W(i)  (\tilde{H}^{(i)})^{-\frac 12} (X-I_n)\tilde{\xi}^{(i)}_1|$ in Section~\ref{app:subsub_techproof1}, we can have the estimates
\begin{align}\label{211125sub3}
\Big(\sum_{1\leq j\leq n: j\neq i} \frac{(A(i,j) +\theta_i\theta_j + \theta_i\bar{\theta} + {\log (n)}/{n})^2}{\tilde{H}^{(i)}(j,j)^3}\Big)^{\frac 12} & \leq C\Big(\sum_{1\leq j\leq n: j\neq i} \frac{A(i,j) +\theta_i\theta_j + \theta_i\bar{\theta} + {\log (n)}/{n}}{\tilde{H}^{(i)}(j,j)^3}\Big)^{\frac 12} \notag\\
&\leq C\Big( \frac{n\bar{\theta}\theta_i + \log (n)}{(n\bar{\theta}^2)^3}\Big)^{\frac 12} \notag\\
& \leq C\Big(\frac{1}{n\bar{\theta}^2} \sqrt{\frac{\theta_i}{\bar{\theta}}}+ \frac{1}{n\bar{\theta}^2} \sqrt{\frac{\log (n)}{ n\bar{\theta}^2 }}\, \Big)
\end{align}
and 
\begin{align}\label{211125sub4}
\sum_{1\leq j\leq n: j\neq i} \frac{A(i,j) +\theta_i\theta_j + \theta_i\bar{\theta}+ {\log (n)}/{n}}{\tilde{H}^{(i)}(j,j)} \leq C\,\Big(  \frac{\theta_i}{\bar{\theta}} +  \frac{\log (n)}{n\bar{\theta}^2} \Big) 
\end{align}
over the event $E$. We thus conclude that 
\begin{align*}
&\quad |W(i)(\tilde{H}^{(i)})^{-1/2}( X-I_n)(\overline{\xi}^{(i)}_1 -\tilde{ \xi}^{(i)}_1)|\notag\\
&\leq C\Big(\frac{\theta_i}{\bar{\theta}} + \frac{\log (n)}{n\bar{\theta}^2}\Big)  \Vert (\tilde{H}^{(i)})^{-\frac 12} (\tilde{\xi}^{(i)}_1-w\hat{\xi}_1)\Vert_\infty + C\Big(\frac{1}{n\bar{\theta}^2} \sqrt{\frac{\theta_i}{\bar{\theta}}}+ \frac{1}{n\bar{\theta}^2} \sqrt{\frac{\log (n)}{ n\bar{\theta}^2 }}\,\Big)\Vert \overline{\xi}^{(i)}_1-w\hat{\xi}_1\Vert  
\end{align*}
Further with $\tilde{H}^{(i)} (i,i) \asymp n\bar{\theta} (\theta_i \vee \bar{\theta})$, we have 
\begin{align} \label{conditioningterm-3}
 &\quad\frac{|W(i)(\tilde{H}^{(i)})^{-1/2}( X-I_n)(\overline{\xi}^{(i)}_1 -\tilde{ \xi}^{(i)}_1)| }{\sqrt{\tilde{H}^{(i)}(i,i)}} \notag\\ 
 &\leq C \widetilde{\kappa_i} n\bar{\theta} \|(\tilde{H}^{(i)})^{-1/2}(w\hat{\xi}_1-\tilde{\xi}^{(i)}_1)\|_\infty +C {(n\bar{\theta}^2)^{-\frac 32}}   \|\overline{\xi}^{(i)}_1-w\hat{\xi}_1\|
\end{align}
over the event $E$.
Now, plugging \eqref{conditioningterm-4} and \eqref{conditioningterm-3} into \eqref{21120901} and combining all $i$'s,  we thus finish the proof of  (\ref{Laplacian-entry-3-2}). 

\subsubsection{Proof of (\ref{Laplacian-entry-3-3})}
\label{app:subsub_techproof3}
 Note that $\overline{\xi}^{(i)}_1$ is the first eigenvector of $(\tilde{H}^{(i)})^{-1/2}\tilde{A}^{(i)}(\tilde{H}^{(i)})^{-1/2}$. The eigen-gap between $\tilde{\lambda}^{(i)}_1$ and $|\bar{\lambda}_2^{(i)}|$ is of order $K^{-1}\lambda_1(PG)\asymp 1$ in light of Weyl's inequality
\begin{align}\label{21121401}
\max_{i} | \overline{\lambda}^{(i)}_i - \tilde{\lambda}^{(i)}_i | \leq \Vert (\tilde{H}^{(i)})^{-\frac 12} (\tilde{A}^{(i)}- \Omega) (\tilde{H}^{(i)})^{-\frac 12}\Vert \leq C\sqrt{\frac{\log (n)}{n\bar{\theta}^2}}
\end{align}
and $K^{-1}\lambda_1(PG) \gg \sqrt{ {\log (n)}/{n\bar{\theta}^2}} $. Similarly, the eigengap between $\hat{\lambda}_1$ and $|\bar{\lambda}_2^{(i)}|$ is of order $K^{-1}\lambda_1(PG)$.  We claim that ${\rm sgn} (\hat{\xi}_1'\overline{\xi}^{(i)}_1)  = {\rm sgn} (\hat{\xi}_1'\tilde{\xi}^{(i)}_1) \equiv w $. Notice that  $|(\hat{\xi}_1')\overline{\xi}^{(i)}_1 -  (\hat{\xi}_1')\tilde{\xi}^{(i)}_1| \leq \Vert \overline{\xi}^{(i)}_1 - \tilde{\xi}^{(i)}_1 \Vert  =o(1)$. This, together with the fact  that $|(\hat{\xi}_1')\tilde{\xi}^{(i)}_1 |>c$ for some constant $c\in (0,1)$, implies that ${\rm sgn} (\hat{\xi}_1'\overline{\xi}^{(i)}_1 ) = {\rm sgn} (\hat{\xi}_1'\tilde{\xi}^{(i)}_1)$.
%
%
Moreover, 
\[
\|\overline{\xi}^{(i)}_1- w\hat{\xi}_1\|\leq { K} \lambda_1^{-1} (PG)\| \underbrace{\big((\tilde{H}^{(i)})^{-1/2}\tilde{A}^{(i)} (\tilde{H}^{(i)})^{-1/2}- H^{-1/2}AH^{-1/2})}_{=: \tilde{\Delta}(i)\equiv \tilde{\Delta}} \hat{\xi}_1  \|
\]
Recall $X = (\tilde{H}^{(i)})^{1/2}H^{-1/2}$. It is seen that
\begin{align*} 
\tilde{\Delta} & =  (\tilde{H}^{(i)})^{-\frac 12}(\tilde{A}^{(i)} -A)(\tilde{H}^{(i)})^{-\frac12} + \big( (\tilde{H}^{(i)})^{-\frac12}A(\tilde{H}^{(i)})^{-\frac12}- H^{-\frac12}AH^{-\frac12}\big)\cr
& =  - (\tilde{H}^{(i)})^{-\frac12}(e_iW(i) + W(i)' e_i')(\tilde{H}^{(i)})^{-\frac12} + 
(\tilde{H}^{(i)})^{-\frac12}A( (\tilde{H}^{(i)})^{-\frac12}-H^{-\frac12}) +( (\tilde{H}^{(i)})^{-\frac12} - H^{-1/2})AH^{-\frac12}\cr
& = - (\tilde{H}^{(i)})^{-\frac12}(e_iW(i)+ W(i)'e_i')(\tilde{H} ^{(i)})^{-\frac12} + (\tilde{H}^{(i)})^{-\frac12}A(\tilde{H}^{(i)})^{-\frac12}(I_n-X) + (X^{-1}-I_n)H^{-\frac12}AH^{-\frac12}. 
\end{align*}
By definition, $H^{-1/2}AH^{-1/2}\hat{\xi}_1=\hat{\lambda}_1\hat{\xi}_1$. It follows that
\[
\tilde{\Delta}\hat{\xi}_1 = - (\tilde{H}^{(i)})^{-\frac12}(e_iW(i)+W(i)'e_i')(\tilde{H}^{(i)})^{-\frac12} \hat{\xi}_1
+ (\tilde{H}^{(i)})^{-\frac12}A(\tilde{H}^{(i)})^{-\frac12}(I_n-X) \hat{\xi}_1 + \hat{\lambda}_1 (X^{-1}-I_n)\hat{\xi}_1. 
\]
As a result,{\small
\begin{align} \label{leave-effect-1}
\|\overline{\xi}^{(i)}_1-w\hat{\xi}_1\|& \leq { K } \lambda_1^{-1} (PG)\frac{|W(i)(\tilde{H}^{(i)})^{-\frac12}\hat{\xi}_1|}{\sqrt{\tilde{H}^{(i)}(i,i)}} + { K } \lambda_1^{-1} (PG)\frac{\|(\tilde{H}^{(i)})^{-\frac12}W(i)'\|}{\sqrt{\tilde{H}^{(i)}(i,i)}}\cdot |\hat{\xi}_1(i)| + C\|(I_n-X)\hat{\xi}_1\|\cr
&\leq  {K } \lambda_1^{-1} (PG)\frac{|W(i)(\tilde{H}^{(i)})^{-\frac12}\hat{\xi}_1|}{\sqrt{\tilde{H}^{(i)}(i,i)}} +\frac{C {K }\lambda_1^{-1} (PG) }{\sqrt{n\bar{\theta}^2}}|\hat{\xi}_1(i)| + C\|(I_n-X)\hat{\xi}_1\|,\cr
&\leq  C{ K } \lambda_1^{-1} (PG) \bigg(\frac{|W(i)(\tilde{H}^{(i)})^{-\frac12}\hat{\xi}_1|}{\sqrt{\tilde{H}^{(i)}(i,i)}} +\frac{1 }{\sqrt{n\bar{\theta}^2}}|\tilde{\xi}^{(i)}_1(i)| +\frac{1 }{\sqrt{n\bar{\theta}^2}}|w\hat{\xi}_1(i)-\tilde{\xi}^{(i)}_1(i)| \bigg) + C\|(I_n-X)\hat{\xi}_1\|,\cr
&\leq C{ K }\lambda_1^{-1} (PG) \bigg( \frac{|W(i)(\tilde{H}^{(i)})^{-\frac12}\hat{\xi}_1|}{\sqrt{\tilde{H}^{(i)}(i,i)}} +\frac{\kappa_i}{\sqrt{\log(n)}} +\frac{1}{\sqrt{n\bar{\theta}^2}}|w\hat{\xi}_1(i)- \tilde{\xi}^{(i)}_1(i)| \bigg) + C\|(I_n-X)\hat{\xi}_1\|, 
\end{align}
}
where in the first line we have used 
$\|(\tilde{H}^{(i)})^{-1/2}A(\tilde{H}^{(i)})^{-1/2}\|\leq CK^{- 1}\lambda_1(PG)$ and $\|X-I_n\|\leq 1/2$, in the second line we have used the estimate 
\begin{align}\label{est:WtH}
{(\tilde{H}^{(i)}(i,i))^{-\frac12}}\| (\tilde{H}^{(i)})^{-\frac12}W(i)'\|&\leq  \sqrt{W(i)(\tilde{H}^{(i)})^{-1} W(i)'} \Big/ \sqrt{\tilde{H}^{(i)}(i,i)}\notag\\
&\leq \frac{\sqrt{(\theta_i/\bar{\theta})\vee (\log (n)/n\bar{\theta}^2)}}{\sqrt{n\bar{\theta}(\bar{\theta} \vee \theta_i)}}
\leq C(n\bar{\theta}^2)^{-\frac12}
\end{align}
by simply using Bernstein inequality to $W(i)(\tilde{H}^{(i)})^{-1} W(i)'$,
 and in the last line we have used \eqref{order-xi}. 

We consider the first term in \eqref{leave-effect-1}. Note that 
\begin{align*}
 \frac{|W(i)(\tilde{H}^{(i)})^{-\frac12}\hat{\xi}_1|}{\sqrt{\tilde{H}^{(i)}(i,i)}} & \leq  \frac{|W(i)(\tilde{H}^{(i)})^{-\frac12}\tilde{\xi}^{(i)}_1|}{\sqrt{\tilde{H}^{(i)}(i,i)}} +  \frac{|W(i)(\tilde{H}^{(i)})^{-\frac12}(\overline{\xi}^{(i)}_1 - \tilde{\xi}^{(i)}_1)|}{\sqrt{\tilde{H}^{(i)}(i,i)}} +  \frac{\|W(i)(\tilde{H}^{(i)})^{-\frac12}\|\|w \hat{\xi}_1-\overline{\xi}^{(i)}_1\|}{\sqrt{\tilde{H}^{(i)}(i,i)}}\cr
 &\leq   \frac{|W(i)(\tilde{H}^{(i)})^{-\frac12}\tilde{\xi}^{(i)}_1|}{\sqrt{\tilde{H}^{(i)}(i,i)}} +  \frac{|W(i) (\tilde{H}^{(i)})^{-\frac12}(\overline{\xi}^{(i)}_1 - \tilde{\xi}^{(i)}_1)|}{\sqrt{\tilde{H}^{(i)}(i,i)}} + \frac{C}{\sqrt{n\bar{\theta}^2}} \| w\hat{\xi}_1-\overline{\xi}^{(i)}_1\|,
\end{align*}
In \eqref{2112080100} and \eqref{conditioningterm-4}, we have seen that the first two terms are bounded by
\begin{align*}
\widetilde{\kappa}_i \bigl(1+n\bar{\theta}\|(\tilde{H}^{(i)})^{-\frac12}( w \hat{\xi}_1-\tilde{\xi}^{(i)}_1)\|_\infty\bigr) + \frac{\log(n)}{n\bar{\theta}^2}\|\overline{\xi}^{(i)}_1- w \hat{\xi}_1\| 
\end{align*}
up to some constant.
Combining the above gives
\begin{align} \label{leave-effect-7}
\frac{|W(i)(\tilde{H}^{(i)})^{-\frac12}\hat{\xi}_1|}{\sqrt{\tilde{H}^{(i)}(i,i)}} 
& \leq C
\widetilde{\kappa}_i \bigl(1+n\bar{\theta}\|(\tilde{H}^{(i)})^{-\frac12}( w \hat{\xi}_1-\tilde{\xi}^{(i)}_1)\|_\infty\bigr) + \Big(\frac{\log(n)}{n\bar{\theta}^2} +\frac{1}{\sqrt{n\bar{\theta}^2}}\Big)\|\overline{\xi}^{(i)}_1- w \hat{\xi}_1\| .
\end{align}
We plug it into \eqref{leave-effect-1} and move all terms of $\|\overline{\xi}^{(i)}_1-w\hat{\xi}_1\|$ to the left hand side. It follows that
\beq \label{leave-effect-8}
\|\overline{\xi}^{(i)}_1-w\hat{\xi}_1\|  
\leq CK\lambda_1^{-1} (PG)\bigg(
\widetilde{\kappa}_i \bigl(1+n\bar{\theta}\|(\tilde{H}^{(i)})^{-\frac12}( w \hat{\xi}_1-\tilde{\xi}^{(i)}_1)\|_\infty\bigr) +  \frac{ |w\hat{\xi}_1(i)-\tilde{\xi}^{(i)}_1(i)|}{\sqrt{n\bar{\theta}^2}} \bigg)+ C\|(I_n-X)\hat{\xi}_1\| 
\eeq
Below, we bound $\|(I_n-X)\hat{\xi}_1\|$. Note that
\beq \label{leave-effect-2}
\|(I_n-X)\hat{\xi}_1\|\leq \|(I_n-X)\tilde{\xi}^{(i)}_1\| + \|(I_n-X)(w\hat{\xi}_1 - \tilde{\xi}^{(i)}_1)\|, 
\eeq
where
\begin{align*}
\|(I_n-X)\tilde{\xi}^{(i)}_1\|^2 & \leq  \sum_{j=1}^n |X(j,j)-1|^2(\tilde{\xi}^{(i)}_1(j))^2=:{ (J_1)}\cr
\|(I_n-X)(w\hat{\xi}_1-\tilde{\xi}^{(i)}_1)\|^2 & \leq \|(\tilde{H}^{(i)})^{-\frac12}(w\hat{\xi}_1-\tilde{\xi}^{(i)}_1)\|^2_\infty\cdot \sum_{j=1}^n |X(j,j)-1|^2\tilde{H}^{(i)}(j,j)\notag\\
&=: \|(\tilde{H}^{(i)})^{-\frac12}( w\hat{\xi}_1-\tilde{\xi}^{(i)}_1)\|^2_\infty\cdot {  (J_2)}. 
\end{align*}
Recall the bound of $|X(j,j)-1|$ in (\ref{est.X-I.entry}).
It follows that over the event $E$,
\begin{align*}
(J_1) & \leq C\sum_{j=1}^n \frac{|A(i,j) +\theta_i\theta_j+ \theta_i\bar{\theta} + \frac{\log (n)}{n}|^2(\tilde{\xi}^{(i)}_1(j))^2}{[\tilde{H}^{(i)}(j,j)]^2}\notag\\
&\leq C\sum_{j=1}^n\big( A(i,j) +\theta_i\theta_j + \theta_i\bar{\theta} +\frac{\log (n)}{n} \big)\frac{ (\tilde{\xi}^{(i)}_1(j))^2}{[\tilde{H}^{(i)}(j,j)]^2},\cr
(J_2) & \leq C\sum_{j=1}^n \frac{|A(i,j)+\theta_i\theta_j +\theta_i\bar{\theta} + \frac{\log (n)}{n} |^2}{\tilde{H}^{(i)}(j,j)}\notag\\
&\leq C\sum_{j=1}^n \big( A(i,j) +\theta_i\theta_j+ \theta_i\bar{\theta} +\frac{\log (n)}{n} \big)\frac{1}{\tilde{H}^{(i)}(j,j)}, 
\end{align*}
where we again use the fact that $A(i,j)\in \{0,1\}$. We shall bound the two terms similarly, using the Bernstein's inequality (Theorem \ref{thm:Bern_ineq}). For $(J_1)$,
\begin{itemize}
\item The mean is bounded by (up to some constant)
$$ \sum_{j=1}^n \Big(\theta_i\theta_j + \theta_i\bar{\theta}  + \frac{\log (n)}{n}\Big)\frac{1}{(n\bar{\theta})^3 (\bar{\theta}\vee \theta_j)}\leq \frac{1}{n\bar{\theta}^2 } \cdot \frac{n\bar{\theta}\theta_i+ \log (n)}{(n\bar{\theta})^2};$$
\item The variance is bounded by (up to some constant)
$$\sum_{j=1}^n \frac{\theta_i\theta_j}{(n\bar{\theta})^6 (\bar{\theta}\vee \theta_j)^2} \leq  \frac{1}{(n\bar{\theta}^2)^2}\cdot \frac{\theta_i}{(n\bar{\theta})^3};$$
\item Each individual term is bounded by (up to some constant) $\frac{|(\tilde{\xi}^{(i)}_1(j))^2|}{[\tilde{H}^{(i)}(j,j)]^2}\leq \frac{1}{n\bar{\theta}^2}\cdot \frac{1}{(n\bar{\theta})^2}$.
\end{itemize}
We then have
\begin{align} \label{leave-effect-3}
(J_1) & \leq \frac{C}{n\bar{\theta}^2}\frac{\theta_i}{n\bar{\theta}} \biggl( 1+ \frac{\log (n)}{n\bar{\theta}\theta_i}\biggr) .
\end{align}
For $(J_2)$, 
\begin{itemize}
\item The mean is bounded by (up to some constant)
$$ \sum_{j=1}^n \Big(\theta_i\theta_j + \theta_i\bar{\theta}  + \frac{\log (n)}{n}\Big)\frac{1}{n\bar{\theta}(\bar{\theta}\vee \theta_j)}\leq \frac{\theta_i}{\bar{\theta}} + \frac{\log (n)}{n\bar{\theta}^2};$$
\item The variance is bounded by (up to some constant)
$$ \sum_{j=1}^n \frac{\theta_i\theta_j }{(n\bar{\theta})^2(\bar{\theta}\vee \theta_j)^2}\leq \frac{1}{n\bar{\theta}^2}\cdot \frac{\theta_i}{\bar{\theta}};$$
\item Each individual term is bounded by (up to some constant) $ 
\frac{1}{\tilde{H}(j,j)}\leq \frac{C}{n\bar{\theta}^2}$. 
\end{itemize}
It follows that
\beq \label{leave-effect-4}
(J_2)\leq \frac{C\theta_i}{\bar{\theta}} + \frac{C\log (n)}{n\bar{\theta}^2}
\eeq
Plugging \eqref{leave-effect-3}-\eqref{leave-effect-4} into \eqref{leave-effect-2}, we find out that
\begin{align} \label{leave-effect-5}
\|(I_n-X)\hat{\xi}_1\|\leq C
\frac{\widetilde{\kappa}_i}{\sqrt{\log(n)}}\bigl(1+n\bar{\theta}\|(\tilde{H}^{(i)})^{-\frac12}(w\hat{\xi}_1-\tilde{\xi}^{(i)}_1)\|_\infty\bigr) 
\end{align}
We plug \eqref{leave-effect-5} into \eqref{leave-effect-8}, together with the assumption $K^{-1}\lambda_1(PG) \asymp 1$,  to get
\begin{align} \label{leave-effect-final}
\|\overline{\xi}^{(i)}_1- w\hat{\xi}_1\| &\leq  C
\widetilde{\kappa}_i \bigl(1+n\bar{\theta}\|(\tilde{H}^{(i)})^{-1/2}(w\hat{\xi}_1-\tilde{\xi}^{(i)}_1)\|_\infty\bigr) + \frac{ 1}{\sqrt{n\bar{\theta}^2}}|w\hat{\xi}_1(i)-\tilde{\xi}^{(i)}_1(i)|
\end{align}
over the event $E$, which proved (\ref{Laplacian-entry-3-3}) by considering all $i$'s altogether.

 \subsection{Proof of the second claim in Theorem~\ref{thm:eigenvector}} \label{sub:2-Keig}
 In this section, we show the proof of (\ref{result-2}). Similarly to the proof of (\ref{result-1}), we streamline the proof into the following lemmas. In addition to the notations in the end of Section \ref{sec:Intro}, below we will use $\Vert \cdot\Vert_{2\to \infty}$ to denote the matrix $2\to \infty$ norm, i.e., the maximum row-wise $\ell^2$-norm of a matrix. Specifically, for any matrix $A \in \mathbb{R}^{n\times m}$ and vector  $x\in \mathbb{R}^{m}$,  $\Vert A\Vert_{2\to \infty}:= \max_{\Vert x\Vert=1} \Vert Ax\Vert_{\infty} = \max_{i} \Vert A(i)\Vert$.
   
\begin{lem}\label{lem:upbd_key2K}
Suppose the assumptions in Theorem \ref{thm:eigenvector} hold. Recall $\kappa_t:=\sqrt{\frac{\log (n)}{n\bar{\theta}^2 }} \sqrt{\frac{\theta_t}{n\bar{\theta}}}$ for $1\leq t \leq n$.  With probability $1-o(n^{-3})$, simultaneously for  $1\leq i,t \leq n$,  
\begin{align}\label{eq:t1_2K}
\Vert  \tilde{\Xi}^{(i)}_1 (t) O_2^{(i)} - \Xi_1 (t)\Vert \leq C  K^{\frac 32} \beta_n^{-1}\kappa_t \Big(1\wedge \sqrt{\frac{\theta_t}{\bar{\theta}}}\, \Big), 
\end{align}
 for some orthogonal matrices $ O_2^{(i)}\in \mathbb{R}^{K-1, K-1}$.
\end{lem}

\begin{lem}\label{lem:upbd_key11_2K}
 Under the assumptions in Theorem \ref{thm:eigenvector}. 
 With probability $1-o(n^{-3})$, simultaneously for  $1\leq i \leq n$,  
\begin{align}
&\Vert  \hat{\Xi}_1(i) - \tilde{\Xi}^{(i)}_1(i)O_3^{(i)})\Vert \leq CK^{\frac 32} \beta_n^{-1}\kappa_i + C K \beta_n^{-1} \Vert e_i'\Delta \hat{\Xi}_1\Vert , \label{Laplacian-entry-1_2K}\\
&\Vert e_i' \Delta \hat{\Xi}_1\Vert  \leq  \Vert e_i' \Delta \tilde{\Xi}^{(i)}_1 \Vert + \Vert e_i' \Delta (\overline{\Xi}^{(i)}_1-\tilde{\Xi}^{(i)}_1 O_4^{(i)})\Vert + \frac{C}{\sqrt{n\bar{\theta}^2 }}  \Vert \hat{\Xi}_1-\overline{\Xi}^{(i)}_1O_5^{(i)}\Vert  , \label{Laplacian-entry-2_2K}
\end{align}
for some orthogonal matrices $ O_4^{(i)},  O_5^{(i)}\in \mathbb{R}^{K-1, K-1}$ and $O_3^{(i)}:= O_4^{(i)}O_5^{(i)}$, where  $\Delta\equiv \Delta{(i)}:= (\tilde{H}^{(i)})^{-1/2} W {H}^{-1/2}  $ for short.
\end{lem}

\begin{lem}\label{lem:techB1_2K}
Under the assumptions in Lemma \ref{lem:upbd_key11_2K}. Recall the notation of $\widetilde{\kappa}_i$ in (\ref {Laplacian-entry-3-1}) for $1\leq i\leq n$.  With probability $1-o(n^{-3})$, simultaneously for  $1\leq i \leq n$,  
\begin{align}  
\Vert e_i' \Delta \tilde{\Xi}^{(i)}_1 \Vert &\leq CK^{\frac 12} \widetilde{\kappa}_i\label{Laplacian-entry-3-1_2K}\\
\Vert e_i' \Delta (\overline{\Xi}^{(i)}_1-\tilde{\Xi}^{(i)}_1 O_4^{(i)})\Vert &\leq  CK^{\frac 12} \Big( \widetilde{\kappa}_i\big(1 +  n\bar{\theta} \Vert (\tilde{H}^{(i)})^{-\frac 12} (   \hat{\Xi}_1 -\tilde{\Xi}^{(i)}_1O_3^{(i)})\Vert_{2\to \infty }  \big)  + \frac{ \log (n)}{n\bar{\theta}^2}\Vert  \hat{\Xi}_1- \overline{\Xi}^{(i)}_1O_5^{(i)} \Vert\Big),\label{Laplacian-entry-3-2_2K}\\
\Vert  \hat{\Xi}_1-\overline{\Xi}^{(i)}_1O_5^{(i)}\Vert &\leq C  K^{\frac 32}\beta_n^{-1} \,\widetilde{\kappa_i} \Big(1+  n\bar{\theta}  \Vert \tilde{H}^{-\frac 12} (   \hat{\Xi}_1 -\tilde{\Xi}^{(i)}_1O_3^{(i)})\Vert_{2\to \infty } \Big) + \frac{C{K\beta_n^{-1}}  }{\sqrt{n\bar{\theta}^2} }\Vert \hat{\Xi}_1(i) - \tilde{\Xi}^{(i)}_1(i) O_3^{(i)}\Vert. \label{Laplacian-entry-3-3_2K}
\end{align}
\end{lem}
 
In the sequel, we will prove the second claim in Theorem~\ref{thm:eigenvector} (i.e., (\ref{result-2})) based on the above lemmas. The proofs of the lemmas are postponed to the next three subsections.
\begin{proof} [Proof of  (\ref{result-2}) ]
Plugging Lemma \ref{lem:techB1_2K} into (\ref{Laplacian-entry-2_2K}), we first have with probability $1- o(n^{-3})$, simultaneously for all $1\leq i \leq n$,
\begin{align*}
\Vert e_i' \Delta \hat{\Xi}_1\Vert  \leq CK^{\frac 12} \Big( \tilde{\kappa}_i\big(1 +  n\bar{\theta}  \Vert (\tilde{H}^{(i)})^{-\frac 12} (   \hat{\Xi}_1 -\tilde{\Xi}^{(i)}_1O_3^{(i)})\Vert_{2\to \infty }  \big)  + \frac{C{K\beta_n^{-1}} \sqrt{\log (n)} }{n\bar{\theta}^2 }\Vert \hat{\Xi}_1(i) - \tilde{\Xi}^{(i)}_1(i) O_3^{(i)}\Vert\Big)
\end{align*} 
which, further substituted to (\ref{Laplacian-entry-1_2K}), implies that 
\begin{align*}
\Vert  \hat{\Xi}_1(i) - \tilde{\Xi}^{(i)}_1(i)O_3^{(i)})\Vert \leq &C K^{\frac32}\beta_n^{-1} \widetilde{\kappa}_i\big(1 +  n\bar{\theta}  \Vert (\tilde{H}^{(i)})^{-\frac 12} (   \hat{\Xi}_1 -\tilde{\Xi}^{(i)}_1O_3^{(i)})\Vert_{2\to \infty }  \big)  \notag\\
&+ \frac{CK^3\sqrt{\log (n)}}{\beta_n^2 n\bar{\theta}^2}\Vert  \hat{\Xi}_1(i) - \tilde{\Xi}^{(i)}_1(i)O_3^{(i)})\Vert.
\end{align*}
Since $K^3\beta_n^{-2}\log (n) /n\bar{\theta}^2=o(1)$, we then arrive at 
\begin{align*}
\Vert  \hat{\Xi}_1(i) - \tilde{\Xi}^{(i)}_1(i)O_3^{(i)})\Vert \leq C K^{\frac32}\beta_n^{-1} \widetilde{\kappa}_i\big(1 +  n\bar{\theta}  \Vert (\tilde{H}^{(i)})^{-\frac 12} (   \hat{\Xi}_1 -\tilde{\Xi}^{(i)}_1O_3^{(i)})\Vert_{2\to \infty }  \big).
\end{align*}
Set $\tilde{O}_1\equiv \tilde{O}_1^{(i)}:=(O_3^{(i)}) ' O_2^{(i)}$. Using Lemma \ref{lem:upbd_key2K} and let $t=i$, we will see that 
\begin{align*}
\Vert  \hat{\Xi}_1(i) - {\Xi}_1(i)\tilde{O}_1'\Vert &\leq \Vert  \hat{\Xi}_1(i) - \tilde{\Xi}^{(i)}_1(i)O_3^{(i)})\Vert + \Vert  \tilde{\Xi}^{(i)}_1 (i) O_2^{(i)} - \Xi_1 (i)\Vert \notag\\
&\leq C K^{\frac32}\beta_n^{-1} \widetilde{\kappa}_i\big(1 +  n\bar{\theta}  \Vert (\tilde{H}^{(i)})^{-\frac 12} (   \hat{\Xi}_1 -\tilde{\Xi}^{(i)}_1O_3^{(i)})\Vert_{2\to \infty }  \big)
\end{align*}
over the event $E$. Suppose that $\hat{\Xi}_1' \Xi_1$ has the singular value decomposition (SVD) $\hat{\Xi}_1' \Xi_1 = U'\cos \Theta V$, we  define $O_1={\rm  sgn} ( \hat{\Xi}_1' \Xi_1 ):= U'V$. Using sine-theta theorem (i.e., (\ref{sine-theta:perturb1}) and (\ref{sine-theta:perturb2})), we can derive 
\begin{align*}
\Vert O_1- \tilde{O}_1\Vert  &\leq  \Vert \hat{\Xi}_1' \Xi_1  - O_1\Vert 
+  \Vert \hat{\Xi}_1' \Xi_1  - \tilde{O}_1\Vert \notag\\
&\leq \Vert \hat{\Xi}_1' \Xi_1  - O_1\Vert 
+  \Vert \hat{\Xi}_1  - \tilde{\Xi}^{(i)}_1O_3^{(i)}\Vert + \Vert (\tilde{\Xi}^{(i)}_1)' \Xi_1- O_2^{(i)}\Vert\notag\\
&\leq CK \beta_n^{-1} \Big( \Vert L_0- L\Vert  + \Vert \tilde{L}^{(i)} - L \Vert + \Vert \tilde{L}^{(i)} - L_0 \Vert \Big)\notag\\
&\leq CK\beta_n^{-1} \sqrt{\frac{\log (n)}{n\bar{\theta}^2}},
\end{align*}
by which, we will obtain 
\begin{align}\label{21121602}
\Vert  \hat{\Xi}_1(i) - {\Xi}_1(i){O}_1'\Vert &\leq \Vert  \hat{\Xi}_1(i) - {\Xi}_1(i)\tilde{O}_1'\Vert   + \Vert \Xi_1(i)\Vert\cdot \Vert O_1- \tilde{O}_1\Vert \notag\\
&\leq C K^{\frac32}\beta_n^{-1} \widetilde{\kappa}_i\big(1 +  n\bar{\theta}  \Vert (\tilde{H}^{(i)})^{-\frac 12} (   \hat{\Xi}_1 -\tilde{\Xi}^{(i)}_1O_3^{(i)})\Vert_{2\to \infty }  \big)
\end{align}
and 
\begin{align}\label{21121601}
\Vert H_0^{-\frac 12} {\Xi}_1({O}_1- \tilde{O}_1)') \Vert_{2\to \infty }\leq \Vert H_0^{-\frac 12} {\Xi}_1 \Vert_{2\to \infty }\cdot \Vert {O}_1- \tilde{O}_1\Vert \leq CK^{\frac 32}\beta_n^{-1} \frac{1}{n\bar{\theta}}\sqrt{\frac{\log (n)}{n\bar{\theta}^2}}
\end{align}
Here to obtain the above two inequalities, we used  the second estimate of (\ref{order-xi}).

Applying Lemma \ref{lem:upbd_key2K} again together
 with (\ref{est.tildeYY}), (\ref{21121601}), it is easy to deduce that 
\begin{align*}
&\Vert (\tilde{H}^{(i)})^{-\frac 12} (   \hat{\Xi}_1 -\tilde{\Xi}^{(i)}_1O_3^{(i)})\Vert_{2\to \infty } \notag\\
 &\leq C\Vert H_0^{-\frac 12}(\hat{\Xi}_1 - {\Xi}_1\tilde{O}_1') \Vert_{2\to \infty }  + C\Vert  H_0^{-\frac 12}({\Xi}_1(O_2^{(i)})'- \tilde{\Xi}^{(i)}_1)O_3^{(i)} \Vert_{2\to \infty }\notag\\
&\leq C\Vert H_0^{-\frac 12}(\hat{\Xi}_1- {\Xi}_1{O}_1') \Vert_{2\to \infty } +  C\Vert H_0^{-\frac 12} {\Xi}_1({O}_1- \tilde{O}_1)') \Vert_{2\to \infty } +C \Vert  H_0^{-\frac 12}({\Xi}_1(O_2^{(i)})'- \tilde{\Xi}^{(i)}_1)\Vert_{2\to \infty }\notag\\
&\leq C\Vert H_0^{-\frac 12}(\hat{\Xi}_1- {\Xi}_1{O}_1') \Vert_{2\to \infty }  + \frac{CK^{\frac 32} \beta_n^{-1}}{n\bar{\theta}}\sqrt{\frac{\log (n)}{n\bar{\theta}^2}}
\end{align*}
Thereby, according to the condition $K^3\beta_n^{-2}\log (n) /n\bar{\theta}^2=o(1)$, (\ref{21121602}) can be further improved to 
\begin{align}\label{21121502}
\Vert  \hat{\Xi}_1(i) - {\Xi}_1(i){O} _1'\Vert &\leq  C K^{\frac32}\beta_n^{-1} \widetilde{\kappa}_i \big(1 +  n\bar{\theta} \Vert H_0^{-\frac 12}(\hat{\Xi}_1- {\Xi}_1{O}_1') \Vert_{2\to \infty }  \big).
\end{align}
Next, we multiply both sides of the above inequality by $H_0^{-\frac 12}(i,i)$ and take the maximum over $i$ since $\hat{\Xi}_1- \Xi O_1'$ is independent of $i$, it yields that, 
\begin{align}\label{21121501}
 \Vert H_0^{-\frac 12}(\hat{\Xi}_1- {\Xi}_1O_1') \Vert_{ 2\to \infty }&= \max_{i} \Vert e_i' H_0^{-\frac12} (\hat{\Xi}_1O_1 - {\Xi}_1)\Vert \notag\\
&\leq CK^{\frac 32} \beta_n^{-1} (n\bar{\theta})^{-1}\sqrt{\frac{\log (n) }{n\bar{\theta}^2}} +  o(\Vert H_0^{-\frac 12}(\hat{\Xi}_1- {\Xi}_1O_1' \Vert_{2\to \infty})
\end{align}
 Rearranging both sides of (\ref{21121501}), we can conclude that 
\begin{align*}
 \Vert H_0^{-\frac 12}(\hat{\Xi}_1- {\Xi}_1O_1') \Vert_{2\to \infty}\leq CK^{\frac 32} \beta_n^{-1} (n\bar{\theta})^{-1}\sqrt{\frac{\log (n) }{n\bar{\theta}^2}}. 
\end{align*}
which, further substituted into (\ref{21121502}), yields   (\ref{result-2}) due to the condition $K^3\beta_n^{-2}\log (n) /n\bar{\theta}^2=o(1)$.
\end{proof}

\subsection{Proof of Lemma \ref{lem:upbd_key2K}} \label{subsec:pr_Lem_upbd_key2K}

We state the proof of Lemma \ref{lem:upbd_key2K} which is quite similar to Lemma  \ref{lem:upbd_key1} with additional attention to the non-commutative multiplication of matrices.
Fix the index $i$, we start with the perturbation from $L_0$ to $\tilde{L}^{(i)}$.
\begin{align*}
\tilde{\Xi}^{(i)}_1 \tilde{\Lambda}^{(i)}_1 = \tilde{L}^{(i)} \tilde{\Xi}^{(i)}_1 &= \big( H_0^{\frac 12}(\tilde{H}^{(i)})^{-\frac 12} \big) L_0 \big( H_0^{\frac 12}(\tilde{H}^{(i)})^{-\frac 12} \big) \tilde{\Xi}^{(i)}_1= \tilde{Y} \lambda_1 \xi_1\xi_1' \tilde{Y} \tilde{\Xi}^{(i)}_1 + \tilde{Y} \Xi_1\Lambda_1 \Xi_1' \tilde{Y} \tilde{\Xi}^{(i)}_1
\end{align*}
by recalling the definition $\tilde{Y}=  H_0^{\frac 12}(\tilde{H}^{(i)})^{-\frac 12}$. Then, for each $1\leq t\leq n $
\begin{align} \label{2021071401}
\tilde{\Xi}^{(i)}_1(t) = \tilde{Y}(t,t) \lambda_1 \xi_1(t) \xi_1' \tilde{Y}\tilde{\Xi}^{(i)}_1 (\tilde{\Lambda}^{(i)}_1)^{-1}+ \tilde{Y} (t,t)\Xi_1(t)\Lambda_1 \Xi_1' \tilde{Y} \tilde{\Xi}^{(i)}_1 (\tilde{\Lambda}^{(i)}_1)^{-1}.
\end{align}
Recall (\ref{prop:eig.tL}), over the event $E$, we first crudely bound $ \Vert \lambda_1 (\tilde{\Lambda}^{(i)}_1)^{-1} \Vert $ by $\beta_n^{-1}\lambda_1(PG)$.
 Then, using the estimate (\ref{est:tY-In}), we can crudely bound the first term on the RHS of (\ref{2021071401}) by
\begin{align} \label{21071402}
 \Vert \tilde{Y}(t,t) \lambda_1 \xi_1(t) \xi_1' \tilde{Y} \tilde{\Xi}^{(i)}_1 (\tilde{\Lambda}^{(i)}_1)^{-1}\Vert  &\leq
 C \beta_n^{-1} \lambda_1(PG)\Big(\Vert \tilde{Y} - I_n\Vert |\xi_1(t)| + \Vert \xi_1'  \tilde{\Xi}^{(i)}_1\Vert |\xi_1(t)| \Big)
 \notag \\
 &\leq C \beta_n^{-1}\lambda_1(PG)\kappa_i \Big(1 \wedge \sqrt{\frac{\theta_i}{\bar{\theta}}} \, \Big)
\end{align}
over the event $E$, where we used the first estimate in Lemma \ref{lem:order} and sin-theta theorem for $\Vert \xi_1'  \tilde{\Xi}^{(i)}_1\Vert $ that 
\begin{align*}
\Vert \xi_1'  \tilde{\Xi}^{(i)}_1\Vert \leq C K\lambda_1^{-1}(PG) \Vert\tilde{L}^{(i)} - L_0 \Vert \leq  C \Vert \tilde{Y} - I_n\Vert. 
\end{align*}
For the second term on the RHS of  (\ref{2021071401}), we have
\begin{align*}
\Vert \tilde{Y}(t,t) \Xi_1(t)\Lambda_1 \Xi_1' \tilde{Y} \tilde{\Xi}^{(i)}_1 (\tilde{\Lambda}^{(i)}_1)^{-1}
- \Xi_1(t) \Lambda_1 \Xi_1'  \tilde{\Xi}^{(i)}_1 (\tilde{\Lambda}^{(i)}_1)^{-1}\Vert
\leq  C \beta_n^{-1} \lambda_1(PG) \Vert \tilde{Y}- I_n\Vert \Vert\Xi_1(t)\Vert
\end{align*}
and 
\begin{align*}
 \Xi_1(t) \Lambda_1 \Xi_1'  \tilde{\Xi}^{(i)}_1 (\tilde{\Lambda}^{(i)}_1)^{-1} =  \Xi_1(t)\Xi_1'  L_0\,  \tilde{\Xi}^{(i)}_1 (\tilde{\Lambda}^{(i)}_1)^{-1} =  \Xi_1(t)\Xi_1'\tilde{\Xi}^{(i)}_1 + \Xi_1(t)\Xi_1'  (L_0-\tilde{L}^{(i)})  \tilde{\Xi}^{(i)}_1 (\tilde{\Lambda}^{(i)}_1)^{-1}.
\end{align*}
By singular value decomposition (SVD), we write  $\Xi_1'\tilde{\Xi}^{(i)}_1= U \cos \varTheta \, V' $ for some orthogonal matrices $U, V$ and diagonal matrix $\cos\varTheta$ all of which are $i$-dependent. Setting $ O_2^{(i)}=\big( {\rm sgn}(\Xi_1'\tilde{\Xi}_1^{(i)})\big)':= VU'$ which is an orthogonal matrix, then we obtain that
\begin{align} \label{21121201}
\Vert \Xi_1'\tilde{\Xi}^{(i)}_1- ({O}_2^{(i)})'\Vert \leq C(K\beta_n^{-1} \Vert\tilde{L}^{(i)} - L_0 \Vert)^2\leq CK\beta_n^{-1} \Vert\tilde{L}^{(i)} - L_0 \Vert. 
\end{align}
Here we used the fact that $K\beta_n^{-1} \Vert\tilde{L}^{(i)} - L_0 \Vert\leq C \beta_n^{-1}\lambda_1(PG) \Vert \tilde{Y}- I_n\Vert = o(1)$.
Further we crudely bound 
\begin{align*}
 \Vert \Xi_1(t)\Xi_1'  (L_0-\tilde{L}^{(i)})  \tilde{\Xi}^{(i)}_1 (\tilde{\Lambda}^{(i)}_1)^{-1} \Vert &\leq CK\beta_n^{-1} \Vert\tilde{L}^{(i)} - L_0 \Vert \Vert \Xi_1(t)\Vert\notag\\
 &\leq C \beta_n^{-1}\lambda_1(PG) \Vert \tilde{Y} - I_n\Vert  \Vert  \Xi_1(t)\Vert
\end{align*}
Hence,
\begin{align}\label{21071403}
\Vert \tilde{Y}(t,t) \Xi_1(t)\Lambda_1 \Xi_1' \tilde{Y} \tilde{\Xi}^{(i)}_1 (\tilde{\Lambda}^{(i)}_1)^{-1} - \Xi_1(t) ({O}_2^{(i)})' \Vert &\leq C \beta_n^{-1}\lambda_1(PG) \Vert \tilde{Y} - I_n\Vert  \Vert  \Xi_1(t)\Vert \notag \\
&\leq C \sqrt K\beta_n^{-1}\lambda_1(PG)\kappa_i \Big(1 \wedge \sqrt{\frac{\theta_i}{\bar{\theta}}} \, \Big)
\end{align}
over the event $E$.

Plugging in (\ref{21071402}) and (\ref{21071403}) back  to (\ref{2021071401}), we simply conclude that 
\begin{align*}
\Vert  \tilde{\Xi}^{(i)}_1 (t) O_2^{(i)}- \Xi_1 (t)\Vert 
\leq C \sqrt K \beta_n^{-1}\lambda_1(PG)\kappa_i \Big(1 \wedge \sqrt{\frac{\theta_i}{\bar{\theta}}} \, \Big)
\end{align*}
over the event $E$. Combining all $i$'s and $t$'s together and noting $\mathbb{P}(E)= 1- o(n^{-3})$. This finished the proof of Lemma \ref{lem:upbd_key2K} by further noticing that $\lambda_1(PG)\asymp K$.

\subsection{Proof of Lemma \ref{lem:upbd_key11_2K}} 
In this section, we prove Lemma \ref{lem:upbd_key11_2K}.

Let us fix the index $i$.
The proof of (\ref{Laplacian-entry-2_2K}) is straightforward by the decomposition 
$$\hat{\Xi}_1=\tilde{\Xi}_1^{(i)} O_4^{(i)}O_5^{(i)} +  (\overline{\Xi}_1^{(i)}- \tilde{\Xi}_1^{(i)} O_4^{(i)}) O_5^{(i)} + \hat{\Xi}_1 - \overline{\Xi}_1^{(i)}O_5^{(i)}$$
where the two orthogonal matrices $O_4^{(i)}, O_5^{(i)}$ will be specified later. 
We further bound
\begin{align*}
\Vert e_i' \Delta (\hat{\Xi}_1-\overline{\Xi}_1^{(i)}O_5^{(i)})\Vert & \leq \frac{1}{\sqrt{\tilde{H}^{(i)}(i,i)}} \Vert W(i) (\tilde{H}^{(i)})^{-\frac 12} \Vert \, \Vert X\Vert \,  \Vert \hat{\Xi}_1-\overline{\Xi}^{(i)}_1O_5^{(i)}\Vert  \notag\\
&\leq \frac{C}{\sqrt{n\bar{\theta}^2 }}  \Vert \hat{\Xi}_1-\overline{\Xi}_1O_5^{(i)}\Vert  
\end{align*}
over the event $E$, by writing $\Delta= (\tilde{H}^{(i)})^{-1/2} W (\tilde{H}^{(i)})^{-1/2} X$ and using the fact $\Vert X\Vert\leq C$ and $\Vert W(i) (\tilde{H}^{(i)})^{-\frac 12} \Vert \leq \sqrt{\theta_i/\bar{\theta}}\,\vee \sqrt{\log (n)/n\bar{\theta}^2}$ (see (\ref{est:WtH})) over the event $E$. This, together with the trivial identities $\Vert e_i'\Delta \tilde{\Xi}_1^{(i)} O_4^{(i)}O_5^{(i)}\Vert = \Vert e_i'\Delta \tilde{\Xi}_1^{(i)} \Vert$ and $\Vert e_i'\Delta  (\overline{\Xi}_1^{(i)}- \tilde{\Xi}_1^{(i)} O_4^{(i)}) O_5^{(i)}\Vert = \Vert e_i'\Delta  (\overline{\Xi}_1^{(i)}- \tilde{\Xi}_1^{(i)} O_4^{(i)})  \Vert$ implies (\ref{Laplacian-entry-2_2K}). 

We then turn to show (\ref{Laplacian-entry-1_2K}). Note that
\begin{align*}
\hat{\Xi}_1\hat{\Lambda}_1 = L \, \hat{\Xi}_1 = X (\tilde{H}^{(i)})^{-\frac 12} A(\tilde{H}^{(i)})^{-\frac 12} X \hat{\Xi}_1 = X (\tilde{H}^{(i)})^{-\frac 12} \Omega (\tilde{H}^{(i)})^{-\frac 12} X \hat{\Xi}_1 + X(\tilde{H}^{(i)})^{-\frac 12} (A-\Omega) (\tilde{H}^{(i)})^{-\frac 12} X \hat{\Xi}_1
\end{align*}
by the notation $X= (\tilde{H}^{(i)})^{\frac 12} H^{-\frac 12}$.
Then, 
\begin{align}\label{21071404}
 \hat{\Xi}_1(i)= &X(i,i) \tilde{\lambda}^{(i)}_1 \tilde{\xi}^{(i)}_1(i) \big(\tilde{\xi}^{(i)}_1\big)' X \hat{\Xi}_1 \hat{\Lambda}_1^{-1}+ X(i,i) \tilde{\Xi}^{(i)}_1(i) \tilde{\Lambda}^{(i)} (\tilde{\Xi}^{(i)}_1)' X \hat{\Xi}_1\hat{\Lambda}_1^{-1} 
 \notag\\
 &+  X(i,i) e_i'( \tilde{H}^{(i)})^{-\frac 12} (A-\Omega) (\tilde{H}^{(i)})^{-\frac 12} X \hat{\Xi}_1\hat{\Lambda}_1^{-1}.
\end{align}
Recall the estimate $\Vert X - I_n \Vert\leq C \sqrt{\log (n)}/\sqrt{n\bar{\theta}^2}$ following from  Lemma \ref{lem:noisetildeH} and the properties of eigenvalues and eigenvectors of  $\tilde{L}^{(i)}$ in Lemma \ref{lem:est.etL}. 
%
Then, for the first term on the RHS of (\ref{21071404}), we have 
\begin{align}\label{21112302}
\Vert X(i,i) \tilde{\lambda}^{(i)}_1 \tilde{\xi}^{(i)}_1(i) \big(\tilde{\xi}^{(i)}_1\big)' X \hat{\Xi}_1\hat{\Lambda}_1^{-1}\Vert \leq C K^{-1} \lambda_1(PG)\Vert \hat{\Lambda}_1^{-1}\Vert \big|\tilde{\xi}^{(i)}_1(i)\big| \Big(\Vert X- I_n\Vert + \Vert \big(\tilde{\xi}^{(i)}_1\big)' \hat{\Xi}_1 \Vert\Big).
\end{align}
 Recall that $\hat{\lambda}_j$'s  for  $1\leq j \leq K$ share the same asymptotic as $\lambda_j$'s in (\ref{order-lambda}) over the event $E$.  By sin-theta theorem and (\ref{l2norm_LtL}), we have the bound 
\begin{align}\label{21112301}
\Vert \big(\tilde{\xi}^{(i)}_1\big)' \hat{\Xi}_1 \Vert \leq C {K} \lambda_1^{-1}(PG)\Vert L- \tilde{L}^{(i)} \Vert \leq C  \Big(\sqrt{\frac{\log (n)}{n\bar{\theta}^2}} + \frac{K\lambda_1^{-1}(PG)}{\sqrt{n \bar{\theta}^2}}\Big).
\end{align}
Thus, plugging (\ref{21112301}), (\ref{prop:eigv.tL}) together with $\Vert X - I_n \Vert\leq C \sqrt{\log (n)}/\sqrt{n\bar{\theta}^2}$ into (\ref{21112302}),  we arrive at 
\begin{align} \label{21071406}
\Vert X(i,i) \tilde{\lambda}^{(i)}_1 \tilde{\xi}^{(i)}_1(i) \big(\tilde{\xi}^{(i)}_1\big)' X \hat{\Xi}_1\hat{\Lambda}_1^{-1}\Vert\leq C K  {\beta_n^{-1}} \kappa_i  \Big( 1\wedge \sqrt{\frac{\theta_i}{\bar{\theta}}}\, \Big),
\end{align}
where we used the trivial bound $\lambda_1(PG)\leq CK$. 

To estimate the other two term in (\ref{21071404}), we need the assistance of $\overline{\Xi}$, the eigenspace of $(\tilde{H}^{(i)})^{-\frac 12} \tilde{A}^{(i)}(\tilde{H}^{(i)})^{-\frac 12}$, which is counterpart to $\tilde{\Xi}^{(i)}_1$ and $\hat{\Xi}_1$.  Recall that $\tilde{A}^{(i)}= \Omega + \tilde{W}^{(i)}-{\rm diag}(\Omega)$ where $\tilde{W}^{(i)}$ is obtained by zeroing-out $i$-th row and column of $W$. Similarly to (\ref{21121201}), we can then claim that there exists an orthogonal matrix $ O_4^{(i)}$ by sin-theta theorem such that 
\begin{align}\label{21071801}
\Vert  \overline{\Xi}^{(i)}_1- \tilde{\Xi}^{(i)}_1O_4^{(i)}  \Vert  \leq  CK\beta_n^{-1} \Vert (\tilde{H}^{(i)})^{-\frac 12} \tilde{A}^{(i)}(\tilde{H}^{(i)})^{-\frac 12} - (\tilde{H}^{(i)})^{-\frac 12} \Omega (\tilde{H}^{(i)})^{-\frac 12} \Vert \leq  \frac{C K\beta_n^{-1}}{\sqrt{{n\bar{\theta}^2}}}
\end{align}
over the event $E$, where $O_4^{(i)}= {\rm sgn} ((\tilde{\Xi}^{(i)}_1)'\overline{\Xi}^{(i)}_1 )$. We will also need an orthogonal matrix $O_5\equiv O_5(i):= {\rm sgn } ((\overline{\Xi}^{(i)}_1)'  \hat{\Xi}_1)$. Again by sin-theta theorem,
\begin{align}\label{210723est6}
\Vert (\overline{\Xi}^{(i)}_1)'  \hat{\Xi}_1 - O_5 \Vert^{\frac 12} &\leq CK\beta_n^{-1} \Vert  (\tilde{H}^{(i)})^{-\frac 12} \tilde{A}^{(i)} (\tilde{H}^{(i)})^{-\frac 12} - H^{-\frac 12} A H^{-\frac 12}\Vert \notag\\
&\leq C \beta_n^{-1}\lambda_1(PG) \Vert X-I_n\Vert + CK\beta_n^{-1} \Vert (\tilde{H}^{(i)})^{-\frac 12} (\tilde{A}^{(i)}- A) (\tilde{H}^{(i)})^{-\frac 12}\Vert\notag\\
&\leq C K\beta_n^{-1} \sqrt{\frac{\log (n)}{n\bar{\theta}^2}}.
\end{align}
Here we used $\Vert(\tilde{H}^{(i)})^{-\frac 12} A(\tilde{H}^{(i)})^{-\frac 12} \Vert\asymp\Vert(\tilde{H}^{(i)})^{-\frac 12} \Omega(\tilde{H}^{(i)})^{-\frac 12} \Vert \asymp K^{-1}\lambda_1(PG)$ to get $K$ canceled for the first term of second line above.
 We then introduce the shorthand notation  $O_3^{(i)}=O_4^{(i)}O_5^{(i)}$.
And for the second term on the RHS of (\ref{21071404}), similarly to (\ref{21071403}),  we get 
\begin{align}\label{21071407}
&\quad \Vert X(i,i) \tilde{\Xi}^{(i)}_1(i) \tilde{\Lambda}^{(i)} (\tilde{\Xi}^{(i)}_1)' X \hat{\Xi}_1\hat{\Lambda}_1^{-1}- \tilde{\Xi}^{(i)}_1(i) O_3^{(i)} \Vert \notag\\
&\leq C\big( \beta_n^{-1}\lambda_1(PG)\Vert X - I_n \Vert + K \beta_n^{-1} \Vert L- \tilde{L}^{(i)} \Vert+ \Vert (\tilde{\Xi}^{(i)})_1'  \hat{\Xi}_1 - O_3^{(i)} \Vert \big)\Vert \tilde{\Xi}^{(i)}_1(i)\Vert 
\notag\\
&\leq C { K^{\frac 32}\beta_n^{-1}} \kappa_i + C\Vert (\tilde{\Xi}^{(i)}_1)'  \hat{\Xi}_1 - O_3^{(i)} \Vert \Vert \tilde{\Xi}^{(i)}_1(i)\Vert 
\end{align}
over the event $E$, where we recall that $\kappa_i= \sqrt{\log (n)/n\bar{\theta}^2}\cdot \sqrt{\theta_i/n\bar{\theta}} $. Moreover, we have 
\begin{align*}
\Vert (\tilde{\Xi}^{(i)}_1)'  \hat{\Xi}_1 - O_3^{(i)} \Vert  \leq \Vert  \overline{\Xi}^{(i)}_1- \tilde{\Xi}^{(i)}_1O_4^{(i)}  \Vert + \Vert (\overline{\Xi}^{(i)}_1)'  \hat{\Xi}_1 - O_5^{(i)} \Vert
\end{align*}
which with (\ref{21071801}), (\ref{210723est6}) and (\ref{prop:eigv.tL}) leads to 
\begin{align}\label{21112304}
\Vert X(i,i) \tilde{\Xi}^{(i)}_1(i) \tilde{\Lambda}^{(i)} (\tilde{\Xi}^{(i)})_1' Y \hat{\Xi}_1\hat{\Lambda}_1^{-1}- \tilde{\Xi}^{(i)}_1(i) O_3^{(i)} \Vert  & \leq C K^{\frac 32} \beta_n^{-1} \kappa_i 
\end{align}
Combining (\ref{21071406}) and (\ref{21112304}) back into (\ref{21071404}), we get 
\begin{align}\label{2107230501}
\Vert  \hat{\Xi}_1(i) - \tilde{\Xi}^{(i)}_1(i)O_3^{(i)}\Vert \leq 
C {K^{\frac 32}\beta_n^{-1} }\kappa_i+  \Vert X(i,i)e_i' (\tilde{H}^{(i)})^{-\frac 12} (A-\Omega) (\tilde{H}^{(i)})^{-\frac 12} X \hat{\Xi}_1\hat{\Lambda}_1^{-1}\Vert
\end{align}
In the sequel, we proceed to the second term on the RHS above. First, using the trivial bound $|X(i,i)|\leq 2$ and $\Vert \hat{\Lambda}_1\Vert^{-1}\leq K\beta_n^{-1}$, we have 
\begin{align*}
 &\quad \Vert X(i,i)e_i' (\tilde{H}^{(i)})^{-\frac 12} (A-\Omega) (\tilde{H}^{(i)})^{-\frac 12} X \hat{\Xi}_1\hat{\Lambda}_1^{-1}\Vert \notag\\
 &\leq C K \beta_n^{-1} \Vert e_i' (\tilde{H}^{(i)})^{-\frac 12} (A-\Omega) (\tilde{H}^{(i)})^{-\frac 12} X \hat{\Xi}_1\Vert\notag\\
 &\leq C K \beta_n^{-1}\Big( \Vert e_i' (\tilde{H}^{(i)})^{-\frac 12} W (\tilde{H}^{(i)})^{-\frac 12} X \hat{\Xi}_1\Vert + \Vert e_i' (\tilde{H}^{(i)})^{-\frac 12} {\rm diag}(\Omega) (\tilde{H}^{(i)})^{-\frac 12} X \hat{\Xi}_1\Vert\Big)
\end{align*}
We can simply get the bound 
\begin{align*}
\Vert e_i' (\tilde{H}^{(i)})^{-\frac 12} {\rm diag}(\Omega) (\tilde{H}^{(i)})^{-\frac 12} X \hat{\Xi}_1\Vert &= \Vert (\tilde{H}^{(i)})^{-1}(i,i)\Omega(i,i) X(i,i) \hat{\Xi}_1(i) \Vert  \notag\\
&\leq C \, \frac{\theta_i^2}{n\bar{\theta}(\bar{\theta}\vee \theta_i)} \, \Vert\hat{\Xi}_1(i) \Vert \notag\\
&\leq \frac{\sqrt K}{\sqrt{\log (n)}}\kappa_i  \Big(1\wedge \sqrt{\frac{\theta_i}{\bar{\theta}}}\, \Big)
\end{align*}
This leads to 
\begin{align}\label{211125305}
\Vert X(i,i)e_i' (\tilde{H}^{(i)})^{-\frac 12} (A-\Omega) (\tilde{H}^{(i)})^{-\frac 12} X \hat{\Xi}_1\hat{\Lambda}_1^{-1}\Vert \leq 
C {K^{\frac 32}\beta_n^{-1} }\kappa_i+ C K \beta_n^{-1} \Vert e_i' \Delta \hat{\Xi}_1\Vert 
\end{align}
over the event $E$ satisfying $\mathbb{P}(E) = 1- o(n^{-3})$.
Combining (\ref{211125305}) and (\ref{2107230501}) and considering all $i$'s, we then conclude the proof of (\ref{Laplacian-entry-1_2K}).

\subsection{Proof of  Lemma \ref{lem:techB1_2K}} \label{subsec:pr_Lem_techB1_2K}
The proof of  Lemma \ref{lem:techB1_2K} is rather complicated. We will show the three claims (i.e.,  (\ref{Laplacian-entry-3-1_2K})- (\ref{Laplacian-entry-3-3_2K}))  separately in the following three parts.

\subsubsection{Proof of (\ref{Laplacian-entry-3-1_2K})}
Write $\Delta=(\tilde{H}^{(i)})^{-\frac 12} W (\tilde{H}^{(i)})^{-\frac 12}X $, we first crudely have
\begin{align}\label{21112401}
\Vert e_i' \Delta \tilde{\Xi}^{(i)}_1 \Vert &\leq \Vert e_i' (\tilde{H}^{(i)})^{-\frac 12} W (\tilde{H}^{(i)})^{-\frac 12} \tilde{\Xi}^{(i)}_1\Vert  + \Vert e_i' (\tilde{H}^{(i)})^{-\frac 12} W (\tilde{H}^{(i)})^{-\frac 12}(X- I_n) \tilde{\Xi}^{(i)}_1\Vert 
\end{align}
We start with the first term on the RHS of (\ref{21112401}).
\begin{align*}
\Vert e_i' (\tilde{H}^{(i)})^{-\frac 12} W (\tilde{H}^{(i)})^{-\frac 12} \tilde{\Xi}^{(i)}_1\Vert = \Big\Vert\frac{1}{\sqrt{\tilde{H}^{(i)}(i,i)}} W(i)(\tilde{H}^{(i)})^{-\frac 12}\tilde{\Xi}^{(i)}_1\Big\Vert =\Big\Vert  \frac{1}{\sqrt{\tilde{H}^{(i)}(i,i)}} \sum_{t=1}^n \frac{W(i,t)}{\sqrt{\tilde{H}^{(i)}(t,t)}} \tilde{\Xi}^{(i)}_1(t)  \Big\Vert.
\end{align*}
Thanks to the independence between $\tilde{\Xi}^{(i)}_1$ and $W(i)$, we can estimate $\sum_{t=1}^n \frac{W(i,t)}{\sqrt{\tilde{H}^{(i)}(t,t)}} \tilde{\Xi}^{(i)}_1(t)$ componentwisely by Bernstein inequality with respect to the randomness of $W(i)$. For each $2\leq p\leq K$, we can bound the variance of $\sum_{t=1}^n \frac{W(i,t)}{\sqrt{\tilde{H}^{(i)}(t,t)}} \tilde{\xi}^{(i)}_p(t)$ by 
\begin{align*}
{\rm var} \Big( \sum_{t=1}^n \frac{W(i,t)}{\sqrt{\tilde{H}^{(i)}(t,t)}} \tilde{\xi}^{(i)}_p(t)\Big) \asymp \sum_{t=1}^n \frac{\theta_i\theta_t}{\tilde{H}^{(i)}(t,t)} \big(\tilde{\xi}^{(i)}_p(t)\big)^2
&\leq C  \sum_{t=1}^n \frac{\theta_i\theta_t}{n\bar{\theta}(\theta_t \vee \bar{\theta})} \big(\tilde{\xi}^{(i)}_p(t)\big)^2 \leq C \frac{\theta_i}{n\bar{\theta}}.
\end{align*}
 Each individual summand can be bounded by $C/n\bar{\theta}$ over the event $E$. As a result, 
\begin{align*}
\Big|\sum_{t=1}^n \frac{W(i,t)}{\sqrt{\tilde{H}^{(i)}(t,t)}} \tilde{\xi}^{(i)}_p(t)\Big| \leq C \Big(\sqrt{\frac{\theta_i \log (n)}{n\bar{\theta}}} + \frac{\log (n)}{n\bar{\theta}}\Big)\leq C \frac{\sqrt{\log (n)}}{n\bar{\theta}} \sqrt{n\bar{\theta} \theta_i \vee \log (n)}
\end{align*}
Further with $\tilde{H}^{(i)} (i,i)\asymp n\bar{\theta}(\theta_i \vee \bar{\theta})$, we finally conclude that 
\begin{align} \label{21071502term1}
\Vert e_i' (\tilde{H}^{(i)})^{-\frac 12} W (\tilde{H}^{(i)})^{-\frac 12} \tilde{\Xi}^{(i)}_1\Vert=\Big\Vert  \frac{1}{\sqrt{\tilde{H}^{(i)}(i,i)}} \sum_{t=1}^n \frac{W(i,t)}{\sqrt{\tilde{H}^{(i)}(t,t)}} \tilde{\Xi}^{(i)}_1(t)  \Big\Vert \leq C K^{\frac 12}\, \widetilde{\kappa}_i.
\end{align}
Next, regarding the term $\Vert e_i' (\tilde{H}^{(i)})^{-\frac 12} W (\tilde{H}^{(i)})^{-\frac 12}(X- I_n) \tilde{\Xi}^{(i)}_1\Vert $, using the estimate (\ref{est.X-I.entry}),  we can  derive 
\begin{align}\label{21112402term11}
\frac{ \Vert  W(i)  (\tilde{H}^{(i)})^{-\frac 12} (X- I_n)\tilde{\Xi}^{(i)}_1\Vert  }{\sqrt{\tilde{H}^{(i)}(i,i)}} &\leq \frac{C}{\sqrt{\tilde{H}^{(i)}(i,i)}}  \sum_{t=1, t\neq i}^n |W(i,t)|\, \frac{A(i,t) +\theta_i\theta_t+ \theta_i \bar{\theta} + \frac{\log (n)}{n}}{|\tilde{H}^{(i)}(t,t)|^{\frac 32}} \Vert \tilde{\Xi}^{(i)}_1(t) \Vert 
\notag\\
&\leq \frac{C}{\sqrt{\tilde{H}^{(i)}(i,i)}} \sum_{t=1, t\neq i}^n\, \frac{A(i,t)  +\theta_i\theta_t + \theta_i \bar{\theta}+ \frac{\log (n)}{n}}{|\tilde{H}^{(i)}(t,t)|^{\frac 32}} \Vert \tilde{\Xi}^{(i)}_1(t) \Vert \notag\\
& \leq C \sqrt K \frac{\widetilde{\kappa}_i}{\sqrt{\log (n)}}
\end{align}
where the last step is analogous to how we get (\ref{2112080101}) by Bernstein's inequality and one can refer to the details in Section \ref{app:subsub_techproof1}. Combining (\ref{21071502term1}) and (\ref{21112402term11}) into (\ref{21112401}), and considering all $i$'s, we thus conclude  (\ref{Laplacian-entry-3-1_2K}).

\subsubsection{Proof of (\ref{Laplacian-entry-3-2_2K})}
The proof is similar to the proof of (\ref{Laplacian-entry-3-2})
in Section \ref{app:subsub_techproof2}.
First, by definition, we bound 
\begin{align}\label{211124010}
\Vert e_i' \Delta (\overline{\Xi}^{(i)}_1-\tilde{\Xi}^{(i)}_1 O_4^{(i)})\Vert \leq &\Vert e_i' (\tilde{H}^{(i)})^{-\frac 12} W (\tilde{H}^{(i)})^{-\frac 12} (\overline{\Xi}^{(i)}_1-\tilde{\Xi}^{(i)}_1 O_4^{(i)})\Vert  \notag\\
&+ \Vert e_i' (\tilde{H}^{(i)})^{-\frac 12} W (\tilde{H}^{(i)})^{-\frac 12}(X- I_n) (\overline{\Xi}^{(i)}_1-\tilde{\Xi}^{(i)}_1 O_4^{(i)})\Vert. 
\end{align}
We rewrite the first term on the RHS by 
\begin{align*}
\Vert e_i' (\tilde{H}^{(i)})^{-\frac 12} W (\tilde{H}^{(i)})^{-\frac 12} (\overline{\Xi}^{(i)}_1-\tilde{\Xi}^{(i)}_1 O_4^{(i)})\Vert= \Big\Vert  \frac{1}{\sqrt{\tilde{H}^{(i)}(i,i)}}  W(i) (\tilde{H}^{(i)})^{-\frac 12}(   \overline{\Xi}^{(i)}_1 -\tilde{\Xi}^{(i)}_1O_4^{(i)} )\Big\Vert. 
\end{align*}
According to the definition of $\overline{\Xi}^{(i)}_1$, $ \overline{\Xi}^{(i)}_1 -\tilde{\Xi}^{(i)}_1O_4^{(i)}$ is also independent of $W(i)$. Then, analogously to the previous section, restricted to the randomness of $W(i)$, we bound the variance of  each component  of $W(i) (\tilde{H}^{(i)})^{-\frac 12}(   \overline{\Xi}^{(i)}_1 -\tilde{\Xi}^{(i)}_1O_4^{(i)} )$ by 
\begin{align*}
\sum_{t=1}^n \theta_i \theta_t \left(\frac{  \big(\overline{\Xi}^{(i)}_1(t) -\tilde{\Xi}^{(i)}_1 (t)O_4^{(i)} \big)e_p}{\sqrt{\tilde{H}^{(i)}(t,t)}}\right)^2 \leq \frac{C\theta_i}{n \bar{\theta}}.
\end{align*}
Here to obtain the RHS upper bound, we used an elementary derivation
\begin{align*}
\sum_{t} (\tilde{\Xi}^{(i)}_1 (t)O_4^{(i)} e_p)^2 = e_p' (O_4^{(i)})' (\tilde{\Xi}^{(i)}_1)' \sum_{t}e_t e_t' \tilde{\Xi}^{(i)}_1 O_4^{(i)} e_p= e_p' (O_4^{(i)})' (\tilde{\Xi}^{(i)}_1)' \tilde{\Xi}^{(i)}_1 O_4^{(i)} e_p = 1.
\end{align*}
There is some ambiguity over the dimension of $e_p$ and $e_t$. $e_p$ shall be of dimension $K-2$ while $e_t$ is of dimension $n$.
Further,  each summand in the $p$-th component of $W(i) (\tilde{H}^{(i)})^{-\frac 12}(   \overline{\Xi}^{(i)}_1 -\tilde{\Xi}^{(i)}_1O_4^{(i)} )$ is bounded by $C \Vert (\tilde{H}^{(i)})^{-\frac 12} (   \overline{\Xi}^{(i)}_1 -\tilde{\Xi}^{(i)}_1O_4^{(i)} )e_p\Vert_\infty$.  Thus, for each $1\leq p \leq K-1$, 
\begin{align*}
\big|W(i) (\tilde{H}^{(i)})^{-\frac 12}(   \overline{\Xi}^{(i)}_1 -\tilde{\Xi}^{(i)}_1O_4^{(i)} ) e_p \big| \leq C \sqrt{\frac{\theta_i \log (n)}{n\bar{\theta}}} + C \log (n)  \Vert (\tilde{H}^{(i)})^{-\frac 12} (   \overline{\Xi}^{(i)}_1 -\tilde{\Xi}^{(i)}_1O_4^{(i)} )e_p\Vert_\infty;
\end{align*}
and therefore, 
\begin{align*}
\Vert W(i) (\tilde{H}^{(i)})^{-\frac 12}(   \overline{\Xi}^{(i)}_1 -\tilde{\Xi}^{(i)}_1O_4^{(i)} )\Vert  &\leq C\bigg( \sum_{p=1}^{K-1} \Big(\sqrt{\frac{\theta_i \log (n)}{n\bar{\theta}}} +  \log (n)  \Vert (\tilde{H}^{(i)})^{-\frac 12} (   \overline{\Xi}^{(i)}_1 -\tilde{\Xi}^{(i)}_1O_4^{(i)} )e_p\Vert_\infty\Big)^2\bigg)^{\frac 12} \notag\\
& \leq C \sqrt{K\log (n)} \sqrt{\frac{\theta_i}{n\bar{\theta}}} + C\log (n) \Big(\sum_{p=1}^{K-1}\Vert (\tilde{H}^{(i)})^{-\frac 12} (   \overline{\Xi}^{(i)}_1 -\tilde{\Xi}^{(i)}_1O_4^{(i)} )e_p\Vert_\infty^2\Big)^{\frac 12}.
\end{align*}
We further have 
\begin{align*}
\Big(\sum_{p=1}^{K-1}\Vert (\tilde{H}^{(i)})^{-\frac 12} (   \overline{\Xi}^{(i)}_1 -\tilde{\Xi}^{(i)}_1O_4^{(i)} )e_p\Vert_\infty^2\Big)^{\frac 12}
&\leq \Big(\sum_{p=1}^{K-1}\Vert (\tilde{H}^{(i)})^{-\frac 12} (   \overline{\Xi}^{(i)}_1 -\tilde{\Xi}^{(i)}_1O_4^{(i)} \Vert^2_{2\to \infty} \Big)^{\frac 12}\notag\\
& \leq  \sqrt{K}  \Vert (\tilde{H}^{(i)})^{-\frac 12} (   \hat{\Xi}_1 -\tilde{\Xi}^{(i)}_1O_4^{(i)} O_5^{(i)})\Vert_{2\to \infty} +\frac{C\sqrt K}{\sqrt {n\bar{\theta}^2}} \Vert  \hat{\Xi}_1- \overline{\Xi}^{(i)}_1O_5^{(i)} \Vert.
\end{align*}
 Thus, over the event $E$,
\begin{align}\label{21071502term2}
&\quad \Vert e_i' (\tilde{H}^{(i)})^{-\frac 12} W (\tilde{H}^{(i)})^{-\frac 12} (\overline{\Xi}^{(i)}_1-\tilde{\Xi}^{(i)}_1 O_4^{(i)})\Vert \notag\\
&\leq  C \sqrt K \, \kappa_i+ C\sqrt K \frac{\log (n)}{\sqrt{n\bar{\theta}(\bar{\theta}\vee \theta_i)}} \, \Vert (\tilde{H}^{(i)})^{-\frac 12} (   \hat{\Xi}_1 -\tilde{\Xi}^{(i)}_1O_4^{(i)} O_5^{(i)})\Vert_{2\to \infty}+ \frac{C\sqrt K\, \log (n)}{n\bar{\theta}^2}\Vert  \hat{\Xi}_1- \overline{\Xi}^{(i)}_1O_5^{(i)} \Vert
\end{align}
Next, for the second term of (\ref{211124010}),  using the estimate (\ref{est.X-I.entry}), we have {\small
\begin{align}\label{21112501}
&\quad \frac{\Vert  W(i) (\tilde{H}^{(i)})^{-\frac 12}(X- I_n)(  \overline{\Xi}_1 -\tilde{\Xi}^{(i)}_1O_4^{(i)}   )\Vert}{\sqrt{\tilde{H}^{(i)}(i,i)}} \notag\\
&\leq \frac{C}{\sqrt{\tilde{H}^{(i)}(i,i)}}  \sum_{t=1, t\neq i}^n |W(i,t)|\, \frac{A(i,t) +\theta_i \theta_t+ \theta_i \bar{\theta} + \log (n)/n}{\tilde{H}^{(i)}(t,t)} \, 
\frac{\Vert \overline{\Xi}_1(t) - \tilde{\Xi}^{(i)}_1(t) O_4^{(i)}\Vert}{\sqrt{\tilde{H}^{(i)}(t,t)}}
\notag\\
&\leq \frac{C}{\sqrt{\tilde{H}^{(i)}(i,i)}} \sum_{t=1, t\neq i}^n \frac{A(i,t) +\theta_i \theta_t+ \theta_i \bar{\theta} +\frac{ \log (n)}{n}}{\tilde{H}^{(i)}(t,t)} \bigg(\big\Vert (\tilde{H}^{(i)})^{-\frac 12}(  \hat{\Xi}_1 -\tilde{\Xi}^{(i)}_1O_4^{(i)}  O_5^{(i)} )\big\Vert_{2\to\infty} + \frac{\Vert \hat{\Xi}_1(t)- \overline{\Xi}^{(i)}_1(t)O_5^{(i)}\Vert}{\sqrt{\tilde{H}^{(i)}(t,t)}}\bigg)
\end{align} 
}
Similarly to the derivations of upper bounds of (\ref{211125sub1}) and (\ref{211125sub2}), we bound the two sums on the RHS of (\ref{21112501}) corresponding to the two terms in the parenthesis separately as follows:
\begin{align*}
&\quad \frac{1}{\sqrt{\tilde{H}^{(i)}(i,i)}} \sum_{t=1, t\neq i}^n \frac{A(i,t) + \theta_i \bar{\theta} + \log (n)/n}{\tilde{H}^{(i)}(t,t)} \big\Vert (\tilde{H}^{(i)})^{-\frac 12}(  \hat{\Xi}_1 -\tilde{\Xi}^{(i)}_1O_4^{(i)}  O_5^{(i)} )\big\Vert_{2\to \infty }  \notag\\
 &\leq \frac{\theta_i/\bar{\theta} + \log (n)/n\bar{\theta}^2}{\sqrt{n\bar{\theta} (\bar{\theta}\vee \theta_i)}}\, \big\Vert (\tilde{H}^{(i)})^{-\frac 12}(  \hat{\Xi}_1 -\tilde{\Xi}^{(i)}_1O_4^{(i)}  O_5^{(i)} )\big\Vert_{2\to \infty } \notag\\
&\leq \Big(\frac{1}{\bar{\theta}} \, \sqrt{\frac{\theta_i}{n\bar{\theta}}} + \frac{\log (n)}{(n\bar{\theta}^2)^{\frac 32}}\Big) \big\Vert (\tilde{H}^{(i)})^{-\frac 12}(  \hat{\Xi}_1 -\tilde{\Xi}^{(i)}_1O_4^{(i)}  O_5^{(i)} )\big\Vert_{2\to \infty }\end{align*}
and 
\begin{align*}
&\frac{1}{\sqrt{\tilde{H}^{(i)}(i,i)}} \sum_{t=1, t\neq i}^n \frac{A(i,t) + \theta_i \bar{\theta} + \log(n)/n}{(\tilde{H}^{(i)}(t,t))^{3/2}}  \Vert \hat{\Xi}_1(t)- \overline{\Xi}^{(i)}_1(t)O_5^{(i)}\Vert \notag\\
&\leq \frac{1}{\sqrt{\tilde{H}^{(i)}(i,i)}} \bigg(\sum_{t=1, t\neq i}^n \frac{\big(A(i,t) + \theta_i \bar{\theta} + \log(n)/n\big)^2}{(\tilde{H}^{(i)}(t,t))^3} \bigg)^{\frac 12} \Big( \sum_{t=1, t\neq i}^n \Vert \hat{\Xi}_1(t)- \overline{\Xi}^{(i)}_1(t)O_5^{(i)}\Vert ^2 \Big)^{\frac 12}\notag\\
&\leq C(n\bar{\theta}^2)^{-\frac 32}\Big({\rm tr}(\hat{\Xi}_1- \overline{\Xi}^{(i)}_1O_5^{(i)})'(\hat{\Xi}_1- \overline{\Xi}^{(i)}_1O_5^{(i)})\Big)^{\frac 12} \notag\\
&\leq C\sqrt K (n\bar{\theta}^2)^{-\frac 32} \Vert \hat{\Xi}_1- \overline{\Xi}^{(i)}_1O_5^{(i)} \Vert, 
\end{align*}
over the event $E$, in which, we applied (\ref{211125sub4}) and (\ref{211125sub3}). We plug the above two estimates into (\ref{21112501}) and conclude that over the event $E$,
\begin{align*}
&\quad \frac{\Vert  W(i) (\tilde{H}^{(i)})^{-\frac 12}(X- I_n)(  \overline{\Xi}^{(i)}_1 -\tilde{\Xi}^{(i)}_1O_4^{(i)}   )\Vert}{\sqrt{\tilde{H}^{(i)}(i,i)}} \notag\\
&\leq C
\widetilde{\kappa_i} n\bar{\theta} \, \Vert (\tilde{H}^{(i)})^{-\frac 12} (   \hat{\Xi}_1 -\tilde{\Xi}^{(i)}_1O_4^{(i)} O_5^{(i)})\Vert_{2\to \infty } + \sqrt K (n\bar{\theta}^2)^{-\frac 32} \Vert \hat{\Xi}_1- \overline{\Xi}^{(i)}_1O_5^{(i)} \Vert
\end{align*}
This, together with (\ref{21071502term2}), concludes the proof of (\ref{Laplacian-entry-3-2_2K}) for fixed $i$, by the fact that $\log (n)/\sqrt{n\bar{\theta}^2}\leq \widetilde{\kappa}_i\cdot n\bar{\theta}$. Combining all $i$'s and the fact $\mathbb{P}(E) = 1- o(n^{-3})$, we finish the proof.

\subsubsection{Proof of (\ref{Laplacian-entry-3-3_2K})}

By sin-theta theorem and the fact that  the eigen-gap is of the order $O(K^{-1} \beta_n)$ in light of Weyl's inequality (see (\ref{21121401})),  analogously to (\ref{leave-effect-1}), we first have  {\small
\begin{align} \label{211125012}
&\quad \Vert  \hat{\Xi}_1 - \overline{\Xi}^{(i)}_1  O_5^{(i)}  \Vert \notag\\
&\leq  {K\beta_n^{-1}}\Vert  \big( H^{-\frac 12} A H^{-\frac 12} - (\tilde{H}^{(i)})^{-\frac 12} \tilde{A}^{(i)} (\tilde{H}^{(i)})^{-\frac 12} \big)  \hat{\Xi}_1\Vert \notag\\
& \leq  { K\beta_n^{-1}} \Big(\Vert (I_n- X^{-1}) H^{-\frac 12} A H^{-\frac 12}  \hat{\Xi}_1\Vert + \Vert (\tilde{H}^{(i)})^{-\frac 12} A (\tilde{H}^{(i)})^{-\frac 12} (X- I_n) \hat{\Xi}_1\Vert + \Vert ( \tilde{H}^{(i)})^{-\frac 12} (A- \tilde{A}^{(i)}) (\tilde{H}^{(i)})^{-\frac 12} \hat{\Xi}_1\Vert  \Big)\notag\\
& \leq C { K\beta_n^{-1}}  \Big(  \Vert (X- I_n)  \hat{\Xi}_1\hat{\Lambda}_1\Vert +   \Vert (\tilde{H}^{(i)})^{-\frac 12} A (\tilde{H}^{(i)})^{-\frac 12} (X - I_n) \hat{\Xi}_1\Vert + \Vert  (\tilde{H}^{(i)})^{-\frac 12} (e_i W(i) + W(i)' e_i') (\tilde{H}^{(i)})^{-\frac 12} \hat{\Xi}_1\Vert \Big).
\end{align}
}
We start with a simple derivation,
\begin{align*}
&\quad \Vert (\tilde{H}^{(i)})^{-\frac 12} A (\tilde{H}^{(i)})^{-\frac 12} ( X - I_n) \hat{\Xi}_1\Vert \notag\\
& \leq \Vert (\tilde{H}^{(i)})^{-\frac 12} \Omega (\tilde{H}^{(i)})^{-\frac 12} \Vert \Vert (X - I_n) \hat{\Xi}_1\Vert +  
\Vert (\tilde{H}^{(i)})^{-\frac 12} (A-\Omega) (\tilde{H}^{(i)})^{-\frac 12} \Vert \Vert (X - I_n) \hat{\Xi}_1\Vert \notag\\
&\leq C  { K^{-1}}\lambda_1(PG)\Vert (X - I_n) \hat{\Xi}_1\Vert;
\end{align*}
Second, we have
\begin{align*}
\Vert  (\tilde{H}^{(i)})^{-\frac 12}W(i)' e_i' (\tilde{H}^{(i)})^{-\frac 12} \hat{\Xi}_1\Vert &=  \tilde{H}^{(i)}(i,i)^{-\frac 12} \Vert  \hat{\Xi}_1(i) (\tilde{H}^{(i)})^{-\frac 12}W(i)'  \Vert  \notag\\
& \leq \frac{C}{\sqrt{n\bar{\theta}^2}} \Vert \hat{\Xi}_1(i)\Vert\notag\\
& \leq \frac{C\sqrt K\, \kappa_i}{\sqrt{\log (n)}} + \frac{C}{\sqrt{n\bar{\theta}^2} }\Vert \hat{\Xi}_1(i) - \tilde{\Xi}^{(i)}_1(i) O_3^{(i)}\Vert.
\end{align*}
where in the second step we used (\ref{est:WtH}) and we decomposed $\hat{\Xi}_1(i)$ as $\tilde{\Xi}^{(i)}O_3^{(i)} + \hat{\Xi}_1(i) - \tilde{\Xi}^{(i)}_1(i) O_3^{(i)}$ and employed (\ref{prop:eigv.tL}) in the last step. Thus, we further bound the RHS of (\ref{211125012}) as 
\begin{align}\label{211125200}
\Vert  \hat{\Xi}_1 - \overline{\Xi}^{(i)}_1  O_5^{(i)}  \Vert
 \leq &C {\beta_n^{-1} } \lambda_1(PG)\Vert (X - I_n)   \hat{\Xi}_1\Vert + C  {K\beta_n^{-1}} \Vert \tilde{H}^{(i)}(i,i)^{-\frac 12} W(i)  (\tilde{H}^{(i)})^{-\frac 12} \hat{\Xi}_1\Vert \notag\\
& + \frac{ C { K^{\frac 32}\beta_n^{-1}} \, \kappa_i}{\sqrt{\log (n)}} + \frac{C{K\beta_n^{-1}}  }{\sqrt{n\bar{\theta}^2} }\Vert \hat{\Xi}_1(i) - \tilde{\Xi}^{(i)}_1(i) O_3^{(i)}\Vert.
\end{align}
In the sequel, we analyze the first two terms on the RHS above. For $\Vert (X - I_n)   \hat{\Xi}_1\Vert$, similarly to (\ref{leave-effect-2}),  we decompose $\hat{\Xi}_1$ and get that 
\begin{align*}
\Vert (X - I_n)   \hat{\Xi}_1\Vert\leq  \Vert (X - I_n)   \tilde{\Xi}^{(i)}_1\Vert + \Vert (X - I_n)   (\hat{\Xi}_1 - \tilde{\Xi}^{(i)}_1O_3^{(i)})\Vert
\end{align*}
Then, we replicate the derivations for the two terms in (\ref{leave-effect-2}) with $\tilde{\xi}^{(i)}_1$, $\hat{\xi}_1$ replaced by $\tilde{\Xi}^{(i)}_1$, $\hat{\Xi}_1$ and $w$ replaced by $O_3'$ to get 
\begin{align*}
 \Vert (X - I_n)   \tilde{\Xi}^{(i)}_1\Vert^2 &\leq C \sum_{j=1}^n\big( A(i,j)+\theta_i\theta_j + \theta_i\bar{\theta} + \frac{\log(n)}{n} \big)\frac{\Vert \tilde{\Xi}^{(i)}_1(j)\Vert^2 }{[\tilde{H}^{(i)}(j,j)]^2} \leq \frac{ CK\widetilde{\kappa}_i^2}{\log(n)}
  \notag\\
\Vert (X - I_n)   (\hat{\Xi}_1 - \tilde{\Xi}^{(i)}_1O_3^{(i)})\Vert^2 &\leq C \|(\tilde{H}^{(i)})^{-1/2}(\hat{\Xi}_1-\tilde{\Xi}^{(i)}_1O_3^{(i)})\|^2_{2\to \infty }\sum_{j=1}^n \frac{ A(i,j) +\theta_i\theta_j + \theta_i\bar{\theta} + \frac{\log (n)}{n}}{\tilde{H}^{(i)}(j,j)}\notag\\
&\leq C\|(\tilde{H}^{(i)})^{-1/2}(\hat{\Xi}_1-\tilde{\Xi}^{(i)}_1O_3^{(i)}) \|^2_{2\to \infty }\cdot \frac{n\bar{\theta}\theta_i + \log(n)}{n\bar{\theta}^2}
\end{align*}
over the event $E$. More detailed steps can be referred to derivations from  (\ref{leave-effect-2})-(\ref{leave-effect-4}). We thereby arrive at 
\begin{align}\label{21112520}
\Vert (X - I_n)   \hat{\Xi}_1\Vert 
&\leq  C \frac{\sqrt K }{\sqrt{\log (n)}}\, \widetilde{\kappa}_i \Big(1+  n\bar{\theta} \Vert (\tilde{H}^{(i)})^{-\frac 12} (   \hat{\Xi}_1 -\tilde{\Xi}^{(i)}_1O_3^{(i)})\Vert_{2\to \infty }\Big) 
\end{align}

Now we turn to study the term $ \Vert \tilde{H}^{(i)}(i,i)^{-\frac 12} W(i)  (\tilde{H}^{(i)})^{-\frac 12} \hat{\Xi}_1\Vert $. Using (\ref{21071502term1}), (\ref{21071502term2}) and (\ref{est:WtH}), we can deduce that
{\small
\begin{align}\label{21112521}
 \Vert \tilde{H}^{(i)}(i,i)^{-\frac 12} W(i) ( \tilde{H}^{(i)})^{-\frac 12} \hat{\Xi}_1\Vert  &\leq  \Vert \tilde{H}^{(i)}(i,i)^{-\frac 12} W(i)  (\tilde{H}^{(i)})^{-\frac 12} \tilde{\Xi}^{(i)}_1\Vert  +  \Vert \tilde{H}^{(i)}(i,i)^{-\frac 12} W(i)  (\tilde{H}^{(i)})^{-\frac 12} (  \overline{\Xi}^{(i)}_1 -\tilde{\Xi}^{(i)}_1O_4^{(i)}   ) \Vert \notag\\
 & \quad +  \Vert \tilde{H}^{(i)}(i,i)^{-\frac 12} W(i)  (\tilde{H}^{(i)})^{-\frac 12}(\hat{\Xi}_1- \overline{\Xi}^{(i)}_1O_5^{(i)}) \Vert\notag\\
 &\leq  C \sqrt K \, \widetilde{\kappa}_i \Big(1+  n\bar{\theta}  \Vert (\tilde{H}^{(i)})^{-\frac 12} (   \hat{\Xi}_1 -\tilde{\Xi}^{(i)}_1O_3^{(i)})\Vert_{2\to \infty } \Big)  \notag\\
 &\quad + C\Big(\sqrt K\, \frac{ \log (n)}{n\bar{\theta}^2} + \frac{1}{\sqrt{n\bar{\theta}^2}}\Big)\Vert  \hat{\Xi}_1- \overline{\Xi}^{(i)}_1O_5^{(i)} \Vert
\end{align}
}
over the event $E$.
Combining (\ref{21112520}) and (\ref{21112521}) into (\ref{211125200}) and putting all terms equipped with factor $\Vert  \hat{\Xi}_1- \overline{\Xi}^{(i)}_1O_5^{(i)} \Vert$ to the LHS, under the condition $K^3 \beta_n^{-2}\log (n)/n\bar{\theta}^2 = o(1)$ and $\lambda_1(PG)\leq CK$,  we finally see that
\begin{align*}
\Vert  \hat{\Xi}_1- \overline{\Xi}^{(i)}_1O_5^{(i)} \Vert \leq C  K^{\frac 32}\beta_n^{-1} \, \widetilde{\kappa}_i \Big(1+  n\bar{\theta} \Vert (\tilde{H}^{(i)})^{-\frac 12} (   \hat{\Xi}_1 -\tilde{\Xi}^{(i)}_1O_3^{(i)})\Vert_{2\to \infty } \Big) + \frac{C{K\beta_n^{-1}}  }{\sqrt{n\bar{\theta}^2} }\Vert \hat{\Xi}_1(i) - \tilde{\Xi}^{(i)}_1(i) O_3^{(i)}\Vert
\end{align*}
over the event $E$. Thus we complete the proof by considering all $i$'s.

\section{Rate of Mixed-SCORE-Laplacian}
We prove the error rate of Mixed-SCORE-Laplacian in this Section. In detail, in Section~\ref{subsec:pf_lem_hatR} we prove Lemma~\ref{lem:hatR}; in Section \ref{subsec: pf_thm_uppbd_pi}, we prove the first claim of Theorem~\ref{thm:uppbd_pi}; in Section \ref{subsec:pf_cor_rates_wunw}, we briefly state the proofs of Corollary~\ref{cor:rates-generalLoss} and the second claim of Theorem~\ref{thm:uppbd_pi}, as these arguments directly stem from Theorem~\ref{thm:uppbd_pi}.   

\subsection{Proof of Lemma \ref{lem:hatR}} \label{subsec:pf_lem_hatR}
Fix the choice of $\hat{\xi}_1$ such that $w=1$ in (\ref{result-1}). Choose the orthogonal matrix $O_1$ appeared in Theorem \ref{thm:eigenvector}. By definition, 
\begin{align*}
\Vert O_1'\hat{r}_i- r_i \Vert  = \Vert e_i' \big(\hat{\Xi}_1 O_1/ \hat{\xi}_1(i)-\Xi_1 /\xi_1(i)\big)\Vert & \leq \Vert e_i' (\hat{\Xi}_1 O_1- \Xi_1)/\hat{\xi}_1(i)\Vert + \Vert \Xi_1(i)\Vert \Big| \frac{1}{\hat{\xi_1}(i)}-\frac{1}{{\xi_1}(i)}\Big| 
\end{align*}
Employing Theorem \ref{thm:eigenvector} with Lemma \ref{lem:order}, for $i\in S_n(c_0)$, we have 
\begin{align*}
\Vert e_i' (\hat{\Xi}_1 O_1- \Xi_1)/\hat{\xi}_1(i)\Vert \leq C\frac{\Vert e_i' (\hat{\Xi}_1 O_1- \Xi_1)\Vert}{\xi_1(i)} \leq C\sqrt{\frac{K^3\log(n)}{n\bar{\theta}(\bar{\theta}\wedge\theta_i)\beta_n^2 }} 
\end{align*}
and 
\begin{align*}
\Vert \Xi_1(i)\Vert \Big| \frac{1}{\hat{\xi}_1(i)}-\frac{1}{{\xi}_1(i)}\Big| \leq C \frac{\Vert \Xi_1(i)\Vert}{\xi_1(i)}\cdot \frac{|\hat{\xi}_1(i)- \xi_1(i)|}{\xi_1(i)}  \leq  C\sqrt{\frac{K^3\log(n)}{n\bar{\theta}(\bar{\theta}\wedge\theta_i)\beta_n^2 }} 
\end{align*}
with probability $1-o(n^{-3})$ simultaneously for  $ i \in S_n(c_0)$  .
Combining the above inequalities, we immediately get (\ref{bound-hatR}) simultaneously for $ i \in S_n(c_0)$, with probability $1-o(n^{-3})$.

\subsection{Proof of the first claim in Theorem \ref{thm:uppbd_pi}} \label{subsec: pf_thm_uppbd_pi}

This theorem has two claims, one is about the node-wise error, and the other is about the bound for the $\ell^1$-loss. The second claim follows easily from the first claim, and its proof is relegated to Section~\ref{subsec:pf_cor_rates_wunw}. We now prove the first claim about the node-wise error. 

We only focus on $i\in \hat{S}_n(c)$ (see  \eqref{keepSet-VS}). For $i\notin \hat{S}_n(c)$, since we take trivial estimator  $K^{-1} \mathbf{1}_K$, the estimation error is then  trivially bounded by some constant. 
Recall the definition, for $i\in \hat{S}_n(c)$,
\begin{align*}
\hat{\pi}_i^*(k)=\max\{\hat{w}_i(k)/\hat{b}_1(k),\, 0\}, \qquad  \hat{\pi}_i=\hat{\pi}_i^*/\|\hat{\pi}_i^*\|_1 
\end{align*}
and correspondingly in the oracle case, $\pi_i=\pi_i^*/\|\pi_i^*\|_1, \pi_i^*=[\diag(b_1)]^{-1}w_i$. We shall study the errors of  $\hat{w}_i$'s and $\hat{b}_1$ compared to $w_i$'s and $b_1$ separately. 

Suppose we are under the high probability $1- o(n^{-3})$ event in which Lemma~\ref{lem:hatR} holds. We refrain ourselves from stating the high probability in the following derivations.
We first study $\hat{w}_i$'s.
Thanks to the choice of a variant of successive projection in our vertex hunting algorithm, referring to Lemma 3.1 of \cite{Mixed-SCORE}, it is easy to deduce that 
\begin{align}\label{est:vertex}
 \Vert \mathsf{P}\widehat{V} O_1- V\Vert_{2\to \infty}  \leq C \max_{i\in \hat{S}_n^*(c, \gamma)} \Vert O_1'\hat{r}_i- r_i \Vert \leq C \sqrt{\frac{K^3\log (n)}{n\bar{\theta}^2 \beta_n^2}}.
\end{align}
for some $K\times K$ permutation matrix $\mathsf{P}$, where we denote by $V= (v_1, v_2, \cdots, v_K)'$ and $\widehat{V}=(\hat{v}_1, \hat{v}_2, \cdots, \hat{v}_K)' $.
In our Mixed-SCORE-Laplacian algorithm, $\hat{w}_i$'s are solved from
\begin{align*}
\hat{Q} \hat{w}_i  = \left(
\begin{array}{c}
1\\
O_1'\hat{r}_i
\end{array}
\right), \qquad \hat{Q}:=  \left(
\begin{array}{cccc}
1& 1& \cdots & 1\\
O_1'\hat{v}_1 & O_1'\hat{v}_2 & \cdots& O_1'\hat{v}_K
\end{array}
\right) 
\end{align*}
Here, a little different from original linear system, we multiply $\hat{r}_i$ and $\hat{v}_1, \cdots,, \hat{v}_K$ by $O_1'$ on the left. Analogously, for the oracle case, 
\begin{align*}
{Q} {w}_i  = \left(
\begin{array}{c}
1\\
{r}_i
\end{array}
\right), \qquad {Q}:=  \left(
\begin{array}{cccc}
1& 1& \cdots & 1\\
{v}_1 & {v}_2 & \cdots& {v}_K
\end{array}
\right). 
\end{align*}
Note that since $v_j$'s, $\hat{v}_j$'s for $1\leq j \leq K$ are the vertices, we easily get that both $\hat{Q}$ and $Q$ are of full-rank.  Then, 
\begin{align}\label{21081503}
\Vert \mathsf{P}\hat{w}_i- w_i\Vert &=\left \Vert (\hat{Q}\mathsf{P}')^{-1}  \left(
\begin{array}{c}
1\\
O_1'\hat{r}_i
\end{array}
\right)
- Q^{-1} \left(
\begin{array}{c}
1\\
r_i
\end{array}
\right)
\right\Vert  \notag\\
&\leq  \left \Vert \Big((\hat{Q}\mathsf{P}')^{-1} - Q^{-1}\Big)\left(
\begin{array}{c}
1\\
{r}_i
\end{array}
\right)
\right\Vert   + \left\Vert  \hat{Q}^{-1}  \left[ \left(
\begin{array}{c}
1\\
O_1\hat{r}_i
\end{array}
\right)-
\left(
\begin{array}{c}
1\\
{r}_i
\end{array}
\right) \right]
\right\Vert.
\end{align}
For the first term on the RHS of (\ref{21081503}), we have
\begin{align*}
 \left\Vert \Big( (\hat{Q}\mathsf{P}')^{-1}  - Q^{-1}\Big)\left(
\begin{array}{c}
1\\
{r}_i
\end{array}
\right) \right\Vert  = \left\Vert \hat{Q}^{-1}  (\hat{Q}\mathsf{P}'- Q ) Q^{-1}
\left(
\begin{array}{c}
1\\
{r}_i
\end{array}
\right) \right\Vert = \big\Vert \hat{Q}^{-1} \big\Vert  \, \big\Vert ( \hat{Q} \mathsf{P}' - Q ) w_i\big\Vert,
\end{align*}
and 
\begin{align*}
 \big\Vert ( \hat{Q} \mathsf{P}'- Q  ) w_i\big\Vert = \Vert (O_1'\widehat{V}'\mathsf{P}'- V')w_i\Vert \leq 
 \Vert \mathsf{P}\widehat{V} O_1- V\Vert_{2\to \infty} 
\end{align*}
If we can claim that $\Vert \hat{Q}^{-1}\Vert\leq C$, then we are done with the bound of the first term. Notice that one easily check
\begin{align*}
\Vert  \hat{Q}\mathsf{P}'- Q \Vert \leq  \sqrt K \Vert \mathsf{P}\widehat{V} O_1- V\Vert_{2\to \infty} = o(\sqrt K)
\end{align*}
since $\frac{K^3 \log(n)}{n\bar{\theta}^2\beta_n^2}\to 0$ as $n\to\infty$.
Suppose that $\Vert  Q^{-1}\Vert \asymp K^{-\frac 12}$, then immediately $\Vert \hat{Q}^{-1} \Vert \asymp K^{-\frac 12}$. To claim that $\Vert  Q^{-1}\Vert \asymp K^{-\frac 12}$, we use the identity 
\begin{align} \label{formula:v_k}
v_k(t)= \frac{b_t(k) }{b_1(k)}, \quad  2\leq t \leq K,
\end{align}
which can be easily verified with some elementary derivations from the definition of $R$ and the fact $\Xi= H_0^{-\frac 12}\Theta \Pi B $ with $B= (b_1, \cdots, b_K)$ (see the proof of Lemma~\ref{lem:simplex} in Section \ref{sec:pf_simplex}).
We will see that
\begin{align*}
Q = B'{\rm diag} (1/b_1(1), \cdots, 1/b_1(K));
\end{align*}
 And due to $b_1(k)\asymp 1$ (claimed in the Proof of Lemma \ref{lem:order}), we then obtain that 
\begin{align*}
\Vert Q^{-1} \Vert = \Vert {\rm diag} (b_1(1), \cdots, b_1(K)) B^{-1}\Vert \asymp \Vert B^{-1} \Vert. 
\end{align*}
Further recall that $BB'= (\Pi' \Theta H_0^{-1} \Theta \Pi )^{-1}$. Hence, $\lambda_{\min}(BB')= 1/ \lambda_{\max} (\Pi' \Theta H_0^{-1} \Theta \Pi) \asymp K$, which leads to $\Vert Q^{-1} \Vert \asymp \Vert B^{-1} \Vert \asymp K^{-\frac 12}$. As a consequence, 
\begin{align*}
\left\Vert \Big( (\hat{Q}\mathsf{P}')^{-1}  - Q^{-1}\Big)\left(
\begin{array}{c}
1\\
{r}_i
\end{array}
\right) \right\Vert   \leq C K^{-\frac 12} \Vert \mathsf{P}\widehat{V} O_1- V\Vert_{2\to \infty}.
\end{align*}
Next, for the second term on the RHS of (\ref{21081503}), one simply bounds it by $\Vert \hat{Q}^{-1} \Vert \Vert O_1\hat{r}_i - r_i\Vert\leq C K^{-\frac 12} \Vert O_1\hat{r}_i - r_i\Vert $. Combining these two estimates into (\ref{21081503}), with the aids of (\ref{est:vertex}) and   Lemma~\ref{lem:hatR}, we conclude that 
\begin{align}\label{est:hw-w}
\Vert \mathsf{P}\hat{w}_i- w_i\Vert \leq C K^{-\frac 12} \Vert O_1\hat{r}_i - r_i\Vert\leq C \sqrt{\frac{K^2\log (n)}{n\bar{\theta} (\bar{\theta}\wedge \theta_i)\beta_n^2}}
. 
\end{align}
Next, we study the error between $1/e_k'\mathsf{P}\hat{b}_1$ and ${b}_1^{-1} (k)$. Here to the end of this section, with a little ambiguity of notation, we denote $\{e_k\}_{k=1}^K$ for the standard basis of $\mathbb{R}^K$.
By definition, since $P$ is a permutation matrix,  
\begin{align*}
\bigg|\frac{1}{\big(e_k'\mathsf{P}\hat{b}_1\big)^2}-\frac{1}{\big({b}_1(k)\big)^2}\bigg| \leq \big| \hat{\lambda}_1- \lambda_1\big| + \big| e_k'\mathsf{P} \widehat{V} \hat{\Lambda}_1\widehat{V}'\mathsf{P}' e_k -   {v}_k' {\Lambda}_1 {v}_k\big|.
\end{align*}
The eigenvalue difference is simply bounded by $\sqrt{\log (n)/n\bar{\theta}^2}$ by Weyl' inequality, which has been previously shown in entry-wise eigenvector analysis. To bound the second term above, we first claim $\Vert v_k\Vert\leq C\sqrt K$. To see this, using (\ref{formula:v_k}) and $b_1(k)\asymp 1$, 
$$\Vert v_k\Vert \leq  C \Vert e_k' B\Vert \leq   C\Vert BB'\Vert^{\frac 12} \leq C\sqrt K .$$
We can then derive that 
\begin{align*}
&\quad  \big| e_k'\mathsf{P} \hat{V} \hat{\Lambda}_1\hat{V}'\mathsf{P}' e_k -   {v}_k' {\Lambda}_1 {v}_k\big| \notag\\ 
&\leq \big| e_k'(\mathsf{P}\widehat{V} O_1- V)O_1' \hat{\Lambda}_1 O_1 O_1'\widehat{V}' \mathsf{P}' e_k\big| +  \big| v_kO_1'\hat{\Lambda}_1O_1(O_1'\widehat{V}' \mathsf{P}'- V)e_k  \big| + \big|{v}_k' (O_1'\hat{\Lambda}_1 O_1- \Lambda_1){v}_k\big| \notag\\
 &\leq C K^{-\frac 12} |\lambda_2(PG)|\Vert \mathsf{P}\widehat{V} O_1- V\Vert_{2\to \infty}  + \big|{v}_k' (O_1'\hat{\Lambda}_1 O_1- \Lambda_1){v}_k\big|
\end{align*}
Here in the last step, we used the trivial bound $\Vert \hat{\Lambda}_1\Vert\leq K^{-1} |\lambda_2(PG)|$. We further estimate the second term above. Notice that $O_1 = {\rm sgn}(\hat{\Xi}_1' \Xi_1)$ shown up in Theorem \ref{thm:eigenvector}. By $L_0 \Xi_1= \Xi_1 \Lambda_1$, ${L} \hat{\Xi}_1= \hat{\Xi}_1 \hat{\Lambda}_1$, and sine-theta theorem (\ref{sine-theta:perturb1}), 
\begin{align*}
 \big|{v}_k' (O_1'\hat{\Lambda}_1 O_1- \Lambda_1){v}_k\big| &\leq  \big|{v}_k' (O_1- \hat{\Xi}_1' \Xi_1)'\hat{\Lambda}_1 O_1{v}_k\big| + \big|{v}_k' \Xi_1'({L} - L_0) \hat{\Xi}_1 O_1{v}_k\big|\notag\\
 & \qquad + \big|{v}_k' \Lambda_1(O_1- \hat{\Xi}_1' \Xi_1)' O_1{v}_k\big| \notag\\
 & \leq C |\lambda_2(PG)|\Vert O_1- \hat{\Xi}_1' \Xi_1\Vert + K \Vert {L}- L_0\Vert \notag\\
 & \leq C |\lambda_2(PG)| (K\beta_n^{-1} \Vert {L}- L_0\Vert)^2 + K \Vert {L}- L_0\Vert\notag\\
 & \leq C \sqrt{\frac{K^3\log (n)}{n\bar{\theta}^2 \beta_n^2}}
\end{align*}
where we also used $\Vert{L}- L_0\Vert \leq C\sqrt{\log (n)/n\bar{\theta}^2}$ and $K^{\frac 32}\beta_n^{-1} \sqrt{\log (n)/n\bar{\theta}^2}= o(1)$. Next, using (\ref{est:vertex}) and the last inequality in Condition~\ref{reg-conds}(b), we bound
\begin{align*}
 K^{-\frac 12} |\lambda_2(PG)|\Vert \mathsf{P}\widehat{V} O_1- V\Vert_{2\to \infty}
& \leq C \sqrt{\frac{K^2\log (n)}{n\bar{\theta}^2 \beta_n^2}}\, . 
\end{align*}
It follows then
\begin{align*}
\big| e_k'\mathsf{P} \hat{V} \hat{\Lambda}_1\hat{V}'\mathsf{P}' e_k -   {v}_k' {\Lambda}_1 {v}_k\big| \leq C \sqrt{\frac{K^3\log (n)}{n\bar{\theta}^2 \beta_n^2}}\, .
\end{align*}
As a consequence,
\begin{align}\label{est:hb-b}
\Big|\frac{1}{(\mathsf{P}\hat{b}_1)(k)} - \frac{1}{ b_1(k)} \Big| \leq C\sqrt{\frac{K^3\log (n)}{n\bar{\theta}^2 \beta_n^2}}
\end{align}
since $b_1(k)\asymp 1$.

Now, we are able to study $\mathsf{P}\hat{\pi}_i^*$ and further $\mathsf{P}\hat{\pi}_i$ by (\ref{est:hw-w}) and (\ref{est:hb-b}). 
If $(\mathsf{P}\hat{w}_i)(k)\leq 0$, trivially we have 
$$\big| (\mathsf{P}\hat{\pi}_i^*)(k) - \pi_i^*(k)  \big| = \pi^*_i(k)\asymp w_i(k)\leq |(\mathsf{P} \hat{w}_i)(k) - {w}_i(k) |
$$
For the case that $(\mathsf{P}\hat{w}_i)(k)> 0$, we get the bound
\begin{align*}
\big| (\mathsf{P}\hat{\pi}_i^*)(k) - \pi_i^*(k)  \big| =\left| \frac{(\mathsf{P}\hat{w}_i)(k)}{(\mathsf{P}\hat{b}_1)(k)} - \frac{{w}_i(k)}{{b}_1(k)} \right| &\leq \big| (\mathsf{P}\hat{w}_i)(k)\big|\, \Big|\frac{1}{(\mathsf{P}\hat{b}_1)(k)} - \frac{1}{ b_1(k)} \Big| + \frac{|(\mathsf{P}\hat{w}_i)(k) - {w}_i(k) |}{|b_1(k)|}
\end{align*}
Moreover, taking sum over $k$ for both sides above, 
\begin{align*}
\Vert\mathsf{P} \hat{\pi}_i^*- \pi_i^*\Vert_1 &\leq C \max_{k}{\Big|\frac{1}{(\mathsf{P}\hat{b}_1)(k)} - \frac{1}{ b_1(k)} \Big|} + \frac{\Vert \mathsf{P}\hat{w}_i - w_i\Vert_1}{\min_k |b_1(k)|} \leq C \sqrt{\frac{K^3\log (n)}{n\bar{\theta} (\bar{\theta}\wedge \theta_i)\beta_n^2}}
\end{align*}
Here we used the Cauchy-Schwarz inequality $\Vert\mathsf{P} \hat{w}_i - w_i\Vert_1\leq \sqrt K\,  \Vert \mathsf{P}\hat{w}_i - w_i\Vert$ and further applied  (\ref{est:hw-w}) and (\ref{est:hb-b}).
As a result, 
\begin{align*}
\big| (\mathsf{P}\hat{\pi}_i)(k) - \pi_i(k)  \big| = \left| \frac{(\mathsf{P}\hat{\pi}_i^*)(k)}{ \Vert \mathsf{P}\hat{\pi}_i^*\Vert_1 }- \frac{\pi_i^*(k) }{\Vert \pi_i^*\Vert_1}\right|\leq \big| (\mathsf{P}\hat{\pi}_i^*)(k)\big|\, \frac{\Vert \mathsf{P}\hat{\pi}_i^*- \pi_i^*\Vert_1}{ \Vert \mathsf{P} \hat{\pi}_i^*\Vert_1\Vert \pi_i^*\Vert_1 } + \frac{|(\mathsf{P}\hat{\pi}_i^*)(k)- \pi_i^*(k)  |}{\Vert \pi_i^*\Vert_1}.
\end{align*}
And summing up over $k$ for both sides,  we can further have 
\begin{align*}
\Vert \mathsf{P}\hat{\pi}_i- \pi_i\Vert_1 \leq \frac{\Vert \mathsf{P}\hat{\pi}_i^*- \pi_i^*\Vert_1}{ \Vert \pi_i^*\Vert_1 } \leq C   \sqrt{\frac{K^3\log (n)}{n\bar{\theta} (\bar{\theta}\wedge \theta_i)\beta_n^2}}
\end{align*}
since $\Vert \pi_i^*\Vert_1 = \sum_{k} w_i(k)/b_1(k)\asymp 1$ by $b_1(k)\asymp 1$ for all $1\leq k\leq K$.
Therefore, we finished the proof.

\subsection{Proofs of Corollary~\ref{cor:rates-generalLoss} and the second claim of Theorem~\ref{thm:uppbd_pi}}\label{subsec:pf_cor_rates_wunw}
The proofs oare straightforward by employing Theorem \ref{thm:uppbd_pi}.  We shortly claim it below.
\begin{proof}[Proof of the second claim of Theorem~\ref{thm:uppbd_pi}]
Recall the definition of the $\ell^1$-loss ${\cal L}(\hat{\Pi},\Pi)$ in \eqref{Loss}. Employing the node-wise errors in Theorem \ref{thm:uppbd_pi} and taking average, we see that 
\begin{align*}
{\cal L}(\hat{\Pi},\Pi)  \leq C\sqrt{\log(n)} \int\min\Bigl\{ \frac{err_n}{\sqrt{t\wedge 1}},\, 1\Bigr\} dF_n(t),
\end{align*}
with probability $1-o(n^{-3})$. Further by the trivial bound ${\cal L}(\hat{\Pi},\Pi)  \leq 2$,  the high probability  error rate implies the  expected $\ell^1$-loss rate, i.e., the $err_n(\theta)$ in (\ref{OptRate}). This finishes the proof.
\end{proof}

\begin{proof}[Proof of Corollary \ref{cor:rates-generalLoss}]
Recall the loss metric ${\cal L}(\hat{\Pi},\Pi; p,q)$  in \eqref{generalLoss}. We crudely  bound 
\begin{align*}
\|T\hat{\pi}_i-\pi_i\|_q^q \leq C_q \Vert \hat{\pi}_i-\pi_i \Vert_1^ q   
\end{align*}
where $C_q$ is some constant depending on $q$. Combining it with the node-wise error rate in Theorem \ref{thm:uppbd_pi} gives Corollary~\ref{cor:rates-generalLoss}.

For the special case $p=1/2$ and $q=1$, we further bound 
\begin{align}\label{for:Lws}
{\cal L}^{w}(\hat{\Pi},\Pi) &=  \min_{T}\Bigl\{\frac{1}{n}\sum_{i=1}^n (\theta_i/\bar{\theta})^{1/2} \|T\hat{\pi}_i-\pi_i\|_1\Bigr\} \notag\\
&\leq  \min_{T}\Bigl\{\frac{1}{n}\sum_{i\in S_1} (\theta_i/\bar{\theta})^{1/2} \|T\hat{\pi}_i-\pi_i\|_1\Bigr\} +  \min_{T}\Bigl\{\frac{1}{n}\sum_{i\in S_2} (\theta_i/\bar{\theta})^{1/2} \|T\hat{\pi}_i-\pi_i\|_1\Bigr\}
\end{align}
where we recall the definition of $S_1, S_2$ in (\ref{def:S1S2}).
For the first term,  we use  Cauchy-Schwarz inequality and get 
\begin{align*}
\frac{1}{n}\sum_{i\in S_1} (\theta_i/\bar{\theta})^{1/2} \|T\hat{\pi}_i-\pi_i\|_1
&\leq  \Big( \frac 1n\sum_{i\in S_1} \theta_i/\bar{\theta}\Big)^{\frac 12} \Big( \frac 1n\sum_{i\in S_1}  \|T\hat{\pi}_i-\pi_i\|_1^2\Big)^{\frac 12} \notag\\
& \leq  C \sqrt{\log (n)}\, err_n
\end{align*}
with probability $1-o(n^{-3})$.
Plugging in the above inequality into (\ref{for:Lws}), and applying  the error rate in Theorem \ref{thm:uppbd_pi} separately for  $i\in S_2$, especially noticing that $\theta_i/\bar{\theta} \leq c\,  err_n^2 \log (n)$ for $i\notin \hat{S}_n(c)$, one can easily obtain 
\begin{align*}
{\cal L}^{w}(\hat{\Pi},\Pi) \leq  C \sqrt{\log (n)}\, err_n
\end{align*}
with probability $1-o(n^{-3})$. Further with trivial bound ${\cal L}^{w}(\hat{\Pi},\Pi) \leq C$, we then conclude the proof.
\end{proof}

\section{Least-favorable configurations and proof of the lower bound}  \label{sec:suppLB}
The key of proving the lower bound arguments in Theorem~\ref{thm:LB} and Theorem~\ref{thm:LB2} is to carefully construct the least-favorable configurations (LFC). 
The LFC for these two theorems are different. We start from the less complicated one, the LFC for the weighted $\ell^1$-loss ${\cal L}^w(\hat{\Pi},\Pi)$, and then modify it to construct the LFC for the standard $\ell^1$-loss ${\cal L}(\hat{\Pi},\Pi)$. The following notation is useful. 

\begin{de} \label{def:ParamClass}
Given $(n,K,  \beta_n)$, $\theta\in\mathbb{R}^n$ and $P\in\mathbb{R}^{K\times K}$, 
let ${\cal Q}_n(K,\theta,P, \beta_n)$ denote the collection of eligible membership matrices $\Pi$ such that Condition~\ref{reg-conds} is satisfied. 
\end{de}

First, we construct the LFC for proving the lower bound in Theorem~\ref{thm:LB2}. We take a special form of $P$,
\beq \label{P-star}
P^* = \beta_n I_K + (1-\beta_n){\bf 1}_K{\bf 1}_K', \quad \text{where $0<\beta_n<c<1$,} 
\eeq
and construct a collection of $\Pi$. We need a well-known result. 
\begin{lem}[Varshamov-Gilbert bound for packing numbers] \label{lem:packing}
For any $s\geq 8$, there exist $J\geq 2^{s/8}$ and $\omega^{(0)}, \omega^{(1)},\ldots,\omega^{(J)}\in\{0,1\}^s$ such that $\omega^{(0)}={\bf 0}_s$ and $\|\omega^{(j)}-\omega^{(k)}\|_1\geq s/8$, for all $0\leq j<k\leq J$. 
\end{lem}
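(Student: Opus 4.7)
The plan is to prove Lemma~\ref{lem:packing} by a greedy construction combined with a volume bound on Hamming balls. First I would set $\omega^{(0)} = \mathbf{0}_s$ and then iteratively select $\omega^{(1)}, \omega^{(2)}, \ldots \in \{0,1\}^s$, at each step choosing any vector whose Hamming distance to \emph{all} previously selected ones is at least $s/8$. The process terminates only when every vector in $\{0,1\}^s$ lies within Hamming distance $s/8$ of some already-chosen $\omega^{(j)}$; by construction, the resulting family automatically satisfies the required pairwise separation, so the only task is to lower-bound how long the procedure can run.

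Next I would convert the termination condition into a counting inequality. Let $B(\omega, r) := \{y \in \{0,1\}^s : \|y - \omega\|_1 < r\}$. Since the procedure continues as long as $\bigcup_{j=0}^{J} B(\omega^{(j)}, s/8)$ fails to cover $\{0,1\}^s$, the final count satisfies
\[
(J+1)\cdot \max_{\omega} |B(\omega, s/8)| \;\geq\; 2^s.
\]
The classical entropy bound for partial binomial sums gives
\[
|B(\omega, s/8)| \;=\; \sum_{0 \leq k < s/8} \binom{s}{k} \;\leq\; 2^{s H_2(1/8)},
\]
where $H_2(p) = -p\log_2 p - (1-p)\log_2(1-p)$ is the binary entropy. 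Combining these yields $J+1 \geq 2^{s(1 - H_2(1/8))}$.

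The final step is a one-line numerical check: a direct computation gives $H_2(1/8) \approx 0.544 < 7/8$, so $1 - H_2(1/8) > 1/8$ and hence $J+1 \geq 2^{s/8}$; the assumption $s \geq 8$ absorbs the additive constant to deliver $J \geq 2^{s/8}$. I do not anticipate a serious obstacle, since both the binomial-tail entropy bound and the inequality $H_2(1/8) < 7/8$ are elementary. As a clean alternative I could instead use the probabilistic method, sampling $\omega^{(1)}, \ldots, \omega^{(J)}$ i.i.d.\ uniformly on $\{0,1\}^s$ and combining Hoeffding's inequality (each coordinate difference being Bernoulli$(1/2)$, so $\mathbb{P}(\|\omega^{(j)}-\omega^{(k)}\|_1 < s/8) \leq e^{-9s/32}$) with a union bound over the $\binom{J+1}{2}$ pairs; this succeeds once $\ln 2 < 9/8$, which again holds, giving the same conclusion.
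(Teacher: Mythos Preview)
Your proposal is correct. The paper does not supply its own proof of this lemma but simply defers to \cite{tsybakov2009introduction}; the greedy construction combined with the entropy bound on Hamming balls that you outline is exactly the classical Varshamov--Gilbert argument found there (Lemma~2.9), so there is nothing to compare.
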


In Theorem~\ref{thm:LB2}, we assume $F_n(err_n)\leq \check{c}$, for a constant $\check{c}\in (0,1)$. 
Let $c = \frac{1+\check{c}}{2}\in (0,1)$. Let $n_1=\lfloor K^{-1} cn\rfloor $and $n_0= n- Kn_1$. We set 
\beq \label{LBconstruct-1}
\Pi^*= \Big( \underbrace{\frac 1K {\bf 1}_K, \cdots, \frac 1K {\bf 1}_K}_{\substack{n_0}},\,  \underbrace{{\bf e}_1, \cdots, {\bf e}_1}_{\substack{n_1}}, \, \cdots, \underbrace{{\bf e}_K, \cdots, {\bf e}_K}_{\substack{n_1}}\Big)'. 
\eeq
Without loss of generality, we can assume that those  $\theta_i$'s corresponding to the pure nodes in $\Pi^*$ contains the top $\lfloor  (c- \check{c})n\rfloor$ degrees and they are evenly assigned to different communities such that the average degrees of the pure nodes in different communities are of the same order; we can also assume that the first $n_0$ $\theta_i$'s satisfy $\theta_i/\bar{\theta} \geq err_n^2$, by the assumption of $F_n(err_n^2)\leq \check{c}$. Note that we can always find a permutation to achieve such $\theta$ and re-construct  $\Pi^*$ correspondingly.
Let $m=\lfloor n_0/2\rfloor$ and $r =\lfloor K/2\rfloor$. We apply Lemma~\ref{lem:packing} to $s=mr$ to get $\omega^{(0)}, \omega^{(1)},\ldots,\omega^{(J)}$, where $J\geq 2^{(mr/8)}$. We re-arrange each $\omega^{(j)}$ to an $m\times r$ matrix row-wisely, denoted as $H^{(j)}$, and then construct $\Gamma^{(j)}\in\mathbb{R}^{n\times K}$ whose nonzero entries only appear in the top left  $(2m)\times (2r)$ block:
\beq \label{LBconstruct-2}
\Gamma^{(j)} = \left[\begin{array}{ccc}
H^{(j)} & - H^{(j)} & {\bf 0}_{m\times 1}\\
- H^{(j)} & H^{(j)} & {\bf 0}_{m\times 1}\\
{\bf 0}_{1\times r} & {\bf 0}_{1\times r} &0 \\
\hdashline
{\bf 0}_{n_1\times r} & {\bf 0}_{n_1\times r} & {\bf 0}_{n_1\times 1}\\
\end{array}\right], \qquad 0\leq j\leq J. 
\eeq
In \eqref{LBconstruct-2}, if $K$ is an even number, then the last column (consisting of zero entries) disappears; similarly, if $n_0$ is an even number, then the last row above the dashed line (consisting of zero entries) disappears. Let $\Theta=\diag(\theta_1,\theta_2,\ldots,\theta_n)$. We construct $\Pi^{(0)},\Pi^{(1)},\ldots,\Pi^{(J)}$ by 
\beq \label{LBconstruct-3}
\Pi^{(j)} = \Pi^* + \gamma_n \Theta^{-\frac12}\Gamma^{(j)}, \qquad \mbox{where}\quad \gamma_n= c_0K^{\frac12}(n\bar{\theta}\beta_n^2)^{-\frac12}, \quad\mbox{for $0\leq j\leq J$},
\eeq
where $c_0>0$ is a properly small constant. The following theorem is proved in the next section. 

\begin{thm} \label{thm:least-favorable1}
Fix $c_1$-$c_4$ in Condition~\ref{reg-conds} and $\check{c}$ in Theorem~\ref{thm:LB2}. 
Given any $(n,K, \alpha_n, \beta_n)$ and $\theta\in \mathbb{R}^n$ such that $F_n(err_n^2)\leq \check{c}$, let $P^*$ be as in \eqref{P-star}, and construct $\Pi^{(0)}, \Pi^{(1)},\ldots,\Pi^{(J)}$ as in \eqref{LBconstruct-1}-\eqref{LBconstruct-3}. When $c_0$ in \eqref{LBconstruct-3} is  properly small, the following statements are true. 
\begin{itemize} \itemsep 5pt
\item For any constant $c_5>0$, let ${\cal Q}_n(\theta,P^*)={\cal Q}_n(K, \theta,P^*, c_5\beta_n)$ (see Definition~\ref{def:ParamClass}). There exists a properly small $c_5$ such that 
$\Pi^{(j)}$ is contained in ${\cal Q}_n(\theta,P^*)$, for $0\leq j\leq J$.  
\item There exists a constant $C_1>0$ such that ${\cal L}^w(\Pi^{(j)}, \Pi^{(k)})\geq C_1err_n$, for all $0\leq j<k\leq J$. 
\item Let ${\cal P}_j$ be the probability measure of a DCMM model with $(\theta,P^*,\Pi^{(j)})$ and let $\mathrm{KL}(\cdot,\cdot)$ denote the Kullback-Leibler divergence. There exists a constant $\epsilon_1\in (0,1/8)$ such that  $\sum_{1\leq j\leq J} \mathrm{KL}(\mathcal{P}_j,\mathcal{P}_0)\leq (1/8-\epsilon_1) J\log(J)$.  
\end{itemize}
Furthermore, $\inf_{\hat{\Pi}}\sup_{\Pi\in {\cal Q}_n(\theta,P^*)}\mathbb{E}{\cal L}^w(\hat{\Pi},\Pi)\geq Cerr_n$. 
\end{thm}
\noindent
Theorem~\ref{thm:LB2} follows immediately from Theorem~\ref{thm:least-favorable1}.

\medskip

Next, we construct the LFC for proving the lower bound in Theorem~\ref{thm:LB}. We still take $P^*$ in \eqref{P-star} and construct a collection of $\Pi$. Compared with the previous case, the targeted lower bound now depends on $F_n(\cdot)$, so that the construction is more sophisticated. We separate two cases according to whether the following holds: 
\beq \label{cond-NoTrivial}
\int_0^{err_n^2}dF_n(t)\leq C\int_{err_n^2}^\infty \frac{err_n}{\sqrt{t_n\wedge 1}}dF_n(t). 
\eeq
To understand \eqref{cond-NoTrivial}, note that $\theta_i/\bar{\theta}\leq err_n^2$ is equivalent to $n\bar{\theta}\theta_i\beta_n^2\leq K^3\log(n)$. For such a node $i$, the best estimator is the naive estimator $\hat{\pi}_i^{naive}=\frac{1}{K}{\bf 1}_K$. In \eqref{cond-NoTrivial}, the left hand side is the total contribution of these nodes in ${\cal L}(\hat{\Pi},\Pi)$, and the right hand side is the contribution of remaining nodes. 
Therefore, \eqref{cond-NoTrivial} guarantees that the rate of convergence of the unweighted $\ell^1$-loss is driven by those nodes for which we can indeed construct non-trivial estimators of $\pi_i$ from data.
When \eqref{cond-NoTrivial} is violated, the lower bound can be proved by similar techniques but simpler least-favorable configurations. The details of this case is relegated in the next section and below we will focus on the case that (\ref{cond-NoTrivial}) holds.  Note that all examples in Section~\ref{subsec:theta-class} satisfy \eqref{cond-NoTrivial}. 

We need a technical lemma about the property of $F_n(\cdot)$ that satisfies the requirement in Theorem~\ref{thm:LB}. It is proved in the next section.
\begin{lem} \label{lem:Fn}
Fix $\rho>0$ and $a_0\in (0,1)$. 
Given any $\theta\in {\cal G}_n(\varrho, a_0)$ (see Definition~\ref{def:thetaClass}), recall that $F_n(\cdot)$ is the empirical distribution associated with $\eta_i=\theta_i/\bar{\theta}$, $1\leq i\leq n$. Let $\tilde{F}_n$ be the empirical distribution associated with $\tilde{\eta}_i=\eta_i\wedge 1$. For any $c>0$ and $\epsilon\in (0,1)$, define
\[
\tau_n(c,\epsilon) =\inf \Bigl\{t>0: \, \int_{err_n^2}^t d\tilde{F}_n(x)\geq (1-\epsilon)\int_{err_n^2}^{c} d\tilde{F}_n(x) \Bigr\}. 
\]
If $F_n(\cdot)$ satisfies \eqref{cond-NoTrivial}, then there exists a number $c_n>err_n^2$ and a constant $\tilde{a}_0\in (0,1)$ such that  $F_n(c_n)\leq 1-\tilde{a}_0$ and 
 \beq \label{Fn-key-claim}
\int_{\tau_n(c_n, 1/8)}^{c_n} \frac{1}{\sqrt{t\wedge 1}}d F_n(t) + \frac{\lceil n\cdot \varpi_n \rceil}{n\sqrt{\tau_n(c_n, 1/8)\wedge 1}}\;\geq \;\tilde{a}_0 \int_{err_n^2}^{\infty}\frac{1}{\sqrt{t\wedge 1}}d F_n(t),
 \eeq
 where $ \varpi_n =[F_n(c_n) - F_n(err_n^2-)]/ 8-[ F_n (c_n) - F_n (\tau_n(c_n, 1/8))]$ and for any distribution function $F(\cdot)$, we define $F(x-) = \lim_{\omega \to 0} F(x- \omega)$. 
\end{lem} 

We now construct a collection of $\Pi$ using $c_n$ in Lemma~\ref{lem:Fn}. 
We re-order $\theta_i$'s such that 
\beq \label{LBconstruct-4}
\theta_{(1)} \leq \theta_{(2)}\leq\ldots\leq \theta_{(n)}.
\eeq
From the way $\tilde{\eta}_i$'s are defined, this ordering also implies that $\tilde{\eta}_{(1)}\leq \tilde{\eta}_{(2)}\leq\ldots\leq\tilde{\eta}_{(n)}$. 
Let $c_n$ be as in Lemma~\ref{lem:Fn}. Define
\[
s_n=\max\{1\leq i\leq n: \, \tilde{\eta}_{(i)}\leq err_n^2\}, \qquad n_0 = \max\{1\leq i\leq n: \, \tilde{\eta}_{(i)}\leq c_n\}-s_n.
\]
It follows from the definition of $\tilde{F}_n$ that $n_0$ is approximately the total number of $\tilde{\eta}_i$'s such that $err_n^2< \tilde{\eta}_i\leq c_n$. 
It can be derived from the definition of $\tau_n(c_n,1/8)$ and $\varpi_n$ that $n[F_n(c_n) - F_n(\tau_n(c_n, 1/8))]  + \lceil n \varpi_n\rceil = n_0 - \lfloor 7n_0/8\rfloor$. Combining these claims with \eqref{Fn-key-claim} gives
\[
\frac{1}{n}\sum_{\lfloor 7n_0/8\rfloor<i- s_n\leq n_0}\frac{1}{\sqrt{\eta_{(i)}\wedge 1}}\; \gtrsim \; \int_{err_n^2}^{\infty}\frac{1}{\sqrt{t\wedge 1}}d F_n(t). 
\]
We multiply $err_n$ on both hand sides. 
By the condition \eqref{cond-NoTrivial}, we have $err_n \int_{err_n^2}^{\infty} \frac{1}{\sqrt{t\wedge 1}}d F_n(t)\geq C^{-1} \int \min\{\frac{err_n}{\sqrt{t\wedge 1}},1\}dF_n(t)$, which yields a lower bound for the right hand side. 
For the left hand side, we plug in $\eta_i=\theta_i/\bar{\theta}$. 
It follows that
\[
\sqrt{\frac{K^3}{n\bar{\theta}\beta_n^2}}\cdot \frac{1}{n} \sum_{\lfloor 7n_0/8\rfloor<i-s_n\leq n_0}\frac{1}{\sqrt{\theta_{(i)}\wedge \bar{\theta}}}\; \gtrsim \;  \int \min\Bigl\{\frac{err_n}{\sqrt{t\wedge 1}}, 1\Bigr\}d F_n(t). 
\]
Notice that for each individual $i$ such that $\lfloor 7n_0/8\rfloor<i-s_n\leq n_0$, its contribution to the left hand side sum above is negligible since $n^{-1}/\sqrt{\theta_{(i)}} \leq n^{-1}/ \sqrt{\bar{\theta} err_n^2} = n^{-1/2}K^{-3/2} \bar{\theta}^{1/2} \beta_n= o(1)$. Therefore, we can remove finitely many $i$ from the left hand side sum without changing the inequality above. This further implies 
\[
\sqrt{\frac{K^3}{n\bar{\theta}\beta_n^2}}\cdot \frac{1}{n} \sum_{\lfloor 7n_0/8\rfloor +2 <i-s_n\leq n_0}\frac{1}{\sqrt{\theta_{(i)}\wedge \bar{\theta}}}\; \gtrsim \;  \int \min\Bigl\{\frac{err_n}{\sqrt{t\wedge 1}}, 1\Bigr\}d F_n(t). 
\] 
Let ${\cal M}_0$ be the index set of the nodes ordered between $s_n$ and $s_n+n_0$ in \eqref{LBconstruct-4}. Let $\gamma_n= c_0K^{\frac12}(n\bar{\theta}\beta_n^2)^{-\frac12}$. The above implies that
\beq \label{LBconstruct-5}
\gamma_n\inf_{\substack{{\cal M}\subset{\cal M}_0,\\|{\cal M}|\geq n_0/8  -2 }}\biggl\{\frac{1}{n}\sum_{i\in {\cal M}}\frac{1}{\sqrt{\theta_i\wedge \bar{\theta}}}\biggr\} \; \gtrsim \;  K^{-1} \int \min\Bigl\{\frac{err_n}{\sqrt{t\wedge 1}}, 1\Bigr\}d F_n(t). 
\eeq
The set ${\cal M}_0$ plays a key role in the construction of the least-favorable configurations. 
We now re-arrange nodes by putting nodes in ${\cal M}_0$ as the first $n_0$ nodes, with the last $n-n_0$ nodes ordered in a way such that the average degrees of the pure nodes in different communities of $\Pi^*$  are of the same order (such an ordering always exists). After node re-arrangement, we construct $\Pi^*$ and $\Gamma^{(0)},\Gamma^{(1)},\ldots,\Gamma^{(J)}$ in the same way as in \eqref{LBconstruct-1}-\eqref{LBconstruct-2}. Let $\tilde{\theta}_i=\theta_i\wedge \bar{\theta}$ and 
$\widetilde{\Theta}=\diag(\tilde{\theta}_1,\tilde{\theta}_2,\ldots,\tilde{\theta}_n)$. Let  
\beq \label{LBconstruct-6}
\Pi^{(j)} = \Pi^* + \gamma_n \widetilde{\Theta}^{-\frac12}\Gamma^{(j)},\qquad \mbox{for}\quad 0\leq j\leq J, 
\eeq
where $\gamma_n= c_0K^{\frac12}(n\bar{\theta}\beta_n^2)^{-\frac12}$ is the same as in \eqref{LBconstruct-5}. The following theorem is an analog of Theorem~\ref{thm:least-favorable1} for the unweighted $\ell^1$-loss and is proved in next section.

\begin{thm} \label{thm:least-favorable2}
Fix $c_1$-$c_4$ in Condition~\ref{reg-conds} and $(\varrho,a_0)$ in Theorem~\ref{thm:LB}. 
Given $(n,K, \beta_n)$ and $\theta\in {\cal G}_n(\varrho, a_0)$,  let $P^*$ be as in \eqref{P-star}, and construct $\Pi^{(0)}, \Pi^{(1)},\ldots,\Pi^{(J)}$ as in \eqref{LBconstruct-6}. When $c_0$ in \eqref{LBconstruct-3} is  properly small, the following statements are true. 
\begin{itemize} \itemsep 5pt
\item For any constant $c_5>0$, let ${\cal Q}_n(\theta,P^*)={\cal Q}_n(K, \theta,P^*, c_5\beta_n)$. There exists a properly small $c_5$ such that 
$\Pi^{(j)}$ is contained in ${\cal Q}_n(\theta,P^*)$, for $0\leq j\leq J$.  
\item There exists a constant $C_2>0$ such that ${\cal L}(\Pi^{(j)}, \Pi^{(k)})\geq C_2\int\min\{\frac{err_n}{\sqrt{t\wedge 1}}, 1\}dF_n(t)$, for all $0\leq j<k\leq J$. 
\item Let ${\cal P}_j$ be the probability measure of a DCMM model with $(\theta,P^*,\Pi^{(j)})$ and let $\mathrm{KL}(\cdot,\cdot)$ denote the Kullback-Leibler divergence. There exists a constant $\epsilon_2\in (0,1/8)$ such that  $\sum_{1\leq j\leq J} \mathrm{KL}(\mathcal{P}_j,\mathcal{P}_0)\leq (1/8-\epsilon_2) J\log(J)$.  
\end{itemize}
Furthermore, $\inf_{\hat{\Pi}}\sup_{\Pi\in {\cal Q}_n(\theta, P^*)}\mathbb{E}{\cal L}(\hat{\Pi},\Pi)\geq C\int\min\{\frac{err_n}{\sqrt{t\wedge 1}}, 1\}dF_n(t)$. 
\end{thm}
\noindent
Theorem~\ref{thm:LB} follows immediately from Theorem~\ref{thm:least-favorable2}. 

\smallskip
\noindent
{\bf Remark}: In Theorems~\ref{thm:least-favorable1}-\ref{thm:least-favorable2}, we fix $P=P^*$ and prove the lower bounds by taking supreme over a class of $\Pi$. Such lower bounds are not only $\theta$-specific but also $P$-specific, and they are stronger than the $\theta$-specific lower bounds in Theorems~\ref{thm:LB} and \ref{thm:LB2}. In Section \ref{subsec:low_P_extend} of the supplementary material, we show that we can prove such $P$-specific lower bounds for an arbitrary $P$ if one of the following holds as $n\to\infty$: 
(a) $(K,P)$ are fixed; (b) $(K,P)$ can depend on $n$, but $K\leq C$ and $P{\bf 1}_K\propto {\bf 1}_K$;
(c) $(K,P)$ can depend on $n$, and $K$ can be unbounded, but $P{\bf 1}_K\propto {\bf 1}_K$ and $|\lambda_2(P)|\leq C\beta_n=o(1)$.

\section{Proofs in lower bound analysis}\label{sec:lowerbd_pf}
In this section, we complete the proofs of lower bounds, i.e., Theorems~\ref{thm:LB} and \ref{thm:LB2}. To this end, we will show the proofs of Theorems~ \ref{thm:least-favorable1}-\ref{thm:least-favorable2}   and Lemma \ref{lem:Fn} stated in Section \ref{sec:suppLB}. We organize this section  as follows: 
In Section \ref{a.subsec:lb1}, we provide the proof of Theorem \ref{thm:least-favorable1} regarding weighted loss metric ${\cal L}^w(\hat{\Pi},\Pi)$. In Section \ref{a.subsec:lb2}, we claim Lemma \ref{lem:Fn} and prove Theorem \ref{thm:least-favorable2} under the condition (\ref{cond-NoTrivial}). The proof of Theorem \ref{thm:least-favorable2} with (\ref{cond-NoTrivial}) violated is relatively simpler and we state it in Section \ref{sub:pr_vio} for completeness.  In Section \ref{subsec:low_P_extend}, we shortly show how to extend the lower bounds to $P$-specific case under some certain additional assumptions. This supports our arguments in the Remark in the end of Section \ref{sec:suppLB}. 


Throughout this section, we will use $\mathcal{C}_{p, k}$ to denote  the index set collecting indices of  the pure nodes in $k$-th community for $1\leq k \leq K$.

\subsection{Proof of Theorem \ref{thm:least-favorable1}} \label{a.subsec:lb1}

We begin with the proof of the  first claim. We first verify $\Pi^{(j)}\in {\cal Q}_n(\theta,P^*)$, for every $0\leq j\leq J$ which are constructed in \eqref{LBconstruct-1}-\eqref{LBconstruct-3}. By the  definition of perturbation matrix $\Gamma^{(j)}$'s in (\ref{LBconstruct-2}), and the fact that $\gamma_n/\sqrt{\theta_i}\leq c_0/K $ for all $1\leq i\leq n_0$ due to $\theta_i/\bar{\theta} \geq err_n^2$ for all $1\leq i\leq n_0$, it is easy to see that $\Pi^{(j)}$'s are indeed membership matrices when choosing small $c_0$. Next, we check Condition~\ref{reg-conds}.  Note that Condition~\ref{reg-conds}(d) and the last inequality in Condition~\ref{reg-conds}(a) immediately hold because of the construction of $\Pi^*$.  By definition, $G^{(j)} =K (\Pi^{(j)} )' \Theta H_0^{-1} \Theta \Pi^{(j)}$ and 
\begin{align}\label{eq:Gj-G*}
\Vert G^{(j)}  - G^*  \Vert \leq 2\Vert  G^*\Vert^{\frac 12} \Vert K \gamma_n^2 (\Gamma^{(j)})'  \Theta^{\frac 12} H_0^{-1} \Theta^{\frac 12}    \Gamma^{(j)}  \Vert^{\frac 12 }    +  \Vert K \gamma_n^2 (\Gamma^{(j)})'  \Theta^{\frac 12} H_0^{-1} \Theta^{\frac 12}    \Gamma^{(j)}  \Vert.
\end{align}
Elementary computations lead to 
\begin{align*}
  G^*  = \Big(\sum_{i=1}^{n_0} \frac{\theta_i^2}{H_0(i,i)}\Big) \frac 1K \mathbf{1}_K\mathbf{1}_K' + K {\rm diag} \Big( \sum_{i\in \mathcal{C}_{p,1}}\frac{\theta_i^2}{H_0(i,i)} \, , \cdots, \sum_{i\in \mathcal{C}_{p,K}}\frac{\theta_i^2}{H_0(i,i)} \Big)\, . 
\end{align*} 
By our construction and assumptions on $\Pi^*$ and $\theta$, 
it can be derived from $\sum_{i=1}^n \theta_i^2/H_0(i,i) \asymp 1$ that 
\begin{align*}
K \sum_{i\in \mathcal{C}_{p, k}}\frac{\theta_i^2}{H_0(i,i)} \asymp 1
 \end{align*}
 for all $1\leq k \leq K$. It follows that $\Vert G^*\Vert\leq c $ and $\Vert (G^*)^{-1}\Vert\leq c$ for some constant $c$.  Furthermore, one can also derive 
 \begin{align}\label{22011001}
 \Vert  K\gamma_n^2 (\Gamma^{(j)})'  \Theta^{\frac 12} H_0^{-1} \Theta^{\frac 12}    \Gamma^{(j)} \Vert \leq c_0^2 err_n^2 = o(1)
 \end{align}
 following from $\sum_{i=1}^n \theta_i/H_0(i,i) \leq 1/\bar{\theta}$  and 
$
|e_i \Gamma^{(j)} x| \leq \sqrt K 
$
for all $1\leq i\leq n$ and any unit vector $x\in \mathbb{R}^K$. Therefore,  by Weyl's inequality and (\ref{eq:Gj-G*}), we can conclude that the first two inequalities in Condition~\ref{reg-conds}(a) hold for all $G^{(j)}$'s.  Further with our choice of special $P^*$ which satisfies that ${\bf 1}_K' P  =\big(K - (K-1)\beta_n \big){\bf1}_K'> (1-c) K {\bf 1}_K$, $\lambda_1(P^*) \asymp K$, and $\lambda_k(P^*)= \beta_n$ for all $2\leq k \leq K$,  the requirements that $|\lambda_K(PG)|\geq c_5\beta_n$ for some $c_5>0$ hold for $P^*G^*$ and $P^* G^{(j)}$'s.  The eigengap condition, Condition~\ref{reg-conds}(b), holds for $P^*G^*$. Condition~\ref{reg-conds}(b) also holds for $P^* G^{(j)}$'s, as a result of the Weyl's inequality, where  
\begin{align*}
|\lambda_1(P^*G^{(j)})  \lambda_2(P^*G^{(j)})| \leq \sigma_1(P^*G^{(j)}) \sigma_2(P^*G^{(j)}) \leq C K \beta_n 
\end{align*}
and $\lambda_1(P^*G^{(j)}) \asymp K$. Here we use $\sigma_1(P^*G^{(j)}), \sigma_2(P^*G^{(j)})$ to denote the first and second largest singular values of $P^*G^{(j)}$; and the right hand side upper bound is due to the expression $P^* G^{(j)} = \beta_n G^{(j)} + K(1-\beta_n) \frac 1K \mathbf{1}_K\mathbf{1}_K' G^{(j)} $ with the fact that  $\Vert G^{(j)}\Vert\leq c $ and $\Vert (G^{(j)})^{-1}\Vert\leq c$ for some constant $c$.
%

   Lastly, we claim that Condition~\ref{reg-conds}(c) holds for all $G^{(j)}$'s. Using \textit{Perron's theorem}, we obtain that the first right eigenvector of $P^* G^{(j)}$ is positive for all $1\leq j\leq J$. In particular, for $P^*G^*$, all of its entries are positive and  $\asymp 1$. We then claim that $P^* G^{(j)}(i,k) \asymp 1$ for all $1\leq i,k\leq K$. To see this, for each $1\leq i,k\leq K$, we first write    \begin{align*}
   P^*G^{(j)}  (i,k) - P^* G^*(i,k)  &= K \gamma_n e_i' P^* (\Gamma^{(j)})' \Theta^{\frac 12} H_0^{-1} \Theta \Pi^* e_k  + K \gamma_n e_i' P^* (\Pi^*)' \Theta H_0^{-1} \Theta^{\frac 12} \Gamma^{(j)} e_k \notag\\
    & \quad + K \gamma_n^2 e_i' P^* (\Gamma^{(j)})' \Theta^{\frac 12} H_0^{-1} \Theta^{\frac 12} \Gamma^{(j)}e_k.
   \end{align*}
   Note that $P^*(\Gamma^{(j)})' = \beta_n (\Gamma^{(j)})'$ by the definition of $\Gamma^{(j)}$ such that $\mathbf{1}_K'(\Gamma^{(j)})' = 0 $. We thus easily bound the first and third terms on the RHS above by 
   \begin{align*}
&  | K \gamma_n e_i' P^* (\Gamma^{(j)})' \Theta^{\frac 12} H_0^{-1} \Theta \Pi^* e_k |\leq \beta_n \Vert G^*\Vert ^{\frac 12} \Vert K \gamma_n^2 (\Gamma^{(j)})'  \Theta^{\frac 12} H_0^{-1} \Theta^{\frac 12}    \Gamma^{(j)} \Vert^{\frac 12} \leq c \beta_n err_n\notag\\
 &|K \gamma_n^2 e_i' P^* (\Gamma^{(j)})' \Theta^{\frac 12} H_0^{-1} \Theta^{\frac 12} \Gamma^{(j)} e_k | \leq \beta_n \Vert K \gamma_n^2 (\Gamma^{(j)})'  \Theta^{\frac 12} H_0^{-1} \Theta^{\frac 12}    \Gamma^{(j)} \Vert \leq c\beta_n err_n^2
   \end{align*}
   which are both of order $o(1)$. For the second term, by the definition of $\Pi^*$, we have 
   \begin{align*}
  | K \gamma_n e_i' P^* (\Pi^*)' \Theta H_0^{-1} \Theta^{\frac 12} \Gamma^{(j)} e_k | & \leq  |K\gamma_n \beta_n e_i'(\Pi^*)' \Theta H_0^{-1} \Theta^{\frac 12} \Gamma^{(j)} e_k  | + |K\gamma_n (1-\beta_n)  \mathbf{1}_n' \Theta H_0^{-1} \Theta^{\frac 12} \Gamma^{(j)} e_k|  \notag\\
  & \leq c \beta_n err_n + c  K \gamma_n \sum_{i=1}^n \frac{\sqrt{\theta_i}}{n\bar{\theta}} \notag\\
  & \leq c \, err_n
   \end{align*}
   where we used Cauchy-Schwarz inequality $\sum_{i=1}^n \sqrt{\theta_i}\leq \sqrt n (\sum_{i=1}^n \theta_i )^{1/2}= n \sqrt{\bar{\theta}}$ in the last step. Therefore, it follows from the above discussions that 
   \begin{align*}
   P^*G^{(j)}  (i,k) = P^*G^*(i,k)  + \big( P^*G^{(j)}  (i,k) - P^*G^*(i,k) \big)  = P^*G^*(i,k)  + o(1) \asymp 1
   \end{align*} 
for all $1\leq i,k \leq K$. 
 As a result,  $\min_{i,k}P^*G^{(j)}(i,k)/ \max_{i,k}P^*G^{(j)}(i,k) >c$ for some constant $c>0$. Then, $\eta^{(j)}_1$, the first right eigenvector of $P^*G^{(j)}$, satisfies
\begin{align}\label{22010901}
\frac{\min_{k}  \eta^{(j)}_1(k)} {\max_{k}  \eta^{(j)}_1(k)}\geq \frac{\min_{i,k}P^*G^{(j)}(i,k) \sum_{k}  \eta^{(j)}_1(k) }{ \max_{i,k}P^*G^{(j)}(i,k)\sum_{k}  \eta^{(j)}_1(k) } >c.
\end{align}
This proves  Condition~\ref{reg-conds}(c).
We then conclude the proof of first statement.

Next, we proceed to prove the second statement, the pairwise difference between $\Pi^{(j)}$'s under the weighted loss metric. By definition, 
\begin{align*}
{\cal L}^w(\Pi^{(j)}, \Pi^{(k)}) &= \frac 1n \sum_{i=1}^n (\theta_i/\bar{\theta})^{\frac 12} \Vert \pi_i^{(j)}-  \pi_i^{(k)}\Vert_1 = \frac {c_0}{n} \sum_{i=1}^{n_0} \frac{\sqrt{K}}{ \beta_n\sqrt{n\bar{\theta}^2} }\Vert { e}_i \big(\Gamma^{(j)}- \Gamma^{(k)}\big)\Vert_1 \notag\\
&=  \frac{c_0\sqrt{K}}{ \beta_n\sqrt{n\bar{\theta}^2} } \cdot \frac4n \Vert \omega^{(j)}- \omega^{(k)}\Vert_1 \geq C_1 err_n,
\end{align*}
for some constant $C_1>0$.

In the last part, we prove the third claim in Theorem \ref{thm:least-favorable1} regarding the KL divergence statement.  Note that 
\begin{align*}
KL(\mathcal{P}_\ell,\mathcal{P}_0)=\sum_{1\leq i<j\leq n}\Omega^{(\ell)}_{ij}\log(\Omega^{(\ell)}_{ij}/\Omega^{(0)}_{ij}) + \big(1- \Omega^{(\ell)}_{ij}\big )  \log\frac{1- \Omega^{(\ell)}_{ij}}{1- \Omega^{(0)}_{ij}}.
\end{align*} 
Notice that $\Omega_{ij}^{(\ell)} = \Omega_{ij}^{(0)}$ for $n_0< i<j \leq n$. Only the pairs satisfying $0<i< j\leq n_0$ and $0<i\leq n_0<j\leq n$ have the contributions. We then write $KL(\mathcal{P}_\ell,\mathcal{P}_0)= (I)+ (II)$ where 
\begin{align}
&(I): = \sum_{0<i< j\leq n_0}\Omega^{(\ell)}_{ij}\log(\Omega^{(\ell)}_{ij}/\Omega^{(0)}_{ij}) + \big(1- \Omega^{(\ell)}_{ij}\big )  \log\frac{1- \Omega^{(\ell)}_{ij}}{1- \Omega^{(0)}_{ij}}\label{def:I}\\
&(II):=  \sum_{0<i\leq n_0< j\leq n}\Omega^{(\ell)}_{ij}\log(\Omega^{(\ell)}_{ij}/\Omega^{(0)}_{ij}) + \big(1- \Omega^{(\ell)}_{ij}\big )  \log\frac{1- \Omega^{(\ell)}_{ij}}{1- \Omega^{(0)}_{ij}} \label{def:II}
\end{align}
We begin with the estimate of $(I)$. For simplicity, we write $\Gamma^{(\ell)}= \big(\Gamma_1^{(\ell)}, \cdots, \Gamma_n^{(\ell)}\big)'$ for $\ell=0, \cdots, J$. By definition,  for $0<i< j\leq n_0$, 
\begin{align*}
&\Omega_{ij}^{(0)} = \theta_i \theta_j \Big( \frac{1}{K^2} {\bf 1}_K' P{\bf 1}_K\Big)  = \theta_i\theta_j \big(1- (1- 1/K) \beta_n\big)<1;
\end{align*}
and 
\begin{align*}
\Omega_{ij}^{(\ell)} &= \theta_i \theta_j \big(\frac 1K {\bf 1}_K + \frac{\gamma_n}{\sqrt{\theta_i}}\Gamma_i^{(\ell)} \big)' P \big(\frac 1K {\bf 1}_K + \frac{\gamma_n}{\sqrt{\theta_j}}\Gamma_j^{(\ell)} \big) \notag\\
&=\theta_i\theta_j \big(1- (1- 1/K) \beta_n\big) + \theta_i\theta_j \frac{\gamma_n^2\beta_n}{\sqrt{\theta_i\theta_j}} \big(\Gamma_i^{(\ell)}\big)' \Gamma_j^{(\ell)} \notag\\
&= \Omega_{ij}^{(0)} \Big(1+ \Delta_{ij}^{(\ell)}\Big), \qquad \qquad j \neq 0, 
\end{align*}
in which, $\Delta_{ij}^{(\ell)}:= (\gamma_n^2/\sqrt{\theta_i\theta_j})\,\cdot\beta_n \big(\Gamma_i^{(\ell)}\big)' \Gamma_j^{(\ell)}/ [1-(1-1/K)\beta_n]$. Here we used the identity $\mathbf{1}_K ' \Gamma^{(\ell)}_j =0$ for all $1\leq j\leq n $, $1\leq \ell \leq J$. Further by $1-(1-1/K)\beta_n >c$ for some constant $c>0$and the assumption that the first $n_0$ $\theta_i$'s satisfy $\theta_i/\bar{\theta} \geq err_n^2$, we notice that 
\begin{align*}
\max_{0<i,j\leq n_0} |\Delta_{ij}^{(\ell)}|\leq  C\frac{c_0^2K^2}{\beta_n\sqrt{(n\bar{\theta}\theta_i) (n\bar{\theta}\theta_j)}} \leq C c_0^2K^{-1} \beta_n 
\end{align*}
and $ \Omega_{ij}^{(0)}  \Delta_{ij}^{(\ell)} < c (1-  \Omega_{ij}^{(0)}) $ for some constant $c\in (0,1)$ when $c_0$ is small enough. 
Choosing sufficiently small $c_0$, we  have the Taylor expansions 
\begin{align*}
\Omega^{(\ell)}_{ij}\log(\Omega^{(\ell)}_{ij}/\Omega^{(0)}_{ij}) =  \Omega_{ij}^{(0)} \big(1+ \Delta_{ij}^{(\ell)}\big) \log \big(1+ \Delta_{ij}^{(\ell)} \big) = \Omega_{ij}^{(0)} \big( \Delta_{ij}^{(\ell)} + \frac 12 (\Delta_{ij}^{(\ell)})^2 + O((\Delta_{ij}^{(\ell)})^3)\big)
\end{align*}
and 
\begin{align*}
\big(1- \Omega^{(\ell)}_{ij}\big )  \log\frac{1- \Omega^{(\ell)}_{ij}}{1- \Omega^{(0)}_{ij}} &= \big(1- \Omega_{ij}^{(0)}  - \Omega_{ij}^{(0)} \Delta_{ij}^{(\ell)} \big) \log \Big(1-\frac{\Omega_{ij}^{(0)}}{1- \Omega^{(0)}_{ij}} \Delta_{ij}^{(\ell)} \Big)\notag\\
&=- \Omega^{(0)}_{ij} \Delta_{ij}^{(\ell)} + \frac{(\Omega^{(0)}_{ij} )^2}{2(1-\Omega^{(0)}_{ij} ) } ( \Delta_{ij}^{(\ell)} )^2 + O\Big(\frac{(\Omega^{(0)}_{ij} \Delta_{ij}^{(\ell)} )^3}{(1-\Omega^{(0)}_{ij} )^2}\Big)
\end{align*}
Combining the above two equations together into (\ref{def:I}), we arrive at
\begin{align}\label{deri:I1}
(I) & \leq  \sum_{0<i<j\leq n_0} \frac{\Omega^{(0)}_{ij}}{(1-\Omega^{(0)}_{ij})} ( \Delta_{ij}^{(\ell)} )^2 = \sum_{0<i<j\leq n_0}\frac{\gamma_n^4 \beta_n^{2} [\big(\Gamma_i^{(\ell)}\big)' \Gamma_j^{(\ell)}]^2}{[1-(1-1/K)\beta_n](1- \Omega^{(0)}_{ij})} \notag\\
& \leq C \gamma_n^4\beta_n^{2} n_0^2K^2 \leq Cc_0^4 n_0 K\cdot \frac{K^3}{\beta_n^2n\bar{\theta}^2} \leq  Cc_0^4 n_0 K
\end{align}
Here we used $K^3/\beta_n^2(n\bar{\theta}^2)\leq c$ for some sufficiently small $c>0$ and the crude bound $|\big(\Gamma_i^{(\ell)}\big)' \Gamma_j^{(\ell)}| \leq K$, which follows from the definition of $\Gamma^{(\ell)}$ in (\ref{LBconstruct-2}). 

In the sequel, we turn to study $(II)$. Since $0<i\leq n_0< j\leq n$, we suppose that $j\in \mathcal{C}_{p,\hat{j}}$ for some $1\leq \hat{j}\leq K$. Then, 
\begin{align*}
&\Omega_{ij}^{(0)} = \theta_i \theta_j \Big( \frac{1}{K} {\bf 1}_K' P{\bf e}_{\hat{j}}\Big)= \theta_i\theta_j \big(1- (1- 1/K) \beta_n\big);
\end{align*}
and 
\begin{align*}
\Omega_{ij}^{(\ell)} &= \theta_i \theta_j \big(\frac 1K {\bf 1}_K + \frac{\gamma_n}{\sqrt{\theta_i}}\Gamma_i^{(\ell)} \big)' P {\bf e}_{\hat{j}} \notag\\
&=\theta_i\theta_j \big(1- (1- 1/K) \beta_n\big) + \theta_i\theta_j \frac{\gamma_n\beta_n}{\sqrt{\theta_i}} \big(\Gamma_i^{(\ell)}\big)' {\bf e}_{\hat{j}} \notag\\
&= \Omega_{ij}^{(0)} \Big(1+ \widetilde{\Delta}_{ij}^{(\ell)}\Big)
\end{align*}
with $\widetilde{\Delta}_{ij}^{(\ell)}:= ( \gamma_n/\sqrt{\theta_i} ) \cdot \beta_n \big(\Gamma_i^{(\ell)}\big)' {\bf e}_{\hat{j}} / \big(1- (1- 1/K) \beta_n\big)$. Similarly, one can easily check that 
\begin{align*}
 \max_{i,j} |\widetilde{\Delta}_{ij}^{(\ell)}| \leq \frac{Cc_0\sqrt K}{\sqrt{n\bar{\theta}\theta_i}} \leq Cc_0 K^{-1} \beta_n
\end{align*}
by our assumption that the first $n_0$ $\theta_i$'s satisfy $\theta_i/\bar{\theta} \geq err_n^2$.
Therefore, in the same way as (\ref{deri:I1}), we can derive 
\begin{align}\label{deri:II1}
(II) & \leq  \sum_{0<i\leq n_0<j\leq n} \frac{\Omega^{(0)}_{ij}}{(1-\Omega^{(0)}_{ij})}
(\widetilde{ \Delta}_{ij}^{(\ell)} )^2 =  \sum_{\begin{subarray}{c}0<i\leq n_0,\\ j\in \mathcal{C}_{p, \hat{j}}, 1\leq \hat{j}\leq K \end{subarray}}\frac{ \theta_j\gamma_n^2 \beta_n^2 [\big(\Gamma_i^{(\ell)}\big)' {\bf e}_{\hat{j}}]^2}{[1-(1-1/K)\beta_n](1- \Omega^{(0)}_{ij})} \notag\\
& \leq C\Big(\sum_{j=n_0+1}^n \theta_j\Big) \gamma_n^2 \beta_n^2 n_0\leq C c_0^2n_0K. 
\end{align}
We now combine (\ref{deri:I1}) and (\ref{deri:II1}). They imply that  
\begin{align*}
\sum_{\ell=1}^J KL(\mathcal{P}_\ell,\mathcal{P}_0) \leq C c_0^2 J nK.
\end{align*}
Here $C$ is a constant independent of choice of $c_0$ and $n$. At the same time, since $J\geq 2^{\lfloor n_0/2\rfloor \times\lfloor K/2\rfloor/8}$, we obtain that $\log J\geq c nK$ for some constant $c>0$ not relying on the other parameters. By properly choosing $c_0$, we can always find a constant $\epsilon_1\in (0, 1/8)$ such that $C c_0^2 J nK\leq Cc_0^2/c \cdot J\log (J)\leq (1/8- \epsilon_1)J\log (J) $. This finishes the proof of  the last claim. Furthermore,  with standard techniques of lower bound analysis (e.g., \cite[Theorem 2.5]{tsybakov2009introduction1}), we ultimately obtain the lower bound stated in Theorem \ref{thm:least-favorable1}.

\subsection{Proof of Lemma \ref{lem:Fn} and Theorem \ref{thm:least-favorable2}} \label{a.subsec:lb2}

\begin{proof} [Proof of Lemma \ref{lem:Fn}]
Recall  Definition~\ref{def:thetaClass}. For such $c_n$, $\varrho$ and $a_0$,  we see that if $\tau_n(c_n, 1/8)\geq \varrho c_n$, then by definition of $\tau_n(c_n, 1/8)$ and $\varpi_n $, 
\begin{align*}
\int_{\tau_n(c_n, 1/8)}^{c_n} \frac{1}{\sqrt{t\wedge 1}} {\rm d} F_n(t)  + \frac{\lceil n\cdot \varpi_n \rceil}{n\sqrt{\tau_n(c_n, 1/8)\wedge 1}}\; &\geq \frac{1}{8\sqrt{c_n\wedge 1}}  \tilde{F}_n(c_n) \notag\\
&\geq \frac{\sqrt{\varrho}}{8\sqrt{\varrho c_n\wedge 1}}  \tilde{F}_n(c_n)\geq \frac{\sqrt{\varrho}}{8}\int_{\varrho c_n}^{c_n} \frac{1}{\sqrt{t\wedge 1}}{\rm d} F_n(t)\;. 
\end{align*}
By Definition~\ref{def:thetaClass}, we conclude 
\begin{align*}
\int_{\tau_n(c_n, 1/8)}^{c_n} \frac{1}{\sqrt{t\wedge 1}} {\rm d} F_n(t) \geq  \tilde{a}_0 \int_{err_n^2}^{\infty}\frac{1}{\sqrt{t\wedge 1}} {\rm d} F_n(t), \quad \tilde{a}_0:=  \frac{\sqrt{\varrho}}{8} a_0.
\end{align*}
In the case that $\tau_n(c_n, 1/8)< \rho c_n$, trivially, (\ref{Fn-key-claim}) holds with $\tilde{a}_0= a_0$.
\end{proof}
With the help of Lemma \ref{lem:Fn}, we are able to prove Theorem \ref{thm:least-favorable2} below.

\begin{proof}[Proof of Theorem \ref{thm:least-favorable2}]
Since the least-favorable configurations $\Pi^*$ and $\Pi^{(j)}$'s are quite similar to  those for the weighted loss metric, only with slightly different perturbation scales. Such differences will not affect the regularity conditions. In fact,  one can simply verify the  regularity conditions in the same manner as the first part of the proof of Theorem \ref{thm:least-favorable1}. We thus conclude the first statement without details.

Next, for the pairwise difference under unweight loss metric, by definition, 
\begin{align*}
{\cal L}(\Pi^{(j)},\Pi^{(k)}) = \frac 1n \sum_{i=1}^n \Vert \pi_i^{(j)} - \pi_i^{(k)}\Vert _1 &= \frac 1n \sum_{i=1}^{n_0} \frac{\gamma_n}{\sqrt{\theta_{i} \wedge \bar{\theta}}} \Vert {\bf e}_i'\big( \Gamma^{(j)} - \Gamma^{(k)}\big)\Vert_1 
\end{align*}
Note that $  \Vert H^{(j)} - H^{(k)}\Vert_1 \geq \lfloor n_{0}/2\rfloor \times\lfloor K/2\rfloor/8$. At least $  \lfloor n_{0}/2\rfloor/8 $ rows of $ H^{(j)} - H^{(k)}$ will contribute to the RHS term above. Since the construction of $\Gamma^{(j)}$ based on $H^{(j)}$, it is not hard to see 
that 
\begin{align*}
\frac {\gamma_n}{n} \sum_{i=1}^{n_0} \frac{1}{\sqrt{\theta_{i} \wedge \bar{\theta}}} \Vert {\bf e}_i'\big( \Gamma^{(j)} - \Gamma^{(k)}\big)\Vert_1 & \geq \frac {\gamma_n}{n} \min_{\substack{{\cal M}\subset{\cal M}_0,\\|{\cal M}|= 2\lfloor\lfloor n_{0}/2\rfloor /8\rfloor}} \sum_{i\in \mathcal{M}}  \frac{K-1}{\sqrt{\theta_{i} \wedge \bar{\theta}}} \notag\\
& \geq \frac {\gamma_n}{n} \min_{\substack{{\cal M}\subset{\cal M}_0,\\|{\cal M}|= \lfloor n_{0}/8 \rfloor -1}} \sum_{i\in \mathcal{M}}  \frac{K-1}{\sqrt{\theta_{i} \wedge \bar{\theta} }} \notag\\
& \gtrsim \;   \int \min\Bigl\{\frac{err_n}{\sqrt{t\wedge 1}}, 1\Bigr\} { \rm d} F_n(t)
\end{align*}
where the last step is due to (\ref{LBconstruct-5}). This concludes the second statement. 

In the end, we briefly state the proof of  the KL divergence bound since it is quite analogous to the counterpart proof  of Theorem  \ref{thm:least-favorable1}. We again define $(I)$ and $(II)$ as (\ref{def:I})-(\ref{def:II}), and bound them separately. Thanks to  the slight difference on the perturbation scale, one can simply mimic the proof  of Theorem  \ref{thm:least-favorable1} and obtain the following bounds under current settings.
\begin{align*}
(I) & \leq  \sum_{1\leq i< j \leq n_0} \frac{\Omega^{(0)}_{ij}}{(1-\Omega^{(0)}_{ij})}
( \Delta_{ij}^{(\ell)} )^2 = \sum_{ 1\leq i< j \leq n_0}\frac{\frac{\theta_i}{\theta_i\wedge \bar{\theta} } \, \frac{\theta_j}{\theta_j\wedge \bar{\theta} }\, \gamma_n^4 \beta_n^{2} [\big(\Gamma_i^{(\ell)}\big)' \Gamma_j^{(\ell)}]^2}{[1-(1-1/K)\beta_n](1- \Omega^{(0)}_{ij})} \notag\\
& \leq C \gamma_n^4\beta_n^{2} n_{0}^2K^2 \leq Cc_0^4 n_{0} K\cdot \frac{K^3}{\beta_n^2n\bar{\theta}^2} \leq  Cc_0^4 n_{0} K
\end{align*}
 and 
 \begin{align*}
(II) & \leq  \sum_{0<i\leq n_0< j\leq n} \frac{\Omega^{(0)}_{ij}}{(1-\Omega^{(0)}_{ij})}
(\widetilde{ \Delta}_{ij}^{(\ell)} )^2 = \sum_{\begin{subarray}{c}0<i\leq n_0,\\ j\in \mathcal{C}_{p, \hat{j}}, 1\leq \hat{j}\leq K \end{subarray}}\frac{ \frac{\theta_i}{\theta_i\wedge \bar{\theta} }\,  \theta_j \gamma_n^2 \beta_n^2 [\big(\Gamma_i^{(\ell)}\big)' {\bf e}_{\hat{j}}]^2}{[1-(1-1/K)\beta_n](1- \Omega^{(0)}_{ij})} \notag\\
& \leq C\Big(\sum_{j=n_0+1}^n \theta_j\Big) \gamma_n^2 \beta_n^2 n_{0}\leq C c_0^2n_{0}K. 
\end{align*}
Here to obtain the two upper bounds above, we used an estimate 
\begin{align*}
\sum_{1\leq i\leq n_0} \frac{\theta_i}{\theta_i\wedge \bar{\theta}} \leq 
\sum_{1\leq i\leq n_0} \Big(1+ \frac{\theta_i}{\bar{\theta}} \Big)\leq 2 c n_0
\end{align*}
for some constant $c>0$, where the last step is due to our ordering of $\theta_i$'s and  $\theta_i/\bar \theta \leq c_n \leq C$ for some constant $C>0$, $1\leq i \leq n_0$, which follows from Definition \ref{def:thetaClass} and self-normalization of $F_n(\cdot)$ (i.e., $\int t dF_n(t) = 1$).
As a result, 
 \begin{align*}
\frac{1}{J}\sum_{\ell=1}^J KL(\mathcal{P}_\ell,\mathcal{P}_0) \leq C c_0^2 n_{0}K\leq \tilde{C} c_0^2 \log J.
\end{align*}
Properly  choosing sufficiently  small $c_0$, we thus complete the third statement.
Furthermore, by standard techniques of lower bound analysis (e.g., \cite[Theorem 2.5]{tsybakov2009introduction1}), we ultimately obtain the lower bound stated in Theorem \ref{thm:least-favorable2}.

\end{proof}

\subsection{Proof of Theorem~\ref{thm:least-favorable2} without (\ref{cond-NoTrivial})  } \label{sub:pr_vio}
In this section, we show the proof of  Theorem~\ref{thm:least-favorable2} in the case that (\ref{cond-NoTrivial})  violates. We will need a distinct sequence of   least-favorable configurations. We still order $\theta_i$'s as (\ref{LBconstruct-4}). But we define 
\beq
n_0 = \max\{1\leq i \leq n: \theta_i/\bar{\theta} \leq err_n^2\} \notag
\eeq 
which means $n_0$ is the total number of $\eta_i$'s such that $0<\eta_i\leq err_n^2$. For the remaining $n-n_0$ nodes, we order them in the way that the average degrees of the pure nodes in different communities of $\Pi^*$  are of the same order as before. $\Pi^*$ and $\Gamma^{(0)},\Gamma^{(1)},\ldots,\Gamma^{(J)}$are constructed in the same way as in \eqref{LBconstruct-1}-\eqref{LBconstruct-2}.  Different from (\ref{LBconstruct-6}), let 
\beq \label{def:Pi(j)_vio}
\Pi^{(j)} = \Pi^* + {c_0} K^{-1} \Gamma^{(j)},\qquad \mbox{for}\quad 0\leq j\leq J.  
\eeq
First,
following the first part of proof of Theorem \ref{thm:least-favorable1}, we will see that $G^*, P^*G^*$ satisfy the regularity conditions in Condition~\ref{reg-conds}. Especially, in this case,  we still have 
$$ K \sum_{i\in \mathcal{C}_{p, k}}\frac{\theta_i^2}{H_0(i,i)} \asymp \int_{err_n^2}^{\infty} t d F_n(t)\geq  \int_{0}^{\infty}  t d F_n(t) - err_n^2 \asymp 1 $$
 Furthermore, one can derive
\[
 \Vert  c_0^2 K^{-2}(\Gamma^{(j)})'  \Theta H_0^{-1} \Theta    \Gamma^{(j)}  \Vert \leq c_0^2. 
\]
Similarly to the analog in the proof of Theorem~\ref{thm:least-favorable1}
, by choosing properly small $c_0$, we have the regularity conditions hold for $P^*G^{(j)}$'s as well. Since the proofs are quite similar, we hence omit the details.

Second, under the construction (\ref{def:Pi(j)_vio}), 
\begin{align*}
{\cal L}(\Pi^{(j)},\Pi^{(k)}) = \frac 1n \sum_{i=1}^n \Vert \pi_i^{(j)} - \pi_i^{(k)}\Vert _1 &= \frac {c_0}{nK} \sum_{i=1}^{n_0}\Vert {\bf e}_i'\big( \Gamma^{(j)} - \Gamma^{(k)}\big)\Vert_1 \notag\\
&= \frac{4c_0}{nK} \Vert H^{(j)}- H^{(k)}\Vert_1 \notag\\
&\geq C\frac{n_0}{n}
\end{align*}
for some constant $C$ not relying on the other parameters. Notice that $n_0/n = \int_{0}^{err_n^2} {\rm d} F_n(t)$. Since (\ref{cond-NoTrivial})  violates, we thus conclude that 
$$
{\cal L}(\Pi^{(j)},\Pi^{(k)})>C  \int_{0}^{err_n^2} {\rm d} F_n(t)\geq C_3\int\min\{\frac{err_n}{\sqrt{t\wedge 1}}, 1\}{\rm d} F_n(t), 
$$
for some constant $C_3>0$.
Third, we claim the KL divergence in the same way as previously, $KL(\mathcal{P}_\ell,\mathcal{P}_0)= (I)+ (II)$ and  $(I)$, $(II)$ are defined in (\ref{def:I})-(\ref{def:II}). By our least-favorable configurations (\ref{def:Pi(j)_vio}), we bound 
\begin{align*}
(I) & \leq  \sum_{1\leq i< j \leq n_0} \frac{\Omega^{(0)}_{ij}}{(1-\Omega^{(0)}_{ij})}
( \Delta_{ij}^{(\ell)} )^2 = \sum_{ 1\leq i< j \leq n_0}\frac{\theta_i \theta_j  c_0^4K^{-4} \beta_n^2 [\big(\Gamma_i^{(\ell)}\big)' \Gamma_j^{(\ell)}]^2}{[1-(1-1/K)\beta_n](1- \Omega^{(0)}_{ij})} \notag\\
& \leq Cc_0^4 \Big(\sum_{i=1}^{n_0} \theta_i\Big)^2 \beta_n^{2} K^{-2} \leq Cc_0^4 n_{0} K\cdot \frac{K^3}{\beta_n^2n\bar{\theta}^2} \leq  Cc_0^4 n_{0} K
\end{align*}
where in this case $\Delta_{ij}^{(\ell)}=c_0^2K^{-2} \beta_n  \big(\Gamma_i^{(\ell)}\big)' \Gamma_j^{(\ell)}/(1-(1-1/K)\beta_n)$, and we used the fact that $\theta_i\leq K^3\beta_n^{-2}/(n\bar{\theta})$ for all $1\leq i \leq n_0$ to obtain  the second inequality on the second row;
 and 
 \begin{align*}
(II) & \leq  \sum_{0<i\leq n_0< j\leq n} \frac{\Omega^{(0)}_{ij}}{(1-\Omega^{(0)}_{ij})}
(\widetilde{ \Delta}_{ij}^{(\ell)} )^2 = \sum_{\begin{subarray}{c}0<i\leq n_0,\\ j\in \mathcal{C}_{p, \hat{j}}, 1\leq \hat{j}\leq K \end{subarray}}\frac{\theta_i  \theta_j c_0^2 K^{-2} \beta_n^2 [\big(\Gamma_i^{(\ell)}\big)' {\bf e}_{\hat{j}}]^2}{[1-(1-1/K)\beta_n](1- \Omega^{(0)}_{ij})} \notag\\
& \leq Cc_0^2\Big(\sum_{i=1}^{n_0} \theta_i\Big) \Big(\sum_{j=n_0+1}^n \theta_j\Big)K^{-2} \beta_n^2 \leq C c_0^2n_{0}K
\end{align*}
where $\widetilde{\Delta}_{ij}^{(\ell)}:=c_0K^{-1}\cdot \beta_n \big(\Gamma_i^{(\ell)}\big)' {\bf e}_{\hat{j}} / \big(1- (1- 1/K) \beta_n\big)$ for this case. Combining the upper bounds for $(I)$ and $(II)$, we finally get 
\beq
 \sum_{1\leq \ell\leq J} \mathrm{KL}(\mathcal{P}_\ell,\mathcal{P}_0)\leq C c_0^2 J n_0K\leq (1/8-\epsilon_3) J\log(J)
\eeq
 for a constant $\epsilon_3\in (0,1/8)$,  by choosing sufficiently small $c_0$ and noting $n_0K \asymp \log (J)$.
 
 In conclusion, we proved the analogs of the three claims in Theorem \ref{thm:least-favorable2} when (\ref{cond-NoTrivial})  violates. Further  by standard techniques of lower bound analysis (e.g., \cite[Theorem 2.5]{tsybakov2009introduction1}), we ultimately obtain the lower bound.

\subsection{Extension to $P$-specific lower bounds} \label{subsec:low_P_extend}
In this subsection, we study the $P$-specific  lower bounds of ${\cal L}^w(\hat{\Pi},\Pi)$ and ${\cal L}(\hat{\Pi},\Pi)$ for arbitrary $P$ if  one of the following condition holds as $n\to \infty$:
\begin{itemize}
\item [(a)] $(K,P)$ are fixed;
\item  [(b)] $(K,P)$ can depend on $n$, but $K\leq C$ and $P{\bf 1}_K\propto {\bf 1}_K$;
\item [(c)] $(K,P)$ can depend on $n$, and $K$ can be unbounded, but $P{\bf 1}_K\propto {\bf 1}_K$ and $|\lambda_2(P)|\leq C\beta_n=o(1)$. 
\end{itemize}
Since the proofs are quite analogous to the case of the special $P$ in the manuscript, in the sequel, we point out the key differences compared to the proofs for Theorems~\ref{thm:least-favorable1}-\ref{thm:least-favorable2}, and shortly state how to adapt the proofs in the previous subsections to the current cases.
\begin{itemize}
\item[(a)] If  $K=K_0$ and  $P=P_0$, for a fixed integer $K_0\geq 2$ and a fixed matrix $P_0$, we can simplify the construction of $\Gamma^{(j)}$'s and hence the configurations $\Pi^{(j)}$'s. More specifically, we apply Lemma~\ref{lem:packing} to $n_0$ to get $\omega^{(0)}, \omega^{(1)}, \dots, \omega^{(J)}$, where $J\geq 2^{n_0/8}$. We insert $\omega^{(j)}$'s into $n$-dim vectors $\gamma^{(j)}$'s such that 
\beq
(\gamma^{(j)})' = \big( \, (\omega^{(j)})', \mathbf{0}_{1\times (n-n_0)} \big). 
\eeq
Let $\eta\in \mathbb{R}^{K_0}$ be a nonzero vector such that 
\beq
\eta' \mathbf{1}_{K_0}= 0, \qquad \eta' P_0 \mathbf{1}_{K_0} = 0, \qquad \Vert \eta\Vert_1\asymp K_0 
\eeq
Such $\eta$ always exists by solving certain linear system. Based on these notations, we re-define $\Gamma^{(j)} = \gamma^{(j)}\eta'$ and re-define $\Pi^{(j)}$ correspondingly as (\ref{LBconstruct-3}) for weighted loss, (\ref{LBconstruct-6}) or (\ref{def:Pi(j)_vio}) for unweighted loss. The verifications of regularity conditions and pairwise difference between the configurations can be claimed in the same way as in the proofs of Theorems~\ref{thm:least-favorable1}-\ref{thm:least-favorable2}. The most distinguishing part appears in the KL divergence. Especially, for $0<i<j\leq n_0$, 
\beq
\Omega_{ij}^{(\ell)} =  \Omega_{ij}^{(0)} \Big(1+ \Delta_{ij}^{(\ell)}\Big), \quad \Delta_{ij}^{(\ell)}\propto \gamma^{(\ell)}(i) \gamma^{(\ell)}(j) \eta'P\eta  \label{def:delta_in_cor1}
\eeq
where the coefficients we did not specify for $\Delta_{ij}$'s rely on the perturbation scale we take from (\ref{LBconstruct-3}), or (\ref{LBconstruct-6}), or (\ref{def:Pi(j)_vio}). Similarly for $0<i\leq n_0<j\leq n$, if $j\in \mathcal{C}_{p, \hat{j}}$,
\beq
\Omega_{ij}^{(\ell)} =  \Omega_{ij}^{(0)} \Big(1+ \widetilde{\Delta}_{ij}^{(\ell)}\Big), \quad \widetilde{\Delta}_{ij}^{(\ell)}\propto \gamma^{(\ell)}(i)  \eta'P e_{\hat{j}}. \label{def:delta_in_cor2}
\eeq
Nevertheless, in this case, $\eta'P\eta  \asymp 1$ and $|\eta'P e_{\hat{j}}|\leq C$. In particular, $\beta_n\asymp 1$. All of these facts lead to similar derivations on upper bounds of $(I)$ and $(II)$ (see definitions in  (\ref{def:I})-(\ref{def:II})). One can claim the desired upper bounded for KL divergence for the least-favorable configurations we constructed here. One can conclude the proof by mimicking the proofs of Theorems~\ref{thm:least-favorable1}-\ref{thm:least-favorable2}.

\item[(b)] If both $(K,P)$ may depend on $n$, but they satisfy that $K\leq C$ and $P{\bf 1}_K\propto {\bf 1}_K$.  We take the same simplified least-favorable configurations as in Case (a). The regularity conditions and pairwise difference can be claimed likewisely. ${\bf 1}_K$ is an eigenvector of $P$. We can take special $\eta$, the eigenvector associated  to the smallest eigenvalue (in magnitude) of $P$. In (\ref{def:delta_in_cor1}) and (\ref{def:delta_in_cor2}), we have $\eta'P\eta  \asymp \beta_n$ and $|\eta'P e_{\hat{j}}| \leq  C\beta_n$, which fit the arguments in the proofs of  KL divergence for  Theorems~\ref{thm:least-favorable1}-\ref{thm:least-favorable2}. Thereby, we can prove the KL divergence in the same way as the proofs of Theorems~\ref{thm:least-favorable1}-\ref{thm:least-favorable2}.

\item[(c)]If  both $(K,P)$ may depend on $n$ and $K$ can be unbounded, but $P{\bf 1}_K\propto {\bf 1}_K$ and $|\lambda_2(P)|\leq C\beta_n=o(1)$. 
We adopt the same least-favorable configurations in Section \ref{sec:suppLB} correspondingly to Theorems~\ref{thm:least-favorable1}-\ref{thm:least-favorable2}. The different parts only appear in the quantities involving $P$. Notice that in this case, ${\bf 1}_K $ is the eigenvector associated to the largest eigenvalue of $P$, 
\beq
(\Gamma_i^{(\ell)})' P {\bf 1}_K \propto (\Gamma_i^{(\ell)})'  {\bf 1}_K = 0 , \qquad (\Gamma_i^{(\ell)})' P\Gamma_j^{(\ell)}\asymp \beta_n
\eeq
since the other eigenvalues of $P$ are asymptotically of order $\beta_n$. These two estimates exactly coincide with the ones in the proofs of  Theorems~\ref{thm:least-favorable1}-\ref{thm:least-favorable2}. Then, all the arguments in  the proofs of  Theorems~\ref{thm:least-favorable1}-\ref{thm:least-favorable2} can be directly applied in this setting. We thereby conclude our proof. 
\end{itemize}

\section{Analysis for a general $b$} \label{sec:choose-b}

In Section~\ref{subsec:Methods}, we explained the rationale of choosing $b=1/2$. An important claim there is that for a general $b$, subject to a column permutation of $\hat{\Pi}$, it holds simultaneously that 
\beq \label{SNR-in-supp}
\|\hat{\pi}_i-\pi_i\|_1\lesssim  \frac{C\sqrt{\log(n)}}{\sqrt{n\theta_i\bar{\theta}}} \cdot \delta(b,F_n), \qquad\mbox{where } \delta(b, F_n)= \frac{\sqrt{\int t^{3-4b}dF_n(t)}}{\int t^{2-2b}dF_n(t)}. 
\eeq
In this section, we discuss why \eqref{SNR-in-supp} is true. 

Let $H$ be the same as in \eqref{Def:Laplacian}, and let $H_0=\mathbb{E}H$. 
Given a fixed $b\geq 0$, let 
\[
L= H^{-b} AH^{-b}, \qquad L_0 = H_0^{-b}\Omega H_0^{-b}. 
\]
For $1\leq k\leq K$, let $\hat{\lambda}_k$ be the $k$th largest eigenvalue (in magnitude) of $L$, and let 
$\hat{\xi}_k\in\mathbb{R}^n$ be the corresponding eigenvector. Similarly, let $\lambda_k$ be the $k$th largest eigenvalue (in magnitude) of $L_0$, and let 
$\xi_k\in\mathbb{R}^n$ be the corresponding eigenvector. 
From the proof of Theorem~\ref{thm:uppbd_pi}, we can see that the node-wise error $\|\hat{\pi}_i-\pi_i\|_1$ is closely related to the following quantity: 
\beq \label{ER_i}
\mathrm{ER}_i:=\max_{1\leq k\leq K}\biggl\{\frac{|\hat{\xi}_k(i)-\xi_k(i)|}{\xi_1(i)}\biggr\}, 
\eeq
subject to a rotation matrix $O_1\in\mathbb{R}^{(K-1)\times (K-1)}$ applied to the columns of $\hat{\Xi}_1=[\hat{\xi}_2,\hat{\xi}_3,\ldots,\hat{\xi}_K]$. 
To study $\mathrm{ER}_i$, we introduce a proxy to $\hat{\xi}_k$. Note that $\hat{\lambda}_k\hat{\xi}_k=H^{-b}AH^{-b}\hat{\xi}_k$, from the  definition of eigenvalues and eigenvectors. It implies $\hat{\xi}_k=\hat{\lambda}_k^{-1}H^{-b}AH^{-b}\hat{\xi}_k$. 
We replace $(H, \hat{\xi}_k)$ on the right hand side by $(H_0, \xi_k)$ to obtain
\beq \label{xi-star}
\hat{\xi}_k^* := {\lambda}_k^{-1}H_0^{-b}AH_0^{-b}\xi_k, \qquad 1\leq k\leq K. 
\eeq
We then similarly define 
\beq \label{SNR_i_star}
\mathrm{ER}^*_i:=\max_{1\leq k\leq K}\biggl\{\frac{|\hat{\xi}^*_k(i)-\xi_k(i)|}{\xi_1(i)}\biggr\}. 
\eeq
The difference between $\mathrm{ER}_i^*$ and $\mathrm{ER}_i$ is governed by $\zeta:=\hat{\xi}^*_1-\hat{\xi}_1\in\mathbb{R}^n$. How to control the effect of $\zeta$ is the central topic of entry-wise eigenvector analysis. In Section~\ref{sec:Entrywise-proof} and Section~\ref{sec:EntrywiseAnalysis-in-supp}, we give rigorous analysis in the case of $b=1/2$. The analysis of a general $b$ follows a similar vein and is omitted here (the regularity conditions may be slightly different). From now on, we assume that $|\mathrm{ER}_i-\mathrm{ER}^*_i|$ is negligible and focus on studying $\mathrm{ER}_i^*$ in \eqref{SNR_i_star}.

\begin{lem} \label{lem:SNR-general-b}
Consider the DCMM model in \eqref{mod-DCMM}-\eqref{mod-DCMM2}. Write $G_0=[{\rm tr} (\Theta M_0^2\Theta)]^{-1}\Pi'\Theta M_0^2\Theta\Pi$ where $M_0 = H_0^{-b}$. As $n\to\infty$, suppose $K$ is fixed and $PG_0$ converges to a fixed irreducible non-singular matrix that has distinct eigenvalues. 
Denote by $\theta_{\max}$ and $\theta_{\min}$ the maximum and minimum of $\theta_i$'s. 
We assume $n\bar{\theta}\theta_{\min}/\log(n)\to\infty$ and 
set $\tau=0$ in the definition of $H$ (see \eqref{Def:Laplacian}). 
Let $\hat{\xi}_1^*, \ldots,\hat{\xi}_K^*$ be defined as  in \eqref{xi-star}. With probability $1-o(n^{-3})$, simultaneously for all $1\leq i\leq n$, 
\begin{align} \label{ER-star-bound}
\mathrm{ER}^*_i \leq  C\Biggl(\sqrt{ \frac{\log(n)}{\theta_i}}\times \frac{\sqrt{\sum_{j=1}^n \theta_j^{3-4b}}}{\sum_{j=1}^n \theta_j^{2-2b}} + \frac{\log(n)}{\theta_i} \times \frac{\theta_{\max}^{1-2b}}{\sum_{j=1}^n \theta_j^{2-2b}}\Biggr). 
\end{align}
\end{lem}

We have made some strong assumptions in Lemma~\ref{lem:SNR-general-b}, e.g., $K$ is fixed, $PG_0$ converges to a fixed matrix with distinct eigenvalues, and $n\bar{\theta}\theta_{\min}/\log(n)\to\infty$. These assumptions are only for convenience, as we want to avoid re-defining those regularity conditions in Section~\ref{subsec:RegConds} for a general $b$. However, there is no technical hurdle of extending  Lemma~\ref{lem:SNR-general-b} to allow for weaker conditions similar to those in Section~\ref{subsec:RegConds}.

We now look into the right hand side of \eqref{ER-star-bound} and write it as $\mathrm{ER}^*_i\leq C(\omega_i^*+\widetilde{\omega}_i)$, where $\omega^*_{i}$ and $\widetilde{\omega}_{i}$ represent the two terms in the brackets, respectively. By Definition~\ref{Def-CDF},  $\sum_{i=1}^n\theta_i^\gamma=\bar{\theta}^\gamma\int t^{\gamma}dF_n(t)$, for any $\gamma\in\mathbb{R}$. Consequently, 
\beq \label{SNR-supp-1}
\omega^*_i = \sqrt{ \frac{\log(n)}{\theta_i}}\times \frac{\sqrt{\sum_{j=1}^n \theta_j^{3-4b}}}{\sum_{j=1}^n \theta_j^{2-2b}} = \frac{C\sqrt{\log(n)}}{\sqrt{n\theta_i\bar{\theta}}}\times \underbrace{\frac{\sqrt{\int t^{3-4b}dF_n(t)}}{\int t^{2-2b}dF_n(t)}}_{\delta(b, F_n)}.
\eeq
In addition, note that $\sqrt{\sum_j \theta_j^{3-4b}}\leq\sqrt{ \theta_{\max}^{2-4b}\sum_j\theta_j}\leq \theta_{\max}^{1-2b}\sqrt{n\bar{\theta}}$. We then obtain:   
\beq \label{SNR-supp-2}
\frac{\widetilde{\omega}_i}{\omega^*_i} = \frac{\sqrt{\theta_i}}{\sqrt{\log(n)}}\times \frac{\sqrt{\sum_{j=1}^n \theta_j^{3-4b}}}{\theta_{\max}^{1-2b}}\leq  \frac{\sqrt{n\bar{\theta}\theta_i}}{\sqrt{\log(n)}}. 
\eeq 
As long as $n\bar{\theta}\theta_i/\log(n)\geq C$, $\widetilde{\omega}_i$ is dominated by $\omega^*_i$, simultaneously for all fixed $b$. Furthermore, $\omega^*_i$ is minimized at $b=1/2$. 
It suggests that $b=1/2$ is the universally best choice (as claimed in Section~\ref{subsec:Methods} of the main text).

\subsection{Proof of Lemma \ref{lem:SNR-general-b}}
We recall that $\hat{\xi}_k^*$ is a proxy to $\xi_k$. It is tedious to bound the difference between $\hat{\xi}_k^*$ and $\xi_k$ and handle the rotation matrix $O_1\in\mathbb{R}^{(K-1)\times (K-1)}$. In the special case of $b=1/2$, such analysis is detailed in Section~\ref{sec:Entrywise-proof} and Section~\ref{sec:EntrywiseAnalysis-in-supp}. For a general $b$, we skip this step but only study $\hat{\xi}_k^*$. Since the statement of  Lemma \ref{lem:SNR-general-b} is only about $\hat{\xi}_k^*$, the proof is much shorter.

Recall the proxy 
$\hat{\xi}_k^* := \lambda_k^{-1} H_0^{-b}AH^{-b}_0 \xi_k$, where $H_0(i,i) = \mathbb{E} d_i\asymp n\bar{\theta} \theta_i$. Under the assumptions on $G_0 $ and $PG_0$, it can be observed that the eigenvalues $\lambda_1, \cdots, \lambda_K$  are well separated by a order of $\sum_{i=1}^n \theta_i^{2-2b} /(n\bar{\theta})^{2b}$ and 
\[ |\lambda_ k|\asymp  \sum_{j=1}^n \theta_j^{2-2b} /(n\bar{\theta})^{2b} \quad \text{ for all $1\leq k \leq K$.}\]
In the same manner to prove Lemma \ref{lem:order}, under the assumptions in Lemma \ref{lem:SNR-general-b}, we can also claim that 
\begin{align*}
\xi_1(i) \asymp \frac{\theta_i^{1- b}}{ \sqrt{\sum_{j=1}^n  \theta_i^{2- 2b}} }, \quad \max_{1\leq k \leq K} |\xi_k (i )| \leq C \frac{\theta_i^{1- b}}{ \sqrt{ \sum_{j=1}^n  \theta_i^{2- 2b} } }
\end{align*}
by replacing $H_0$ by $M_0^2$. We skip the details for simplicity. Based on the above estimates, we can derive 
\begin{align}\label{eq:2023061401}
\big|\hat{\xi}_k^*(i) - \xi_k(i) \big| &\lesssim  \frac{(n\bar{\theta})^{2b} M_0(i,i)}{\sum_{j=1}^n \theta_j^{2-2b} } \, {\sum_{j=1}^n \big(A(i,j) - \Omega(i,j) \big)M_0(j,j)\xi_k(j)} \notag\\
& \lesssim   \frac{(n\bar{\theta})^{b}}{\big(\sum_{j=1}^n \theta_j^{2-2b} \big)\theta_i^b} \, {\sum_{j\neq i}^n W(i,j)M_0(j,j)\xi_k(j)} +  \frac{\theta_i^{3- 3b}}{\big(\sum_{j=1}^n \theta_j^{2-2b}\big)^{\frac 32}}\,. 
\end{align}
for any fixed $1\leq k \leq K$.
To proceed, we apply Bernstein inequality (i.e., Theorem \ref{thm:Bern_ineq}) to the summation on the RHS above and get 
\begin{align*}
& \quad {\sum_{j\neq i}^n W(i,j)M_0(j,j)\xi_k(j)}\notag\\
& \leq C \sqrt{\sum_{j=1}^n \theta_i \theta_j (n\bar{\theta}\theta_j)^{-2b} \frac{\theta_j^{2-2b}}{\sum_{j=1}^n \theta_j^{2-2b}} \cdot \log (n) } \, + C\max_{j } \bigg|(n\bar{\theta}\theta_j)^{-b} \frac{\theta_j^{1-b} } {\sqrt{\sum_{j=1}^n \theta_j^{2-2b}} }\bigg| \cdot \log (n)  \notag\\
& \leq C\sqrt {\theta_i \log (n)} \, \frac{(\sum_{j=1}^n \theta_j^{3-4b})^{1/2}}{(n\bar{\theta})^b (\sum_{j=1}^n \theta_j^{2-2b})^{1/2}} + \frac{C\max_j \theta_j^{1-2b}}{(n\bar{\theta})^b (\sum_{j=1}^n \theta_j^{2-2b})^{1/2}} \cdot  \log (n)
\end{align*}
with probability at least $1- o(n^{-3})$.  Substituting the above inequality back into (\ref{eq:2023061401}), together with the estimate of $\xi_1(i)$, we obtain that 
\begin{align*}
\frac{\max_{1\leq k\leq K} \big|\hat{\xi}_k^*(i) - \xi_k(i) \big|}{\xi_1(i)}\lesssim \frac{\sqrt{\log (n) }}{\sqrt{\theta_i}}\, \frac{\big(\sum_{j=1}^n \theta_j^{3-4b}\big)^{1/2}}{\sum_{j=1}^n \theta_j^{2-2b}} + \frac{\log n}{\theta_i} \, \frac{\max_j \theta_j^{1-2b} }{\sum_{j=1}^n \theta_j^{2-2b}}
\end{align*}
with probability at least $1- o(n^{-4})$. By combining this inequality for all $i$, we conclude our proof. 

\section{Relaxed condition on $P$} \label{supp:relaxP}
In Section~\ref{subsec:RegConds}, we introduced one regularity condition on $P$ in Condition~\ref{reg-conds} (b). Specifically, we assume that 
\begin{align*}
\min_{1\leq k\leq K} \Big\{\sum_{1\leq \ell\leq K}P(k, \ell) \Big\} \geq c_2 K. 
\end{align*} 
As we explained in the main context of Section~\ref{subsec:RegConds}, this condition can be relaxed to 
\begin{align}\label{cond:relaxedP}
\min_{1\leq k\leq K} \Big\{\sum_{1\leq \ell\leq K}P(k, \ell) \Big\} \geq c_2 \alpha_n, \quad \text{where $\alpha_n \in [1, K]$.} 
\end{align} 
 Our theory still holds under certain modification of the regularity conditions in this case. We discuss more details in this section. 
 
 Let $\alpha_n \in [1, K]$. We re-define 
 \begin{align}\label{def:newG}
 G = \alpha_n \cdot \Pi'\Theta D_{\theta}^{-1}\Theta \Pi
 \end{align}
 The following theorem holds. 
 \begin{thm}\label{thm:eigen_relaxedP}
 Consider the DCMM model \eqref{mod-DCMM}-\eqref{mod-DCMM2}, where Condition~\ref{reg-conds}(a), the first inequality of (b), and (c) are satisfied for $G$ defined in (\ref{def:newG}). 
In addition, suppose (\ref{cond:relaxedP}) holds and $K^2\alpha_n \log(n)/(n\bar{\theta}^2\beta_n^2)\to 0$ as $n\to\infty$. 
With probability $1-o(n^{-3})$,  there exists $\omega\in \{1, -1\}$ and  an orthogonal matrix $O_1\in \mathbb{R}^{(K-1)\times(K-1)}$ such that simultaneously for all $1\leq i\leq n$, 
\begin{align}
|\omega\hat{\xi}_1(i)-\xi_1(i)| & \leq C \sqrt{\frac{K^2\theta_i\log(n)}{n^2\bar{\theta}^3 \alpha_n}}\Biggl(1+ \sqrt{\frac{\log(n)}{n\bar{\theta}\theta_i (\alpha_n/K)}}\, \Biggr),\label{result-1}\\
\Vert e_i' (\hat{\Xi}_1 O_1 - \Xi_1)\Vert&\leq  C \sqrt{\frac{K^2\alpha_n \theta_i\log(n)}{n^2\bar{\theta}^3 \beta_n^2}}\Biggl(1+ \sqrt{\frac{\log(n)}{n\bar{\theta}\theta_i(\alpha_n/K) }} \, \Biggr).  \label{result-2}
\end{align}
 \end{thm}
Based on the above theorem, we have the node-wise error and rate of MSL below. 
 \begin{thm}
Suppose the assumptions in Theorem~\ref{thm:eigen_relaxedP} hold, and additionally, Condition~\ref{reg-conds}(d) is satisfied.  Let $\hat{\Pi}$ be the output of Algorithm~\ref{alg:MSL}.
 With probability $1-o(n^{-3})$, there exists a permutation $T$ on $\{1,2,\ldots,K\}$, such that simultaneously for all $1\leq i\leq n$, 
\beq \label{Nodewise-Err}
\Vert T\hat{\pi}_i- \pi_i\Vert_1 \leq  C \min\left\{ \sqrt{\frac{K^2\alpha_n\log (n)}{n\bar{\theta} (\bar{\theta}\wedge \theta_i)\beta_n^2}},\;\; 1\right\}, 
\eeq
In addition, let 
${\cal L}(\hat{\Pi},\Pi)$ be the $\ell^1$-loss in \eqref{Loss}. Then, 
\[
\mathbb{E}{\cal L}(\hat{\Pi},\Pi) \leq C \sqrt{\log(n)}\cdot  \frac{1}{n}\sum_{i=1}^n \min\left\{ \frac{K\sqrt{\alpha_n}}{ \beta_n \sqrt{n\bar{\theta} (\bar{\theta}\wedge \theta_i)}},\;\; 1\right\}. 
\] 

 \end{thm}

\bibliographystyle{chicago}
\bibliography{DCMM_Laplacian}

\end{document}